\theoremstyle{plain}
\newtheorem{theorem}{Theorem}[section]
\newtheorem{lemma}[theorem]{Lemma}
\newtheorem{corollary}[theorem]{Corollary}
\newtheorem{remark}[theorem]{Remark}
\newtheorem{proposition}[theorem]{Proposition}
\newtheorem*{claim}{Claim}
\def\Mod{\mbox{\rm{Mod}}}
\def\ML{{\mathcal {ML}}}
\def\PML{{\mathbb P}{\mathcal {ML}}}
\def\FL{{\mathcal {FL}}}
\def\EL{{\mathcal {EL}}}
\def\PFL{{\mathbb P}{\mathcal {FL}}}
\def\T{{\mathcal T}}
\def\S{{\mathcal S}}
\def\C{{\mathcal C}}
\def\G{{\mathcal G}}
\def\Diff{\mbox{\rm{Diff}}}
\def\id{\mbox{\rm{Id}}}
\def\H{{\mathcal H}}
\def\X{{\mathfrak X}}
\DeclareMathOperator{\ev}{ev}
\def\Ann{{\text{Ann}}}
\def\Min{{\text{Min}}}
\def\Cur{{\text{Cur}}}
\def\FL{{\mathcal {FL}}}
\def\PFL{{\mathbb P}{\mathcal {FL}}}
\def\sys{{\text{sys}}}
\def\INT{{\text{int}}}
\newcommand{\ATF}{{\mathbb A}}
\newcommand{\I}{i}
\newcommand{\co}{{\colon}}
\title{The universal Cannon--Thurston map and the boundary of the curve complex}
\author{Christopher J. Leininger\thanks{partially supported by NSF grant DMS-0603881}, Mahan Mj\thanks{partially supported by a UGC Major Research Project grant} and Saul Schleimer\thanks{partially
supported by NSF grant DMS-0508971}}
\begin{document}

\maketitle

\begin{abstract}
In genus two and higher, the fundamental group of a closed surface acts naturally on the curve complex of the surface
with one puncture.  Combining ideas from previous work of Kent--Leininger--Schleimer and Mitra, we construct a
universal Cannon--Thurston map from a subset of the circle at infinity for the closed surface group onto the boundary
of the curve complex of the once-punctured surface.  Using the techniques we have developed, we also show that the
boundary of this curve complex is locally path-connected.

\smallskip

\begin{center}

{\em AMS subject classification =   20F67(Primary), 22E40   57M50}

\end{center}

\end{abstract}

\tableofcontents

\section{Introduction.}

\subsection{Statement of results.}

Fix a hyperbolic metric on a closed surface $S$ of genus at least two.  This identifies the universal cover with the
hyperbolic plane $p:\mathbb H \to S$. Fix a basepoint $z \in S$ and a point $\widetilde z \in p^{-1}(z)$.  This defines
an isomorphism between the group $\pi_1(S,z)$ of homotopy classes of loops based at $z$ and the group $\pi_1(S)$ of
covering transformations of $p:\mathbb H \to S$.

We will also regard the basepoint $z \in S$ as a marked point on $S$.   As such, we write $(S,z)$ for the surface $S$
with the marked point $z$.
We could also work with the punctured surface $S - \{z\}$; however a marked point is more convenient for us.

Let $\C(S)$ and $ \C(S,z)$ denote the curve complexes of $S$ and $(S,z)$ respectively and let $\Pi:\C(S,z) \to \C(S)$
denote the forgetful projection.  From \cite{kls}, the fiber over $v \in \C^0(S)$ is $\pi_1(S)$--equivariantly
isomorphic to the Bass-Serre tree $T_v$ determined by $v$.  The action of $\pi_1(S)$ on $\C(S,z)$ comes from the
inclusion into the mapping class group $\Mod(S,z)$ via the Birman exact sequence (see Section \ref{S:mod(S)}). We
define a map
$$\Phi:\C(S) \times \mathbb H \to \C(S,z)$$ by sending $\{ v\} \times \mathbb H$  to $T_v \cong \Pi^{-1}(v) \subset \C(S,z)$ in a $\pi_1(S)$--equivariant way and then extending over simplices using barycentric coordinates (see Section \ref{S:construct Phi}).

Given $v \in \C^0(S)$, let $\Phi_v$ denote the restriction to $\mathbb H \cong \{v\} \times \mathbb H$
$$\Phi_v:\mathbb H \to \C(S,z).$$

Suppose that $r \subset \mathbb H$ is a geodesic ray that eventually lies in the preimage of some proper essential
subsurface of S.  We will prove in Section 3 that $\Phi_v(r) \subset \C(S, z)$ has finite diameter. The remaining rays
define a subset $\ATF \subset \partial \mathbb H$ (of full Lebesgue measure). Our first result is the following.

\begin{theorem} [Universal Cannon--Thurston map] \label{cannonthurston}
For any $v \in \C^0(S)$, the map $\Phi_v: \mathbb H \to \C(S,z)$ has a unique continuous $\pi_1(S)$--equivariant extension
$$\overline\Phi_v: \mathbb H \cup \ATF \to \overline \C(S,z).$$
The map $\partial \Phi = \overline \Phi_v|_{\ATF}$ does not depend on $v$ and is a quotient map onto $\partial
\C(S,z)$.  Given distinct points $x,y \in \ATF$, $\partial \Phi(x) = \partial \Phi(y)$ if and only if $x$ and $y$ are
ideal endpoints of a leaf (or ideal vertices of a complementary polygon) of the lift of an ending lamination on $S$.
\end{theorem}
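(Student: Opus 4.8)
The plan has four parts: obtain $\overline\Phi_v$ by verifying a Cannon--Thurston criterion in the style of Mitra; see that the boundary map is independent of $v$; prove surjectivity and the quotient property using Klarreich's identification $\partial\C(S,z)=\EL(S,z)$; and, as the crux, identify the point preimages of $\partial\Phi$ using the trees $T_v$ and subsurface projections. Throughout I would use that $\C(S,z)$ is Gromov hyperbolic (Masur--Minsky), that $\Phi_v$ is coarsely Lipschitz and $\pi_1(S)$--equivariant (up to bounded error it is the composition of the $\pi_1(S)$--equivariant collapse $\mathbb H\to T_v$ with the inclusion $T_v=\Pi^{-1}(v)\hookrightarrow\C(S,z)$, which sends adjacent vertices of $T_v$ to vertices a bounded distance apart), and the dichotomy recorded before the statement: a ray eventually inside the preimage of a proper essential subsurface has $\Phi_v$--image of finite diameter, and $\ATF$ is exactly the complementary set of directions.

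\textbf{The extension.} Since $\mathbb H$ and $\C(S,z)$ are hyperbolic, it suffices to check a Mitra--type criterion: there is a proper function $f$ so that for all $\alpha,\beta\in\mathbb H\cup\ATF$ every $\C(S,z)$--geodesic from $\Phi_v(\alpha)$ to $\Phi_v(\beta)$ stays at distance at least $f\big(d_{\mathbb H}(\widetilde z,[\alpha,\beta])\big)$ from $\Phi_v(\widetilde z)$. I would deduce this from the Masur--Minsky subsurface--projection description of distance in $\C(S,z)$ together with the dichotomy above: if a $\C(S,z)$--geodesic came close to $\Phi_v(\widetilde z)$, the corresponding portion of $[\alpha,\beta]$ would be trapped near $\widetilde z$ inside the preimage of a subsurface whose size is controlled by how close we came, and the hypothesis that $\alpha,\beta\in\mathbb H\cup\ATF$ prevents this from occurring far from $\widetilde z$. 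The criterion yields the continuous extension $\overline\Phi_v$ on $\mathbb H\cup\ATF$; uniqueness is automatic since $\mathbb H$ is dense and $\overline\C(S,z)$ is Hausdorff, and equivariance passes to the limit.

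\textbf{Independence of $v$; surjectivity; the quotient property.} Passing from $v$ to $v'$ changes $\Phi_v$ only through subsurface projections supported in the proper subsurfaces of $S$ crossed by exactly one of $v,v'$, and these stay bounded along any $\ATF$--ray by the previous step, so $\overline\Phi_v|_{\ATF}$ is independent of $v$; call it $\partial\Phi$. For surjectivity onto $\partial\C(S,z)=\EL(S,z)$: given $\mu\in\EL(S,z)$, I would choose a geodesic ray in $\mathbb H$ projecting to a ray on $S$ that shadows a lift of the lamination underlying $\mu$; since $\mu$ fills $(S,z)$ this ray is not eventually inside the preimage of any proper subsurface, hence lies over a point of $\ATF$, and its $\Phi_v$--image converges to $\mu$. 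That $\partial\Phi$ is a quotient map I would get from continuity, surjectivity, and properness of $\Phi_v$ in $\ATF$--directions, which lets one verify the universal property of the quotient topology directly; this is the softest point of the argument.

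\textbf{The point preimages.} This is the hardest step. For one inclusion, let $x\ne y$ be ideal endpoints of a leaf $\ell$ of $\widetilde{\mathcal L}$, or two ideal vertices of a complementary ideal polygon $P$ of $\widetilde{\mathcal L}$, where $\mathcal L$ is an ending lamination on $S$. Since $\mathcal L$ fills $S$ one checks that $\ell$ (resp.\ $P$) lies in no proper essential subsurface of $(S,z)$, so $x,y\in\ATF$; then $\partial\Phi(x)=\partial\Phi(y)$ would follow by showing the two rays into which $[x,y]$ is split have $\Phi_v$--images that are asymptotic in $\C(S,z)$ --- in $T_v$ they cross the same pattern of lines of $\widetilde v$ up to the bounded transverse combinatorics of the minimal filling lamination $\mathcal L$, and the residual error is measured by proper subsurface projections that stay bounded precisely because $\mathcal L$ is minimal and filling; for the polygon case one argues in addition that $\Phi_v$ of the whole ideal polygon together with its ideal vertices has finite diameter. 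For the converse, suppose $\partial\Phi(x)=\partial\Phi(y)=\mu$ with $x\ne y$; forgetting $z$ turns $\mu$ into a filling lamination $\mathcal L$ on $S$, i.e.\ an ending lamination. The geodesic $g=[x,y]$ then has $\Phi_v$--image whose two ends are asymptotic in $\C(S,z)$; feeding this into the bounded geodesic image theorem and the hierarchy machinery on $(S,z)$, together with the tree structure of the fibers, should force each end of $g$ to fellow--travel a lift of a leaf of $\mathcal L$, and matching up the two ends then forces $x,y$ to span a single leaf of $\widetilde{\mathcal L}$ or to be ideal vertices of a single complementary polygon. I expect the genuine obstacles to be: (i) extracting $\mathcal L$ on $S$ cleanly from the ending lamination $\mu$ on $(S,z)$ and controlling the change of hyperbolic structure; (ii) the combinatorial bookkeeping needed to land on a \emph{single} leaf or polygon rather than a larger transitive--closure class, where one uses that an ending lamination has finitely many complementary regions, each a finite--sided ideal polygon with exactly two leaves through each ideal vertex; and (iii) handling the complementary--polygon case uniformly with the leaf case, which I would do by a careful analysis of the annular and non--annular subsurface projections of $g$ in the manner of Kent--Leininger--Schleimer and Mitra.
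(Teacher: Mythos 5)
Your outline diverges from the paper's route, and at each of the places where it diverges the step you leave to ``subsurface--projection machinery'' is precisely where the real work lies, so as it stands there are genuine gaps. (i) For the extension, you assert a Mitra--type proper-function criterion and say it follows because a $\C(S,z)$--geodesic returning near $\Phi_v(\widetilde z)$ would trap part of $[\alpha,\beta]$ in the preimage of a controlled subsurface; no mechanism is given for why that trapping occurs. The paper instead builds explicit uniformly quasiconvex sets: for a fixed geodesic $\gamma$ with $p(\gamma)$ a filling closed geodesic it shows $\H^\pm(\gamma)=\Phi(\C(S)\times H^\pm(\gamma))$ is weakly convex (via the tree fibers $T_v$ and a simplicial nearest-point retraction) and that for translates nesting down to $x\in\ATF$ one has $d(u_0,\H^+(\gamma_n))\to\infty$; that last proposition is a genuine argument (Hausdorff limits of curves in a ball $B(v_0,R)$, ruling out ending sublaminations by Klarreich) and has no counterpart in your sketch. (ii) For surjectivity, your recipe ``shadow a lift of the lamination underlying $\mu$'' presupposes that $\mu\in\EL(S,z)$ comes from an ending lamination on $S$, and that the $\Phi_v$--image of the shadowing ray converges to $\mu$; neither is justified, and the first is exactly what fails for the many boundary points that do not arise by puncturing a lamination on $S$ (e.g.\ the invariant laminations of point-pushing pseudo-Anosovs, which by Kra's theorem are pseudo-Anosov in $\Mod(S,z)$ but trivial in $\Mod(S)$). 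The paper gets surjectivity quite differently: density of $\Phi_v(\mathbb H)$ in $\overline\C(S,z)$ from the limit-set of the normal subgroup $\pi_1(S)$ plus the systole map, together with the key proposition that if $x_n\to x\notin\ATF$ then $\Phi_v(x_n)$ cannot converge into $\partial\C(S,z)$ (proved by pushing the marked point along a ray filling a proper subsurface $Y$ and showing lengths of arcs in the projection to $X^\circ$ blow up).

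(iii) The converse of the identification statement is the crux, and your first move --- ``forgetting $z$ turns $\mu$ into a filling lamination on $S$, i.e.\ an ending lamination'' --- is an unproved claim of essentially the same depth as the theorem; the same point-pushing examples show it cannot be taken for granted, and the remainder (``should force each end of $g$ to fellow--travel a lift of a leaf'') is explicitly conjectural. The paper never passes through such a statement: it proves that $\partial\H^+(\gamma_x)\cap\partial\H^+(\gamma_y)\neq\emptyset$ for all separating translates if and only if the geodesic from $x$ to $y$ projects to a \emph{simple} geodesic, using the computation $\partial\X(\gamma)=\hat\Phi(\partial\C(S)\times\gamma)\cup\{|\mu_\pm|\}$, the injectivity lemma for $\hat\Phi$ on $\EL(S)\times\mathbb H$, and the fact that limits of simple geodesics are simple; the identification of fibers is then an immediate corollary, with no hierarchy machinery. (iv) Finally, the quotient property is not ``soft'': $\partial\C(S,z)$ is not locally compact and $\partial\Phi$ is not proper ($\ATF$ is totally disconnected and noncompact, and the paper warns that the same quotient on all of $\partial\mathbb H$ is non-Hausdorff). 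The paper proves it by exhibiting explicit neighborhood bases for boundary points built from the sets $\partial\H^+(\gamma_n)$ (one family for non-simple points, finite unions for simple ones), which again rests on the separation results. In short, your plan correctly identifies the targets, but the four load-bearing steps are each replaced by an assertion, and two of them (surjectivity and the converse identification) rest on a descent-to-$S$ claim that is false or at best unproven for exactly the boundary points that make this theorem delicate.
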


We recall that a Cannon--Thurston map was constructed by Cannon and Thurston \cite{CT} for the fiber subgroup of the
fundamental group of a closed hyperbolic $3$--manifold fibering over the circle.  The construction was then extended to
simply degenerate, bounded geometry Kleinian closed surface groups by Minsky \cite{minsky-top}, and in the general
simply degenerate case by the second author \cite{mahan-amalgeo},\cite{mahan-split}.  In all these cases, one produces
a quotient map from the circle $\partial \mathbb H$ onto the limit set of the Kleinian group $\Gamma$. In the quotient,
distinct points are identified if and only if they are ideal endpoints of a leaf (or ideal vertices of a complementary
polygon) of the lift of an ending lamination for $\Gamma$.  This is either one or two ending laminations depending on
whether the group is singly or doubly degenerate; see \cite{mahan-elct}.

In a similar fashion, the second author \cite{mitra-endlam} has constructed a Cannon--Thurston map for any
$\delta$--hyperbolic extension $\Gamma$ of a group $G$ by $\pi_1(S)$,
\[ 1 \to \pi_1(S) \to \Gamma \to G \to 1\]
(for a discussion of such groups see \cite{mosher-hbh,FMcc}).  This is a $\pi_1(S)$--equivariant quotient map from
$\partial \mathbb H$ onto the Gromov boundary of $\Gamma$.  As above, the quotient identifies distinct points if and
only if they are ideal endpoints of a leaf (or ideal vertices of a complementary polygon) of the lift of an ending
lamination for $G$.

The map $\partial \Phi$ is universal in that distinct points are identified if and only if they are the ideal endpoints
of a leaf (or ideal vertices of a complementary polygon) of the lift of \textit{any} ending lamination on $S$.  We
remark that the restriction to $\ATF$ is necessary to get a reasonable quotient: the same quotient applied to the
entire circle $\partial \mathbb H$ is a non-Hausdorff space.

It follows from the above description of the various Cannon--Thurston maps that the universal property of $\partial
\Phi$ can also be rephrased as follows.  If $F:\partial \mathbb H \to \Omega$ is any Cannon--Thurston map as
above---so, $\Omega$ is either the limit set of a Kleinian group, or the Gromov boundary of a hyperbolic extension
$\Gamma$---then there exists a map
\[ \phi_F: F(\ATF) \to \partial \C(S,z) \]
so that $ \phi_F \circ F|_{\ATF} = \partial \Phi$.  Moreover, because $\partial \Phi$ identifies \textit{precisely} the
required points to make this valid, one sees that any $\pi_1(S)$--equivariant quotient of $\ATF$ with this property is
actually a $\pi_1(S)$--equivariant quotient of $\partial \C(S,z)$.\\

It is a classical fact, due to Nielsen, that the action of $\pi_1(S)$ on $\partial \mathbb H$ extends to the entire
mapping class group $\Mod(S,z)$. It will become apparent from the description of $\ATF$ given below that this
$\Mod(S,z)$ action restricts to an action on $\ATF$.  In fact, we have

\begin{theorem} \label{full equivariance}
The quotient map
\[ \partial \Phi: \ATF \to \partial \C(S,z) \]
is equivariant with respect to the action of $\Mod(S,z)$.
\end{theorem}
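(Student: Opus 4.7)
My plan is to reduce the $\Mod(S,z)$-equivariance of $\partial \Phi$ to its already-established $\pi_1(S)$-equivariance by comparing $\Phi_v$ with a conjugated version of itself. Given $f \in \Mod(S,z)$ with image $\bar f \in \Mod(S)$, I first choose a representative homeomorphism $\phi$ of $(S,z)$ and lift to a quasi-isometry $\tilde f \colon \mathbb H \to \mathbb H$ fixing $\tilde z$; by Nielsen's theorem this extends continuously to $\overline{\mathbb H}$ and recovers the $\Mod(S,z)$-action on $\partial \mathbb H$. Because $p \circ \tilde f = \phi \circ p$, the map $\tilde f$ sends the preimage of any proper essential subsurface of $S$ to the preimage of another such subsurface, so the defining property of $\ATF$ is preserved and the action restricts to $\ATF$.

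The key construction is the map
$$\Phi_v^f \colon \mathbb H \to \C(S,z), \qquad \Phi_v^f(x) = f^{-1} \cdot \Phi_{\bar f(v)}(\tilde f(x)).$$
Since $\Pi$ is $\Mod(S,z)$-equivariant, $f^{-1}$ carries $T_{\bar f(v)} = \Pi^{-1}(\bar f(v))$ isomorphically onto $T_v = \Pi^{-1}(v)$, so $\Phi_v^f$ lands in $T_v$. Using normality $\pi_1(S) \lhd \Mod(S,z)$ together with the twisted equivariance $\tilde f(\gamma \cdot x) = (f\gamma f^{-1}) \cdot \tilde f(x)$ for $\gamma \in \pi_1(S)$, a direct calculation confirms $\Phi_v^f$ is $\pi_1(S)$-equivariant. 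The main technical point---and what I expect to be the chief obstacle---is showing that $\Phi_v$ and $\Phi_v^f$ differ by a bounded amount in $\C(S,z)$. Both are continuous $\pi_1(S)$-equivariant maps, so the continuous $\pi_1(S)$-invariant function $x \mapsto d_{\C(S,z)}(\Phi_v(x), \Phi_v^f(x))$ descends to the compact quotient surface $S$ and is therefore bounded by some $C = C(f,v)$; in the $\delta$-hyperbolic complex $\C(S,z)$, two sequences at bounded distance share their limit in $\partial \C(S,z)$.

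With this in hand, the passage to the boundary is routine. For $x \in \ATF$ and $x_n \in \mathbb H$ with $x_n \to x$, Theorem \ref{cannonthurston} gives $\Phi_v(x_n) \to \partial \Phi(x)$, and the uniform bound forces $\Phi_v^f(x_n) \to \partial \Phi(x)$ as well. Applying the homeomorphism $f$ of $\overline{\C(S,z)}$ and using $\tilde f(x_n) \to f \cdot x \in \ATF$,
$$f \cdot \partial \Phi(x) = \lim\, f \cdot \Phi_v^f(x_n) = \lim \Phi_{\bar f(v)}(\tilde f(x_n)) = \partial \Phi(f \cdot x),$$
where the last equality invokes the $v$-independence part of Theorem \ref{cannonthurston}. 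This is the desired $\Mod(S,z)$-equivariance.
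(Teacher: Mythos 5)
Your argument is correct, but it follows a genuinely different route from the paper. The paper's proof is a short dynamical one: it takes the dense set of points $x \in \ATF$ that are attracting fixed points of elements $\delta' \in \pi_1(S)$ whose axes project to filling closed geodesics, invokes Kra's theorem to see that such a $\delta'$ is pseudo-Anosov in $\Mod(S,z)$, so that $\pi_1(S)$--equivariance forces $\partial\Phi(x)$ to be the attracting fixed point of $\delta'$ on $\partial\C(S,z)$; then, for $\phi \in \Mod(S,z)$, normality identifies both $\partial\Phi(\phi(x))$ and $\phi(\partial\Phi(x))$ as the attracting fixed point of the single pseudo-Anosov $\phi\delta'\phi^{-1}$, and continuity of $\partial\Phi$ propagates the identity from this dense set to all of $\ATF$. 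You instead conjugate the map itself: you form $\Phi_v^f = f^{-1}\circ\Phi_{\bar f(v)}\circ\tilde f$, check it is $\pi_1(S)$--equivariant (this uses exactly the same compatibility between conjugation in $\Mod(S,z)$ and deck-group conjugation by the lift fixing $\widetilde z$ that the paper uses implicitly when it says $\phi(x)$ is the attracting fixed point of $\phi\delta'\phi^{-1}$), bound $d_{\C(S,z)}(\Phi_v,\Phi_v^f)$ by descending the continuous invariant distance function to the compact surface, and then pass to boundary limits using continuity of $\overline\Phi_v$, the $v$--independence of $\partial\Phi$, and invariance of $\ATF$. What your approach buys is the avoidance of Kra's theorem and of any fixed-point dynamics on $\partial\C(S,z)$ or $\partial\mathbb H$, and it establishes the identity directly at every point of $\ATF$ rather than on a dense subset; what it costs is the extra construction and the bounded-distance compactness step, where the paper gets by with a few lines. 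Both proofs ultimately rest on the same inputs from Theorem \ref{cannonthurstonhalf} (continuity of the extension) and on the $\Mod(S,z)$--invariance of $\ATF$, which the paper also asserts without proof, so your level of rigor matches the paper's.
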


As an application of the techniques we have developed, we also prove the following.

\begin{theorem} \label{localpathconn}
The Gromov boundary $\partial \C(S,z)$ is path-connected and locally path-connected.
\end{theorem}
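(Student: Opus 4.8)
The plan is to construct paths in $\partial\C(S,z)$ as images, under the universal Cannon--Thurston map $\partial\Phi\colon\ATF\to\partial\C(S,z)$ of Theorem~\ref{cannonthurston}, of arcs in the circle $\partial\mathbb H$. The immediate obstruction is that $\ATF$ is totally disconnected, so an arc $\alpha\subset\partial\mathbb H$ joining two points of $\ATF$ must cross $\partial\mathbb H\setminus\ATF$, which is the union of the limit sets $\Lambda(\widetilde W)$ of the lifts $\widetilde W$ of proper essential subsurfaces $W\subset S$. The real work is therefore to control $\partial\Phi$ near this ``bad set'': one wants to show that along any sufficiently short sub-arc $\alpha$ of $\partial\mathbb H$ with endpoints in $\ATF$, the assignment $e^{i\theta}\mapsto\partial\Phi(e^{i\theta})$, a priori defined only at good angles, extends to a continuous map $f\colon\alpha\to\partial\C(S,z)$ whose image has small diameter.

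I would isolate this as a local connecting statement: \emph{for every $\xi\in\partial\C(S,z)$ and every neighborhood $U$ of $\xi$ there is a smaller neighborhood $V$ such that any $\eta\in V$ is joined to $\xi$ by a path in $U$.} Once this is established in sufficiently uniform form, both assertions of Theorem~\ref{localpathconn} follow by soft arguments: local path-connectivity is essentially the statement itself, while path-connectivity follows by taking $\xi=\partial\Phi(x)$ and $\eta=\partial\Phi(y)$ for arbitrary $x,y\in\ATF$, subdividing an arc of $\partial\mathbb H$ from $x$ to $y$ by points of $\ATF$ (which is dense, being of full measure) into consecutive pairs close enough for the local statement, and concatenating the resulting short paths. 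So the genuine content is the local connecting statement, and this is where I expect the main difficulty.

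To prove it, let $\alpha=[x,y]$ be a short arc of $\partial\mathbb H$ with endpoints in $\ATF$ and set $f(e^{i\theta})=\partial\Phi(e^{i\theta})$ for $e^{i\theta}\in\ATF$. The key input for extending $f$ over a bad point $b\in\Lambda(\widetilde W)$ is the finiteness result of Section~3: $\Phi_v$ has bounded-diameter image on $p^{-1}(W)$. Applying this to the (possibly nested) family of subsurfaces whose limit sets $\alpha$ meets should give, first, that the one-sided limits of $f$ at every bad point exist and agree -- so $f$ extends continuously over all of $\alpha$ -- and, second, a summable estimate for the ``variation'' of $f$ contributed by crossing each complementary gap of a $\Lambda(\widetilde W)$, bounded in terms of the diameter of $\Phi_v(p^{-1}(W))$ and, via continuity of $\overline\Phi_v$ at $x$ (Theorem~\ref{cannonthurston}), shrinking as $\alpha$ shrinks. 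The resulting continuous image $f(\alpha)$ is then the desired path, contained in a neighborhood of $\xi$ that can be made as small as we like.

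The main obstacle is exactly this continuous extension across the bad set: one must show that $\partial\Phi$ cannot jump as $\alpha$ crosses a complementary gap of some $\Lambda(\widetilde W)$ -- a gap that may itself contain infinitely many further subsurface limit sets -- while keeping the total accumulated variation under control. I would expect to handle this by first reducing to a dense class of ``best'' (arational lamination, marked point in generic position) endpoints, for which the explicit fiber description in Theorem~\ref{cannonthurston} -- identifications coming precisely from endpoints of leaves and ideal vertices of complementary polygons of lifted ending laminations -- makes the gap-crossing analysis manageable, and then passing to all of $\partial\C(S,z)$ by density. The subsurface structure of $\ATF$ and the diameter bounds of Section~3 are precisely the ingredients that make this feasible, which is why this theorem comes ``as an application of the techniques we have developed''.
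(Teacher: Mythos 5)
Your plan founders at its central step: the claim that the one-sided limits of $\partial \Phi$ along $\ATF$ exist and agree at each bad point $b \in \partial \mathbb H - \ATF$, so that $\partial\Phi$ extends continuously over a short arc $\alpha$. This is false, and at a dense set of bad points. Take $\eta \in \pi_1(S)$ hyperbolic with axis projecting to a \emph{simple} (or any non-filling) closed geodesic, and let $b$ be its attracting fixed point; such $b$ are dense in $\partial\mathbb H$, so every nondegenerate arc contains one. Pick $x_0 \in \ATF$ on one side of the axis near $b$; then $\eta^n(x_0) \to b$ from that side, and by equivariance (Theorems \ref{cannonthurston} and \ref{full equivariance}) $\partial\Phi(\eta^n x_0) = \eta^n\bigl(\partial\Phi(x_0)\bigr)$. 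If a one-sided limit $L$ of $\partial\Phi$ at $b$ existed, then $L = \lim \eta^{n+1}\partial\Phi(x_0) = \eta(L)$, so $L$ would be a fixed point of $\eta$ in $\partial\C(S,z)$; but by Kra's theorem $\eta$ is a \emph{reducible} mapping class of $(S,z)$ and fixes no point of $\partial\C(S,z)$ --- exactly the argument of Lemma \ref{notangentsbaby}. So no continuous extension over $\alpha$ exists, and no rerouting through ``summable variation'' can repair this: the obstruction is not quantitative. The bounded-diameter fact you invoke from Section 3 points the opposite way --- it is what shows (Proposition \ref{notangents}) that directions outside $\ATF$ never reach $\partial\C(S,z)$ at all --- and the introduction's remark that the same identification applied to the whole circle gives a non-Hausdorff quotient is a further warning that ``filling in'' $\partial\Phi$ across the bad set is not available. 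Paths in $\partial\C(S,z)$ cannot be produced as images of arcs of the circle at infinity.

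The paper's proof uses a different source of paths entirely: it moves the marked point rather than moving along $\partial\mathbb H$. Fixing an ending lamination $|\lambda| \in \EL(S)$, the map $t \mapsto \hat\Phi(|\lambda|, r(t))$, for $r$ a ray or path in $\mathbb H$, is continuous by Proposition \ref{psiphi2} and lands in $\partial\C(S,z)$; combined with the nesting estimates this shows each half-space boundary $\partial\H^+(\gamma_0)$ is path-connected (Lemma \ref{halfspaceconn}), including a path limiting onto any prescribed $|\mu| = \partial\Phi(x)$. Since the sets $\partial\H^+(\gamma_0)$ (and finite unions of them sharing a point) form a neighborhood basis of $\partial\C(S,z)$ by Lemmas \ref{easyquotient topology} and \ref{quotient topology}, local path-connectivity follows, and global path-connectivity follows from Proposition \ref{sep half infinity}. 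If you want to salvage your outline, the piece to replace is precisely your gap-crossing analysis: connect two nearby boundary points not through the circle but through a common half-space boundary, using a fixed filling lamination and a path of marked points in $\mathbb H$.
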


We remark that $\ATF$ is noncompact and totally disconnected, so unlike the proof of local connectivity in the Kleinian
group setting, Theorem \ref{localpathconn} does not follow immediately from Theorem \ref{cannonthurston}.

This strengthens the work of the first and third authors in \cite{leinsch} in a special case: in \cite{leinsch} it was
shown that the boundary of the curve complex is connected for surfaces of genus at least $2$ with any positive number
of punctures and closed surfaces of genus at least $4$.  The boundary of the complex of curves describes the space of
simply degenerate Kleinian groups as explained in \cite{leinsch}.  These results seem to be the first ones providing
some information about the \textit{topology} of the boundary of the curve complex.  The question of connectivity of the
boundary was posed by Storm, and the general problem of understanding its topology by Minsky in his 2006 ICM address.
Gabai \cite{gabai} has now given a proof of Theorem \ref{localpathconn} for all surfaces $\Sigma$ for which
$\C(\Sigma)$ is nontrivial, except the torus, $1$--punctured torus and $4$--punctured sphere, where it is known to be
false.

\bigskip

\noindent
\textbf{Acknowledgements.}  The authors wish to thank the Mathematical Sciences Research Institute for its hospitality
during the Fall of 2007 where this work was begun.  We would also like to thank the other participants of the two
programs, \textit{Kleinian Groups and Teichm\"uller Theory} and \textit{Geometric Group Theory}, for providing a
mathematically stimulating and lively atmosphere.

\subsection{Notation and conventions.} \label{S:notation}

\subsubsection{Laminations.} \label{S:laminations}
For a discussion of laminations, we refer the reader to \cite{harer-penner}, \cite{CEG}, \cite{bonahon-curr-teich},
\cite{thurstonnotes}, \cite{CB}.

A \textit{measured lamination} on $S$ is a lamination with a transverse measure of full support. A measured lamination
on $S$ will be denoted $\lambda$ with the support---the underlying lamination---written $|\lambda|$. We require that
all of our laminations be essential, meaning that the leaves lift to quasigeodesics in the universal cover.

If $a$ is an arc or curve in $S$ and $\lambda$ a measured lamination, we write $\lambda(a) = \int_a d \lambda$ for the
total variation of $\lambda$ along $a$. We say that $a$ is transverse to $\lambda$ if $a$ is transverse to every leaf
of $|\lambda|$. If $v$ is the isotopy class of a simple closed curve, then we write
\[\I(v,\lambda) = \inf_{\alpha \in v} \lambda(\alpha) \]
for the intersection number of $v$ with $\lambda$, where $\alpha$ varies over all representatives of the isotopy class
$v$.

Two measured laminations $\lambda_0$ and $\lambda_1$ are \textit{measure equivalent} if for every isotopy class of
simple closed curve $v$, $\I(v,\lambda_0) = \I(v,\lambda_1)$. Every measured lamination is equivalent to a unique
\textit{measured geodesic lamination} (with respect to the fixed hyperbolic structure on $S$).  This is a measured
lamination $\lambda$ for which $|\lambda|$ is a geodesic lamination.  Given a measured lamination $\lambda$, we let
$\hat \lambda$ denote the measure equivalent measured geodesic lamination.  We will describe a preferred choice of
representative of the measure class of a measured lamination in Section \ref{S:pointpos} below.

We similarly define measured laminations on $(S,z)$ as compactly supported measured laminations on $S - \{z\}$.  In the
situations that we will be considering, these will generally not arise as geodesic laminations for a hyperbolic metric
on $S - \{z\}$, though any one is measure equivalent to a measured geodesic lamination for a complete hyperbolic metric
on $S - \{z\}$.

The spaces of (measure classes of) measured laminations will be denoted by $\ML(S)$ and $\ML(S,z)$. The topology on
$\ML$ is the weakest topology for which $\lambda \mapsto \I(v,\lambda)$ is continuous for every simple closed curve
$v$.  Scaling the measures we obtain an action of $\mathbb R^+$ on $\ML(S) - \{ 0 \}$ and $\ML(S,z) -
\{ 0 \}$, and we denote the quotient spaces $\PML(S)$ and $\PML(S,z)$, respectively.

A particularly important subspace is the space of \textit{filling laminations} which we denote $\FL$.  These are the
measure classes of measured laminations $\lambda$ for which all complementary regions of its support $|\lambda|$ are
disks (for $S-\{z\}$, there is also a single punctured disk).  The quotient of $\FL$ by forgetting the measures will be
denoted $\EL$ and is the space of \textit{ending laminations}.  For notational simplicity, we will denote the element
of $\EL$ associated to the measure class of $\lambda$ in $\FL$ by its support $|\lambda|$.

Train tracks provide another useful tool for describing measured laminations.  See \cite{thurstonnotes} and
\cite{harer-penner} for a detailed discussion of train tracks and their relation to laminations. We recall some of the
most relevant information.

A lamination $\mathcal L$ is carried by a train track $\tau$ if there is a map $f:S \to S$ homotopic to the identity
with $f(\mathcal L) \subset \tau$ so that for every leaf $\ell$ of $\mathcal L$ the restriction of $f$ to $\ell$ is an
immersion.  If $\lambda$ is a measured lamination carried by a train track $\tau$, then the transverse measure defines
weights on the branches of $\tau$ satisfying the switch condition---the sum of the weights on the incoming branches
equals the sum on the outgoing branches. Conversely, any assignment of nonnegative weights to the branches of a train
track satisfying the switch condition uniquely determines an element of $\ML$.  Given a train track $\tau$ carrying
$\lambda$, we write $\tau(\lambda)$ to denote the train track $\tau$ together with the weights defined by $\lambda$.

\begin{proposition}
Suppose that $\{ \lambda_n\}_{n=1}^\infty \cup \{\lambda\} \subset \ML$ are all carried by the train track $\tau$. Then
$\lambda_n \to \lambda$ if and only if the weights on each branch of $\tau$ defined by $\lambda_n$ converge to those
defined by $\lambda$.
\end{proposition}
\begin{proof} This is an immediate consequence of \cite[Theorem~2.7.4]{harer-penner}.
\end{proof}

There is a well-known construction of train tracks carrying a given lamination which will be useful for us.  For a
careful discussion, see \cite[Theorem 1.6.5]{harer-penner}, or \cite[Section 4]{brock-length-cont}. Starting with a
geodesic lamination $\mathcal L$ one chooses $\epsilon > 0$ very small and constructs a foliation, transverse to
$\mathcal L$, of the $\epsilon$--neighborhood $N_\epsilon(\mathcal L)$.  The leaves of this foliation are arcs called
\textit{ties}. Taking the quotient by collapsing each tie to a point produces a train track $\tau$ on $S$; see Figure
\ref{nbhd}.

\begin{figure}[htb]
\centerline{}\centerline{}
\begin{center}
\ \psfig{file=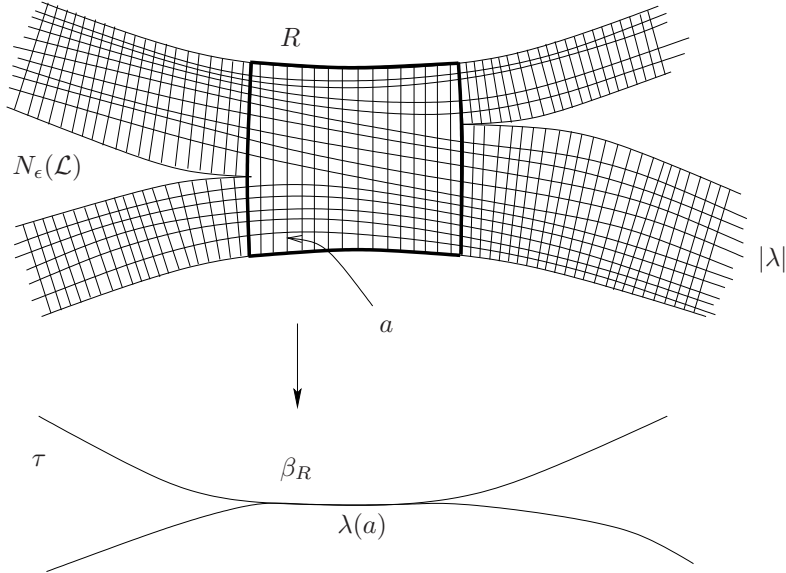,height=3truein} \caption{The train track $\tau$ is obtained by collapsing the ties of
$N_\epsilon(\mathcal L)$.  The lamination $\lambda$ defines weights on $\tau$: the weight on $\beta_R$ is $\lambda(a)$,
where $a$ is a tie in the rectangle $R$.} \label{nbhd}
\end{center}
  \setlength{\unitlength}{1in}
  \begin{picture}(0,0)(0,0)
    \put(2.42,2.2){$a$}
    \put(4.4,2.55){$|\lambda|$}
    \put(.5,3.03){$N_\epsilon(\mathcal L)$}
    \put(.6,1.5){$\tau$}
    \put(1.9,3.7){$R$}
    \put(1.9,1.45){$\beta_R$}
    \put(2.2,1.15){$\lambda(a)$}
  \end{picture}
\end{figure}


We can view $N_\epsilon(\mathcal L)$ as being built from finitely many rectangles, each foliated by ties, glued
together along arcs of ties in the boundary of the rectangle.  In the collapse each rectangle $R$ projects to a branch
$\beta_R$ of $\tau$.  When $\tau$ is trivalent we may assume that $\tau \subset S$ is contained in $N_\epsilon(\mathcal
L)$, transverse to the foliation by ties, and the branch $\beta_R$ is contained in the rectangle $R$.

Suppose now that $\lambda$ is any measured lamination with $|\lambda| \subset N_\epsilon(\mathcal L)$, and $|\lambda|$
transverse to the ties.  If $R$ is a rectangle and $a$ a tie in $R$, then the weight on the branch $\beta_R$, defined
by $\lambda$, is given by $\lambda(a) = \int_a d\lambda$; see Figure \ref{nbhd}.


\subsubsection{Mapping class groups.} \label{S:mod(S)}

Recall that we have fixed a hyperbolic structure on $S$ as well as a locally isometric universal covering $p:\mathbb H
\to S$.  We also have a basepoint $\widetilde z \in p^{-1}(z)$ determining an isomorphism from $\pi_1(S)$, the covering group of
$p$, to $\pi_1(S,z)$, the group of homotopy classes of based loops.  All of this is considered fixed for
the remainder of the paper.

The \textit{mapping class group} of $S$ is the group $\Mod(S) = \pi_0(\Diff^+(S))$, where $\Diff^+(S)$ is the group of
orientation preserving diffeomorphisms of $S$. We define $\Mod(S,z)$ to be $\pi_0(\Diff^+(S,z))$, where $\Diff^+(S,z)$
is the group of orientation preserving diffeomorphisms of $S$ that fix
 $z$.

The evaluation map
\[\ev:\Diff^+(S) \to S \]
given by $\ev(f) = f(z)$ defines a locally trivial principal fiber bundle
\[ \Diff^+(S,z) \to \Diff^+(S) \to S.\]
A theorem of Earle and Eells \cite{earleeells} says that $\Diff_0(S)$, the component containing the identity, is contractible.  So
the long exact sequence of a fibration gives rise to Birman's exact sequence \cite{birmansequence,birmanbook}
\[
1 \rightarrow \pi_1(S) \rightarrow \Mod(S,z) \rightarrow \Mod(S) \to 1.
\]



We elaborate on the injection $\pi_1(S) \to \Mod(S,z)$ in Birman's exact sequence.  Let
\[ \Diff_B(S,z) = \Diff_0(S) \cap \Diff^+(S,z).\]
The long exact sequence of homotopy groups identifies $\pi_1(S) \cong \pi_0(\Diff_B(S,z))$. This isomorphism is
induced by a homomorphism
\[\ev_*:\Diff_B(S,z) \to \pi_1(S)\]
given by $\ev_*(h) = [\ev(h_t)]$ where $h_t$, $t \in [0,1]$, is an isotopy from $h$ to $\id_S$, and
$[\ev(h_t)]$ is the based homotopy class of $\ev(h_t) = h_t(z)$, $t \in [0,1]$.  To see that this is a homomorphism,
suppose $h,h' \in \Diff_B(S,z)$ and $h_t$ and $h_t'$ are paths from $h$ and $h'$ respectively to $\id_S$.  Write
$\gamma(t) = h_t(z)$ and $\gamma'(t) = h_t'(z)$.  There is a path $H_t$ from $h \circ h'$ to $\id_S$ given as
\[ H_t = \left\{ \begin{array}{ll} h_{2t} \circ h' & \mbox{ for } t \in [0,1/2] \\ h_{2t-1}' & \mbox{ for } t \in
[1/2,1].\end{array} \right.\]
Then $H_t(z)$ is the path obtained by first traversing $\gamma$ then $\gamma'$, while $H_0
= h \circ h'$ and $H_1 = \id_S$.  So, $\ev_*(h \circ h') = \gamma \gamma'$, and $\ev_*$ is the required homomorphism.

Given $h \in \Diff_B(S,z)$, we will write $\sigma_h$ for a loop (or the homotopy class) representing $\ev_*(h)$.
Similarly, we will let $h_\sigma$ denote the mapping class (or a representative homeomorphism) determined by $\sigma
\in \pi_1(S)$.  When convenient, we will simply identify $\pi_1(S)$ with a subgroup of $\Mod(S,z)$.

\subsubsection{Curve complexes.} \label{S:curvecx}

A closed curve in $S$ is \textit{essential} if it is homotopically nontrivial in $S$. We will refer to a closed curve
in $S - \{z\}$ simply as a closed curve in $(S,z)$, and will say it is \textit{essential} if it is homotopically
nontrivial {\em and} nonperipheral in $S - \{z\}$.  Essential simple closed curves in $(S,z)$ are isotopic if and only
if they are isotopic in $S- \{z\}$.

Let $\C(S)$ and $\C(S,z)$ denote the \textit{curve complexes} of $S$ and $(S,z)$, respectively; see \cite{harvey} and
\cite{masur-minsky}.  These are geodesic metric spaces obtained by isometrically gluing regular Euclidean simplices
with all edge lengths equal to one.  The following is proven in \cite{masur-minsky}.
\begin{theorem} [Masur-Minsky]
The spaces $\C(S)$ and $\C(S,z)$ are $\delta$-hyperbolic for some $\delta > 0$.
\end{theorem}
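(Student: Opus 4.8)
The plan is to follow Masur and Minsky: we establish $\delta$-hyperbolicity by exhibiting, for every pair of vertices, an explicit connected path joining them that is a uniform unparametrized quasigeodesic with a strong contraction property, and then invoking the general principle that a geodesic metric space carrying such a path system is $\delta$-hyperbolic, with $\delta$ depending only on the relevant constants. We describe the argument for $\C(S)$; the case of $\C(S,z) = \C(S - \{z\})$ is identical, with $S$ replaced by the once-punctured surface.

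First I would set up the coarse dictionary between the Teichm\"uller space $\T$ of $S$ and $\C(S)$: the systole map $\pi \colon \T \to \C(S)$, sending a marked hyperbolic structure $X$ to one of its shortest closed geodesics, is coarsely Lipschitz and coarsely surjective (via the collar lemma together with a length bound for any curve crossing a sufficiently short one), and $d_{\C(S)}(v, \pi(X))$ is uniformly bounded whenever $v$ has length at most the Bers constant on $X$. Given vertices $v, w$, choose $X_v, X_w \in \T$ on which $v$, respectively $w$, is short, let $G = G_{vw}$ be the Teichm\"uller geodesic joining them, and set $\gamma_{vw} = \pi(G)$, which then runs from a bounded neighborhood of $v$ to $w$. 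That $\gamma_{vw}$ has jumps of bounded combinatorial size --- so it is legitimately a coarse path in $\C(S)$, and $\pi|_G$ is a coarse reparametrization --- follows from the fact that along a Teichm\"uller geodesic the system of shortest curves moves only a bounded combinatorial amount over a bounded time interval, which one controls using extremal-length comparisons and Minsky's product-region estimates near the thin part.

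The technical heart is the contraction estimate: there is a constant $B$ such that if $d_{\C(S)}(u, \gamma_{vw}) \ge 1$, then the nearest-point projection onto $\gamma_{vw}$ of the metric ball about $u$ of radius $d_{\C(S)}(u, \gamma_{vw})$ has diameter at most $B$. The mechanism is that if $u$ is $\C(S)$-far from every curve that becomes short somewhere along $G$, then $u$ has definite hyperbolic length all along $G$; hence near the point of $G$ realizing $\min_{X \in G} \ell_X(u)$ some curve $u'$ disjoint from $u$ (so $d_{\C(S)}(u, u') = 1$) becomes short, and the subinterval of $G$ on which $u'$ stays short has $\C(S)$-diameter at most $B$ by extremal-length bookkeeping; this pins down the projection. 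The same estimate shows each $\gamma_{vw}$ is a uniform unparametrized quasigeodesic, and triangle slimness is then extracted by applying the contraction property to the three Teichm\"uller geodesics forming a triangle. Feeding all of this into the Masur--Minsky criterion yields $\delta$-hyperbolicity of $\C(S)$, and verbatim of $\C(S,z)$; this is precisely the argument of \cite{masur-minsky}, which we cite rather than reproduce.

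I expect the contraction estimate to be the main obstacle, since it is exactly where qualitative information about how hyperbolic lengths of curves vary along a Teichm\"uller geodesic must be converted into a quantitative, uniform bound; the rest of the scheme is formal once that is in hand. For the punctured surface $(S,z)$ one can avoid Teichm\"uller theory entirely and use instead the combinatorial route of Bowditch and of Hensel--Przytycki--Webb: replace $\gamma_{vw}$ by a \emph{unicorn path}, built from initial subarcs of two filling arcs glued at a common intersection point, check directly that consecutive arcs on such a path are disjoint (so the path is $1$-Lipschitz into the arc graph), that triangles of unicorn paths are uniformly thin, and that the paths are stable, and then transfer from the arc graph to $\C(S,z)$ by the standard surgery quasi-isometry; the closed surface $\C(S)$, however, still needs the Teichm\"uller argument above (or a further reduction through the once-punctured surface and the tree-fibered forgetful projection $\Pi \colon \C(S,z) \to \C(S)$).
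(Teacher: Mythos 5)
The paper does not prove this statement at all: it is quoted directly from \cite{masur-minsky}, which is exactly the citation your sketch ultimately rests on, so your proposal takes essentially the same approach. Your outline of the Masur--Minsky argument (projections of Teichm\"uller geodesics, the contraction property, and the hyperbolicity criterion) is a fair coarse summary of that cited proof, so there is nothing further to reconcile with the paper.
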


We will refer to a simplex $v \subset \C(S)$ or $u \subset \C(S,z)$ and not distinguish between this simplex and the isotopy class of
multicurve it determines.  Any simple closed curve $u$ in $(S,z)$ can be viewed as a curve in $S$ which we denote
$\Pi(u)$. This gives a well-defined ``forgetful'' map
\[
\Pi \co \C(S,z) \to \C(S)
\]
which is simplicial.

Given a multicurve $v \subset \C(S)$, unless otherwise stated, we assume that $v$ is realized by its geodesic
representative in $S$. Associated to $v$ there is an action of $\pi_1(S)$ on a tree $T_v$, namely, the Bass--Serre tree
for the splitting of $\pi_1(S)$ determined by $v$.  We will make use of the following theorem of \cite{kls}.

\begin{theorem} [Kent-Leininger-Schleimer] \label{T:maintree}
The fiber of $\Pi$ over a point $x \in \C(S)$ is $\pi_1(S)$--equivariantly homeomorphic to the tree $T_v$, where $v$ is the unique simplex containing $x$ in its interior.\qed
\end{theorem}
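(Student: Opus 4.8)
\emph{The plan.} I would build an explicit $\pi_1(S)$--equivariant simplicial isomorphism between $\Pi^{-1}(x)$ and the dual-tree model of $T_v$. Work in the universal cover $p\co\mathbb H\to S$, and let $\widetilde v = p^{-1}(v)$ be the preimage of the geodesic realization of the multicurve $v$: a $\pi_1(S)$--invariant family of pairwise disjoint geodesics. The Bass--Serre tree $T_v$ can be modeled with one vertex per component (``region'') of $\mathbb H\setminus\widetilde v$ and one edge per geodesic of $\widetilde v$, a region being incident to an edge when that geodesic lies in its frontier; the $\pi_1(S)$--action is the evident one and the quotient graph of groups is the decomposition of $\pi_1(S)$ dual to $v$. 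So it suffices to identify $\Pi^{-1}(x)$, as a $\pi_1(S)$--complex, with this object.

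\emph{Vertices.} Since $x$ lies in the interior of the simplex $v$, carrying barycentric coordinates over from $x$ identifies the $0$--skeleton of $\Pi^{-1}(x)$ with the minimal multicurves in $(S,z)$ having one component over each component of $v$; for $x=v$ a single curve these are just the isotopy classes of simple closed curves $u$ in $(S,z)$ freely homotopic in $S$ to $v$. To such a $u$, realized disjointly from $v$ (possible since $\I(u,v)=0$), I attach the region $\Omega(u)$ of $\mathbb H\setminus\widetilde v$ defined as follows: each component of $p^{-1}(u)$ shares its pair of endpoints on $\partial\mathbb H$ with a unique geodesic of $\widetilde v$ (both are axes of the same conjugate of $v$), which gives a canonical $\pi_1(S)$--equivariant bijection between the complementary regions of $p^{-1}(u)$ and of $\widetilde v$; let $\Omega(u)$ be the region of $\mathbb H\setminus\widetilde v$ matched with the region of $\mathbb H\setminus p^{-1}(u)$ that contains the marked lift $\widetilde z$. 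One checks that $\Omega(u)$ depends only on the isotopy class of $u$ in $(S,z)$ and that $\Omega(\mathrm{Push}(\gamma)(u)) = \gamma\cdot\Omega(u)$ for the point-pushing action of $\gamma\in\pi_1(S,z)$: point-pushing along $\gamma$ amounts to dragging $\widetilde z$ across $\widetilde\gamma$ to $\gamma\widetilde z$, and the region bijection above is equivariant. That $u\mapsto\Omega(u)$ is a bijection I would obtain from an orbit--stabilizer count: using the Birman exact sequence recalled above to realize $\ker(\Mod(S,z)\to\Mod(S))$ as $\pi_1(S)$ acting by point-pushing, one shows point-pushing is transitive on curves over $v$ within each $\pi_1(S)$--orbit, that the number of orbits matches that of the $\pi_1(S)$--action on regions of $\mathbb H\setminus\widetilde v$ ($1$ if $v$ is non-separating, $2$ if separating, and the analogous count for general multicurves), and that the stabilizer of $u$ is the fundamental group of the complementary piece of $v$ on the side of $u$ recorded by $\Omega(u)$ --- precisely a Bass--Serre vertex stabilizer.

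\emph{Edges and conclusion.} Next, $\Pi^{-1}(x)$ is $1$--dimensional: a $2$--simplex $\langle w_0,w_1,w_2\rangle$ of $\C(S,z)$ meeting $\Pi^{-1}(x)$ in its interior would force $x$ into the interior of $\Pi\langle w_0,w_1,w_2\rangle$, hence two of the $w_i$ over one component of $v$ and the third over a curve not in $v$, which a short case analysis excludes (and from it one sees the preimage of $x$ inside any simplex of $\C(S,z)$ is a point or an arc in a collapsed direction). Then I would show that two fiber vertices span an edge of $\Pi^{-1}(x)$ if and only if their regions are adjacent in $\mathbb H\setminus\widetilde v$: disjoint curves over $v$ cobound a subsurface containing $z$ on exactly one side, whose lift exhibits the common geodesic; conversely, adjacent regions are realized by pushing a parallel copy of the relevant component of $v$ off the shared geodesic toward the correct side. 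Combined with the previous step this yields a $\pi_1(S)$--equivariant simplicial isomorphism $\Pi^{-1}(x)\cong T_v$, hence the desired homeomorphism of geometric realizations; when $x$ is interior to a higher-dimensional simplex the region description of $T_v$ is unchanged and the only addition is that fiber edges now carry a label recording which component of $v$ they sit over.

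\emph{Main obstacle.} I expect the crux to be the vertex step, and inside it the orbit--stabilizer analysis of the point-pushing action: correctly pinning down the region ``$u$ points to,'' verifying well-definedness and equivariance, and --- the real work --- proving that point-pushing is transitive within each orbit on the curves over $v$ with exactly the expected complementary-subsurface stabilizers. Once that is in hand, the $1$--dimensionality and adjacency statements, and the simplex-by-simplex bookkeeping for a general $x$, are comparatively routine.
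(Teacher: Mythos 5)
A preliminary remark: the paper itself does not prove this statement — it is imported verbatim from \cite{kls} with a \qed — so there is no internal argument to compare against; your dual-tree/point-pushing strategy is in the same spirit as the source, and also as the picture this paper builds around it (Proposition \ref{prop: factors through trees}, Theorem \ref{glued cc}). However, your vertex step, which you correctly identify as the crux, has a genuine flaw as written. You define $\Omega(u)$ by first ``realizing $u$ disjointly from $v$ (possible since $\I(u,v)=0$)'' and then recording which complementary region of $p^{-1}(u)$ contains $\widetilde z$. But $\I(u,v)=0$ is the intersection number in $S$, where isotopies are allowed to cross $z$; what your construction needs is a representative of the isotopy class of $u$ \emph{in $(S,z)$} disjoint from the geodesic $v$, and this does not exist in general. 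Indeed, $v$ itself (a parallel copy missing $z$) is one of the vertices of the fiber, and any curve $u$ over $v$ whose fiber-vertex lies at distance at least two from it in the tree you are trying to build — for instance the image of a parallel copy of $v$ under a point-push along a loop crossing $v$ several times — satisfies $i_{S-\{z\}}(u,v)>0$, since otherwise $u$ and $v$ would span an edge of $\C(S,z)$ lying over the vertex $v$. If, on the other hand, you permit isotopies across $z$ in order to achieve disjointness, then every curve over $v$ can be slid onto the same parallel copy, and ``the region of $\mathbb H\setminus p^{-1}(u)$ containing $\widetilde z$'' is no longer determined by the $(S,z)$-class of $u$: the map $\Omega$ is then neither well defined nor injective. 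So the recipe on which the whole vertex correspondence rests fails for all but the vertices adjacent to $[v]$.

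The gap is repairable, but it requires replacing the disjoint realization by something intrinsic. One option is to use only the endpoint data: each component of $p^{-1}(u)$ is a quasigeodesic sharing its endpoints with a unique geodesic of $\widetilde v$, and each complementary region of $p^{-1}(u)$ has the same limit set in $\partial\mathbb H$ as a unique complementary region of $\widetilde v$; define $\Omega(u)$ as the $\widetilde v$-region paired with the $p^{-1}(u)$-region containing $\widetilde z$, and prove this pairing is a $\pi_1(S)$-equivariant bijection (this needs an argument — e.g.\ lifting an ambient isotopy of $S$ carrying $u$ to $v$ — and is not automatic from ``same endpoints''). A cleaner route, closer to the machinery already in Section \ref{S:construct Phi}, is to write every vertex of the fiber as $f^{-1}(v)$ with $f\in\Diff_0(S)$, $f(z)\notin N(v)$, and set $\Omega(u)$ to be the component of $\mathbb H\setminus p^{-1}(N(v))$ containing $\widetilde\ev(f)$; well-definedness is then exactly the descent statement of Proposition \ref{phidescends}, and the remaining content — that this vertex correspondence is a bijection with the expected (vertex-group) stabilizers, plus your one-dimensionality and adjacency steps, which are fine — is precisely what \cite{kls} carries out. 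As it stands, though, the proposal's central definition rests on a realization that does not exist, so the argument does not go through without this repair.
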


\subsubsection{Measured laminations and the curve complex.} \label{mlandcc}

The curve complex $\C$ naturally injects into $\PML$ sending a simplex $v$ to the simplex of measures supported on $v$.
We denote the image subspace $\PML_\C$.  We note that this bijection $\PML_\C \to \C$ is not continuous in either
direction.  We will use the same notation for a point of $\PML_\C$ and its image in $\C$.

In \cite{klarreich-el} Klarreich proved that $\partial \C \cong \EL$.  Therefore, if we define
\[ \PML_{\overline \C} = \PML_\C \cup \PFL \]
then there is a natural surjective
map
$$\PML_{\overline \C} \to \overline{\C}$$
extending $\PML_\C \to \C$.  The following is a consequence of Klarreich's work \cite{klarreich-el}, stated
using our terminology.
\begin{proposition}[Klarreich] \label{klarreich}
The natural map $\PML_{\overline \C} \to \overline{\C}$ is continuous at every point of $\PFL$.  Moreover, a sequence
$\{v_n\} \subset \C$ converges to $|\lambda| \in \EL$ if and only if every accumulation point of $\{v_n\}$ in $\PML$
has $|\lambda|$ as its support.
\end{proposition}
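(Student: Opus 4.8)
The plan is to derive both assertions from Klarreich's theorem \cite{klarreich-el}, essentially by translating her statement into the present vocabulary. Recall that Klarreich constructs a homeomorphism $\EL \to \partial \C$, where $\EL$ carries the quotient topology inherited from $\PFL \subseteq \PML$; under this identification the natural map $\PML_{\overline{\C}} \to \overline{\C}$ restricts on $\PFL$ to the composition $\PFL \to \EL \xrightarrow{\ \cong\ } \partial\C \hookrightarrow \overline{\C}$, whose first arrow is the quotient by ``forgetting the transverse measure'' and is continuous by the definition of the topology on $\EL$. Klarreich's work also pins down the topology of $\overline{\C}$ near $\partial\C$: a sequence of vertices $v_n \in \C$ converges to $|\lambda| \in \EL$ in $\overline{\C}$ if and only if $\{v_n\}$, regarded as a sequence of geodesic laminations, converges to $|\lambda|$ in the coarse Hausdorff sense, i.e.\ every subsequence of $\{v_n\}$ admits a further subsequence converging, in the Hausdorff topology on geodesic laminations, to a lamination containing $|\lambda|$. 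Granting these inputs, what remains is (i) to identify this coarse Hausdorff condition with the condition on $\PML$-accumulation points in the ``moreover'' clause, and then (ii) to read off continuity at $\PFL$.

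For step (i), the easy direction is this: if every accumulation point of $\{v_n\}$ in $\PML$ has support $|\lambda|$, then given a Hausdorff limit $\mathcal L$ of some subsequence I would pass to a further subsequence converging in the compact space $\PML$ to a measured lamination $\mu$, note $|\mu| = |\lambda|$ by hypothesis, and observe $|\mu| \subseteq \mathcal L$ (the support of a limiting measured lamination is always contained in the Hausdorff limit of the supports); so Klarreich's condition holds. Conversely, suppose Klarreich's condition holds and let $\mu$ be a $\PML$-accumulation point of $\{v_n\}$: choose a subsequence $v_{n_k} \to \mu$ in $\PML$ and, passing to a further subsequence, $v_{n_k} \to \mathcal L$ in the Hausdorff topology, so $|\lambda| \subseteq \mathcal L$. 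Here I would invoke the structural fact that such an $\mathcal L$ is contained in the diagonal extension of $|\mu|$ --- that is, in $|\mu|$ together with finitely many isolated leaves running through the complementary ideal polygons of $|\mu|$. Since $|\lambda| \in \EL$ is minimal and filling it contains no isolated leaves and no proper nonempty sublamination, so the inclusion $|\lambda| \subseteq \mathcal L$ forces $|\lambda| \subseteq |\mu|$; and any leaf of $|\mu|$ lying outside the minimal filling lamination $|\lambda|$ would be isolated in $|\mu|$ and could support no invariant transverse measure (positive mass on such a leaf would be reproduced infinitely often under the spiraling holonomy of $|\mu|$, contradicting local finiteness of the transverse measure), so in fact $|\mu| = |\lambda|$. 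This is exactly the ``moreover'' clause.

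For step (ii), since $\PML_{\overline{\C}} \subseteq \PML$ is metrizable it suffices to check continuity on sequences. Let $\mu_n \to \mu$ in $\PML_{\overline{\C}}$ with $\mu \in \PFL$, and split $\{\mu_n\}$ into the subsequence lying in $\PML_\C$ and the subsequence lying in $\PFL$. On the first subsequence the $\mu_n$ are vertices $v_n \in \C$ with $v_n \to \mu$ in $\PML$, so the only $\PML$-accumulation point of $\{v_n\}$ is $\mu$, with support $|\mu| \in \EL$, and the ``moreover'' clause gives that the images of these $v_n$ converge to $|\mu|$ in $\overline{\C}$. On the second subsequence the images are the points $|\mu_n| \in \EL = \partial\C$, and continuity of the quotient $\PFL \to \EL$ turns $\mu_n \to \mu$ into $|\mu_n| \to |\mu|$ in $\partial\C \subseteq \overline{\C}$. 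Both subsequences of images therefore converge to $|\mu|$ in $\overline{\C}$, hence so does the entire sequence of images; this is the required continuity at $\mu$.

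The only genuinely nontrivial input, and the step I would expect to cost real work, is the structural fact used in the converse half of step (i): that a Hausdorff limit of simple closed curves which converge projectively to $\mu$ can differ from $|\mu|$ only by isolated leaves lying in its complementary polygons. Everything else is either soft point-set topology (compactness of $\PML$ and of the space of geodesic laminations; the partition argument in step (ii)) or standard lamination theory (minimality of ending laminations; the impossibility of an invariant atom on an isolated leaf), together with the literal content of Klarreich's theorem as recalled above.
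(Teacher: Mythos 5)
Your route genuinely differs from the paper's. For the implication ``every $\PML$--accumulation point of $\{v_n\}$ has support $|\lambda|$ $\Rightarrow$ $v_n \to |\lambda|$ in $\overline\C$'', the paper picks $X_n$ in Teichm\"uller space with $v_n$ shortest, shows any Thurston-boundary accumulation point of $X_n$ has support $|\lambda|$, and applies Klarreich's continuity of the extended systole map at $\PFL$; you instead go through the coarse-Hausdorff characterization of convergence to boundary points, which is a legitimate alternative reading of Klarreich. For the reverse implication the paper simply quotes Klarreich's Theorem 1.4 verbatim, whereas you re-derive it --- and that re-derivation is where your argument breaks.

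The structural fact you invoke --- that a Hausdorff limit $\mathcal L$ of curves converging projectively to $\mu$ is contained in $|\mu|$ together with finitely many isolated leaves through the complementary ideal polygons of $|\mu|$ --- is false in general. If $v_n$ is the image of a fixed curve $\beta$ under the $n$-th power of the Dehn twist about a simple closed curve $\alpha$, then $v_n \to [\alpha]$ in $\PML$, while the Hausdorff limit consists of $\alpha$ together with leaves spiraling onto $\alpha$; these are not diagonals of ideal polygons (the complement of $\alpha$ is not a polygon), and with more care one can even force extra minimal sublaminations, disjoint from $|\mu|$, into the Hausdorff limit. So this lemma cannot be cited as stated. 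The conclusion you want is nevertheless salvageable, and more cheaply, by running the containments in the other direction: in your situation you already know $|\lambda| \subseteq \mathcal L$ with $|\lambda|$ minimal and filling, so every complementary region of $|\lambda|$ is an ideal polygon and hence $\mathcal L$ equals $|\lambda|$ plus finitely many isolated diagonal leaves; combining this with $|\mu| \subseteq \mathcal L$ (the same standard fact you use in your easy direction) and the fact that the support of a measured lamination has no isolated noncompact leaves, you get that $|\mu|$ is a nonempty sublamination of $|\lambda|$, whence $|\mu| = |\lambda|$ by minimality --- this also makes your holonomy argument unnecessary. Alternatively, note that this is exactly the direction the paper gets for free from Klarreich's Theorem 1.4, so you could simply cite it. The remainder of your proposal (the easy direction of your step (i), and the sequence-splitting argument for continuity in step (ii)) is sound.
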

\begin{proof}
Theorem 1.4 of \cite{klarreich-el} implies that if a sequence $\{v_n\}$ converges in $\overline \C$ to $|\lambda|$,
then every accumulation point of $\{v_n\}$ in $\PML$ has $|\lambda|$ as its support.  We need only verify that if
$\lambda \in \PFL$ and every accumulation point $\lambda'$ in $\PML$ of a sequence $\{v_n\}$ has $|\lambda| =
|\lambda'|$ then $\{v_n\}$ converges to $|\lambda|$ in $\overline \C$.

To see this, let $\{X_n\} \subset \T$ be any sequence in the Teichm\"uller space $\T$ for which $v_n$ is the shortest
curve in $X_n$.  In particular $\ell_{X_n}(v_n)$ is uniformly bounded.  Since every accumulation point of $\{v_n\}$ is
in $\PFL$, it follows that $X_n$ exits every compact set and so accumulates only on $\PML$ in the Thurston
compactification of $\T$. Moreover, if $\lambda'$ is any accumulation point of $X_n$ in $\PML$, then
$i(\lambda',\lambda) = 0$, and so $|\lambda'| = |\lambda|$ since $\lambda$ is filling.

Now according to Theorem 1.1 of \cite{klarreich-el}, the map
\[\sys: \T \to \C\]
sending $X \in \T$ to any shortest curve in $X$ extends to
\[\overline{\sys}: \T \cup \PFL \to \overline \C \]
continuously at every point of $\PFL$.  It follows that
\[\lim_{n \to \infty} v_n  = \lim_{n \to \infty} \sys(X_n) = |\lambda|\]
in $\overline \C$ and we are done.
\end{proof}

\subsubsection{Cannon--Thurston maps.} \label{S:ctmaps}

Fix $X$ and $Y$ hyperbolic metric spaces, $F : Y \rightarrow X$ a continuous map, and $Z \subset
\partial Y$ a subset of the Gromov boundary.
A \textit{$Z$--Cannon--Thurston map} is a continuous extension $\overline F:Y \cup Z \to X \cup \partial X$ of $F$.
That is, $\overline F|_Y = F$. We will simply call $\overline F$ a Cannon--Thurston map when the set $Z$ is clear from
the context.  We sometimes refer to the restriction $\partial F = \overline F|_Z$ as a Cannon--Thurston map.

This definition is more general than that in \cite{mitra-ct} in the sense that here we require $F$ only to be
continuous, whereas in \cite{mitra-ct} it was demanded that $F$ be an embedding.  Also, we do not require $\overline F$
to be defined on all of $\overline Y = Y \cup \partial Y$.

To prove the existence of such a Cannon--Thurston map, we shall use the following obvious criterion:
\begin{lemma} \label{ct-crit}
Fix $X$ and $Y$ hyperbolic metric spaces, $F : Y \to X$ a continuous map and $Z \subset \partial Y$ a subset.  Fix a
basepoint $x \in X$.  Then there is a $Z$--Cannon--Thurston map $\overline F : Y \cup Z \to X \cup \partial X$ if and
only if for every $z \in Z$ there is a neighborhood basis $B_i \subset Y \cup Z$ of $z$ and a collection of uniformly
quasiconvex sets $Q_i \subset X$ with $F(B_i \cap Y) \subset Q_i$ and $d_X(x, Q_i) \to \infty$ as $i \to \infty$.
Moreover,
$$
\bigcap_i \overline{Q_i} = \bigcap_i \partial Q_i = \left\{ \overline F(z) \right\}
$$
determines $\overline F(z)$ uniquely. \qed
\end{lemma}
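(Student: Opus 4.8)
\emph{Proof proposal.} The substantive content is the ``if'' direction, so I focus there; the ``only if'' direction is elementary and I indicate it at the end. Assume the stated data: for each $z \in Z$ a neighborhood basis which, after passing to a subfamily, we take to be nested $B_1 \supset B_2 \supset \cdots$ in $Y \cup Z$, together with uniformly $K$--quasiconvex sets $Q_i \subset X$ with $F(B_i \cap Y) \subset Q_i$ and $d_X(x,Q_i) \to \infty$. The plan is to let $\overline F$ agree with $F$ on $Y$, to set $\overline F(z)$ equal to the unique point of $\bigcap_i \overline{Q_i}$, and then to verify continuity. First I would check the intersection really is a single ideal point. Since $d_X(x,Q_i) \to \infty$, no point of $X$ lies in every $\overline{Q_i}$, so $\bigcap_i \overline{Q_i} \subset \partial X$ and equals $\bigcap_i \partial Q_i$, using that $\overline Q \cap \partial X = \partial Q$ for a $K$--quasiconvex $Q$. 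For uniqueness, if $\xi \neq \eta$ both lay in every $\partial Q_i$, then for each $i$ the bi--infinite geodesic $(\xi,\eta)$ has both endpoints in $\partial Q_i$, hence lies in a $K'$--neighborhood of $Q_i$ with $K' = K'(K,\delta)$; so $d_X(x,(\xi,\eta)) \geq d_X(x,Q_i) - K' \to \infty$, absurd. For non-emptiness, fix $y_n \to z$ with $y_n \in Y$; for each $i$ we have $F(y_n) \in Q_i$ for $n$ large, so $d_X(x,F(y_n)) \to \infty$, and if $\xi,\eta$ are two subsequential limits of $(F(y_n))$ in $X \cup \partial X$ then, for each $i$, a geodesic joining two far--out terms lies in $N_{K'}(Q_i)$, and the same thin--geodesic argument forces $\xi = \eta$. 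Thus $F(y_n)$ converges to a point $\overline F(z) \in \bigcap_i \partial Q_i$, independent of the sequence by an interleaving argument, and $\bigcap_i \overline{Q_i} = \bigcap_i \partial Q_i = \{\overline F(z)\}$.

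Next I would prove continuity of $\overline F$. Since $Y$ is metric and each $z \in Z$ has the countable basis $\{B_i\}$, the domain $Y \cup Z$ is first countable, so sequential continuity suffices, and the only nontrivial case is $y_n \to z$ with $z \in Z$. It is enough to show $(\overline F(y_n) \mid \overline F(z))_x \to \infty$. Fix $i$. For $n$ large we have $y_n \in B_i$: if $y_n \in Y$ then $\overline F(y_n) = F(y_n) \in Q_i$, while if $y_n \in Z$ then, applying the non-emptiness step above at the point $y_n$ with approximating points of $B_i \cap Y$, we get $\overline F(y_n) \in \partial Q_i$; and $\overline F(z) \in \partial Q_i$ always. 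In every case both $\overline F(y_n)$ and $\overline F(z)$ lie in $Q_i \cup \partial Q_i$, so a geodesic joining them lies in $N_{K'}(Q_i)$, whence $(\overline F(y_n) \mid \overline F(z))_x \geq d_X(x,Q_i) - K' - O(\delta)$. Sending $i \to \infty$ after $n$ gives $(\overline F(y_n)\mid\overline F(z))_x \to \infty$, i.e. $\overline F(y_n) \to \overline F(z)$. This proves the ``if'' direction together with the displayed formula.

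For the converse, suppose a $Z$--Cannon--Thurston map $\overline F$ exists and fix $z \in Z$. Continuity forces $F(y)$ to leave every bounded subset of $X$ as $y \to z$, so $\overline F(z) = \xi \in \partial X$. Fixing a geodesic ray $\rho$ from $x$ to $\xi$, I would put $Q_i = N_{2\delta}(\rho([i,\infty)))$, which are uniformly quasiconvex with $d_X(x,Q_i) \geq i - 2\delta \to \infty$; by continuity the $\overline F$--preimage of a shadow--type neighborhood $\{p : (p \mid \xi)_x > i + c\}$ of $\xi$ (with $c = c(\delta)$) contains a neighborhood $B_i$ of $z$, and for $y \in B_i \cap Y$ the point $F(y)$ then lies within $O(\delta)$ of $\rho([i,\infty))$, so $F(B_i \cap Y) \subset Q_i$; shrinking to a nested basis finishes it.

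The one genuine obstacle is that $X \cup \partial X$ need not be locally compact — indeed $\overline{\C}(S,z)$ is not — so one cannot argue continuity by nesting compact neighborhoods shrinking to $\overline F(z)$; every step must instead be run through Gromov products and the uniform quasiconvexity of the $Q_i$, and care is needed that all constants $K, K', c$ depend only on $K$ and $\delta$.
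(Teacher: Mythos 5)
The paper itself offers no argument here---the lemma is stated as an ``obvious criterion'' and closed with a tombstone---so the only question is whether your proof is sound. Your ``if'' direction, which is the only direction the paper ever uses, is essentially correct and is the standard argument: for $y_n\to z$ the tail of $F(y_n)$ lies in $Q_i$, quasiconvexity plus $\delta$--hyperbolicity bounds the relevant Gromov products below by $d_X(x,Q_i)-O(K+\delta)$, and the countable basis $\{B_i\}$ makes $Y\cup Z$ first countable at $z$, so sequential continuity suffices. One point of rigor: since $X$ need not be proper (the intended application is $\C(S,z)$), a bi-infinite geodesic $(\xi,\eta)$ between two boundary points need not exist, so the uniqueness and continuity steps should be phrased, as you yourself say at the end, entirely with approximating sequences inside $Q_i$ and Gromov products rather than with ideal geodesics; with that substitution the argument goes through, including the identification of $\bigcap_i\overline{Q_i}=\bigcap_i\partial Q_i$ with the single point $\overline F(z)$.

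The converse, however, has genuine gaps. First, under the paper's definition a $Z$--Cannon--Thurston map is merely a continuous extension to $Y\cup Z$, with no requirement that $Z$ be carried into $\partial X$; continuity therefore does \emph{not} force $F(y)$ to leave every bounded set as $y\to z$. A constant map $F$ admits the constant extension, yet no sets $Q_i\supset F(B_i\cap Y)\neq\emptyset$ can satisfy $d_X(x,Q_i)\to\infty$, so your opening claim fails and, read literally, so does the ``only if'' statement; this should be flagged (or the hypothesis $\overline F(Z)\subset\partial X$ made explicit) rather than asserted. Second, even granting $\overline F(z)=\xi\in\partial X$, the choice $Q_i=N_{2\delta}(\rho([i,\infty)))$ does not work: a point $p$ with $(p\mid\xi)_x>i+c$ need not lie near the tail of the ray (already in a tree it can branch off just beyond $\rho(i)$ and travel arbitrarily far), so the containment $F(B_i\cap Y)\subset Q_i$ fails; moreover a genuine geodesic ray from $x$ to $\xi$ need not exist without properness. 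Both defects are repaired by taking $Q_i=\{p\in X:(p\mid\xi)_x\geq i\}$ itself, which is uniformly quasiconvex with $d_X(x,Q_i)\geq i$, and $B_i=\overline F^{-1}$ of the corresponding basic neighborhood of $\xi$ in $X\cup\partial X$.
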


\section{Point position.} \label{S:pointpos}

We now describe in more detail the map
\[ \Phi:\mathbb \C(S) \times \mathbb H \to \C(S,z)\]
as promised in the introduction,
and explain how this can be extended continuously to $\overline{\C}(S) \times \mathbb H$.

\subsection{A bundle over $\mathbb H$.} \label{S:bundle over H}

The bundle determining the Birman exact sequence has a subbundle obtained by restricting $\ev$ to $\Diff_0(S)$:
\[
\xymatrix{ \Diff_B(S,z) \ar[r] & \Diff_0(S) \ar[r]^{\quad \ev} & S.\\}
\]
As noted before, Earle and Eells proved that $\Diff_0(S)$ is
contractible, and hence there is a unique lift
\[\widetilde{\ev}:\Diff_0(S) \to \mathbb H\]
with the property that $\widetilde{\ev}(\id_S) = \widetilde z$.

The map $\widetilde{\ev}$ can also be described as follows.  Any diffeomorphism $S \to S$ has a lift $\mathbb H \to
\mathbb H$, and the contractibility of $\Diff_0(S)$ allows us to coherently lift diffeomorphisms to obtain an injective
homomorphism $\Diff_0(S) \to \Diff(\mathbb H)$.  Then $\widetilde{\ev}$ is the composition of this homomorphism with
the evaluation map $\Diff(\mathbb H) \to \mathbb H$ determined by $\widetilde z$.

Since $p$ is a covering map, $\widetilde{\ev}$ is also a fibration.  Appealing to the long exact sequence of homotopy
groups again, we see that the fiber over $\widetilde z$ is precisely $\Diff_0(S,z)$. We record this in the following
diagram
\begin{equation} \label{big diagram}
\xymatrix{ & & \mathbb H \ar[d]^p\\
\Diff_B(S,z) \ar[r] & \Diff_0(S) \ar[r]^{\quad \ev} \ar[ur]^{\widetilde{\ev}} & S\\
\Diff_0(S,z). \ar[u] \ar[ur]}
\end{equation}

The group $\Diff_B(S,z)$ acts on $\Diff_0(S)$ on the left by
\[h \cdot f = f \circ h^{-1}\]
for $h \in \Diff_B(S,z)$ and $f \in \Diff_0(S)$. Also recall from Section 1.2.2 that $\pi_1(S) \cong
\pi_0(\Diff_B(S,z))$ with this isomorphism induced by a homomorphism
\[\ev_*:\Diff_B(S,z) \to \pi_1(S).\]
\begin{lemma} \label{L:tildeevequivariant}
The lift
\[ \widetilde{\ev}:\Diff_0(S) \to \mathbb H\]
is equivariant with respect to $\ev_*$.
\end{lemma}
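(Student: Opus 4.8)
The plan is to invoke uniqueness of lifts to collapse the asserted equivariance to a single base-point check, and then to settle that check by a path-lifting argument that matches the isomorphism $\pi_1(S)\cong\pi_0(\Diff_B(S,z))$ coming from $\ev_*$ with the one implicit in $\widetilde{\ev}$. Fix $h\in\Diff_B(S,z)$ and consider the two maps $F_1,F_2\colon\Diff_0(S)\to\mathbb H$ defined by $F_1(f)=\widetilde{\ev}(h\cdot f)=\widetilde{\ev}(f\circ h^{-1})$ and $F_2(f)=\ev_*(h)\cdot\widetilde{\ev}(f)$, where $\ev_*(h)\in\pi_1(S)$ acts on $\mathbb H$ as a covering transformation. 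Both are continuous ($F_1$ is $\widetilde{\ev}$ precomposed with $f\mapsto f\circ h^{-1}$, and $F_2$ is $\widetilde{\ev}$ postcomposed with a fixed deck transformation), and both are lifts through $p$ of the same map $\Diff_0(S)\to S$: since $h^{-1}$ fixes $z$ we have $\ev(h\cdot f)=(f\circ h^{-1})(z)=f(z)=\ev(f)$, so $p\circ F_1=\ev$, while $p\circ F_2=\ev$ because $p$ is invariant under deck transformations. As $\Diff_0(S)$ is connected (indeed contractible, by Earle--Eells), two continuous lifts of one map that agree at a point agree everywhere; hence it suffices to prove $F_1(\id_S)=F_2(\id_S)$.

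Evaluating at $\id_S$ gives $F_1(\id_S)=\widetilde{\ev}(h^{-1})$ and $F_2(\id_S)=\ev_*(h)\cdot\widetilde z$, so (using that $h^{-1}\in\Diff_B(S,z)$ and that $\ev_*$ is a homomorphism) the lemma reduces to the base case
\[
\widetilde{\ev}(h')=\ev_*(h')^{-1}\cdot\widetilde z\qquad\text{for all }h'\in\Diff_B(S,z).
\]
To verify this, choose an isotopy $h'_t$, $t\in[0,1]$, with $h'_0=h'$ and $h'_1=\id_S$. Then $t\mapsto\widetilde{\ev}(h'_t)$ is a continuous path in $\mathbb H$ that, since $p\circ\widetilde{\ev}=\ev$, projects to the loop $t\mapsto h'_t(z)$ based at $z$ representing $\ev_*(h')$, and whose endpoints are $\widetilde{\ev}(h')$ at $t=0$ and $\widetilde{\ev}(\id_S)=\widetilde z$ at $t=1$. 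Thus $t\mapsto\widetilde{\ev}(h'_t)$ is the lift of this loop terminating at $\widetilde z$; equivalently its reverse is the lift based at $\widetilde z$ of the reversed loop, so by the standard correspondence between $\pi_1(S)$ and the deck group (with respect to the fixed basepoint $\widetilde z$) it starts at $\ev_*(h')^{-1}\cdot\widetilde z$, which is therefore $\widetilde{\ev}(h')$. Feeding $h'=h^{-1}$ back into the displayed identity yields $F_1(\id_S)=\ev_*(h)\cdot\widetilde z=F_2(\id_S)$, completing the argument.

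The one place that requires care -- and the only real obstacle -- is keeping the group-action conventions consistent throughout: that $h\cdot f=f\circ h^{-1}$ is a \emph{left} action of $\Diff_B(S,z)$ on $\Diff_0(S)$; that $\ev_*$ is a homomorphism into $\pi_1(S)$, which acts on $\mathbb H$ on the left by deck transformations; and that, for the chosen identification of $\pi_1(S)$ with the deck group, the lift of a loop $\gamma$ based at $z$ starting at $\widetilde z$ ends at $[\gamma]\cdot\widetilde z$ (so the lift ending at $\widetilde z$ starts at $[\gamma]^{-1}\cdot\widetilde z$). Once these are pinned down, the inverse in the base case is forced and the equivariance comes out exactly as stated, with no further difficulty.
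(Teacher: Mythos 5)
Your proof is correct and follows essentially the same route as the paper: the paper likewise observes that $\widetilde{\ev}(f\circ h^{-1})$ and $\widetilde{\ev}(f)$ differ by a deck transformation which, by continuity and connectedness of $\Diff_0(S)$, is independent of $f$ (your uniqueness-of-lifts reduction to the single check at $\id_S$), and then identifies that deck transformation with $\ev_*(h)$ by lifting the loop $t\mapsto h_t^{-1}(z)$ exactly as in your base case. Your explicit bookkeeping of the lifting convention matches the one the paper uses implicitly, so there is no gap.
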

\begin{proof}
We need to prove
\[ \ev_*(h) (\widetilde \ev(f)) = \widetilde \ev(f \circ h^{-1})\]
for all $f \in \Diff_0(S)$ and $h \in \Diff_B(S,z)$.  Observe that since $h(z) = z$ for every $h \in \Diff_B(S,z)$,
$\ev(f) = \ev(f \circ h^{-1})$ for every $f \in \Diff_0(S)$.  Therefore, since $\widetilde \ev$ is a lift of $\ev$ we
have
\[
p (\widetilde{\ev}(f)) = \ev(f) = \ev(f \circ h^{-1}) = p(\widetilde{\ev}(f \circ h^{-1}))
\]
and hence $\widetilde \ev(f)$ differs from $\widetilde \ev(f \circ h^{-1})$ by a covering transformation $\sigma \in \pi_1(S)$:
\[
\widetilde \ev(f \circ h^{-1}) = \sigma (\widetilde \ev(f)).
\]

The covering transformation $\sigma$ appears to depend on both $f$ and $h$.  However if $H_t$, $t \in [0,1]$, is a path
in $\Diff_B(S,z)$ from $h = H_0$ to $h' = H_1$ then $\widetilde \ev(f \circ H_t^{-1})$ is constant in $t$:  this can be
seen from the above description of $\widetilde{\ev}$ as the evaluation map on the lifted diffeomorphism group.  It
follows that $\sigma$ depends only on $f$ and the component of $\Diff_B(S,z)$ containing $h$. In fact, continuity of
$\widetilde \ev$ and connectivity of $\Diff_0(S)$ implies that $\sigma$ actually only depends on the component of
$\Diff_B(S,z)$ containing $h$, and not on $f$ at all.

We have
\[
\sigma(\widetilde z) = \sigma (\widetilde \ev(\id_S)) = \widetilde \ev(\id_S \circ h^{-1}) = \widetilde \ev(h^{-1}).
\]
So if $h_t$, $t \in [0,1]$, is a path in $\Diff_0(S)$ from $h = h_0$ to $\id_S = h_1$, then since $\ev_*(h) = \sigma_h$
where $\sigma_h$ is represented by the loop $h_t(z)$, $t \in [0,1]$, it follows that $\sigma_h^{-1}$ is represented by
the loop $h_t^{-1}(z)$, $t \in [0,1]$; see Section \ref{S:mod(S)}.

Now observe that $\widetilde \ev(h_t^{-1})$, $t \in [0,1]$, is a lift of the loop $h_t^{-1}(z)$, $t \in [0,1]$, to a
path from $\sigma(\widetilde z)$ to $\widetilde{z}$. Therefore, $\sigma_h^{-1}$ is $\sigma^{-1}$, and hence $\sigma =
\sigma_h = \ev_*(h)$.
\end{proof}

\subsection{An explicit construction of $\Phi$.} \label{S:construct Phi}

We are now ready to give an explicit description of the map $\Phi$.  We will define first a map
\[ \widetilde \Phi: \C(S) \times \Diff_0(S) \to \C(S,z) \]
and show that this descends to a map $\Phi : \C(S) \times \mathbb{H} \to \C(S,z)$ by composing with $\widetilde{\ev}$
in the second factor.

Recall that for every $v \in \C^0(S)$, we have realized $v$ by its geodesic representative.  We would like to simply
define
\[ \widetilde \Phi(v,f) = f^{-1}(v). \]
However, this is not a curve in $(S,z)$ when $f(z)$ lies on the geodesic $v$.  The map we define in the end will agree
with this when $f(z)$ is not too close to $v$, and it is helpful to keep this in mind when trying to make sense of the
actual definition of $\widetilde \Phi$.

To carry out the construction of $\widetilde \Phi$, we now choose $\{ \epsilon(v) \}_{v \in \C^0(S)} \subset \mathbb
R_+$ so that $N(v) = N_{\epsilon(v)}(v)$, the $\epsilon(v)$--neighborhood of $v$, has the following properties:
\begin{itemize}
\item $N(v) \cong \S^1 \times [0,1]$ and
\item $N(v) \cap N(v') = \emptyset$ if $v \cap  v' = \emptyset$.
\end{itemize}
Let $N^\circ (v)$ denote the interior of $N(v)$ and $v^\pm$ denote the boundary components of $N(v)$.

Given a simplex $v \subset \C(S)$ with vertices $\{v_0,...,v_k\}$ we consider the barycentric coordinates for points in
$v$:
\[ \left\{ \sum_{j=0}^k s_j v_j \, \left| \, \sum_{j=0}^k s_j = 1 \mbox{ and } s_j \geq 0 , \, \, \mbox{ for all } j=0,...,k  \right. \right\}. \]

To define our map
\[ \widetilde \Phi: \C(S) \times \Diff_0(S) \to \C(S,z) \]
we first explain how to define it for $(v,f)$ with $v$ a vertex of $\C(S)$.  If $f(z) \not \in  N^\circ(v)$, then we set
\[\widetilde \Phi(v,f) = f^{-1}(v)\]
as suggested above.

If $f(z) \in N^\circ(v)$, then $f^{-1}(v^+)$ and $f^{-1}(v^-)$ are nonisotopic curves in $(S,z)$. We will define
$\widetilde \Phi(v,f)$ to be a point on the edge between these two vertices of $\C(S,z)$, depending on the distance
from $f(z)$ to the two boundary components $v^+$ and $v^-$.  Specifically, set
\[ t = \frac{d(v^+,f(z))}{2 \epsilon(v)},\]
where $d(v^+,f(z))$ is the distance inside $N(v)$ from $f(z)$ to $v^+$, and define
\[
\widetilde \Phi(v,f) = t f^{-1}(v^+) + (1-t) f^{-1}(v^-)
\]
in barycentric coordinates on the edge $\left[ f^{-1}(v^+),f^{-1}(v^-) \right]$.

In general, for a point $(x,f) \in \C(S) \times \Diff_0(S)$ with $x = \sum_j s_j v_j \in v = \{v_0,...,v_k\}$ we
define $\widetilde \Phi(x,f)$ as follows.   As before, if $f(z) \not \in \cup_j N^\circ(v_j)$, then define
\[
\widetilde \Phi(x,f) = \sum_j s_j f^{-1}(v_j).
\]
Otherwise, $f(z) \in N^\circ(v_i)$ for exactly one $i \in \{ 0,...,k\}$.  Set
\[t = \frac{d(v_i^+,f(z))}{2 \epsilon(v_i)}\]
as above, and define
\[
\widetilde \Phi(x,f) = s_i \left( t f^{-1}(v_i^+) + (1-t)f^{-1}(v_i^-) \right) + \sum_{j \neq i} s_j f^{-1}(v_j).
\]

The group $\Diff_B(S,z)$ acts on $\C(S) \times \Diff_0(S)$, trivially in the first factor and as described in Section
\ref{S:bundle over H} in the second factor.  Of course, since $\Diff_B(S,z) < \Diff^+(S,z)$ projects into $\Mod(S,z)$
it also acts on $\C(S,z)$.  The map $\widetilde \Phi$ is equivariant: given $h \in \Diff_B(S,z)$, $f \in \Diff_0(S)$
and $v$ a vertex in $\C(S)$, provided $f(z) \not \in N^\circ(v)$ we have
\[\begin{array}{rcl}
\widetilde \Phi(h \cdot (v,f)) & = & \widetilde \Phi(v,f \circ h^{-1})\\
 & = & (f \circ h^{-1})^{-1}(v)\\
 & = & h \circ f^{-1}(v)\\
 & = & h \cdot (f^{-1}(v))\\
 & = & h \cdot \widetilde\Phi(v,f). \end{array} \] The general situation is similar, but notationally more complicated.

\begin{proposition} \label{phidescends}
The map $\widetilde \Phi$ descends to a map  $\Phi$ making the following diagram commute
\[
\xymatrix{ \C(S) \times \Diff_0(S) \ar[drr]^{\widetilde \Phi} \ar[d]_{\id_{\C(S)} \times \widetilde \ev}\\
 \C(S) \times \mathbb H \ar[rr]_{\Phi} & & \C(S,z).}\]
Moreover, $\Phi$ is equivariant with respect to the action of $\pi_1(S)$.
\end{proposition}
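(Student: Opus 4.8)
The plan is to verify the two assertions separately: first that $\widetilde\Phi$ is constant on the fibers of $\id_{\C(S)} \times \widetilde\ev$, so that it factors through a well-defined $\Phi$, and second that this $\Phi$ is $\pi_1(S)$--equivariant. For the factorization, fix a simplex $v \subset \C(S)$ and note that by Lemma \ref{L:tildeevequivariant}, $\widetilde\ev(f) = \widetilde\ev(f')$ (with $f,f' \in \Diff_0(S)$) if and only if $f' = f \circ h^{-1}$ for some $h \in \Diff_B(S,z)$ with $\ev_*(h) = \id$, i.e.\ $h$ lies in the identity component $\Diff_0(S,z)$ of $\Diff_B(S,z)$. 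So I must show $\widetilde\Phi(x,f) = \widetilde\Phi(x, f\circ h^{-1})$ whenever $h$ is isotopic to $\id_S$ through diffeomorphisms fixing $z$. The point is that such an $h$ lies in the kernel of the map $\Diff^+(S,z) \to \Mod(S,z)$, hence acts trivially on the isotopy classes of curves in $(S,z)$; and moreover $h(z) = z$, so $f(z) = (f\circ h^{-1})(z)$, which means the two points $f(z)$ and $(f \circ h^{-1})(z)$ fall into the same $N^\circ(v_j)$ (or none), with identical parameter $t$. Thus in every branch of the piecewise definition of $\widetilde\Phi$, the barycentric coordinates $s_j$ and $t$ are unchanged, while each vertex $f^{-1}(v_j)$ is replaced by $(f\circ h^{-1})^{-1}(v_j) = h\circ f^{-1}(v_j)$, which is the same point of $\C^0(S,z)$ since $h$ acts trivially on isotopy classes. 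Hence $\widetilde\Phi(x,f) = \widetilde\Phi(x,f\circ h^{-1})$, and $\Phi$ is well-defined by $\Phi(x,\widetilde\ev(f)) = \widetilde\Phi(x,f)$.

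For equivariance, I would combine the equivariance of $\widetilde\Phi$ under $\Diff_B(S,z)$ (established in the excerpt, at least on vertices, with the general case asserted to be similar) with the $\ev_*$--equivariance of $\widetilde\ev$ from Lemma \ref{L:tildeevequivariant}. Concretely, given $\sigma \in \pi_1(S)$, pick $h \in \Diff_B(S,z)$ with $\ev_*(h) = \sigma$; then for $(x,f) \in \C(S)\times\Diff_0(S)$,
\[
\Phi\bigl(x, \sigma \cdot \widetilde\ev(f)\bigr) = \Phi\bigl(x, \widetilde\ev(f\circ h^{-1})\bigr) = \widetilde\Phi(x, f\circ h^{-1}) = h\cdot \widetilde\Phi(x,f) = \sigma \cdot \Phi(x,\widetilde\ev(f)),
\]
where the first equality is Lemma \ref{L:tildeevequivariant}, the second and third are the definition of $\Phi$ and the $\Diff_B(S,z)$--equivariance of $\widetilde\Phi$ respectively, and the last uses that the $\Diff_B(S,z)$--action on $\C(S,z)$ factors through $\pi_1(S) < \Mod(S,z)$ via $\ev_*$. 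Since $\widetilde\ev$ is onto $\mathbb H$, this says $\Phi(x,\sigma \cdot w) = \sigma\cdot\Phi(x,w)$ for all $w \in \mathbb H$, which is the claim (trivially in the first factor).

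I expect the main obstacle to be the bookkeeping hidden in the phrase \emph{``the general situation is similar, but notationally more complicated''}: one must carefully check that a general $h \in \Diff_0(S,z)$ (for the factorization) and a general $h \in \Diff_B(S,z)$ (for equivariance) interact correctly with the full piecewise definition of $\widetilde\Phi$ on the interior of a simplex, in particular that the index $i$ for which $f(z) \in N^\circ(v_i)$ is intrinsic to the point $\widetilde\ev(f)$ and the simplex $v$ (it is, since the $N^\circ(v_j)$ are disjoint by construction and $h$ preserves $f(z)$ in the first case, while in the second case $h\circ f^{-1}$ carries the whole configuration equivariantly). Once one is convinced that the choice of branch, the scalar $t$, and the barycentric weights $s_j$ are all determined by data on which the relevant group acts in the expected way, the verification is the routine chase indicated above; the only genuine content is the identification of $\ker(\Diff^+(S,z) \to \Mod(S,z))$ with $\Diff_0(S,z)$ and the compatibility of the two equivariances via $\ev_*$, both of which are furnished by the Birman exact sequence and Lemma \ref{L:tildeevequivariant}.
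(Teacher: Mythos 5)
Your proposal is correct and follows essentially the same route as the paper: identify two preimages under $\id_{\C(S)}\times\widetilde\ev$ as differing by some $h \in \Diff_0(S,z)$, observe that the data defining $\widetilde\Phi$ (the point $f(z)$, the distance parameter $t$, and the isotopy classes $f^{-1}(v_j)$, $f^{-1}(v_i^\pm)$ in $(S,z)$) are unchanged by such an $h$, and then deduce $\pi_1(S)$--equivariance of $\Phi$ from the $\Diff_B(S,z)$--equivariance of $\widetilde\Phi$ together with Lemma \ref{L:tildeevequivariant}. The only cosmetic difference is that you extract the fiber description of $\widetilde\ev$ from Lemma \ref{L:tildeevequivariant} (plus freeness of the deck action) where the paper reads it off the fibration diagram, which amounts to the same thing.
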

Here the action of $\pi_1(S)$ on $\C(S) \times \mathbb H$ is trivial on the first factor and the covering group
action on the second.
\begin{proof}
We suppose that $\widetilde \ev(f_0) = \widetilde \ev(f_1)$ and must show $\widetilde \Phi(x,f_0) = \widetilde
\Phi(x,f_1)$.

Appealing to diagram \eqref{big diagram} in Section \ref{S:bundle over H}, it follows that $f_0 = f_1 \circ h$ for some
$h \in \Diff_0(S,z)$.   We suppose that $\alpha$ is a simple closed curve on $S$ and $f_0(z) \not \in \alpha$. Then
$f_1(z) = f_1(h(z)) = f_0(z) \not \in \alpha$ and
\[ d(f_0(z),\alpha) = d(f_1(h(z)),\alpha) = d(f_1(z),\alpha). \]
Moreover, $f_0^{-1}(\alpha) = h^{-1} (f_1^{-1}(\alpha))$ and since $h^{-1}$ is isotopic to the identity in $(S,z)$, it
follows that $f_0^{-1}(\alpha)$ and $f_1^{-1}(\alpha)$ are isotopic in $(S,z)$.

Recall that the dependence of $\widetilde{\Phi}(x,f)$ on $f$ was only via certain isotopy classes $f^{-1}(\alpha)$ and
a single distance $d(v^+,f(z))$.  Since these data are the same for $f_0$ and $f_1$, it follows that
\[ \widetilde \Phi(x,f_0) = \widetilde \Phi(x,f_1) \]
and so $\widetilde \Phi$ descends to $\C(S) \times \mathbb H$ as required.

Lemma \ref{L:tildeevequivariant} implies that $\id_{\C(S)} \times \widetilde \ev$ is equivariant with respect to
$\ev_*$.  Thus, since $\widetilde \Phi$ is equivariant, so is $\Phi$.
\end{proof}

\begin{proposition} \label{prop: factors through trees}
Given $x \in \C(S)$, let $v \subset \C(S)$ be the simplex containing $x$ in its interior.  Then the restriction
\[ \Phi_x = \Phi|_{\{x\} \times \mathbb H}:\mathbb H \to \Pi^{-1}(x)\]
is obtained by first projecting to $T_v$, then composing with the equivariant homeomorphism $T_v \cong \Pi^{-1}(x)$ from Theorem \ref{T:maintree}.
\end{proposition}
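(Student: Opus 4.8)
The plan is to show that $\Phi_x$ factors through the projection $\mathbb H \to T_v$ by tracking what $\widetilde\Phi$ does as $f$ ranges over a fiber of $\widetilde\ev$ composed with the universal cover structure, and comparing with the explicit description of the isomorphism $T_v \cong \Pi^{-1}(x)$ from \cite{kls}. First I would recall precisely how the Bass--Serre tree $T_v$ is realized: lift the geodesic multicurve $v$ to $p^{-1}(v) \subset \mathbb H$, which cuts $\mathbb H$ into complementary regions; $T_v$ is the dual tree, with one vertex per complementary region and one edge per lift of a component of $v$. The point $\widetilde z$, and more generally $\widetilde\ev(f)$ for $f \in \Diff_0(S)$, lies in some complementary region (or on some lift of $v$), and this assignment $\widetilde\ev(f) \mapsto$ (its region) is exactly the projection $\mathbb H \to T_v$ up to the identification of regions/lifts with vertices/edges. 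So the content of the proposition is that $\widetilde\Phi(x,f)$ depends on $f$ only through which complementary region of $p^{-1}(v)$ contains $\widetilde\ev(f)$, and that this dependence matches the $\pi_1(S)$--equivariant homeomorphism of Theorem \ref{T:maintree}.

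The key steps, in order, would be: (1) Reduce to the case $x = v$ a vertex; the simplex case follows by the barycentric-coordinate formula for $\widetilde\Phi$ together with the product structure $\Pi^{-1}(v) \cong$ join of the $\Pi^{-1}(v_j)$, since each coordinate is handled independently and the $T_{v_j}$ assemble correctly. (2) For a vertex $v$, observe that $\widetilde\Phi(v,f) = f^{-1}(v)$ when $f(z) \notin N^\circ(v)$, and that $f^{-1}(v)$ as an isotopy class in $(S,z)$ is determined by the isotopy class of the path from $z$ to a point of $v$ avoiding $v$, i.e.\ by the lift: the curve $f^{-1}(v)$ in $(S,z)$ is the image in $S$ of a particular lift of $v$ determined by which side of which lift $\widetilde\ev(f)$ sits relative to $p^{-1}(v)$. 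This is exactly the combinatorics of $T_v$: the vertex of $T_v$ corresponding to the region containing $\widetilde\ev(f)$ maps to the curve $f^{-1}(v)$ under the \cite{kls} identification. (3) For $f(z) \in N^\circ(v)$, i.e.\ $\widetilde\ev(f)$ within $N(v)$, one checks the edge formula $t f^{-1}(v^+) + (1-t)f^{-1}(v^-)$ with $t = d(v^+,f(z))/2\epsilon(v)$ is precisely the image under the homeomorphism $T_v \to \Pi^{-1}(v)$ of the point of the corresponding edge of $T_v$ at the matching parameter — here the projection $\mathbb H \to T_v$ should be taken with $N(v)$ collapsed to the edges in the natural linear way, so the distance-to-$v^\pm$ becomes the edge parameter. (4) Finally invoke $\pi_1(S)$--equivariance of both $\Phi_x$ (Proposition \ref{phidescends}) and of the homeomorphism $T_v \cong \Pi^{-1}(x)$ to conclude the two maps agree everywhere once they agree on the level of the combinatorial/cell structure, using that $\Phi_x$ is continuous and $T_v$ is the quotient of $\mathbb H$.

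The main obstacle I expect is step (2)/(3): making the identification ``isotopy class of $f^{-1}(v)$ in $(S,z) \leftrightarrow$ vertex of $T_v$'' completely precise and checking it is the \emph{same} identification as in Theorem \ref{T:maintree} rather than merely an abstract $\pi_1(S)$--equivariant bijection of trees. Concretely, one must verify that the curve $f^{-1}(v)$ corresponds to the lift $\widetilde v \subset p^{-1}(v)$ such that $f$ conjugates the stabilizer of the region of $\widetilde\ev(f)$ appropriately, and that adjacency in $T_v$ (sharing an edge) corresponds exactly to the two vertices of $\C(S,z)$ realized as $f^{-1}(v^-)$ and $f^{-1}(v^+)$ when $\widetilde\ev(f)$ crosses a lift of $v$. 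The cleanest way to organize this is probably to fix a path in $\Diff_0(S)$ from $\id_S$ to $f$, track $\widetilde\ev$ along it as a path in $\mathbb H$, and read off how $f^{-1}(v)$ changes each time the path crosses $p^{-1}(v)$ — this directly exhibits $f \mapsto f^{-1}(v)$ as the dual-tree projection and bypasses any delicate normalization, after which the equivariance argument of step (4) pins down the match with \cite{kls}.
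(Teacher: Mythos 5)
Your proposal follows essentially the same route as the paper: the paper describes the projection $\mathbb H \to T_v$ exactly as you do (collapse each component of $p^{-1}(N(v_i))$ to an interval via the rescaled distance to $p^{-1}(v_i^+)$ --- the same parameter $t$ appearing in the definition of $\widetilde\Phi$ --- and collapse complementary regions to points), observes that $\Phi_x$ is constant on the fibers of this projection, and then identifies the induced map $T_v \to \Pi^{-1}(x)$ with the homeomorphism of Theorem \ref{T:maintree} by equivariance together with the agreement of edge and vertex stabilizers from \cite{kls}, which is precisely your concern in steps (2)--(4). The only difference is organizational: the paper works with the Bass--Serre tree of the whole multicurve $v$ at once rather than reducing to vertices and reassembling (your ``join'' description of $\Pi^{-1}(v)$ is not quite accurate, but this does not affect the argument since $f(z)$ meets at most one $N^\circ(v_i)$).
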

\begin{proof}
Fix $x \in \C(S)$, the simplex $v = \{v_0,...,v_k\} \subset \C(S)$ containing $x$ in its interior, and write
\[ x = \sum_{i=1}^k s_i v_i \]
in terms of barycentric coordinates.

We note that the neighborhoods $N(v_i)$ determine a map from $\mathbb H$ to the Bass--Serre tree $T_v$ associated to
$v$ as follows.  We collapse each component $U$ of the preimage $p^{-1}(N(v_i))$ onto an interval, say $[0,1]$, by the
projection defined as the distance to the component of $p^{-1}(v_i^+)$ meeting $U$, multiplied by $1/(2
\epsilon(v_i))$.  If we further collapse each component of the complement of
\[p^{-1}(N(v_0) \cup \cdots \cup N(v_k)) \]
to a point, the quotient space is precisely $T_v$.

The map $\Phi_x$ is constant on the fibers of the projection to $T_v$. That is, $\Phi_x:\{x \} \times \mathbb H \to
\Pi^{-1}(x) \subset \C(S,z)$ factors through the projection to $T_v$
\[
\xymatrix@R=1pt{ \{x\} \times \mathbb H \ar[rr]^{\Phi_x} \ar[dr] & & \Pi^{-1}(x)\\
 & T_v \ar[ru] & }.\]
Moreover, the equivariance of $\Phi$ implies that
\[ T_v \to \Pi^{-1}(x) \]
is equivariant.  According to \cite{kls}, the edge and vertex stabilizers in the domain and range agree, and in fact this map is
the homeomorphism given by Theorem \ref{T:maintree}, as required.
\end{proof}

\subsection{A further description of $\C(S,z)$.}

We pause here to give a combinatorial description of $\C(S,z)$ which will be useful later, but is also of interest in its own right.
Given any simplex $v \subset \C(S)$, the preimage of the interior of $v$ admits a $\pi_1(S)$--equivariant homeomorphism
\[ \Pi^{-1}(\INT(v)) \cong \INT(v) \times T_v\]
as can be seen from Theorem \ref{T:maintree}.  As is well-known, the edges of $T_v$ can be labeled by the vertices of
$v$. Now, if $\phi:v' \to v$ is the inclusion of a face, then there is a $\pi_1(S)$--equivariant quotient map
$\phi^*:T_v \to T_{v'}$ obtained by collapsing all the edges of $T_v$ labeled by vertices \textit{not} in $\phi(v')$
(compare \cite{guirardellevitt}, for example).  This provides a description of $\Pi^{-1}(v)$, the preimage of the
closed simplex, as a quotient
\[ \left. \left( \bigsqcup_{\phi:v' \to v} v' \times T_{v'}\right) \right/ \huge{\sim}.\]
Here the disjoint union is taken over all faces $\phi:v' \to v$ and the equivalence relation $\sim$ is defined by
\[ (\varphi(x),t) \sim (x,\varphi^*(t))\]
for every inclusion of faces $\varphi:v'' \to v'$ and every $x \in v''$, $t \in T_{v'}$.  Said differently, we take the
product $v \times T_v$ and for every face $\phi:v' \to v$, we glue $v \times T_v$ to $v' \times T_{v'}$ along $\phi(v')
\times T_v$ by $\phi^{-1} \times \phi^*$.

We can do this for all simplices, then glue them all together, providing the following useful description of $\C(S,z)$.
\begin{theorem} \label{glued cc}
The curve complex $\C(S,z)$ is $\pi_1(S)$--equivariantly homeomorphic to
\[ \left. \left( \bigsqcup_{v \subset \C(S)} v \times T_v \right) \right/ \sim.\]
Here the disjoint union is taken over all simplices $v \subset \C(S)$, and the equivalence relation is generated by
\[(\phi(x),t) \sim (x,\phi^*(t))\]
for all inclusions of faces $\phi:v' \to v$ all $x \in v'$ and all $t \in T_v$.\qed
\end{theorem}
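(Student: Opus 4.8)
The plan is to verify that the set-theoretic quotient described in the statement is both well-defined as a topological space and $\pi_1(S)$--equivariantly homeomorphic to $\C(S,z)$, by gluing together the local descriptions $\Pi^{-1}(v) \cong \left(\bigsqcup_{\phi:v'\to v} v'\times T_{v'}\right)\big/\!\sim$ already established in the paragraph preceding the theorem. First I would observe that for each simplex $v\subset\C(S)$ we have a canonical map $v\times T_v \to \Pi^{-1}(v) \subset \C(S,z)$, obtained from the homeomorphism $\INT(v)\times T_v \cong \Pi^{-1}(\INT(v))$ of Theorem \ref{T:maintree} together with the extension over the closed simplex; concretely, a point $(x,t)$ with $x=\sum s_j v_j$ maps to the weighted multicurve in $(S,z)$ whose $v_j$--component carries weight $s_j$ and whose ``position'' relative to the splitting $v_j$ is recorded by the image of $t$ under $T_v\to T_{v_j}$. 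Assembling these over all $v$ gives a continuous surjection $F:\bigsqcup_{v} v\times T_v \to \C(S,z)$.

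Next I would check that $F$ descends to the quotient by $\sim$. The key compatibility is that, for an inclusion of faces $\phi:v'\to v$, the collapsing map $\phi^*:T_v\to T_{v'}$ of the paragraph above is precisely the one that makes the diagram
\[
\xymatrix{ v'\times T_v \ar[r]^{\phi\times\id} \ar[d]_{\id\times\phi^*} & v\times T_v \ar[d]^{F}\\
v'\times T_{v'} \ar[r]_{F} & \C(S,z) }
\]
commute; this is exactly the content of the gluing construction recalled for a single simplex, because $\phi^*$ is defined by collapsing the edges of $T_v$ labeled by vertices of $v$ not in $\phi(v')$, and under $F$ those directions correspond to the components of the multicurve whose weight $s_j$ has dropped to $0$ at the face $\phi(v')$. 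Hence $F$ factors through a continuous bijection $\overline F$ from the quotient space onto $\C(S,z)$. Equivariance is immediate since every map in sight ($T_v\to T_{v'}$, the identification $T_v\cong\Pi^{-1}(\INT(v))$, the inclusion $v\times T_v\hookrightarrow$) is $\pi_1(S)$--equivariant, by Theorem \ref{T:maintree} and the naturality of the Bass--Serre collapses.

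It then remains to see that $\overline F$ is a homeomorphism, i.e. that it is open (equivalently, that the quotient topology on $\bigl(\bigsqcup_v v\times T_v\bigr)/\!\sim$ agrees with the topology of $\C(S,z)$). The cleanest route is local: fix a point $u\in\C(S,z)$, let $v=\Pi(u)$ be the carrying simplex downstairs, and note that the preimage $\Pi^{-1}(v)$ is an open neighborhood of $u$ in $\C(S,z)$ (it is the union of the open stars of the vertices of $u$), and that $\overline F$ restricted to the part of the quotient coming from faces of $v$ is exactly the homeomorphism $\Pi^{-1}(v)\cong\bigl(\bigsqcup_{\phi:v'\to v} v'\times T_{v'}\bigr)/\!\sim$ already granted. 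Since these local pieces are open and cover everything, and $\overline F$ is a continuous bijection which is a homeomorphism on each, $\overline F$ is a homeomorphism globally.

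The main obstacle I anticipate is purely bookkeeping: making the identification between the two equivalence relations completely precise. One must track how a weight $s_j\to 0$ in barycentric coordinates on $\C(S)$ forces the corresponding collapse $T_v\to T_{v'}$ on the tree factor, and confirm that the relation ``generated by $(\phi(x),t)\sim(x,\phi^*(t))$'' over all faces is exactly the relation that appeared in the single-simplex description after one allows $v$ itself to vary and composes face inclusions. This is a diagram-chase using functoriality of $v'\mapsto T_{v'}$ under face maps (the cocycle condition $(\psi\circ\phi)^* = \phi^*\circ\psi^*$), which holds because iterated edge-collapses compose; once that functoriality is in hand, the gluing is formal. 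No new geometric input beyond Theorem \ref{T:maintree} and \cite{kls} is needed.
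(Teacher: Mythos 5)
Your overall route --- assembling the single-simplex descriptions of $\Pi^{-1}(v)$ from the paragraph preceding the theorem, checking that the collapses $\phi^*$ are compatible under composition of face inclusions, and invoking Theorem \ref{T:maintree} for the identification of fibers and for equivariance --- is exactly the argument the paper intends (the theorem is stated with a \qed, the preceding discussion being its proof). However, one step in your verification that $\overline F$ is a homeomorphism is wrong as stated: $\Pi^{-1}(v)$, the preimage of a \emph{closed} simplex, is not an open neighborhood of $u$ in $\C(S,z)$, and it is not the union of the open stars of the vertices of $u$. For instance, if $u$ is a vertex then $\Pi^{-1}(\Pi(u))$ is the tree $T_{\Pi(u)}$, which has empty interior in $\C(S,z)$: any neighborhood of $u$ meets higher-dimensional simplices $w$ of $\C(S,z)$ spanned by $u$ together with curves whose $\Pi$--images do not lie in $\Pi(u)$, and interior points of such $w$ near $u$ project outside the closed simplex $\Pi(u)$. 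So the sets $\Pi^{-1}(v)$ do not form an open cover, and the ``continuous bijection which is a homeomorphism on open pieces'' conclusion does not follow as written.

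The repair is routine and stays inside your framework. Either replace $\Pi^{-1}(v)$ by the preimage of the open star of $v$ in $\C(S)$, which is genuinely open since $\Pi$ is continuous, and note that this preimage is covered by the images of the pieces $w\times T_w$ for simplices $w$ having $v$ as a face; or argue with closed pieces: every closed simplex of $\C(S,z)$ is contained in $\Pi^{-1}(v)$ for $v$ the image simplex downstairs, so (in the simplicial, i.e.\ weak, topology that both sides carry by construction) a subset is closed if and only if its intersection with each $\Pi^{-1}(v)$ is closed, and the analogous statement holds in the quotient; since $\overline F$ restricts to a homeomorphism on each such piece, both $\overline F$ and its inverse are continuous. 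With that adjustment your proof is complete and coincides with the paper's (implicit) argument; the rest of your write-up, including the cocycle condition $(\psi\circ\phi)^*=\phi^*\circ\psi^*$ and the equivariance, is exactly what the paper relies on.
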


\subsection{Extending to measured laminations.}

The purpose of this section is to modify the above construction of $\Phi$ to build a map
\[
\Psi \co \ML(S) \times \mathbb H \to \ML(S,z)
\]
and to prove that this is continuous at every point of $\FL(S) \times \mathbb H$; see Corollary \ref{psicont}.  We do
this by defining a map on $\ML(S) \times \Diff_0(S)$, and checking that it descends to $\ML(S) \times \mathbb H$.

Before we can begin, we must specify a particular realization for each element of $\ML(S)$ as a measured lamination. We
begin by realizing all elements as measured geodesic laminations (recall we denote these with a hat, $\hat \lambda$),
then replace all simple closed geodesic components of the support with appropriately chosen annuli.  We now explain
this more precisely and set some notation.

Given a measured geodesic lamination $\hat \lambda$, the support $|\hat \lambda|$ can be decomposed into a finite union
of pairwise disjoint minimal sublaminations; see \cite{CB}.  Write
\[ \hat \lambda = \Cur(\hat \lambda) + \Min(\hat \lambda),\]
where $|\Cur(\hat \lambda)|$ is the union of all simple closed geodesics in $|\hat \lambda|$ and $|\Min(\hat \lambda)|
= |\hat \lambda| - |\Cur(\hat \lambda)|$.  We construct a measured lamination $\lambda$ measure equivalent to $\hat
\lambda$ by taking
\[ \lambda = \Ann(\lambda) + \Min(\lambda),\]
where $\Min(\lambda) = \Min(\hat \lambda)$ and $\Ann(\lambda)$ is a measured lamination whose support is a foliation on
annular neighborhoods of $|\Cur(\hat \lambda)|$ defined as follows.

The sublamination $\Cur(\hat \lambda)$ can be further decomposed as $\Cur(\hat \lambda) = \sum_j t_j v_j$, where $t_j
v_j$ means $t_j$ times the transverse counting measure on the simple closed geodesic component $v_j$ of $|\Cur(\hat
\lambda)|$.  Then $|\Ann(\lambda)|$ is the disjoint union $\cup_j N(v_j)$, with each $N(v_j)$ given the foliation by
curves equidistant to $v_j$.  This foliation of $N(v_j)$ is assigned the transverse measure which is
$t_j/(2\epsilon(v_j))$ times the distance between leaves, and $\Ann(\lambda)$ is the sum of these measured laminations;
see Figure \ref{cartoon} for a cartoon depiction of $\hat \lambda$ and $\lambda$.  Choosing $\{\epsilon(v)\}$
sufficiently small it follows that $|\Ann(\lambda)| \cap |\Min(\lambda)| = \emptyset$ for all $\lambda$.

For future use, if $\Cur(\hat \lambda) = \sum_j t_j v_j$, then we define
\[ T(\hat \lambda) = T(\lambda) = \max_j t_j.\]
If $|\Cur(\hat \lambda)| = \emptyset$ we set $T(\hat \lambda) = T(\lambda) = 0$.

\begin{figure}[htb]
\centerline{}
\centerline{}
\begin{center}
\ \psfig{file=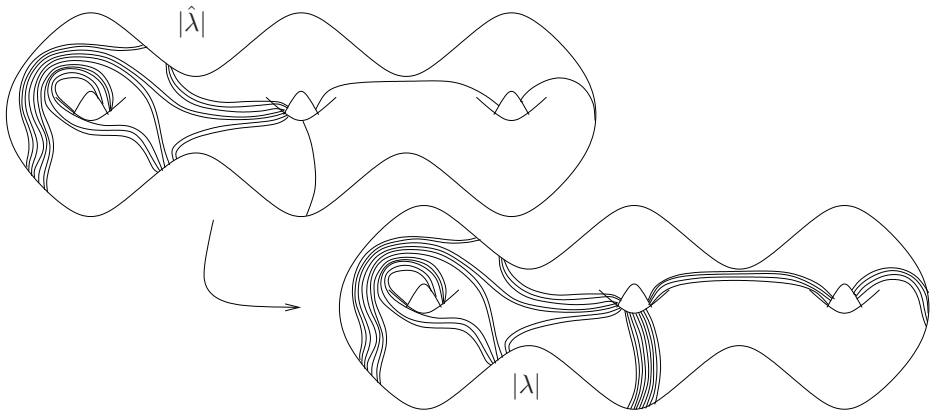,height=2.1truein}
\caption{Removing simple closed geodesics and inserting foliated annuli.}
\label{cartoon}
\end{center}
  \setlength{\unitlength}{1in}
  \begin{picture}(0,0)(0,0)
    \put(.9,2.6){$|\hat \lambda|$}
    \put(2.65,.7){$|\lambda|$}
  \end{picture}
\end{figure}

Whenever we refer to an element $\lambda$ of $\ML(S)$ in what follows, we will assume it is realized by such a measured
lamination. Of course $|\hat \lambda| \subset |\lambda|$, meaning that as subsets of $S$, $|\hat \lambda|$ is a subset of $|\lambda|$, \textit{and} that each leaf of $|\hat \lambda|$ is a leaf of $|\lambda|$.  The difference between the total variations assigned an arc
by $\lambda$ and $\hat \lambda$ is estimated by the following.
\begin{lemma} \label{notthatdifferent}
If $a$ is any arc transverse to $|\lambda|$, then it is also transverse to $|\hat \lambda|$ and we have
\[ |\lambda(a) - \hat \lambda(a)| \leq T(\hat \lambda).\]
\end{lemma}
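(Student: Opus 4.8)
The plan is to compare the two measured laminations leaf-by-leaf along the arc $a$. Recall that $|\hat\lambda| \subset |\lambda|$ as subsets of $S$, and the difference $|\lambda| \smallsetminus |\hat\lambda|$ consists precisely of the interiors of the foliated annuli $N(v_j)$ inserted around the simple closed geodesic components $v_j$ of $|\Cur(\hat\lambda)|$, minus the cores $v_j$ themselves. Outside of $\bigcup_j N(v_j)$ the two laminations literally agree (including their transverse measures), since $\Min(\lambda) = \Min(\hat\lambda)$ and the annular foliation of $N(v_j)$ restricts to the single leaf $v_j$ with the same transverse counting measure scaled by $t_j$ at the core. So the first step is to observe that transversality to $|\lambda|$ implies transversality to $|\hat\lambda|$ (the latter is a sub-collection of leaves) and to reduce the estimate to controlling $\lambda(a \cap N(v_j)) - \hat\lambda(a \cap v_j)$ for each $j$, summed over $j$.

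The key computation is then the following. Fix one component $N(v_j)$, which is an annulus of width $2\epsilon(v_j)$ carrying the equidistant foliation with transverse measure equal to $t_j/(2\epsilon(v_j))$ times arclength in the transverse direction. The arc $a$, being transverse to this foliation, crosses $N(v_j)$ in some collection of sub-arcs. For a sub-arc that enters and exits through the same boundary component $v_j^+$ (or $v_j^-$) without crossing the core $v_j$, it contributes nothing to $\hat\lambda(a)$ but a nonnegative amount to $\lambda(a)$; since such a sub-arc stays within the annulus of total width $2\epsilon(v_j)$, goes out and comes back, its $\lambda$-measure is at most $2 \cdot t_j/(2\epsilon(v_j)) \cdot \epsilon(v_j) = t_j$ — actually I would want to be slightly careful and note that in fact the signed variation is what matters, but since we are taking total variation $\lambda(a) = \int_a d\lambda$ the bound $\leq t_j$ per "excursion that does not cross the core" is what we need, and more to the point the total contribution from all such excursions is controlled. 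For a sub-arc that crosses the core $v_j$ exactly once, it contributes $t_j$ to $\hat\lambda(a)$ and its $\lambda$-contribution differs from $t_j$ by at most the variation picked up on the two sides, again bounded by $t_j$ in absolute value of the difference. The cleanest way to phrase this: since $a$ is transverse to $|\lambda|$ it meets each $N(v_j)$ in finitely many sub-arcs, and one checks that $|\lambda(a\cap N(v_j)) - \hat\lambda(a\cap v_j)| \leq t_j$ by a direct estimate on the geometry of a transverse arc in a foliated annulus of width $2\epsilon(v_j)$ with the given density.

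Summing over $j$ would naively give $\sum_j t_j$, which is too weak — we need $\max_j t_j = T(\hat\lambda)$. So the real content, and the step I expect to be the main obstacle, is to get the estimate to be a maximum rather than a sum. The point is that the annuli $N(v_j)$ are pairwise disjoint and the arc $a$ is a single arc: as $a$ is traversed it visits the various annuli in some order, and within each visit the discrepancy between $\lambda$ and $\hat\lambda$ is bounded by the contribution of at most "one width" of that annulus. I would argue that the total discrepancy is bounded by the largest single such contribution by tracking the running difference $\int_{a([0,s])} d\lambda - \int_{a([0,s])} d\hat\lambda$ as a function of the parameter $s$: outside the annuli this running difference is constant, and inside $N(v_j)$ it moves within a band of total height $t_j$ (it returns to its entry value whenever $a$ exits $N(v_j)$ on the same side it entered, having crossed the core an even number of times, and shifts by a multiple of $t_j$ otherwise — but the relevant oscillation at any moment is at most $t_j$). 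Hence the running difference, which starts and ends at $0$... wait, it need not end at $0$; rather, the correct statement is that at the endpoint the accumulated difference is bounded in absolute value by $T(\hat\lambda) = \max_j t_j$ because only the "partial excursions" (sub-arcs of $a$ that start or end strictly inside some $N(v_j)$, of which there are at most two — one at each end of $a$) can contribute an unbalanced amount, and each such partial excursion contributes at most the width-times-density of a single annulus. This bookkeeping argument — isolating that all but at most two boundary sub-arcs contribute zero net difference, and each boundary sub-arc contributes at most $t_j \le T(\hat\lambda)$ — is the heart of the lemma, and I would write it out carefully since the "$\max$ not $\sum$" phenomenon is exactly what is being claimed.
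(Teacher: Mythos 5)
Your overall decomposition (reduce to comparing $\Ann(\lambda)(a)$ with $\Cur(\hat\lambda)(a)$ annulus by annulus, since $\Min(\lambda)=\Min(\hat\lambda)$) is the right start, but the heart of your argument has a genuine gap: you never use transversality inside the annuli. Since $a$ is transverse to $|\lambda|$, and inside $N(v_j)$ the leaves of $|\lambda|$ are exactly the level sets of the distance to the core $v_j$, the distance-to-core function is strictly monotone along every component of $a \cap N(v_j)$. Two things follow: (i) the ``same-side excursions'' you spend much of the proposal trying to control simply do not exist; and (ii) every component of $a \cap N(v_j)$ not containing an endpoint of $a$ runs from $v_j^+$ to $v_j^-$, crosses the core exactly once, and contributes \emph{exactly} $t_j$ to each of $\Ann(\lambda)(a)$ (full width $2\epsilon(v_j)$ times density $t_j/(2\epsilon(v_j))$) and $\Cur(\hat\lambda)(a)$ --- exact cancellation, not merely a per-subarc difference bounded by $t_j$. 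Without (i), your running-difference bookkeeping is actually false as stated: $\lambda(a)=\int_a d\lambda$ is a total variation, so an excursion entering and leaving through the same boundary component would add a strictly positive amount on the $\lambda$ side with nothing on the $\hat\lambda$ side; the running difference would not ``return to its entry value,'' and repeated excursions would accumulate without bound. You half-noticed this (``the signed variation is what matters\dots but we are taking total variation'') but did not resolve it; the resolution is precisely the monotonicity coming from transversality, which is what makes every interior subarc contain a core crossing and contribute zero net difference.

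Even granting the reduction to the at most two subarcs of $a \cap \bigl(\bigcup_j N(v_j)\bigr)$ containing an endpoint of $a$, your final estimate bounds each by $t_j \le T(\hat\lambda)$, giving $2T(\hat\lambda)$ in total, which is weaker than the stated inequality. The correct per-endpoint bound is $T(\hat\lambda)/2$: by monotonicity such a subarc either stays on one side of the core, so its $\Ann(\lambda)$-mass is at most $t_j/2$ while its $\Cur(\hat\lambda)$-mass is $0$, or it crosses the core once, so its $\Ann(\lambda)$-mass lies between $t_j/2$ and $t_j$ while its $\Cur(\hat\lambda)$-mass is $t_j$; either way the discrepancy is at most $t_j/2$, because the core sits at the midpoint of an annulus of width $2\epsilon(v_j)$ carrying density $t_j/(2\epsilon(v_j))$. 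Summing over at most two such subarcs gives $T(\hat\lambda)$, which is the paper's argument. (If $a$ lies entirely inside a single $N(v_j)$, the one subarc containing both endpoints has discrepancy at most $t_j \le T(\hat\lambda)$ directly.)
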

\begin{proof} The transversality statement is an immediate consequence of $|\hat \lambda| \subset |\lambda|$.

Since $\Min(\lambda) = \Min(\hat \lambda)$, we see that
\[|\lambda(a) - \hat \lambda(a)| = |\Ann(\lambda)(a) - \Cur(\hat \lambda)(a)| \]
The intersection of $|\Ann(\lambda)| \cap a$ is a union of subarcs of $a$, each containing an intersection point of
$|\Cur(\hat \lambda)| \cap a$, with the possible exception of those arcs which meet the endpoints of $a$. If $a_0
\subset a$ is one of the subarcs which meets the boundary, then we have $|\Ann(\lambda)(a_0) - \Cur(\hat \lambda)(a_0)|
\leq T(\hat \lambda)/2$. Since there are at most $2$ such arcs, the desired inequality follows.
\end{proof}

The following is also useful.
\begin{lemma} \label{L:convergetolam}
Suppose $\lambda_n \to \lambda$ in $\ML(S)$ with $\lambda \in \FL(S)$.  Further suppose that $|\lambda_n|$ converges in
the Hausdorff topology on closed subsets of $S$ to a set $\mathcal L$.  Then $\mathcal L$ is a geodesic lamination
containing $|\lambda|$.
\end{lemma}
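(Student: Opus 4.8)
The plan is to establish two separate facts: that $\mathcal L$ is a geodesic lamination, and that it contains $|\lambda|$ (as a subset of $S$, with each leaf of $|\lambda|$ a leaf of $\mathcal L$). For the first point, I would recall that in our fixed realization each $\lambda_n$ is of the form $\operatorname{Ann}(\lambda_n) + \operatorname{Min}(\lambda_n)$, where $|\operatorname{Min}(\lambda_n)|$ is genuinely geodesic and $|\operatorname{Ann}(\lambda_n)|$ is a union of foliated annuli with cores among the finitely many curves $\{v\}$. The key observation is that since $\lambda_n \to \lambda \in \FL(S)$, the weights $t_j$ on any simple closed curve components must go to zero in the limit (otherwise $\lambda$ would have a closed leaf and could not be filling); hence $T(\hat\lambda_n) \to 0$, and by Lemma \ref{notthatdifferent} the measured geodesic laminations $\hat\lambda_n$ also converge to $\lambda$ in $\ML(S)$. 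After passing to a subsequence I may assume $|\hat\lambda_n|$ converges in the Hausdorff topology to some closed set $\mathcal L'$; a standard fact (e.g. from \cite{CB}) is that a Hausdorff limit of geodesic laminations is a geodesic lamination, so $\mathcal L'$ is a geodesic lamination. I then need to compare $\mathcal L$ with $\mathcal L'$: since $|\lambda_n| = |\hat\lambda_n| \cup |\operatorname{Ann}(\lambda_n)|$ and the annular parts shrink (their widths $2\epsilon(v_j)$ are fixed but their \emph{transverse measures} go to zero—here I must be a little careful, as the annuli themselves do \emph{not} shrink geometrically), the honest statement is $\mathcal L' \subseteq \mathcal L$, and $\mathcal L \setminus \mathcal L'$ is contained in the Hausdorff limit of the annuli $\bigcup_j N(v_j^{(n)})$. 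The resolution is that a leaf of $\operatorname{Ann}(\lambda_n)$ is homotopic to the core $v_j$, and as $t_j \to 0$ such curves eventually carry arbitrarily small weight, so their \emph{supports} need not disappear; thus I should instead argue directly that any point of $\mathcal L$ lies on a leaf of a geodesic lamination.

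A cleaner route for showing $\mathcal L$ is a geodesic lamination: use Klarreich/Masur--Minsky machinery or, more elementarily, the train track perspective. Since $\lambda_n \to \lambda$ in $\ML(S)$ and $\lambda$ is filling, for large $n$ all $\lambda_n$ are carried by a fixed train track $\tau$ carrying $\lambda$ (this uses that carried-by-$\tau$ is an open condition in $\ML$, $\tau$ chosen as in the neighborhood-of-$\mathcal L$ construction described after Proposition \ref{klarreich}, applied to $|\lambda|$). Realizing each $\lambda_n$ inside a fixed tie-neighborhood $N_\epsilon(|\lambda|)$, transverse to the ties, the Hausdorff limit $\mathcal L$ is forced to lie in $N_\epsilon(|\lambda|)$ and be transverse to the ties; since this holds for every $\epsilon$ (shrinking $\epsilon$ only shrinks the allowed region), $\mathcal L \subseteq \bigcap_\epsilon N_\epsilon(|\lambda|) = |\lambda|$ would give $\mathcal L = |\lambda|$—but that is too strong, because $\lambda_n$ need not be carried by $\tau$ in our \emph{fixed} realization with the inserted annuli. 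So I would run this argument with $\hat\lambda_n$ in place of $\lambda_n$: $\hat\lambda_n \to \lambda$ (shown above), $\hat\lambda_n$ is geodesic, so $|\hat\lambda_n|$ is contained in $N_\epsilon(|\lambda|)$ for $n$ large and any fixed $\epsilon$, giving that the Hausdorff limit $\mathcal L'$ of $|\hat\lambda_n|$ equals $|\lambda|$. Then separately handle the annular parts of $\lambda_n$.

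So the final structure I propose is: (1) show $T(\hat\lambda_n) \to 0$ using that $\lambda \in \FL(S)$; (2) conclude $\hat\lambda_n \to \lambda$ via Lemma \ref{notthatdifferent}; (3) pass to a subsequence so $|\hat\lambda_n| \to \mathcal L'$ Hausdorff, with $\mathcal L'$ a geodesic lamination, and using a tie-neighborhood of $|\lambda|$ (or directly using that measured geodesic laminations converging in $\ML$ with filling limit have Hausdorff-limit supports containing the limit support) deduce $|\lambda| \subseteq \mathcal L'$; (4) since $|\hat\lambda_n| \subseteq |\lambda_n|$ with each leaf a leaf, Hausdorff-limits give $\mathcal L' \subseteq \mathcal L$, hence $|\lambda| \subseteq \mathcal L$ with each leaf of $|\lambda|$ a leaf of $\mathcal L$; and (5) finally, show $\mathcal L$ itself is a geodesic lamination. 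For step (5) the point is that $\mathcal L \setminus \mathcal L'$ consists of limits of the foliated-annulus leaves; as $t_j \to 0$ one shows these contribute nothing to a Hausdorff limit of \emph{positive-measure} sets—but to limit $|\lambda_n|$ as a set one must be careful. I expect step (5) to be the main obstacle: the cleanest fix is to note that each leaf of $|\operatorname{Ann}(\lambda_n)|$ lies within bounded distance (at most $\epsilon(v_j)$) of the geodesic $v_j \subseteq |\hat\lambda_n| \subseteq N_\delta(|\lambda|)$ for any fixed $\delta$ and large $n$, so in fact $|\lambda_n| \subseteq N_{\delta + \max_j \epsilon(v_j)}(|\lambda|)$, and—since $\epsilon(v_j)$ can be taken as small as we like in the ambient construction but is \emph{fixed}—one instead argues pointwise: any accumulation point $x$ of $x_n \in |\lambda_n|$ either accumulates along $|\hat\lambda_n|$, landing in $\mathcal L' = |\lambda|$, or along annuli, in which case $x$ lies on a complete geodesic obtained as a limit of the geodesics $v_{j(n)}$ (whose supports are in $\mathcal L'$), so $x \in \mathcal L'$ after all, giving $\mathcal L = \mathcal L' = |\lambda|$. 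I would double-check whether the intended conclusion is merely $\mathcal L \supseteq |\lambda|$ (allowing $\mathcal L$ strictly larger, with the extra leaves being limits of annulus curves that need not be geodesic in $S$)—if so, step (5) requires only observing those extra leaves are themselves geodesics limiting geodesics, which is automatic, and the lemma follows.
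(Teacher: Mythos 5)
Your steps (1)--(4) are essentially sound, but there is a genuine gap at step (5), and it sits exactly where the paper's proof does its real work. Knowing only that the weights satisfy $T(\hat\lambda_n)\to 0$ says nothing about the geometry of the annular pieces: $|\Ann(\lambda_n)|$ is the \emph{full} annulus $N(v_j)=N_{\epsilon(v_j)}(v_j)$, whose width $2\epsilon(v_j)$ is independent of the weight $t_j$. If some fixed curve $v$ belonged to $|\Cur(\hat\lambda_n)|$ for infinitely many $n$ (with weights tending to $0$), the Hausdorff limit $\mathcal L$ would contain the entire two-dimensional annulus $N(v)$ and would not be a lamination at all; and your assertion that an accumulation point of points lying in the annuli ``lies on a complete geodesic obtained as a limit of the geodesics $v_{j(n)}$,'' hence lies in $\mathcal L'$, is valid only if $\epsilon(v_{j(n)})\to 0$, which you never establish --- you even flag, correctly, that the annuli do not shrink geometrically, but the proposed resolution then quietly assumes that they do. Similarly, the leaves of the foliated annuli are equidistant curves, not geodesics, so their limits are not ``automatically'' geodesics.

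What is missing is the paper's short observation closing exactly this gap: (i) no simple closed geodesic occurs infinitely often among the components of $|\Cur(\hat\lambda_n)|$, since $v\subset|\hat\lambda_n|$ gives $\I(v,\lambda_n)=0$ and hence $\I(v,\lambda)=0$ in the limit, contradicting $\lambda\in\FL(S)$; and (ii) along any sequence of \emph{distinct} simple closed geodesics the lengths tend to infinity, so the constants $\epsilon(v)$ fixed in the construction tend to $0$. Together these show that the Hausdorff distance between $|\lambda_n|$ and $|\hat\lambda_n|$ tends to $0$, so $\mathcal L=\mathcal L'$ and the well-known statement for genuine geodesic laminations finishes the proof. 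Two side remarks: your claim that being carried by a fixed train track is an open condition in $\ML(S)$ is false unless the track is complete, and the asserted equalities $\mathcal L'=|\lambda|$ and $\mathcal L=|\lambda|$ are too strong in general (Hausdorff limits typically pick up diagonal leaves); neither is needed, since the lemma asks only for containment, which your steps (3)--(4) obtain correctly via the standard fact you cite.
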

\begin{proof}
If $|\lambda_n| = |\hat \lambda_n|$ is a geodesic lamination for all $n$, then the fact that $\mathcal L$ is a geodesic
lamination is well-known; see \cite{CB}.

Since $\lambda_n \to \lambda$ and $\lambda \in \FL$, it follows that no simple closed geodesic occurs infinitely
often in $\{|\Cur(\hat \lambda_n)|\}$.  Further note that if $\{v_n\}$ is any sequence of distinct simple closed
geodesics in $S$, then their lengths tend to infinity and hence $\epsilon(v_n) \to 0$.  Therefore, the Hausdorff
distance between $|\hat \lambda_n|$ and $|\lambda_n|$ tends to zero, and so the Hausdorff limits of $|\hat \lambda_n|$ and
$|\lambda_n|$ are the same.  As above, we see that $\mathcal L$ is a geodesic lamination.
\end{proof}

Now, given any $(\lambda,f) \in \ML(S) \times \Diff_0(S)$, we would like to simply define
\[ \widetilde \Psi(\lambda,f) = f^{-1}(\lambda). \]
As before, this does not make sense when $f(z)$ lies on the supporting lamination $|\lambda|$.  This is remedied by
first splitting open the lamination along the leaf which $f(z)$ meets to produce a new measured lamination $\lambda'$
representing the measure class $\lambda$ (there is no ambiguity about how the measure is split since $\lambda$ has no
atoms).  The new lamination $|\lambda'|$ has either a bigon or annular region containing $f(z)$ and $f^{-1}(\lambda)$
is defined to be $f^{-1}(\lambda')$.  The support $|f^{-1}(\lambda')|$ is contained in $f^{-1}(|\lambda'|)$, and this
containment can be proper since $f^{-1}(|\lambda|)$ may have an isolated leaf.  Note that this happens precisely when
$f(z)$ lies on a boundary leaf of $|\lambda|$.

Train tracks provide a more concrete description of $\widetilde \Psi(f,\lambda)$ which will be useful in proving
continuity results. Let $\mathcal L$ be any geodesic lamination on $S$ and $\epsilon > 0$ sufficiently small so that
the quotient of $N_\epsilon(\mathcal L)$ by collapsing the ties defines a train track $\tau$ as in Section
\ref{S:laminations}. Suppose that $\lambda$ is a measured lamination on $S$ for which $|\lambda|$ is contained in
$N_\epsilon(\mathcal L)$ and is transverse to the ties.  If $f(z) \not \in N_\epsilon(\mathcal L)$, then $\widetilde
\Psi(f,\lambda)$ is the lamination on $(S,z)$ determined by the weighted train track $f^{-1}(\tau(\lambda))$ as
described in Section \ref{S:laminations}.

If $f(z) \in N_\epsilon(\mathcal L)$ then by a small perturbation of $\epsilon$ we may assume that $f(z)$ does not lie on a
boundary-tie of any rectangle and that each switch of $\tau$ is trivalent.  Then either $f(z)$ is outside $N_\epsilon(\mathcal L)$ and we are in the situation above, or else $f(z)$ is in the interior of some rectangle $R$.  Furthermore, $\tau$ can be realized in $N_\epsilon(\mathcal L)$ with the branch $\beta_R$ associated to $R$ contained in $R$.

We modify the train track $\tau$ at the branch $\beta_R$ as follows.  Remove an arc in the interior of $\beta_R$ leaving two subarcs
$\beta_R^\ell$ and $\beta_R^r$ of $\beta_R$.  Insert two branches $\beta_R^u$ and $\beta_R^d$ creating a bigon
containing $f(z)$; see Figure \ref{insertbigon}.  The result, denoted $\tau'$, is a train track on $(S,f(z))$.

If $f_t \in \Diff_0(S)$ is an isotopy with $f = f_0$ and $f_t(z) \subset R$ for every $t \in [0,1]$, and $\tau_t'$ is
constructed for $f_t$ as $\tau$ is constructed for $f$ (so $\tau' = \tau_0'$), then $f_t^{-1}(\tau_t')$ is (isotopic to) $f^{-1}(\tau')$ for all $t$.
\begin{figure}[htb]
\centerline{}
\centerline{}
\begin{center}
\ \psfig{file=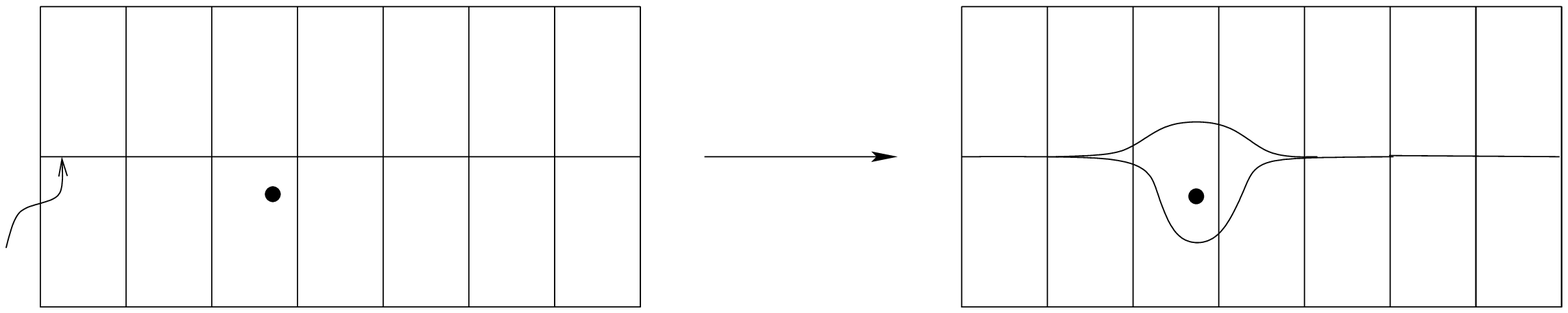,height=.9truein}
\caption{Modifying $\tau$ to $\tau'$.}
\label{insertbigon}
\end{center}
 \setlength{\unitlength}{1in}
  \begin{picture}(0,0)(0,0)
    \put(.72,.8){$f(z)$}
    \put(.02,.65){$\beta_R$}
    \put(.05,1.4){$R$}
    \put(2.97,1.15){$\beta_R^\ell$}
    \put(4.24,1.15){$\beta_R^r$}
    \put(3.5,1.25){$\beta_R^u$}
    \put(3.5,.67){$\beta_R^d$}
  \end{picture}
\end{figure}

The measured lamination $\lambda$ makes $\tau'$ into a weighted train track $\tau'(\lambda)$ on $(S,f(z))$ as follows.
For the branches of $\tau'$ that are the same as those of $\tau$, the weights are defined as before. To define the
weights on the new branches, we first consider the tie $a \subset R$ that contains $f(z)$, and write it as the union of
subarcs $a = a^u \cup a^d$ with $a^u \cap a^d = \{ f(z)\}$.  We define the weights on the branches $\beta_R^u$ and
$\beta_R^d$ of the bigon to be $\lambda(a^u)$ and $\lambda(a^d)$, respectively, while the weights on the branches
$\beta_R^\ell$ and $\beta_R^r$ are both $\lambda(a) = \lambda(a^u)+\lambda(a^d)$; see Figure \ref{bigonweights}. The
lamination $f^{-1}(\lambda)$ is the lamination determined by the weighted train track $f^{-1}(\tau'(\lambda))$.

\begin{figure}[htb]
\centerline{}
\centerline{}
\begin{center}
\ \psfig{file=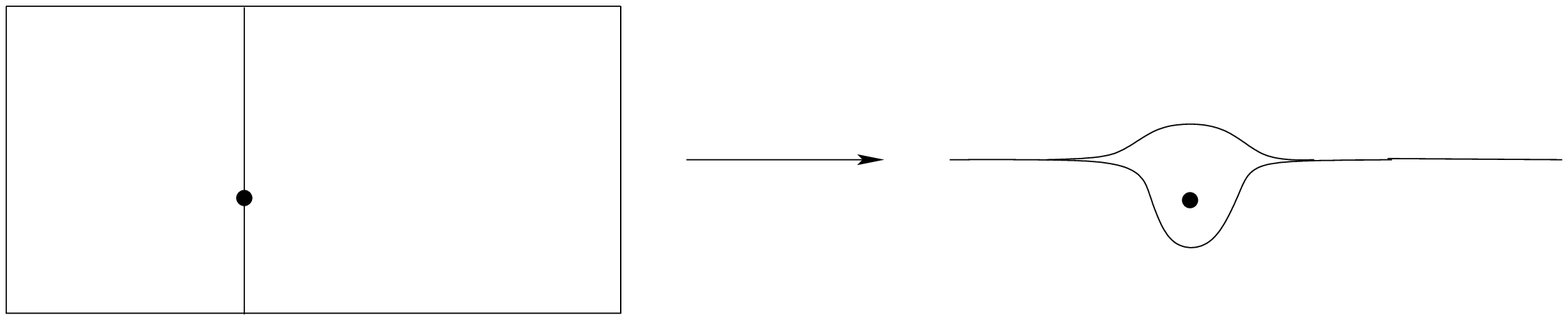,height=.9truein}
\caption{Weights on $\tau'$ determined by $\lambda$ and $f(z)$.}
\label{bigonweights}
\end{center}
 \setlength{\unitlength}{1in}
  \begin{picture}(0,0)(0,0)
    \put(.57,.92){$f(z)$}
    \put(.03,1.4){$R$}
    \put(.9,1.15){$a^u$}
    \put(.9,.75){$a^d$}
    \put(2.9,1.11){$\lambda(a)$}
    \put(4.24,1.11){$\lambda(a)$}
    \put(3.45,1.23){$\lambda(a^u)$}
    \put(3.45,.65){$\lambda(a^d)$}
  \end{picture}
\end{figure}

The proof of the following is similar to that of Proposition \ref{phidescends} and we omit it.
\begin{proposition} \label{psidescends}
$\widetilde \Psi$ descends to a $\pi_1(S)$--equivariant map $\Psi$:
\[
\xymatrix{ \ML(S) \times \Diff_0(S) \ar[drr]^{\widetilde \Psi} \ar[d]_{\id_{\ML(S)} \times \widetilde \ev}\\
 \ML(S) \times \mathbb H \ar[rr]_{\Psi} & & \ML(S,z).}\]
\vspace{-.5cm}

\qed
\end{proposition}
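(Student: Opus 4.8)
The plan is to follow the template of the proof of Proposition \ref{phidescends} essentially verbatim, with the (possibly split-open) lamination $f^{-1}(\lambda)$ playing the role that was played there by the isotopy classes $f^{-1}(\alpha)$ and the distances $d(v^\pm,f(z))$.

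First I would show that $\widetilde\Psi$ descends along $\id_{\ML(S)}\times\widetilde\ev$. Suppose $\widetilde\ev(f_0)=\widetilde\ev(f_1)$; appealing to diagram \eqref{big diagram}, we have $f_0=f_1\circ h$ for some $h\in\Diff_0(S,z)$. Since $h(z)=z$, we get $f_0(z)=f_1(h(z))=f_1(z)$, so the leaf of $|\lambda|$ through this point (if any), the split-open representative $\lambda'$ of the measure class $\lambda$, and the complementary bigon or annular region of $|\lambda'|$ containing the point are all the same whether computed from $f_0$ or from $f_1$. Hence $\widetilde\Psi(\lambda,f_i)=f_i^{-1}(\lambda')$ with the same $\lambda'$, and $f_0^{-1}(\lambda')=h^{-1}\bigl(f_1^{-1}(\lambda')\bigr)$. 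Because $h^{-1}$ is isotopic to the identity through diffeomorphisms fixing $z$, its restriction to $S-\{z\}$ acts trivially on $\ML(S,z)$, so $f_0^{-1}(\lambda')$ and $f_1^{-1}(\lambda')$ determine the same point of $\ML(S,z)$. (If one prefers the train-track picture, the same observation shows $f_0^{-1}(\tau'(\lambda))$ and $f_1^{-1}(\tau'(\lambda))$ are isotopic weighted train tracks on $(S,z)$, since both the branches and the weights are read off from $\lambda$ and from the point $f_0(z)=f_1(z)$ alone.) Thus $\widetilde\Psi$ factors through $\id_{\ML(S)}\times\widetilde\ev$, defining $\Psi$ and making the triangle commute.

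Next I would record the $\Diff_B(S,z)$--equivariance of $\widetilde\Psi$. For $h\in\Diff_B(S,z)$ and $(\lambda,f)\in\ML(S)\times\Diff_0(S)$, the point $(f\circ h^{-1})(z)=f(h^{-1}(z))=f(z)$, so the splitting data for $(\lambda,f\circ h^{-1})$ and for $(\lambda,f)$ agree and
\[
\widetilde\Psi\bigl(h\cdot(\lambda,f)\bigr)=\widetilde\Psi(\lambda,f\circ h^{-1})=(f\circ h^{-1})^{-1}(\lambda')=h\circ f^{-1}(\lambda')=h\cdot\widetilde\Psi(\lambda,f),
\]
exactly as in the displayed computation preceding Proposition \ref{phidescends}. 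Combining this with Lemma \ref{L:tildeevequivariant}, which gives that $\id_{\ML(S)}\times\widetilde\ev$ is equivariant with respect to $\ev_*\colon\Diff_B(S,z)\to\pi_1(S)$, we conclude that the descended map $\Psi$ is $\pi_1(S)$--equivariant, which finishes the proof.

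The only delicate point — and essentially the reason the statement needs any argument at all — is the bookkeeping around the splitting operation: one must be sure that the split representative $\lambda'$, the complementary region containing the marked point, and the possibly-proper inclusion $|f^{-1}(\lambda')|\subset f^{-1}(|\lambda'|)$ arising from isolated leaves of $|\lambda|$ all depend only on the point $f(z)$ and not on $f$ itself, so that they genuinely coincide for $f_0$ and $f_1$ (and likewise under the $\Diff_B(S,z)$--action). Since $f_0(z)=f_1(z)$ and $(f\circ h^{-1})(z)=f(z)$, this is immediate once stated; everything else is formally identical to the case of $\widetilde\Phi$, which is why the detailed verification may be omitted.
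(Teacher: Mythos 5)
Your proposal is correct and is exactly the argument the paper has in mind: the paper omits the proof of Proposition \ref{psidescends}, stating only that it is ``similar to that of Proposition \ref{phidescends},'' and your write-up is that similar argument carried out, with the key observation (as in the $\widetilde\Phi$ case) that the defining data --- here the split-open representative $\lambda'$ and the region containing the marked point --- depend only on $f(z)$, so that $\widetilde\ev(f_0)=\widetilde\ev(f_1)$ forces $\widetilde\Psi(\lambda,f_0)=\widetilde\Psi(\lambda,f_1)$ because $f_0=f_1\circ h$ with $h\in\Diff_0(S,z)$ acting trivially on $\ML(S,z)$. The equivariance computation and the appeal to Lemma \ref{L:tildeevequivariant} likewise match the template of Proposition \ref{phidescends}, so there is nothing to add.
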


Because of the particular way we have realized our laminations, neither the map $\widetilde \Psi$ nor the map $\Psi$
need be continuous at measured laminations with nontrivial annular component. However, this is the only place where
continuity can break down.

\begin{proposition} \label{tildepsicont}
The map $\widetilde \Psi$ is continuous on $\FL(S) \times \Diff_0(S)$.
\end{proposition}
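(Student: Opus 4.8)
The plan is to work entirely with the train track description of $\widetilde\Psi$ given just above and to reduce continuity at a point $(\lambda,f) \in \FL(S)\times\Diff_0(S)$ to the convergence-of-weights criterion from the Proposition following Figure~\ref{nbhd}. First I would fix $(\lambda,f)$ with $\lambda$ filling, and a sequence $(\lambda_n,f_n) \to (\lambda,f)$. Since $\lambda$ is filling, I want to choose a single geodesic lamination $\mathcal L$ and a single $\epsilon>0$ so that the collapsing construction of Section~\ref{S:laminations} produces a trivalent train track $\tau$, with $\tau \subset N_\epsilon(\mathcal L)$ transverse to the ties, and so that $|\lambda|$ is carried by $\tau$ with $|\lambda| \subset N_\epsilon(\mathcal L)$ transverse to the ties. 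The key point is that this \emph{same} $\tau$ carries $\lambda_n$ for all large $n$: because $\lambda \in \FL(S)$, Lemma~\ref{L:convergetolam} shows any Hausdorff accumulation set of $\{|\lambda_n|\}$ is a geodesic lamination containing $|\lambda|$, hence (after passing to a subsequence, which is harmless) is contained in $N_\epsilon(\mathcal L)$ and transverse to the ties for $n$ large. So all but finitely many $\lambda_n$, and $\lambda$ itself, are carried by $\tau$, and by the weight-convergence Proposition the weights $\tau(\lambda_n)$ converge to $\tau(\lambda)$.

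Next I would handle the location of the marked point. There are two cases according to whether $f(z) \in N_\epsilon(\mathcal L)$ or not; since $f_n(z) \to f(z)$, for large $n$ the point $f_n(z)$ is on the same side of this dichotomy (after shrinking/perturbing $\epsilon$ slightly so $f(z)$ is not on $\partial N_\epsilon(\mathcal L)$ and not on a boundary-tie, as in the construction). If $f(z) \notin N_\epsilon(\mathcal L)$, then $f_n(z) \notin N_\epsilon(\mathcal L)$ for large $n$, and $\widetilde\Psi(\lambda_n,f_n)$ is the lamination determined by $f_n^{-1}(\tau(\lambda_n))$; since $f_n \to f$ in $\Diff_0(S)$ the tracks $f_n^{-1}(\tau)$ are eventually all isotopic (as marked train tracks on $(S,z)$, via the isotopy $f_n \simeq f$), the weights converge, and hence $f_n^{-1}(\tau(\lambda_n)) \to f^{-1}(\tau(\lambda))$, giving $\widetilde\Psi(\lambda_n,f_n) \to \widetilde\Psi(\lambda,f)$ in $\ML(S,z)$ by the same weight-convergence criterion applied in $(S,z)$. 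If instead $f(z)$ lies in the interior of a rectangle $R$, then for large $n$ the point $f_n(z)$ lies in the interior of the same rectangle $R$ (this is where I use that $f_n(z) \to f(z)$ and $f(z)$ is interior to $R$), so the modified track $\tau'$ of Figure~\ref{insertbigon} serves for all large $n$; the four new weights are $\lambda_n(a_n^u), \lambda_n(a_n^d), \lambda_n(a_n)$ (twice), where $a_n \subset R$ is the tie through $f_n(z)$ split at $f_n(z)$. Since $\lambda$ has no atoms and $|\lambda|$ is transverse to the ties, and $f_n(z)\to f(z)$ with $\lambda_n\to\lambda$, the partial integrals $\lambda_n(a_n^u) \to \lambda(a^u)$ and $\lambda_n(a_n^d)\to\lambda(a^d)$: this is continuity of $\int_{a^u}d\lambda$ jointly in the measured lamination and in the endpoint of the arc, which follows since $\lambda$ is a continuous measure on each tie and $\lambda_n \to \lambda$ weakly on each tie. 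Again invoking the weight-convergence Proposition in $(S,z)$ gives $\widetilde\Psi(\lambda_n,f_n)\to\widetilde\Psi(\lambda,f)$.

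Finally, since the limit is independent of the subsequence chosen (the Hausdorff limit set always contains $|\lambda|$ and the weights always converge to $\tau(\lambda)$, and in the bigon case the four weights always converge to the same four values determined by $f(z)$ and $\lambda$), the full sequence converges, proving continuity of $\widetilde\Psi$ on $\FL(S)\times\Diff_0(S)$.

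I expect the main obstacle to be the passage between the possibly-different isolated-leaf phenomena for $|f_n^{-1}(\lambda_n)|$ versus $|f^{-1}(\lambda)|$ — i.e.\ making precise that splitting $\lambda$ open along the leaf through $f(z)$ behaves continuously under perturbation of $f(z)$ and of $\lambda$. The remark preceding the statement flags exactly this: continuity can fail at laminations with an annular component, but for filling $\lambda$ there are no such components, and the split lamination $\lambda'$ differs from $\lambda$ only in a bigon or annulus whose combinatorial effect is captured entirely by the bigon modification $\tau \rightsquigarrow \tau'$ of Figure~\ref{insertbigon}. Pinning down that the weighted track $f_n^{-1}(\tau'_n(\lambda_n))$ is genuinely isotopic in $(S,z)$ to a fixed track with converging weights — using the isotopy $f_n \simeq f$ and the stated invariance of $f_t^{-1}(\tau_t')$ along isotopies keeping $f_t(z)$ in $R$ — is the technical heart of the argument, and everything else is an application of the weight-convergence Proposition and Lemma~\ref{L:convergetolam}.
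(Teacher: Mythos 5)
Your overall strategy --- reduce continuity to convergence of branch weights on a train track obtained by collapsing a tied neighborhood, split into cases according to the position of the marked point, identify the tracks on $(S,z)$ via the isotopy $f_n \simeq f$, and handle the two bigon weights using atomlessness of $\lambda$ --- is the same as the paper's. But your first step has a genuine gap: you choose the lamination $\mathcal L$ (hence the track $\tau$) in advance, from $\lambda$ alone, and assert that this one $\tau$ carries $\lambda_n$ for all large $n$ because any Hausdorff accumulation set of $\{|\lambda_n|\}$ is a geodesic lamination containing $|\lambda|$ and is ``hence contained in $N_\epsilon(\mathcal L)$.'' That inference fails. A geodesic lamination containing the filling lamination $|\lambda|$ may also contain diagonal leaves of the complementary ideal polygons of $|\lambda|$; such leaves cross the interiors of those polygons at a definite distance from $|\lambda|$ and are not contained in $N_\epsilon(|\lambda|)$ once $\epsilon$ is small enough for the collapsing construction to make sense. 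Correspondingly, if some complementary region of $|\lambda|$ is not an ideal triangle, the track obtained from $N_\epsilon(|\lambda|)$ is not complete, its cone of carried measured laminations has positive codimension in $\ML(S)$, and there exist $\lambda_n \to \lambda$ (for instance weighted simple closed curves whose supports accumulate onto $|\lambda|$ together with a diagonal) that are \emph{not} carried by $\tau$, so their branch weights are not even defined. So your ``key point'' is false in general; it is only automatic when $|\lambda|$ is maximal.

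The repair is exactly what the paper does: do not fix $\mathcal L$ beforehand. Given $(\lambda_n,f_n) \to (\lambda,f)$, pass to a subsequence along which $|\lambda_n|$ converges in the Hausdorff topology to a closed set $\mathcal L$; Lemma~\ref{L:convergetolam} (this is what that lemma is really for) says this $\mathcal L$ is a geodesic lamination containing $|\lambda|$, and it is Hausdorff convergence to $\mathcal L$ --- not the fact that $\mathcal L \supset |\lambda|$ --- that puts $|\lambda_n|$ inside $N_\epsilon(\mathcal L)$, transverse to the ties, for large $n$. Build $\tau$ from this subsequence-dependent $\mathcal L$ and run your two cases; since every sequence converging to $(\lambda,f)$ then has a subsequence whose images converge to $\widetilde\Psi(\lambda,f)$, continuity at $(\lambda,f)$ follows. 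With that substitution, the rest of your outline matches the paper: the case split on $f(z)$, the identification of $f_n^{-1}(\tau)$ with $f^{-1}(\tau)$ by an isotopy keeping the image of $z$ away from the neighborhood (or inside the rectangle $R$), and the convergence $\lambda_n(a_n^u) \to \lambda(a^u)$, which the paper proves by a sandwich argument using atomlessness of $\lambda$ on the tie after replacing $\lambda_n$ by $\hat\lambda_n$ via Lemma~\ref{notthatdifferent} and $T(\lambda_n) \to 0$; your appeal to ``weak convergence on each tie'' is the same idea, just stated rather than proved.
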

\begin{proof}  We will show that for any sequence $\{(\lambda_n,f_n)\}$ in $\ML(S) \times \Diff_0(S)$ converging to
$(\lambda,f) \in \FL(S) \times \Diff_0(S)$ there is a subsequence for which $\{\widetilde
\Psi(\lambda_{n_k},f_{n_k})\}$ converges to $\widetilde \Psi(\lambda,f)$.  Since we will find such a subsequence for \textit{any} sequence converging to $(\lambda,f)$, continuity of $\widetilde \Psi$ at $(\lambda,f)$ will follow.

We begin by passing to a subsequence for which
the supports $\{|\lambda_n|\}$ converge in the Hausdorff topology to a closed set $\mathcal L$.
It follows from Lemma \ref{L:convergetolam}, that $\mathcal L$ is a geodesic lamination containing $|\lambda|$.\\

\noindent \textbf{Case 1.} Suppose $f(z) \not \in \mathcal L$.\\

In this case, there is an $\epsilon >0$ so that the $\epsilon$--neighborhoods of $f(z)$ and $\mathcal
L$ are disjoint.  Since $f_n \to f$ as $n \to \infty$, there exists $N >0$ so that for all $n \geq N$, $f_n(z) \in
N_\epsilon(f(z))$, and moreover, $f_n$ is isotopic to $f$ through an isotopy $f_t$ such that $f_t(z) \in
N_\epsilon(f(z))$ for all $t$.  Taking $N$ even larger if necessary, we may assume that for $n \geq N$, $\lambda_n \subset
N_\epsilon(\mathcal L)$.  Therefore, for all $n \geq N$, $\lambda$ and $\lambda_n$ determine weighted train tracks
$\tau(\lambda)$ and $\tau(\lambda_n)$, respectively.  Since $\lambda_n \to \lambda$, it follows that
$\tau(\lambda_n) \to \tau(\lambda)$ as $n \to \infty$.

Since $f_n$ is isotopic to $f$ by an isotopy keeping the image of $z$ in $N_\epsilon(f(z))$, it follows that
$f^{-1}(\tau) = f_n^{-1}(\tau)$, up to isotopy.  Therefore, $f^{-1}(\tau(\lambda_n))$ and $f_n^{-1}(\tau(\lambda_n))$
are isotopic and so we have convergence of weights $f^{-1}(\tau(\lambda_n)) \to f^{-1}(\tau(\lambda))$ which implies
the associated measured laminations converge
\[ \widetilde \Psi(\lambda_n,f_n) \to \widetilde \Psi(\lambda,f)\]
as required.  This completes the proof for Case 1.\\

\noindent \textbf{Case 2.} Suppose that $f(z) \in \mathcal L$.\\

We choose $\epsilon > 0$ sufficiently small so that the quotient of $N_\epsilon(\mathcal L)$ by collapsing ties is a train track $\tau$, so that $f(z)$
lies in the interior of some rectangle $R$ of $N_\epsilon(\mathcal L)$ and so that $\tau$ is trivalent.

Let $N > 0$ be such that for all $n \geq N$, $f_n(z)$ also lies in the interior of $R$ and $f$ is isotopic to $f_n$ by
an isotopy $f_t$ with $f_t(z)$ contained in $R$ for all $t$.  For each $n \geq N$, the train track $\tau$ associated to
$N_\epsilon(\mathcal L)$ and the points $f_n(z)$ and $f(z)$ define tracks $\tau_n'$ and $\tau'$, respectively, with
bigons as described above.  Moreover, $f_n^{-1}(\tau_n')$ and $f^{-1}(\tau')$ are isotopic, and we simply identify the
two as the same train track on $(S,z)$.

Since $\lambda_n$ is converging to $\lambda$ as $n \to \infty$, it follows that the weighted train tracks $\tau(\lambda_n)$
converge to $\tau(\lambda)$.  Therefore, to prove that the weighted train tracks $f^{-1}(\tau_n'(\lambda_n)) =
f_n^{-1}(\tau_n'(\lambda_n))$ converge to $f^{-1}(\tau'(\lambda))$, it suffices to prove that the weights assigned to
$f^{-1}(\beta_R^u)$ and $f^{-1}(\beta_R^d)$ by $\lambda_n$ converge to the weights assigned to these branches by
$\lambda$.  This is sufficient because the weights on the remaining branches agree with weights on the corresponding
branches of $\tau$, where we already know convergence.  From this it will follow that $\widetilde \Psi(\lambda_n,f_n)
\to \widetilde \Psi(\lambda,f)$.

Note that the weights on $\beta_R$ determined by the $\lambda_n$ converge to the weight defined by $\lambda$.  So,
since the sum of the weights on $f^{-1}(\beta_R^u)$ and $f^{-1}(\beta_R^d)$ is precisely the weight on $\beta_R$, it
suffices to prove convergence for the weights of one of these, say, $f^{-1}(\beta_R^u)$.

To define the required weights, first recall that we have the tie $a_n \subset R$ with $f_n(z) \in a_n$, and write
$a_n$ as a union of subarcs $a_n = a_n^u \cup a_n^d$ with $a_n^u \cap a_n^d = \{f_n(z)\}$.  Similarly, we have a tie $a
\subset R$ with $a = a^u \cup a^d$ and $a^u \cap a^d=\{f(z)\}$.  Then the weights on $f^{-1}(\beta_R^u)$ determined by
$\lambda_n$ and $\lambda$ are given by
\[\lambda_n(a_n^u) \quad \mbox{ and } \quad \lambda(a^u),\]
respectively.

Therefore, we must verify that $\lambda_n(a_n^u) \to \lambda(a^u)$.  However, since $T(\lambda_n) \to 0$ as
$k \to \infty$, Lemma \ref{notthatdifferent} implies that it suffices to prove $\hat \lambda_n(a_n^u) \to
\lambda(a^u)$.

Fix any $\delta > 0$. Since $\Cur(\hat \lambda) = \emptyset$, $\hat \lambda|_a$ has no atoms, and so we can find subarcs $a^u_-$ and $a^u_+$ of $a$ with
\[ a^u_- \subsetneq a^u \subsetneq a^u_+ \subset a \]
so that
\[ \hat \lambda(a^u_-) \leq \hat \lambda(a^u) \leq \hat \lambda(a^u_+) \]
with $\hat \lambda(a^u_+) - \hat \lambda(a^u_-) < \delta$.

Since $\hat \lambda_n \to \hat \lambda$, it follows that we also have
\[ \lim_{n \to \infty} \hat \lambda_n(a_+^u) = \hat \lambda(a_+^u)\]
and
\[ \lim_{n \to \infty} \hat \lambda_n(a_-^u) = \hat \lambda(a_-^u).\]

Furthermore, since $a_n \to a$ and $a_n^u \to a^u$ in the $C^1$--topology, we see that
\[ \limsup_{n \to \infty}\hat \lambda_n(a_n^u) \leq \lim_{n \to \infty} \hat \lambda_n(a^u_+) = \hat \lambda(a^u_+)\]
and
\[ \liminf_{n \to \infty} \hat \lambda_n(a_n^u) \geq \lim_{n \to \infty} \hat \lambda_n(a^u_-) = \hat \lambda(a^u_-). \]
Since $\liminf \hat \lambda_n(a_n^u) \leq \limsup \hat \lambda_n(a_n^u)$, combining all of the above, we obtain
\[ \Big| \limsup_{n \to \infty}\hat \lambda_n(a_n^u) - \hat \lambda(a^u) \Big| + \Big| \liminf_{n \to \infty} \hat \lambda_n(a_n^u) - \hat \lambda(a^u) \Big| < \delta. \]
As $\delta$ was arbitrary, it follows that
\[ \lim_{n \to \infty} \hat \lambda_n(a_n^u) = \limsup_{n \to \infty} \hat \lambda_n(a_n^u) = \liminf_{n \to \infty} \hat \lambda_n(a_n^u) = \hat \lambda(a^u)\]
and this completes the proof of Case 2.
Since Cases 1 and 2 exhaust all possibilities, this also completes the proof of the proposition.
\end{proof}

\begin{corollary} \label{psicont}
The map $\Psi$ is continuous on $\FL(S) \times \mathbb H$.
\end{corollary}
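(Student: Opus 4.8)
The plan is to derive continuity of $\Psi$ on $\FL(S) \times \mathbb H$ from continuity of $\widetilde \Psi$ on $\FL(S) \times \Diff_0(S)$ (Proposition \ref{tildepsicont}) and the commuting diagram of Proposition \ref{psidescends}, by showing that the vertical map $\id_{\ML(S)} \times \widetilde{\ev}$ is nice enough for continuity to descend through it. The cleanest route is via local sections of $\widetilde{\ev}$.

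First I would record that $\widetilde{\ev}\colon \Diff_0(S) \to \mathbb H$ admits a continuous section on a neighborhood of any prescribed point of $\mathbb H$. This uses the facts recalled in Section \ref{S:mod(S)}: the evaluation map $\ev\colon \Diff^+(S) \to S$ is a locally trivial fiber bundle, hence has local sections; since $\Diff_0(S)$ is open in $\Diff^+(S)$, after shrinking the domain a local section through a point of $\Diff_0(S)$ takes values in $\Diff_0(S)$; and since $p\colon \mathbb H \to S$ is a covering map, composing such a local section over an evenly covered $V \subset S$ with the inverse of $p$ over a sheet $\widetilde V \subset p^{-1}(V)$ yields (after the routine check, using connectedness of $\widetilde V$, that the resulting map lands in the correct sheet) a continuous local section $s\colon \widetilde V \to \Diff_0(S)$ of $\widetilde{\ev}$.

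Next, fix a point $(\lambda_0,\widetilde x_0) \in \FL(S) \times \mathbb H$ and a section $s\colon \widetilde V \to \Diff_0(S)$ of $\widetilde{\ev}$ with $\widetilde x_0 \in \widetilde V$. On $\ML(S) \times \widetilde V$ the commuting diagram of Proposition \ref{psidescends} gives $\Psi(\lambda,\widetilde x) = \Psi(\lambda, \widetilde{\ev}(s(\widetilde x))) = \widetilde \Psi(\lambda, s(\widetilde x))$, that is, $\Psi|_{\ML(S) \times \widetilde V} = \widetilde \Psi \circ (\id_{\ML(S)} \times s)$. Restricting to $\FL(S) \times \widetilde V$, the map $(\lambda,\widetilde x) \mapsto (\lambda, s(\widetilde x))$ is continuous and takes values in $\FL(S) \times \Diff_0(S)$, on which $\widetilde \Psi$ is continuous by Proposition \ref{tildepsicont}. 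Hence $\Psi$ is continuous on $\FL(S) \times \widetilde V$, and since $(\lambda_0,\widetilde x_0)$ was arbitrary, $\Psi$ is continuous on $\FL(S) \times \mathbb H$.

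The only step that does any real work is the production of the local sections $s$: one must upgrade the stated local triviality of $\ev$ (and the covering property of $p$) to genuine local sections of $\widetilde{\ev}$, rather than resting on the weaker statement that $\widetilde{\ev}$ is a fibration. Alternatively, and equivalently, one can observe that $\widetilde{\ev}$ is an open surjection, so that $\id_{\ML(S)} \times \widetilde{\ev}$ restricts to an open surjection, hence a quotient map, $\FL(S) \times \Diff_0(S) \to \FL(S) \times \mathbb H$ (the domain being the full preimage of the target), and then continuity of $\Psi|_{\FL(S)\times \mathbb H}$ follows from the universal property of quotient maps applied to the continuous map $\widetilde\Psi|_{\FL(S)\times\Diff_0(S)}$. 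Everything else is formal.
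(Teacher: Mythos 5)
Your proof is correct and follows essentially the same route as the paper, which disposes of the corollary with the single remark that $\widetilde{\ev}$ is a quotient map so that continuity descends from Proposition \ref{tildepsicont} through the diagram of Proposition \ref{psidescends}. Your extra care in producing local sections of $\widetilde{\ev}$ (equivalently, noting it is an open surjection, so that $\id_{\ML(S)}\times\widetilde{\ev}$ restricted to the saturated set $\FL(S)\times\Diff_0(S)$ is again a quotient map) supplies exactly the justification the paper leaves implicit, since products of mere quotient maps need not be quotient maps.
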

\begin{proof}
The map $\widetilde \ev$ is a quotient map.
\end{proof}

\subsection{$\Phi$ and $\Psi$.}

The map $\Psi$ descends to a map $\PML(S) \times \mathbb H \to \PML(S,z)$ in the obvious way.  We denote this map by
$\Psi$ with the context deciding the meaning.

We let $\Psi_\C$ denote the restriction of $\Psi$ to $\PML_\C(S) \times \mathbb H$.  The map $\Psi_\C$ has image
$\PML_\C(S,z)$.
\begin{lemma} \label{psiphi}
The following diagram commutes
\[
\xymatrix{
\PML_\C(S) \times \mathbb H \ar[r]^{\Psi_\C} \ar[d] & \PML_\C(S,z) \ar[d]\\
\C(S) \times \mathbb H \ar[r]^{\Phi} & \C(S,z)\\}
\]
The vertical arrows here are the natural maps.
\end{lemma}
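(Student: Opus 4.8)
The plan is to verify the identity pointwise after lifting everything to the level of $\Diff_0(S)$. Since $\Phi$ and $\Psi$ are by definition induced from $\widetilde\Phi$ and $\widetilde\Psi$ by composing with $\widetilde{\ev}$ in the second factor (Propositions \ref{phidescends} and \ref{psidescends}), and the natural map $\PML_\C(S)\to\C(S)$ together with $\id_{\Diff_0(S)}$ is compatible with $\id\times\widetilde{\ev}$ on both sides, it suffices to show: for every $\lambda\in\PML_\C(S)$ and every $f\in\Diff_0(S)$, the image of $\widetilde\Psi(\lambda,f)\in\PML_\C(S,z)$ under the natural map $\PML_\C(S,z)\to\C(S,z)$ equals $\widetilde\Phi(x,f)$, where $x\in\C(S)$ is the image of $\lambda$. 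Here I use that $\id_{\PML_\C(S)}\times\widetilde{\ev}$ is a surjection, so commutativity of the square for $\Psi_\C$ and $\Phi$ follows from commutativity of the corresponding square for $\widetilde\Psi$ and $\widetilde\Phi$.

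So fix such $\lambda$ and $f$. Let $v=\{v_0,\dots,v_k\}$ be the simplex of $\C(S)$ whose interior contains $x$, so that $\lambda$ is represented by $\sum_j t_j v_j$ with all $t_j>0$, and $x=\sum_j s_j v_j$ in barycentric coordinates with $s_j=t_j/\sum_\ell t_\ell$. By the chosen realization of elements of $\PML_\C(S)$ we have $|\lambda|=\bigcup_j N(v_j)$, each $N(v_j)$ being foliated by curves equidistant from $v_j$ and carrying transverse measure $t_j/(2\epsilon(v_j))$ times distance; thus a tie crossing $N(v_j)$ once has $\lambda$--measure $t_j$, and the sub-annulus of $N(v_j)$ of width $w$ has $\lambda$--measure $t_j w/(2\epsilon(v_j))$. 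We split into the same two cases as the definition of $\widetilde\Phi$.

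If $f(z)\notin\bigcup_j N^\circ(v_j)$ and $f(z)\notin|\lambda|$ (the only other possibility, $f(z)\in v_i^\pm$, is the second borderline case below), then $\widetilde\Psi(\lambda,f)=f^{-1}(\lambda)$, and collapsing each annulus $f^{-1}(N(v_j))$ to its core curve exhibits this as the measure class of $\sum_j t_j f^{-1}(v_j)$ on the simplex of $\C(S,z)$ spanned by the multicurve $\{f^{-1}(v_0),\dots,f^{-1}(v_k)\}$ (these are essential and pairwise non-isotopic in $S-\{z\}$, since disjoint isotopic curves in $S-\{z\}$ cobound an annulus in $S$). Normalizing the weights gives $\sum_j s_j f^{-1}(v_j)=\widetilde\Phi(x,f)$. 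If instead $f(z)\in N^\circ(v_i)$ for the unique such $i$, set $d=d(v_i^+,f(z))$ and $t=d/(2\epsilon(v_i))$. Then $f(z)$ lies on the equidistant leaf $\ell\subset N(v_i)$ with $d(v_i^+,\ell)=d$, and $\widetilde\Psi(\lambda,f)$ is computed by splitting $|\lambda|$ open along $\ell$ --- equivalently, by inserting into the carrying train track the bigon of Figure \ref{insertbigon}. The split produces an annular complementary region containing $f(z)$, bounded by leaves isotopic in $S-\{f(z)\}$ to $v_i^+$ and to $v_i^-$; these bound sub-annuli of $N(v_i)$ of widths $d$ and $2\epsilon(v_i)-d$, hence of $\lambda$--masses $t_i t$ and $t_i(1-t)$, while the remaining annuli contribute $t_j f^{-1}(v_j)$ as before. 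Therefore $\widetilde\Psi(\lambda,f)$ is the measure class of
\[ t_i t\, f^{-1}(v_i^+) + t_i(1-t)\, f^{-1}(v_i^-) + \sum_{j\neq i} t_j\, f^{-1}(v_j), \]
whose total mass is $\sum_\ell t_\ell$; normalizing the weights yields exactly
\[ s_i\big(t\, f^{-1}(v_i^+) + (1-t)\, f^{-1}(v_i^-)\big) + \sum_{j\neq i} s_j\, f^{-1}(v_j) \;=\; \widetilde\Phi(x,f). \]

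The step that needs care is the last one: after applying $f^{-1}$ one must correctly identify the isotopy classes in $S-\{z\}$ of the two leaves bounding the split-open annulus --- they become $f^{-1}(v_i^+)$ and $f^{-1}(v_i^-)$, not $f^{-1}(v_i)$ --- and one must check that the transverse $\lambda$--measures of the two sub-annuli are precisely $t_i t$ and $t_i(1-t)$; this is exactly what makes the ``splitting parameter'' of $\widetilde\Psi$ coincide with the barycentric distance parameter $t$ appearing in $\widetilde\Phi$. The borderline configurations, where $f(z)$ lies on the core geodesic $v_i$ (then $t=1/2$) or on a boundary leaf $v_i^\pm$ (then the split annulus is absorbed into the adjacent complementary region and one is back in the first case), are handled by the same computation. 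This completes the verification at the level of $\widetilde\Phi$ and $\widetilde\Psi$, and hence proves the lemma.
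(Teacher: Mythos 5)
Your proof is correct, and it is essentially the paper's argument spelled out in full: the paper disposes of this lemma with the single observation that on $\PML_\C(S)$ the map $\Psi$ was defined in essentially the same way as $\Phi$, which is exactly what your case-by-case computation (matching the split-annulus masses $t_i t$ and $t_i(1-t)$ with the barycentric parameter $t$, plus the borderline configurations) verifies. So there is no gap and no divergence of method --- you have simply made explicit the definitional coincidence the authors take as evident.
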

\begin{proof} On $\PML_\C(S)$, $\Psi$ was defined in essentially the same way as $\Phi$.
\end{proof}

If we let $\Psi_{\overline \C}$ be the restriction of the map $\Psi$ to $\PML_{\overline \C} \times \mathbb H$, then we have
\begin{proposition} \label{psiphi2}
There is a continuous extension $\hat \Phi: \overline{\C}(S) \times \mathbb H \to \overline{\C}(S,z)$ which fits
into a commutative diagram
\[
\xymatrix{
\PML_{\overline \C}(S) \times \mathbb H \ar[r]^{\Psi_{\overline \C}} \ar[d] & \PML_{\overline \C}(S,z) \ar[d]\\
\overline \C (S) \times \mathbb H \ar[r]^{\hat \Phi} & \overline{\C}(S,z)}
\]
\end{proposition}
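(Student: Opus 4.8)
The plan is to build $\hat\Phi$ on the two pieces $\C(S)\times\mathbb H$ and $\partial\C(S)\times\mathbb H = \EL(S)\times\mathbb H$ of $\overline\C(S)\times\mathbb H$, and then to check continuity across the boundary using Klarreich's criterion (Proposition \ref{klarreich}) together with the continuity of $\Psi$ on $\FL(S)\times\mathbb H$ (Corollary \ref{psicont}). On the first piece $\hat\Phi$ is just $\Phi$, which by Lemma \ref{psiphi} is compatible with $\Psi_\C$ and the natural map $\PML_\C\to\C$. On the second piece, given $|\lambda|\in\EL(S)$ and $w\in\mathbb H$, I would pick any $\lambda\in\PFL(S)$ with support $|\lambda|$, form $\Psi(\lambda,w)\in\PML(S,z)$, and \emph{define} $\hat\Phi(|\lambda|,w)$ to be its image in $\overline\C(S,z)$ under the natural map $\PML_{\overline\C}(S,z)\to\overline\C(S,z)$. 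The first thing to verify is that this is well-defined, i.e.\ independent of the choice of $\lambda$ in its projective class: this follows because $\Psi$ is defined on $\PML$, not $\ML$, so $\Psi(\lambda,w)$ only depends on the projective class, and one must also note that $\Psi(\lambda,w)\in\PFL(S,z)$ so that it actually maps into $\partial\C(S,z)$ — here one uses that $f^{-1}$ is a homeomorphism carrying a filling lamination (with the extra puncture filled in by the inserted bigon/annulus) to a filling lamination on $(S,z)$.

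With $\hat\Phi$ defined set-theoretically and the square commuting by construction on each of the two pieces, the real work is continuity of $\hat\Phi$ at points of $\partial\C(S)\times\mathbb H$. Fix $(|\lambda|,w)$ with $|\lambda|\in\EL(S)$, and take a sequence $(x_n,w_n)\to(|\lambda|,w)$ in $\overline\C(S)\times\mathbb H$; I must show $\hat\Phi(x_n,w_n)\to\hat\Phi(|\lambda|,w)$ in $\overline\C(S,z)$. By Klarreich (Proposition \ref{klarreich}) it suffices to show that every accumulation point of $\{\hat\Phi(x_n,w_n)\}$ in $\PML(S,z)$ has support equal to the fixed ending lamination $\Psi(\lambda,w)\in\EL(S,z)$ underlying $\hat\Phi(|\lambda|,w)$. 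To get at this, lift $x_n$ to a point $\mu_n\in\PML_{\overline\C}(S)$ mapping to $x_n$ (if $x_n\in\C(S)$ take $\mu_n\in\PML_\C$, if $x_n\in\EL$ take $\mu_n\in\PFL$ with that support). Since $x_n\to|\lambda|$ in $\overline\C(S)$, Klarreich's criterion applied in $S$ says every accumulation point of $\{\mu_n\}$ in $\PML(S)$ is supported on $|\lambda|$, hence (as $|\lambda|$ is filling) lies in $\PFL(S)$ with support $|\lambda|$; passing to a subsequence, $\mu_n\to\mu$ with $\mu\in\PFL(S)$, $|\mu|=|\lambda|$. Now apply continuity of $\Psi$ on $\FL(S)\times\mathbb H$ (Corollary \ref{psicont}) together with $w_n\to w$: $\Psi(\mu_n,w_n)\to\Psi(\mu,w)$ in $\PML(S,z)$. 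By Lemma \ref{psiphi} (and the definition of $\hat\Phi$ on the boundary) the image of $\Psi(\mu_n,w_n)$ in $\overline\C(S,z)$ is exactly $\hat\Phi(x_n,w_n)$, and $\Psi(\mu,w)$ has support the same ending lamination as $\Psi(\lambda,w)$ because $\mu$ and $\lambda$ are measure-equivalent and $\Psi$ is a projective/homeomorphism-induced operation; so every $\PML(S,z)$-accumulation point of $\{\hat\Phi(x_n,w_n)\}$ is supported on $|\Psi(\lambda,w)|$, which is what Klarreich's criterion needs. Hence $\hat\Phi(x_n,w_n)\to\hat\Phi(|\lambda|,w)$.

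The one subtlety I would flag as the main obstacle is the interaction between the non-continuity of the bijection $\PML_\C\leftrightarrow\C$ and the need to choose the lifts $\mu_n$ coherently: I cannot simply say ``$x_n\to|\lambda|$ in $\overline\C$ implies $\mu_n\to\mu$ in $\PML$'' for an arbitrary lift, and I am using Klarreich's theorem in an essential way precisely to control \emph{all} subsequential limits of $\{\mu_n\}$, forcing each of them into $\PFL(S)$ with support $|\lambda|$ since $|\lambda|$ is filling. A secondary technical point is that $\Psi$ is only asserted continuous on $\FL(S)\times\mathbb H$, not on all of $\ML(S)\times\mathbb H$ (it fails at laminations with annular components), so it is crucial that the limiting lamination $\mu$ is filling — which is exactly why the argument only works over the \emph{ending} laminations $\EL(S)$ and gives continuity only at points of $\partial\C(S)\times\mathbb H$ (continuity on the interior $\C(S)\times\mathbb H$ part being the content of the earlier construction of $\Phi$). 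Finally I would record that the extension is unique since $\overline\C(S)\times\mathbb H$ is the closure of $\C(S)\times\mathbb H$ and $\hat\Phi$ is continuous, so it is determined by $\Phi$.
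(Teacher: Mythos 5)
Your proposal is correct and follows essentially the same route as the paper: identify $\partial \C$ with $\EL$ via Klarreich, define the boundary extension by applying $\Psi$ to a measured representative in $\PFL(S)$ (noting $\Psi(\FL(S)\times\mathbb H)\subset\FL(S,z)$), and deduce continuity from Proposition \ref{klarreich} together with Corollary \ref{psicont}; your accumulation-point argument in $\PML$ simply spells out what the paper leaves terse. One small correction: well-definedness on $\EL(S)\times\mathbb H$ requires that $|\Psi(\lambda,w)|$ depend only on the support $|\lambda|$, not merely on the projective class of $\lambda$ (a filling lamination need not be uniquely ergodic), which is exactly the fact the paper records and which is justified by your parenthetical observation that $\Psi$ is induced by $f^{-1}$, rather than by your later appeal to $\mu$ and $\lambda$ being ``measure-equivalent'' (they need not be).
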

\begin{proof}  Via Klarreich's work, as discussed in Section \ref{mlandcc}, we identify $\partial \C$ with
$\EL$.  Moreover, the vertical maps in the statement of the proposition send $\mathbb P \FL(S) \times \mathbb H$ and
$\mathbb P \FL(S,z)$ onto $\EL(S) \times \mathbb H$ and $\EL(S,z)$, respectively, using this identification.

From the construction of $\Psi$ and the definition of $\FL$, one can see that
\[ \Psi(\FL(S) \times \mathbb H) \subset \FL(S,z).\]
Furthermore, if $\lambda,\lambda' \in \ML(S)$ with $|\lambda| = |\lambda'|$, then $|\Psi(\lambda,x)| =
|\Psi(\lambda',x)|$.  Thus, $\Psi$ determines a map
\[ \EL(S) \times \mathbb H \to \EL(S,z)\]
which extends $\Phi$ to the required map
\[ \hat \Phi: \overline \C(S) \times \mathbb H \to \overline\C(S,z).\]
Continuity follows from Proposition \ref{klarreich} and Corollary \ref{psicont}.
\end{proof}

We will also need the following
\begin{proposition} \label{contfailure}
Suppose $\{v_n\} \subset \C(S)$, $\{x_n \} \subset \mathbb H$, and $x_n \to x \in \mathbb H$.  If $\{v_n\}$ does not
accumulate on $\partial \C(S)$, then $\{\Phi(v_n,x_n)\}$ does not accumulate on $\partial \C(S,z)$.
\end{proposition}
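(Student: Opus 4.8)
The plan is to argue by contradiction using the characterization of convergence to $\partial\C$ from Proposition \ref{klarreich} together with the continuity of $\hat\Phi$ from Proposition \ref{psiphi2}. Suppose $\{\Phi(v_n,x_n)\}$ does accumulate on $\partial\C(S,z)$; after passing to a subsequence we may assume $\Phi(v_n,x_n) \to |\mu| \in \EL(S,z)$ in $\overline\C(S,z)$. I want to promote this to convergence of the underlying measured laminations and then push it down through the forgetful map to get a constraint on $\{v_n\}$ in $\PML(S)$ that contradicts the hypothesis that $\{v_n\}$ stays in a compact part of $\overline\C(S)$.

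First I would record the set-up at the level of $\PML$. Viewing $v_n$ as the point of $\PML_\C(S)$ supported on $v_n$, by Lemma \ref{psiphi} we have $\Phi(v_n,x_n) = \Psi_\C(v_n,x_n)$ in $\C(S,z)$, i.e. $\Phi(v_n,x_n)$ is the image of the measured lamination $\Psi(v_n,x_n) \in \PML_\C(S,z)$. Since $\{v_n\}$ does not accumulate on $\partial\C(S) \cong \EL(S)$, Proposition \ref{klarreich} tells us that $\{v_n\}$ has an accumulation point $\lambda$ in $\PML(S)$ with $|\lambda| \notin \EL(S)$ — that is, $|\lambda|$ is not filling. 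Pass to a further subsequence so that $v_n \to \lambda$ in $\PML(S)$; since $x_n \to x \in \mathbb H$, the pair $(v_n,x_n) \to (\lambda,x)$ in $\PML(S)\times\mathbb H$. Now $(\lambda,x) \in \PML(S)\times\mathbb H$, but it need not lie in the locus $\PFL(S)\times\mathbb H$ where $\Psi$ is known to be continuous (Corollary \ref{psicont}), so I cannot directly conclude $\Psi(v_n,x_n)\to\Psi(\lambda,x)$. Instead I would pass to yet another subsequence so that the supports $|v_n|$ converge in the Hausdorff topology to some closed set, hence (by the standard fact, cf.\ the proof of Lemma \ref{L:convergetolam}) to a geodesic lamination $\mathcal L$ containing $|\hat\lambda|$, and so that $\{\Psi(v_n,x_n)\}$ converges in $\PML(S,z)$ to some $\nu$.

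The key step is then to argue that $|\nu|$ is \emph{not} filling on $(S,z)$, which contradicts $\Psi(v_n,x_n)\to|\mu|\in\EL(S,z)$ via the other direction of Proposition \ref{klarreich} (an accumulation point of $\{\Phi(v_n,x_n)\}$ in $\PML(S,z)$ must be supported on $|\mu|$, hence filling). To see $|\nu|$ is not filling: the construction of $\widetilde\Psi$ shows that $|\Psi(v_n,x_n)|$ is carried by (a split-open version of) the train track obtained from $N_\epsilon(\mathcal L)$, modified near the rectangle containing $f_n(z)$, where $\widetilde{\ev}(f_n) = x_n$; since $\mathcal L$ is not filling on $S$ (its complement contains a non-disk region because $|\hat\lambda|\subset\mathcal L$ and $|\lambda|$ is non-filling), there is an essential simple closed curve or a nonperipheral arc in the complement of $\mathcal L$ in $S$, and this persists to an essential curve in the complement of $|\nu|$ in $S-\{z\}$ — the only modification is the insertion of a small bigon near $z$, which does not fill a complementary region. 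Hence $|\nu| \notin \EL(S,z)$. A little care is needed here because $z$ could lie in the non-disk complementary region of $\mathcal L$; but even then a complementary component of $\mathcal L$ that is not a disk becomes, after removing the point $z$ and splitting, a complementary component of $|\nu|$ that is neither a disk nor a once-punctured disk, so it still contains an essential curve of $(S,z)$.

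The main obstacle is precisely the possible discontinuity of $\Psi$ at $(\lambda,x)$ when $|\lambda|$ has a simple closed curve component, which forces the train-track argument in the last paragraph rather than a clean appeal to continuity; handling the location of $z$ relative to the complementary regions of $\mathcal L$ is the fiddly part. Everything else is bookkeeping with subsequences and the already-established Propositions \ref{klarreich}, \ref{psidescends}, \ref{psicont} and Lemmas \ref{psiphi}, \ref{L:convergetolam}.
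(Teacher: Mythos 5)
Your overall strategy (contradiction, compactness of $\PML$, Hausdorff limits, train tracks, and Klarreich's criterion applied in $(S,z)$) is the same as the paper's, but there is a genuine gap at your key step: you assert that $\mathcal L$ is not filling on $S$ ``because $|\hat\lambda|\subset\mathcal L$ and $|\lambda|$ is non-filling,'' and then find an essential curve in the complement of $\mathcal L$ itself. That inference is false. A geodesic lamination can properly contain the support of a non-filling measured lamination and still have all complementary regions disks, because the extra leaves (which carry no measure) can fill up the complement. Moreover such laminations genuinely occur as Hausdorff limits in the setting of this proposition: take $v_n=T_w^n(u)$ with $u\cup w$ filling; then $\{v_n\}$ stays at bounded distance from $w$ in $\C(S)$ (so does not accumulate on $\partial\C(S)$), the $\PML$-limit is supported on $w$, but the Hausdorff limit $\mathcal L$ consists of $w$ together with leaves spiraling onto $w$ from both sides, and every complementary region is a disk (any essential curve disjoint from $\mathcal L$ would have $i(c,w)=i(c,u)=0$, and the spiraling from both sides kills any crown/annular region about $w$). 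So there is in general no essential curve or nonperipheral arc in $S-\mathcal L$, and your route to ``$|\nu|$ is not filling on $(S,z)$'' collapses exactly where the work has to be done.

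The paper's proof repairs this by using a stronger consequence of Proposition \ref{klarreich}: since $\{v_n\}$ does not accumulate on $\partial\C(S)$, \emph{no sublamination} of $\mathcal L$ lies in $\EL(S)$; hence, after deleting the finitely many isolated infinite leaves of $\mathcal L$, what remains is supported in a proper subsurface and is disjoint from some curve $v'$ (which may well cross the deleted leaves, hence cross $\mathcal L$). One then takes the subtrack $\tau_0$ of the track $\tau$ from $N_\epsilon(\mathcal L)$ that is disjoint from a representative $\alpha$ of $v'$ and carries all the nonzero weights of the limit $\lambda$; the point is that the \emph{limit} $\mu_0$ of the weighted tracks $f^{-1}(\tau'(\lambda_n))$ has nonzero weights only on $f^{-1}(\tau_0')$, so its support misses $f^{-1}(\alpha)$ and is non-filling in $(S,z)$, even though laminations carried by the full track $\tau'$ could be filling. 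You need this ``pass to the sublamination/subtrack where the limiting weights live'' step; disjointness from all of $\mathcal L$ is both unavailable and unnecessary. Separately, and more minor: you never dispose of the case in which some curve occurs infinitely often in $\{v_n\}$ (the paper does this first, via continuity of $\Phi$ in the $\mathbb H$ variable, getting a limit inside $\C(S,z)$); in that case the annular realizations $N_{\epsilon(v)}(v)$ have definite width, so your claim that $|\Psi(v_n,x_n)|$ is carried by the track obtained from $N_\epsilon(\mathcal L)$ does not literally hold, and that case should be treated separately before the Hausdorff-limit argument.
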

\begin{proof}
The proof is by contradiction.  Suppose $\{\Phi(v_n,x_n)\}$ accumulates on some lamination $|\mu| \in \partial
\C(S,z)$, and pass to a subsequence which converges to $|\mu|$ in $\overline \C(S,z)$.  If any curve in the sequence
$\{v_n\}$ occurs infinitely often, then passing to a further subsequence, we can assume $v_n$ is constant and equal to
$v$.  Then
\[ |\mu| = \lim_{n \to \infty} \Phi(v_n,x_n) = \lim_{n \to \infty} \Phi(v,x_n) = \Phi(v,x) \in \C(S,z).\]
This is a contradiction since $|\mu| \in \partial \C(S,z)$.  So without loss of generality, we may assume that all the
$v_n$ are distinct.

Fix elements $\lambda_n \in \ML(S)$ representing the projective classes associated to $v_n$ via the natural bijection
$\PML_\C(S) \to \C(S)$.  After passing to a further subsequence if necessary, we may assume that we have convergence
$\lambda_n \to \lambda$.  Since $v_n$ are all distinct, $T(\lambda_n) \to 0$.  Thus, as in the proof of Lemma
\ref{L:convergetolam}, we may pass to a further subsequence if necessary so that $|\lambda_n|$ converges to a geodesic
lamination $\mathcal L$.

It follows from Proposition \ref{klarreich} that no sublamination of $\mathcal L$ lies in $\EL(S)$. In particular,
removing the infinite isolated leaves of $\mathcal L$, we obtain a lamination which is disjoint from a simple closed
curve $v'$ and contains the support of $\hat \lambda$. Choosing $\epsilon > 0$ sufficiently small, we can assume that
the train track $\tau$ obtained from $N_\epsilon(\mathcal L)$ (as described in Section \ref{S:laminations}) contains a
subtrack $\tau_0$ so that (1) $\tau_0$ is disjoint from some representative $\alpha$ of $v'$ and (2) $\tau(\lambda)$
has nonzero weights only on the branches of $\tau_0$.

Now let $f \in \Diff_0(S)$ be such that $\widetilde{\ev}(f) = x$.  After modifying $\tau$ and $\tau_0$ to $\tau'$ and
$\tau_0'$ as in the previous section if necessary (that is, possibly inserting a bigon around $f(z)$), it follows that
for sufficiently large $n$, $f^{-1}(\tau'(\lambda_n))$ determines the lamination $\Psi(\lambda_n,x_n)$. After passing
to yet a further subsequence if necessary, we can assume that $f^{-1}(\tau'(\lambda_n))$ converges to some
$f^{-1}(\tau')(\mu_0)$, also having nonzero weights only on $f^{-1}(\tau_0')$.  It follows that $\mu_0$, the limit of
$\Psi(\lambda_n,x_n)$, is \textit{not} in $\FL(S,z)$ since its support is disjoint from $f^{-1}(\alpha)$. Since
$\mathbb P \Psi(\lambda_n,x_n) \in \PML_\C(S,z)$, Proposition \ref{klarreich} implies $|\mu_0| = |\mu|$, which is a
contradiction.
\end{proof}

\begin{lemma} \label{injective comp reg}
For $(|\lambda|,x),(|\lambda'|,x') \in \EL(S) \times \mathbb H$, $\hat \Phi(|\lambda|,x) = \hat \Phi(|\lambda'|,x')$ if
and only if $|\lambda| = |\lambda'|$ and $x$ and $x'$ are in the same leaf of $p^{-1}(|\lambda|)$ or in the closure of
the same complementary region of $\mathbb H - p^{-1}(|\lambda|)$.
\end{lemma}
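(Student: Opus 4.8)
The plan is to pass to the universal cover and reduce the statement to a concrete fact about the geodesic lamination $\widetilde L := p^{-1}(|\lambda|)$ together with the marked point $\widetilde z$. Fix $f \in \Diff_0(S)$ with $\widetilde{\ev}(f) = x$ and let $\tilde f \co \mathbb H \to \mathbb H$ be the associated lift from the homomorphism $\Diff_0(S) \to \Diff(\mathbb H)$. Then $\tilde f$ commutes with the deck transformations and lies at bounded distance from $\id_{\mathbb H}$ — lift an isotopy from $f$ to $\id_S$ and use compactness of $S$ — hence extends continuously to $\partial \mathbb H$ by the identity. By construction $\hat \Phi(|\lambda|,x)$ is the isotopy class on $S - \{z\}$ of $\widetilde \Psi(\lambda,f)$, whose support lifts to $\tilde f^{-1}(\widetilde M)$, where $\widetilde M = \widetilde L$ when $f(z) \notin |\lambda|$ and otherwise $\widetilde M$ is $\widetilde L$ split along the $\pi_1(S)$--orbit of the leaf through $x$; in either case $\widetilde M$ is $\pi_1(S)$--equivariantly isotopic to $\widetilde L$, with the same set of endpoint pairs on $\partial \mathbb H$, and $\widetilde z = \tilde f^{-1}(x)$ lies in a complementary region of $\tilde f^{-1}(\widetilde M)$. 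Let $\mathcal P$ be the partition of $\mathbb H$ whose pieces are the non--boundary leaves of $\widetilde L$ and the closed complementary ideal polygons of $\widetilde L$, and write $Q(y)$ for the piece containing $y$. Each leaf and each complementary polygon of $\widetilde L$ maps homeomorphically to its image in $S$ — a leaf, respectively a complementary disk, of $|\lambda|$ — because $|\lambda|$ has no closed leaves and fills $S$; so each has trivial $\pi_1(S)$--stabilizer and the translates of a piece are pairwise disjoint.

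For the ``only if'' direction, suppose $\hat\Phi(|\lambda|,x) = \hat\Phi(|\lambda'|,x')$. Since equal laminations on $S - \{z\}$ have $\pi_1(S)$--equivariantly isotopic lifts, the geodesic representatives of $\tilde f^{-1}(\widetilde M)$ and of the corresponding lift for $(|\lambda'|,x')$ agree; as these geodesic representatives are $\widetilde L$ and $p^{-1}(|\lambda'|)$ (endpoints are preserved by splitting and by $\tilde f$), we get $|\lambda| = |\lambda'|$. Now to the class $\hat\Phi(|\lambda|,y) = [\mu_y]$ associate the set $V(y) \subset \partial \mathbb H$ of ideal vertices of the complementary region of the lift of $\mu_y$ that contains $\widetilde z$. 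This is well defined: $\widetilde z$ avoids the lamination (the splitting was done precisely to arrange this), and any isotopy of $\mu_y$ in $S - \{z\}$ lifts to a $\pi_1(S)$--equivariant isotopy fixing $\widetilde z$ and $\partial \mathbb H$ pointwise, hence carries the relevant complementary region to one with the same ideal vertices. Transporting by $\tilde f$ and inspecting $\widetilde M$, one identifies $V(y)$ with the ideal vertex set of $Q(y)$: if $y$ lies in a complementary polygon $P$ it is the vertex set of $P$; if $y$ lies on a leaf $\ell$ the relevant region of $\widetilde M$ is the bigon created by splitting $\ell$, whose ideal vertices are the two endpoints of $\ell$ when $\ell$ is a non--boundary leaf and the vertices of the adjacent polygon when $\ell$ is a boundary leaf. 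Since $Q \mapsto (\text{ideal vertices of } Q)$ is injective on the pieces of $\mathcal P$ (two geodesics with equal endpoints coincide, a complementary polygon is determined by its $\geq 3$ ideal vertices, and leaves give $2$--element sets), $[\mu_x] = [\mu_{x'}]$ forces $V(x) = V(x')$, hence $Q(x) = Q(x')$, which is exactly the assertion that $x$ and $x'$ lie on a common leaf, or in the closure of a common complementary polygon, of $p^{-1}(|\lambda|)$.

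For the ``if'' direction, assume $|\lambda| = |\lambda'|$ and $Q(x) = Q(x')$, so $V(x) = V(x') =: V$. Construct a model lamination $\mu_V$ on $S - \{z\}$ by taking $\widetilde L$, letting $R$ be the piece of $\mathcal P$ with ideal vertex set $V$, and $\pi_1(S)$--equivariantly isotoping $\widetilde L$ rel $\widetilde z$ so that the complementary region at $R$ acquires a thin ``finger'' reaching $\widetilde z$ (when $|V|=2$ this means splitting the leaf $R$ and extending the bigon to $\widetilde z$); as a finger carries no ideal vertices, $\widetilde z$ ends up in a complementary region with ideal vertex set $V$, and $\mu_V$ is determined up to isotopy in $S - \{z\}$ by $|\lambda|$ and $V$ alone. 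It then suffices to see $\mu_x \simeq \mu_V$: both lifts are $\pi_1(S)$--equivariant, equivariantly isotopic to $\widetilde L$, and have $\widetilde z$ in a complementary region with ideal vertex set $V$, so straightening all leaves while retaining a finger at $\widetilde z$ carries each of them $\pi_1(S)$--equivariantly and rel $\widetilde z$ to $\mu_V$ by the change of coordinates principle applied equivariantly. Hence $\mu_x \simeq \mu_V = \mu_{V(x')} \simeq \mu_{x'}$, i.e. $\hat\Phi(|\lambda|,x) = \hat\Phi(|\lambda'|,x')$.

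I expect the main obstacle to be the ``if'' direction, and within it two bookkeeping points: handling the splitting of $\widetilde L$ along a leaf — needed exactly when $x$ or $x'$ lies on the lamination, including the degenerate case where $V$ has two elements — and carrying out the straightening and finger isotopies $\pi_1(S)$--equivariantly while fixing the distinguished lift $\widetilde z$ throughout, this last requirement being what forces the ``finger'' normal form in place of a naive straightening. The ``only if'' direction, by contrast, reduces cleanly to the observation that the ideal vertex set of the complementary region of the lift containing $\widetilde z$ is an honest isotopy invariant.
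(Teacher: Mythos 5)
Your ``only if'' direction is essentially correct and is in substance the paper's argument in different packaging: where you extract the invariant ``ideal vertex set of the complementary region of the lifted lamination containing $\widetilde z$'' and use injectivity of piece $\mapsto$ vertex set, the paper arranges the two representatives of $\hat\Phi(|\lambda|,x)$ and $\hat\Phi(|\lambda'|,x')$ to be literally equal and observes that $\widetilde f' \circ \widetilde f^{-1}$ preserves $p^{-1}(|\lambda|)$ and is the identity on $\partial \mathbb H$, hence preserves each leaf and complementary region while carrying $x$ to $x'$. Both proofs rest on the same two facts (lifts of maps isotopic to the identity extend by the identity to $\partial\mathbb H$; leaves and complementary regions of a geodesic lamination are determined by their ideal points), so this is a repackaging rather than a genuinely different route. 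One small inaccuracy: the split lamination $\widetilde M$ is not $\pi_1(S)$--equivariantly isotopic to $\widetilde L$ (splitting changes the leaf structure), though what you actually use --- that the endpoint data on $\partial\mathbb H$ is unchanged and that an isolated zero--measure copy is discarded when the marked point sits on a boundary leaf --- is correct.

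The gap is in the ``if'' direction. The sentence ``straightening all leaves while retaining a finger at $\widetilde z$ carries each of them $\pi_1(S)$--equivariantly and rel $\widetilde z$ to $\mu_V$ by the change of coordinates principle applied equivariantly'' is not an argument: the change of coordinates principle is a statement about finite collections of curves and arcs, and the assertion that two laminations on $(S,z)$ which agree after forgetting $z$ and which place $\widetilde z$ in complementary regions with the same ideal vertex set are isotopic rel $z$ is \emph{exactly} the implication you are trying to prove, so invoking it as a principle is circular (and your model $\mu_V$ being ``determined up to isotopy in $S-\{z\}$ by $|\lambda|$ and $V$ alone'' is the same unproved claim). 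The direction does admit an easy honest proof, using an observation you already made: if $x,x'$ lie in the same open complementary region $U$ of $p^{-1}(|\lambda|)$, then $p|_U$ is a homeomorphism onto a complementary disk of $|\lambda|$, so one can choose an ambient isotopy $g_t$ of $S$ supported in $p(U)$ with $g_0=\id_S$ and $g_1(p(x))=p(x')$; its lift (starting at the identity) carries $x$ to $x'$ inside $U$, and since $g_1$ is the identity on a neighborhood of $|\lambda|$ we may take $f'=g_1\circ f$, giving $\Psi(\lambda,x')=f'^{-1}(\lambda)=f^{-1}(\lambda)=\Psi(\lambda,x)$ on the nose, not merely up to isotopy. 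The cases where $x$ or $x'$ lies on a leaf (including boundary leaves, so that closures of regions are covered) are then handled by the same supported--isotopy argument applied to the split--open lamination, moving the marked point within the bigon or enlarged region --- which again embeds under $p$ since it is simply connected --- or, staying entirely within the paper's toolkit, by continuity of $y \mapsto \hat\Phi(|\lambda|,y)$ (Corollary \ref{psicont} together with Proposition \ref{klarreich}) applied to limits taken from within $U$, where the map has just been shown to be constant. With that substitution your proof is complete; as written, the ``finger'' normal form step assumes the conclusion.
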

\begin{proof}
If $x,x'$ lie on the same leaf of $p^{-1}(|\lambda|)$ or in the closure of the same component of $\mathbb H -
p^{-1}(|\lambda|)$, then it is straightforward to see that $\hat\Phi(|\lambda|,x) = \hat\Phi(|\lambda|,x')$.

Now we prove the forward direction; suppose that $\hat \Phi(|\lambda|,x) = \hat \Phi(|\lambda'|,x')$. We must show that
$|\lambda| = |\lambda'|$ and $x$ and $x'$ are in the same leaf of $p^{-1}(|\lambda|)$ or in the closure of the same
complementary region of $\mathbb H - p^{-1}(|\lambda|)$.

We first apply an isotopy so that the laminations $\hat \Phi(|\lambda|,x)$ and $\hat \Phi(|\lambda'|,x')$ are equal
(not just isotopic). Forgetting $z$, the laminations remain the same (though they may have a bigon complementary
region, and so are not necessarily geodesic laminations), and hence $|\lambda| = |\lambda'|$.

Proving the statement about $x$ and $x'$ is slightly more subtle.  For simplicity, we assume that $x$ and $x'$ lie in
components of $\mathbb H - p^{-1}(|\lambda|)$ (the general case is similar, but the notation is more complicated). Let
$f,f' \in \Diff_0(S)$ be such that $\widetilde \ev(f) = x$ and $\widetilde \ev(f') = x'$.  Let $\widetilde f$ and
$\widetilde f'$ be lifts of $f$ and $f'$ with $\widetilde f(\widetilde z) = x$ and $\widetilde f'(\widetilde z) = x'$
(see Section \ref{S:bundle over H}). Modifying $f$ and $f'$ by an element of $\Diff_0(S,z)$ if necessary, we may assume
that $f^{-1}(|\lambda|) = \hat \Phi(|\lambda|,x)$ and $f'^{-1}(|\lambda|) = \hat \Phi(|\lambda|,x')$ are equal (again,
not just isotopic).

Since $f^{-1}(|\lambda|) = f'^{-1}(|\lambda|)$, it follows that $f' \circ f^{-1}(|\lambda|) = |\lambda|$.  Back in
$\mathbb H$ this means $\widetilde f' \circ \widetilde f^{-1}(p^{-1}(|\lambda|)) = p^{-1}(|\lambda|)$. Since
$\widetilde f' \circ \widetilde f^{-1}(x) = x'$, and $\widetilde f' \circ \widetilde f^{-1}$ is the identity on
$\partial \mathbb H$, it must be that $x$ and $x'$ lie in the same complementary region of $\mathbb H -
p^{-1}(|\lambda|)$, as required.
\end{proof}




\section{Universal Cannon--Thurston maps.}

\subsection{Quasiconvex sets.}

For the remainder of the paper, fix a biinfinite geodesic $\gamma \subset \mathbb H$ for which $p(\gamma)$ is a
\textit{filling} closed geodesic in $S$, by which we mean that $p(\gamma)$ is a closed geodesic and the complement of
$p(\gamma)$ is a union of disks in $S$.  Let $\delta \in \pi_1(S)$ generate the (infinite cyclic) stabilizer of
$\gamma$. We will make several statements about $\gamma$, though they will also obviously apply to any
$\pi_1(S)$--translate of $\gamma$.

Define
\[ \X(\gamma) = \Phi(\C(S) \times \gamma). \]
Let $H^\pm(\gamma)$ denote the two half spaces bounded by $\gamma$ and define
\[ \H^{\pm}(\gamma) = \Phi(\C(S) \times H^\pm(\gamma)). \]

Recall that $N(v) = N_{\epsilon(v)}(v)$ is a small neighborhood of the geodesic representative of $v \in \C^0(S)$.  We
may assume that the $\epsilon(v)$ are small enough to ensure that every component $\alpha \subset \gamma \cap
p^{-1}(N(v))$ is essential in the strip of $p^{-1}(N(v))$ that $\alpha$ meets.  Here, we say that arc is
\textit{essential} if it is not homotopic into the boundary keeping the endpoints fixed.

A subset $X$ of a geodesic metric space is called \textit{weakly convex} if for any two points of the set there exists
a geodesic connecting the points contained in the set.  In a Gromov hyperbolic space, weakly convex sets are in
particular uniformly quasi-convex.

\begin{proposition} \label{weakconvex} $\X(\gamma),\H^\pm(\gamma)$ are simplicial subcomplexes of $\C(S,z)$ spanned by their vertex sets and are weakly
convex.
\end{proposition}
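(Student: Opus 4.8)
The plan is to analyze the fibered structure of $\C(S,z)$ over $\C(S)$ via Theorem~\ref{glued cc}, and to show that the preimages of the relevant subsets of $\mathbb{H}$ behave compatibly with the tree fibers. First I would reduce to understanding $\X(\gamma)$ and $\H^\pm(\gamma)$ one simplex at a time. Fix a simplex $v = \{v_0,\dots,v_k\} \subset \C(S)$. Over the interior of $v$ we have $\Pi^{-1}(\INT(v)) \cong \INT(v) \times T_v$, and by Proposition~\ref{prop: factors through trees} the restriction $\Phi_x$ for $x \in \INT(v)$ is the composite $\mathbb{H} \to T_v \cong \Pi^{-1}(x)$, where the first map is the collapsing map described in that proof. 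The key observation is that the collapsing map $\mathbb{H} \to T_v$, applied to the geodesic $\gamma$, has image a \emph{subtree} of $T_v$: indeed, the image of any connected set is connected, hence a subtree, and because we arranged that each component of $\gamma \cap p^{-1}(N(v_i))$ is essential in its strip, the image of $\gamma$ is a subtree spanned by the vertices it contains (the essentiality ensures $\gamma$ crosses each relevant strip transversally so its image is a genuine subsegment, not collapsed to a point). Likewise the image of a half-space $H^\pm(\gamma)$ is a subtree, and it is spanned by its vertex set for the same reason.

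Next I would assemble these simplexwise statements. By Theorem~\ref{glued cc}, $\C(S,z)$ is the union of the pieces $v \times T_v$ glued along faces via the collapsing maps $\phi^* : T_v \to T_{v'}$. The set $\X(\gamma) = \Phi(\C(S) \times \gamma)$ meets the piece over $\INT(v)$ in $\INT(v) \times \Gamma_v$, where $\Gamma_v \subset T_v$ is the subtree that is the image of $\gamma$ under the collapse $\mathbb{H} \to T_v$; and one checks that the face maps $\phi^*$ carry $\Gamma_v$ onto $\Gamma_{v'}$ (this is just functoriality of the collapsing construction with respect to forgetting some of the $N(v_i)$). Hence $\X(\gamma)$ is exactly the subcomplex $\bigsqcup_v (v \times \Gamma_v)/\!\sim$, which is a genuine simplicial subcomplex of $\C(S,z)$; and since each $\Gamma_v$ is spanned by its vertices, so is $\X(\gamma)$. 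The same argument applies verbatim to $\H^\pm(\gamma)$, using the subtrees that are images of $H^\pm(\gamma)$.

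Finally, for weak convexity I would argue that each $\Gamma_v$ is weakly convex in $T_v$ — in fact a subtree of a tree is convex, so any two vertices are joined by the unique embedded edge-path, which lies in $\Gamma_v$. Then I would use the structure of $\C(S,z)$ as glued trees to produce, given two vertices $u, u' \in \X(\gamma)$, an explicit path in $\X(\gamma)$: first a combinatorial path in the ``tree of trees'' reaching from the fiber containing $u$ to the fiber containing $u'$, realized inside the subtrees $\Gamma_v$ at each stage, and then verify this path (suitably tightened) is a geodesic in $\C(S,z)$. Here I expect the main obstacle: weak convexity requires not just that $\X(\gamma)$ be path-connected inside itself, but that an \emph{actual geodesic} of $\C(S,z)$ between two of its points stays inside it, and $\C(S,z)$ is only $\delta$-hyperbolic, not a tree, so one cannot simply quote tree convexity globally. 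I would handle this by showing the subcomplex $\X(\gamma)$, with its induced path metric, embeds isometrically (on vertices, up to the standard comparison between simplicial and path metrics) — exploiting that within a single simplex $v \times T_v$ the metric is a product-type metric and $\Gamma_v$ is convex there, and that the gluing maps are distance-nonincreasing collapses, so a locally geodesic path in $\X(\gamma)$ built from convex pieces in each fiber cannot be shortcut in the ambient complex. This isometric-embedding / ``no shortcuts'' verification is the technical heart; once it is in place, weak convexity of $\X(\gamma)$ and $\H^\pm(\gamma)$ follows, and uniform quasiconvexity in the $\delta$-hyperbolic space $\C(S,z)$ is then automatic from the remark preceding the proposition.
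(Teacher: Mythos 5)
Your first half — the fiberwise description of $\X(\gamma)$ and $\H^\pm(\gamma)$ via Theorem~\ref{glued cc}, the compatibility of the fiber subtrees with the face maps $\phi^*$, and the conclusion that these sets are subcomplexes spanned by their vertex sets — matches the paper's argument in substance (the paper additionally identifies the fiber $\X(\gamma)\cap\Pi^{-1}(v)$ precisely as the axis $\gamma_v$ of $\delta$ in $T_v$, using that $p(\gamma)$ is filling so $\delta$ is not elliptic, which is what makes the later bookkeeping clean).

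The weak convexity step, however, is where you have a genuine gap: you correctly identify that one must rule out shortcuts through the ambient complex, but you only \emph{announce} an ``isometric-embedding / no shortcuts'' verification without supplying it, and the sketch you give (local geodesics built from convex fiber pieces, plus the remark that the face collapses $\phi^*$ are distance-nonincreasing) does not yield it: in a glued complex of this kind local geodesics need not be global geodesics, and distance-nonincreasing gluing maps give no control over ambient geodesics that wander through fibers far from $\X(\gamma)$. The missing idea, which is the actual content of the paper's proof, is a global $1$-Lipschitz retraction: define $\rho:\C(S,z)\to\X(\gamma)$ fiberwise by the nearest-point projection $\eta_v:T_v\to\gamma_v$ onto the axis, extended by the identity on the simplex factor; the identity $\eta_{v'}\circ\phi^*=\phi^*\circ\eta_v$ (a geodesic from $t$ to $\gamma_v$ maps to a geodesic from $\phi^*(t)$ to $\gamma_{v'}$) shows $\rho$ is well defined on the quotient of Theorem~\ref{glued cc}, and one checks it is simplicial, hence distance-nonincreasing. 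Then for $u_1,u_2\in\X(\gamma)$ any ambient geodesic between them is carried by $\rho$ to a path in $\X(\gamma)$ of no greater length, which is therefore itself an ambient geodesic inside $\X(\gamma)$; this is exactly the statement of weak convexity, and the same projection argument (onto the half-trees) handles $\H^\pm(\gamma)$. Without constructing such a retraction, or some equivalent global mechanism, your plan does not establish weak convexity.
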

To say that a subcomplex $\Omega \subset \C(S,z)$ is \textit{spanned by its vertex set}, we mean that $\Omega$ is the
largest subcomplex having $\Omega^{(0)}$ as its vertex set.

\begin{proof}[Proof of Proposition \ref{weakconvex}.] We describe the case of $\X(\gamma)$, with $\H^\pm(\gamma)$ handled by similar arguments.
First we appeal to Proposition \ref{prop: factors through trees} and Theorem \ref{glued cc} to describe the structure
of $\X(\gamma) \subset \C(S,z)$.  Next we prove that $\X(\gamma)$ is spanned by its vertices and finally we construct a
simplicial projection $\rho : \C(S,z) \to \X(\gamma)$. The existence of $\rho$ implies the proposition.

For any $x \in \INT(v)$, $\X(\gamma) \cap \Pi^{-1}(x) = \Phi(\{x\} \times \gamma)$, which is a biinfinite geodesic in
the tree $\Pi^{-1}(x) \cong T_v$.  One can also see this as the axis of $\delta$ in $T_v$ (since $p(\gamma)$ is
filling, $\delta$ is not elliptic in $T_v$); we denote this axis by $\gamma_v \subset T_v$. Recall that an inclusion of
faces $\phi:v' \to v$ induces a quotient of associated trees $\phi^*:T_v \to T_{v'}$.  Since the axis of $\delta$ in
$T_v$ is sent to the axis of $\delta$ in $T_{v'}$ by $\phi^*$, we have $\phi^*(\gamma_v) = \gamma_{v'}$.  Therefore,
with respect to our homeomorphism with the quotient of Theorem \ref{glued cc}, we have
\begin{equation} \label{glued Xgamma}
\X(\gamma) \cong \left. \left( \bigsqcup_{v \subset \C(S)} v \times \gamma_v \right) \right/ \sim \end{equation}
where, as in Theorem \ref{glued cc}, the disjoint union is over all simplices $v \subset \C(S)$, and the equivalence relation is generated by
\[(\phi(x),t) \sim (x,\phi^*(t)) \]
for all faces $\phi:v' \to v$, all $x \in v'$ and all $t \in \gamma_v$.
We also use the homeomorphism in \eqref{glued Xgamma} to identify the two spaces.

We can now show that $\X(\gamma)$ is spanned by its vertices.
The simplices of $\C(S,z)$ via the homeomorphism of Theorem \ref{glued cc} are precisely the images of cells $v \times
\sigma$ in the quotient, where $v \subset \C(S)$ is a simplex and $\sigma \subset T_v$ is an edge or vertex.   Thus, if
the image of $v \times \sigma$ is a simplex, and we let $v_0,...,v_k$ be the vertices of $v$ and $t_0,t_1$ the vertices
of $\sigma$ (assuming, for example, that $\sigma$ is an edge) then the vertices of the simplex determined by $v \times
\sigma$ are images of $(v_i,t_j)$ for $i =0,...,k$ and $j = 0,1$.  If these vertices lie in $\X(\gamma)$, then $t_0,t_1
\in \gamma_v$, hence $\sigma \subset \gamma_v$ and the image of $v \times \sigma$ lies in $\X(\gamma)$. It follows that
$\X(\gamma)$ is a simplicial subcomplex of $\C(S,z)$ spanned by its vertex set.

Next, we will define a projection
\[\rho:\C(S,z) \to \X(\gamma).\]
Let $\eta_v : T_v \to \gamma_v$ be the nearest point projection
map.  Extend $\eta_v$ by the identity map to obtain $\rho_v : v
\times T_v \to v \times \gamma_v$.
Observe that if $\phi:v' \to v$ is a face, then nearest-point projections commute
\[ \eta_{v'} \circ \phi^* = \phi^* \circ \eta_v.\]
This is because a geodesic segment in $T_v$ from a point $t$ to $\gamma_v$ is taken to a geodesic segment from $\phi^*(t)$ to $\gamma_{v'}$.
From this it follows that the maps $\rho_v$ give a well-defined map $\rho$.

All that remains is to verify that $\rho$ is simplicial.  Given a simplex which is the image of $v \times \sigma$ in
the quotient, for some $v \subset \C(S)$ and $\sigma \subset T_v$, the $\rho$--image of this simplex is the image of
$\rho_v(v \times \sigma) = v \times \eta_v(\sigma)$ in the quotient.  Since $\eta_v(\sigma)$ is either an edge or
vertex, $v \times \eta_v(\sigma)$ projects to a simplex in the quotient, as required.
\end{proof}

Throughout what follows we continue to denote the axis of $\delta$ in $T_v$ by $\gamma_v \subset T_v$ or, with respect to the homeomorphism $T_v \cong \Pi^{-1}(v)$, by $\gamma_v = \Phi(\{v\} \times \gamma)$.

\begin{proposition} \label{separating half spaces}
We have
\[ \H^+(\gamma) \cup \H^-(\gamma) = \C(S,z) \]
and
\[ \H^+(\gamma) \cap \H^-(\gamma) = \X(\gamma).\]
\end{proposition}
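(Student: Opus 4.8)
The plan is to dispatch the union by a formal surjectivity argument and then reduce the intersection statement to a single connectivity observation about fibers. For the union: since $H^+(\gamma)$ and $H^-(\gamma)$ are the two \emph{closed} half spaces of $\gamma$, their union is all of $\mathbb H$, so
\[ \H^+(\gamma) \cup \H^-(\gamma) = \Phi\big(\C(S) \times (H^+(\gamma) \cup H^-(\gamma))\big) = \Phi(\C(S) \times \mathbb H). \]
By Proposition \ref{prop: factors through trees}, for each $x \in \C(S)$ the restriction $\Phi_x \colon \mathbb H \to \Pi^{-1}(x)$ is the surjective projection $\mathbb H \to T_v$ followed by the homeomorphism $T_v \cong \Pi^{-1}(x)$, hence is onto $\Pi^{-1}(x)$; therefore $\Phi(\C(S) \times \mathbb H) = \bigcup_{x} \Pi^{-1}(x) = \C(S,z)$, which is the first equality.

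For the intersection, the containment $\X(\gamma) \subset \H^+(\gamma) \cap \H^-(\gamma)$ is immediate since $\gamma \subset H^\pm(\gamma)$ and hence $\X(\gamma) = \Phi(\C(S)\times\gamma) \subset \H^\pm(\gamma)$. For the reverse containment I would take $u = \Phi(x^+, p^+) = \Phi(x^-, p^-)$ with $p^\pm \in H^\pm(\gamma)$, apply $\Pi$, and use that $\Phi(x,\cdot)$ lands in $\Pi^{-1}(x)$ (Proposition \ref{prop: factors through trees}) to conclude $x^+ = \Pi(u) = x^-$. Writing $x$ for this common value and $v$ for the simplex with $x \in \INT(v)$, Proposition \ref{prop: factors through trees} factors $\Phi_x$ as $g \circ \pi_v$, where $\pi_v \colon \mathbb H \to T_v$ is the projection built in its proof and $g$ is a homeomorphism; since $g$ is injective, $\Phi_x(p^+) = \Phi_x(p^-)$ forces $\pi_v(p^+) = \pi_v(p^-)$, so $p^+$ and $p^-$ lie in a common fiber $F$ of $\pi_v$.

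The crux is that $F$ is connected. By the construction of $\pi_v$, every fiber is either a component of the complement in $\mathbb H$ of the strips $p^{-1}(N(v_i))$, or a level curve of the normalized distance-to-boundary function inside one such strip, and each of these is connected. A connected subset of $\mathbb H$ that meets both closed half spaces $H^+(\gamma)$ and $H^-(\gamma)$ must meet $\gamma = H^+(\gamma) \cap H^-(\gamma)$, since otherwise it would be contained in the disjoint union of the two open half spaces. Choosing $p \in F \cap \gamma$, we obtain $u = \Phi_x(p^+) = \Phi_x(p) \in \Phi(\{x\}\times\gamma) = \gamma_v \subset \X(\gamma)$, completing the reverse containment.

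The main (and mild) obstacle is establishing the connectivity of the fibers of $\pi_v$; this just amounts to reading off the explicit description of $\pi_v$ from the proof of Proposition \ref{prop: factors through trees} — the chunks of $\mathbb H$ cut out by the preimages of the annuli $N(v_i)$ are connected, and so are the intermediate level curves inside each strip. Every other step is formal.
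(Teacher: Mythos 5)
Your proposal is correct, and its engine is the same as the paper's: the preimage in $\mathbb H$ of the point in question is connected, and a connected set meeting both closed half planes $H^+(\gamma)$ and $H^-(\gamma)$ must meet $\gamma$, so the point lies in $\Phi(\C(S)\times\gamma)=\X(\gamma)$. Where you diverge is in the reduction. The paper runs this argument only at \emph{vertices} of $\C(S,z)$, where the relevant fiber is a complementary region $U$ of $p^{-1}(N(v))$, and then promotes the statement from vertices to arbitrary points of $\H^+(\gamma)\cap\H^-(\gamma)$ by invoking Proposition \ref{weakconvex}, i.e.\ that $\X(\gamma)$ and $\H^\pm(\gamma)$ are subcomplexes spanned by their vertex sets. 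You instead fix an arbitrary $x\in\C(S)$ (after using $\Pi\circ\Phi_x=x$ to see the two representations have the same first coordinate), factor $\Phi_x=g\circ\pi_v$ via Proposition \ref{prop: factors through trees}, and use connectivity of \emph{every} fiber of $\pi_v$ --- the complementary chunks over vertices of $T_v$ and the equidistant level curves over edge-interior points. This buys independence from the ``spanned by its vertex set'' part of Proposition \ref{weakconvex}, at the cost of needing the finer fiber description; the paper's route stays at the combinatorial level of vertices and trees and lets the simplicial structure do the interpolation. Two small remarks: the fiber of $\pi_v$ over a vertex of $T_v$ is really the \emph{closed} complementary region (it picks up the adjacent boundary curves of the strips), not the open complement of the strips as you wrote, but this is harmless since connectivity is all you use; and your verification of the first equality just makes explicit the surjectivity of $\Phi$ (via surjectivity of each $\pi_v$), which the paper asserts without comment.
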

\begin{proof}  The first statement follows from the fact that $H^+(\gamma) \cup H^-(\gamma) =
\mathbb H$ and that $\Phi$ is surjective.

For the second statement, first observe that since $\gamma \subset H^\pm(\gamma)$, it follows that
\[ \X(\gamma) \subset \H^+(\gamma) \cap \H^-(\gamma).\]
To prove the other inclusion, look in each of the trees $\Pi^{-1}(v) \cong T_v$.
For each vertex $v \in \C(S)$, we define the \textit{half-tree}
\[ H^\pm(\gamma_v) := \H^\pm(\gamma) \cap \Pi^{-1}(v) = \Phi(\{v\} \times H^+(\gamma)).\]

Let $u \in H^+(\gamma_v) \cap H^-(\gamma_v)$ be any vertex; we will show that $u \in \gamma_v$.
We can write $u = \Phi(\{v\} \times U)$ where $U$ is a component of $\mathbb H - p^{-1}(N(v))$.
Therefore, $U \cap H^+(\gamma) \neq \emptyset$ and $U \cap H^-(\gamma) \neq \emptyset$.  Since $U$ is connected and
$\gamma$ separates $H^+(\gamma)$ from $H^-(\gamma)$ we have $U \cap \gamma \neq \emptyset$.  Hence $\Phi(\{v\} \times
U) = u \in \gamma_v$ as required.

Given any simplex $u = \{u_0,...,u_k\} \subset \H^+(\gamma) \cap \H^-(\gamma)$, by the previous paragraph we have $u_j \in \X(\gamma)$.  Since $\X(\gamma)$ is a subcomplex spanned by its vertex set, we have $u \subset \X(\gamma)$ and hence
\[\H^+(\gamma) \cap \H^-(\gamma) \subset \X(\gamma)\]
which completes the proof.
\end{proof}

It will be convenient to keep the terminology in the proof of this proposition as well. We therefore think of $\gamma_v$ as ``bounding the half-trees'' $H^\pm(\gamma_v) \subset T_v \cong \Pi^{-1}(v)$.

\subsection{Rays and existence of Cannon--Thurston maps.} \label{S:rays and existence of ct maps}

An \textit{essential subsurface} of $S$ is either a component of the complement of a geodesic multicurve in $S$, the annular neighborhood $N(v)$ of some geodesic $v \in \C^0(S)$, or else the entire surface $S$.

A point $x \in \partial \mathbb H$ is a {\em filling point} for an essential subsurface $Y$ (or simply,
$x$ {\em fills} $Y$) if
\begin{itemize}
\item
for every geodesic ray $r \subset \mathbb H$ ending at $x$ and for every $v
\in \C^0(S)$ which nontrivially intersects $Y$, we have $p(r) \cap v \neq \emptyset$ and
\item
there is a geodesic ray $r \subset \mathbb H$ ending at $x$ so that $p(r)
\subset Y$.
\end{itemize}

Observe, by taking subrays, that any such ray $r$ in fact meets
infinitely many components of $p^{-1}(v)$.  Observe also that every
point $x \in \partial \mathbb H$ fills exactly one essential subsurface of $S$.

Let $\ATF \subset \partial \mathbb H$ be the set of points that fill $S$.
\begin{lemma}
If $x \not \in \ATF$ and $r$ is a ray ending at $x$ then $\Phi(\{v\} \times
r)$ has bounded diameter for all $v \in \C^0(S)$.
\end{lemma}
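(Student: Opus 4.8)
The plan is to combine the $\pi_1(S)$--equivariance of $\Phi$ with the fact that the stabilizer of a lift of a proper essential subsurface fixes a point of $\C(S,z)$. First I would unwind the hypothesis: since every point of $\partial\mathbb H$ fills exactly one essential subsurface and $x\notin\ATF$, the point $x$ fills some \emph{proper} essential subsurface $Y\subsetneq S$, so $Y$ is either the annular neighborhood $N(v_0)$ of some $v_0\in\C^0(S)$ or a component of the complement of a geodesic multicurve. By the second clause in the definition of a filling point there is a geodesic ray $r_0$ ending at $x$ with $p(r_0)\subseteq Y$, and lifting, $r_0$ lies in a single component $\widetilde Y_0$ of $p^{-1}(Y)$. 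When $Y=N(v_0)$, $\widetilde Y_0$ is the strip $N_{\epsilon(v_0)}(\widetilde v_0)$ about a lift $\widetilde v_0$ of $v_0$, and since a geodesic ray confined to a strip of bounded width limits onto an endpoint of the strip's core geodesic, $x$ is an endpoint of $\widetilde v_0$. In all cases set $\Gamma_0=\mathrm{Stab}_{\pi_1(S)}(\widetilde Y_0)$; then $\Gamma_0$ acts cocompactly on $\widetilde Y_0$.

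Next I would produce a point $q_\ast\in\C(S,z)$ fixed by $\Gamma_0$. If $Y=N(v_0)$, then $\Gamma_0=\mathrm{Stab}_{\pi_1(S)}(\widetilde v_0)$ is infinite cyclic and is an edge stabilizer of the splitting of $\pi_1(S)$ over $v_0$, so it fixes an edge, hence a point, of $T_{v_0}$. If $Y$ is a complementary component of a multicurve, pick a boundary curve $w$ of $Y$; then $Y$ lies in a single component $Y'$ of $S\setminus w$, so $\widetilde Y_0$ lies in a component $\widetilde Y_0'$ of $p^{-1}(Y')$, and $\Gamma_0\leq\mathrm{Stab}_{\pi_1(S)}(\widetilde Y_0')$, which is a vertex stabilizer of the splitting of $\pi_1(S)$ over $w$; hence $\Gamma_0$ fixes a vertex of $T_w$. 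In either case the $\pi_1(S)$--equivariant homeomorphism $\Pi^{-1}(w)\cong T_w$ of Theorem \ref{T:maintree} (with $w=v_0$ in the annular case) transports this to a point $q_\ast\in\Pi^{-1}(w)\subseteq\C(S,z)$ fixed by $\Gamma_0$.

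Now I would run the equivariance argument. Since $r$ and $r_0$ both end at $x$ they are asymptotic, so there is $T_0\geq 0$ with $r(t)$ in the $1$--neighborhood $N_1(\widetilde Y_0)$ for all $t\geq T_0$. Cocompactness of the $\Gamma_0$--action on $\widetilde Y_0$ gives a compact set $K\subseteq\mathbb H$ with $N_1(\widetilde Y_0)\subseteq\Gamma_0\cdot K$, so for $t\geq T_0$ we may write $r(t)=g_tq_t$ with $g_t\in\Gamma_0$ and $q_t\in K$. By Proposition \ref{phidescends}, $\Phi$ is $\pi_1(S)$--equivariant and trivial in the first coordinate, so $\Phi(v,r(t))=g_t\cdot\Phi(v,q_t)$; since $g_t$ acts on $\C(S,z)$ as an isometry fixing $q_\ast$,
\[ d_{\C(S,z)}\big(\Phi(v,r(t)),q_\ast\big)=d_{\C(S,z)}\big(\Phi(v,q_t),q_\ast\big)\leq\sup_{q\in K}d_{\C(S,z)}\big(\Phi_v(q),q_\ast\big)=:D. \]
Here $D<\infty$ because $\Phi_v\colon\mathbb H\to\C(S,z)$ is continuous: by Proposition \ref{prop: factors through trees} it is the composite of the continuous collapsing projection $\mathbb H\to T_v$ with the homeomorphism of $T_v$ onto the subcomplex $\Pi^{-1}(v)\subseteq\C(S,z)$, so $\Phi_v(K)$ is compact, hence bounded. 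Finally, $\Phi_v$ sends the compact set $\{\,r(t):0\leq t\leq T_0\,\}$ to a bounded set and, by the displayed estimate, sends $\{\,r(t):t\geq T_0\,\}$ into the ball of radius $D$ about $q_\ast$; therefore $\Phi(\{v\}\times r)$ has finite diameter, as required.

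The main obstacle is the middle step: recognizing $\mathrm{Stab}_{\pi_1(S)}(\widetilde Y_0)$ as a subgroup of a vertex (or edge) stabilizer of an appropriate one--curve splitting of $\pi_1(S)$ and then transporting the resulting fixed point to $\C(S,z)$ via Theorem \ref{T:maintree}. A secondary point requiring care is the case analysis on the two shapes of proper essential subsurface, together with the observation used in the annular case that a geodesic ray confined to a strip converges to an endpoint of the strip's core geodesic.
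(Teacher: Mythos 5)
Your proof is correct, but it takes a genuinely different route from the paper's. The paper exploits the \emph{first} clause of the filling condition: since $x$ does not fill $S$, there is a single curve $v' \in \C^0(S)$ with $p(r) \cap v'$ finite, so $r$ crosses only finitely many components of $p^{-1}(N(v'))$; hence $\Phi(\{v'\}\times r)$, which lies in the fiber $\Pi^{-1}(v') \cong T_{v'}$, has bounded diameter there, and the case of an arbitrary $v$ follows because $\Phi(\{v\}\times r)$ and $\Phi(\{v'\}\times r)$ lie at bounded Hausdorff distance (for fixed second coordinate, moving the first coordinate along a path in $\C(S)$ moves the image a bounded amount). You instead exploit the \emph{second} clause: the proper subsurface $Y$ filled by $x$, a ray $r_0$ inside a component $\widetilde Y_0$ of $p^{-1}(Y)$, the action of $\Gamma_0 = \mathrm{Stab}_{\pi_1(S)}(\widetilde Y_0)$, and a $\Gamma_0$--fixed point $q_\ast \in \C(S,z)$ coming from an edge or vertex stabilizer of the Bass--Serre tree of a boundary curve, concluding by the $\pi_1(S)$--equivariance of $\Phi$. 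Both arguments work; the paper's is shorter and needs no discussion of stabilizers or fixed points, while yours is more structural and actually proves something slightly stronger, namely that $\Phi_v(N_1(\widetilde Y_0))$ lies in a single bounded neighborhood of $q_\ast$, so all rays ending in $\overline{\widetilde Y_0} \cap \partial \mathbb H$ are handled at once with a bound depending only on $Y$, $v$ and the fixed point. One small imprecision to fix: when $Y$ is an open complementary component of a multicurve, the $\Gamma_0$--action on $\widetilde Y_0$ itself is not literally cocompact (the quotient is the open, precompact surface $Y$); what you need, and what is true, is that $\overline{\widetilde Y_0}/\Gamma_0$ is the compact completed subsurface with geodesic boundary, whence the closed $1$--neighborhood of $\widetilde Y_0$ is indeed contained in $\Gamma_0 \cdot K$ for some compact $K$.
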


\begin{proof}
Since $x$ does not fill $S$ there is a simple closed geodesic $v' \subset S$ so that $p(r) \cap v'$ is finite.  It follows that $\Phi(\{v'\} \times
r)$ has bounded diameter in $\Pi^{-1}(v') \subset \C(S, z)$.  Since
$\Phi(\{v'\} \times r)$ and $\Phi(\{v\} \times r)$ have bounded Hausdorff distance, we are done.
\end{proof}

Recall that we have fixed once and for all a geodesic $\gamma \subset \mathbb H$ which projects to a non-simple closed
geodesic in $S$.  Consider a set $\{\gamma_n\}$ of pairwise distinct $\pi_1(S)$--translates of $\gamma$, with the
property that the half spaces are nested:
\[ H^+(\gamma_1) \supset H^+(\gamma_2) \supset ... \]
Since the $\gamma_n$ are all distinct, proper discontinuity of the action of $\pi_1(S)$ on $\mathbb H$ implies that
\[ \bigcap_{n=1}^\infty \overline{H^+(\gamma_n)} = \{x\} \]
for some $x \in \partial \mathbb H$.  Here the bar denotes closure in $\overline{\mathbb H} = \mathbb H \cup
\partial \mathbb H$.  We say that $\{\gamma_n\}$ \textit{nests down to $x$}.  Note that $\{ \overline{H^+(\gamma_n)}\}$
is a neighborhood basis for $x$.

Given any $x \in \partial \mathbb H$, if $r \subset \mathbb H$ is a geodesic ray ending at $x$, then since $p(\gamma)$
is filling, $p(r)$ intersects $p(\gamma)$ infinitely often.  It follows that there is a sequence $\{\gamma_n\}$ which
nest down to $x$.

\begin{proposition} \label{nesting distance}
If $\{\gamma_n\}$ is a sequence nesting down to a point $x \in \ATF$, then for any choice of basepoint $u_0 \in \C(S,z)$,
\[d(u_0,\H^+(\gamma_n)) \to \infty\]
as $n \to \infty$.
\end{proposition}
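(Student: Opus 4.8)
The statement to prove is Proposition~\ref{nesting distance}: if $\{\gamma_n\}$ nests down to $x \in \ATF$, then $d(u_0, \H^+(\gamma_n)) \to \infty$ for any basepoint $u_0 \in \C(S,z)$.  The natural strategy is to argue by contradiction: suppose the distances stay bounded along a subsequence.  Then for each $n$ in this subsequence we may choose a vertex $w_n \in \H^+(\gamma_n)^{(0)}$ with $d(u_0, w_n) \leq C$ for a fixed constant $C$.  Since the ball of radius $C$ about $u_0$ in $\C(S,z)$ contains only finitely many vertices, after passing to a further subsequence we may assume $w_n = w$ is a single fixed vertex lying in $\bigcap_n \H^+(\gamma_n)$.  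So the whole proposition reduces to showing that $\bigcap_n \H^+(\gamma_n)$ contains no vertex of $\C(S,z)$ (indeed is empty), using crucially that $x$ fills $S$.

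**The key step.**  Write $w = \Phi(\{v\} \times U)$ where $v$ is a vertex of $\C(S)$ with $w \in \Pi^{-1}(v)$ and $U$ is a component of $\mathbb H - p^{-1}(N(v))$; equivalently, $w$ corresponds to a vertex of the tree $T_v \cong \Pi^{-1}(v)$.  Now $w \in \H^+(\gamma_n)$ means, in the language of the proof of Proposition~\ref{separating half spaces}, that $w$ lies in the half-tree $H^+(\gamma_{n,v})$ of $T_v$ bounded by the axis $\gamma_{n,v}$ of (the appropriate conjugate of) $\delta$.  I would unwind this back to $\mathbb H$: $w \in \H^+(\gamma_n)$ forces $U \cap H^+(\gamma_n) \neq \emptyset$, and in fact (since $w \notin \gamma_{n,v}$, as $w$ is a single fixed vertex and the $\gamma_n$ are eventually disjoint from any fixed compact region once they nest past it) $U$ must lie entirely on the $H^+(\gamma_n)$ side — that is, $U \subset H^+(\gamma_n)$ and $U \cap \gamma_n = \emptyset$ for all large $n$.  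Thus $U$ is a connected open subset of $\mathbb H$ contained in $\bigcap_n \overline{H^+(\gamma_n)} \setminus$(a neighborhood of the bounding geodesics), which together with $\bigcap_n \overline{H^+(\gamma_n)} = \{x\}$ shows $\overline U \ni x$ and $U$ avoids all $\gamma_n$.

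**Closing the contradiction.**  Now I use that $x \in \ATF$, i.e. $x$ fills $S$.  Since $\overline U \ni x$, a geodesic ray $r$ ending at $x$ eventually enters $U$ (after possibly replacing $r$ by a subray, since $U$ is convex-ish in the relevant sense — more carefully, one takes a ray from an interior point of $U$ to $x$, which stays in $U$ as $U$ is a component of the complement of a lift of a simple closed geodesic neighborhood, hence a half-plane-like region whose closure contains $x$).  But $U$ is a component of $\mathbb H - p^{-1}(N(v))$, so $p(r)$ eventually misses $v$ — contradicting the filling condition for $x$, which demands $p(r) \cap v \neq \emptyset$ (indeed infinitely often).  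Hence no such $w$ exists, $\bigcap_n \H^+(\gamma_n)$ contains no vertex, the subsequence with bounded distance cannot exist, and $d(u_0, \H^+(\gamma_n)) \to \infty$.  The main obstacle I anticipate is the bookkeeping in the middle step: carefully justifying that $U$ lies entirely inside $H^+(\gamma_n)$ (not merely meeting it) for large $n$, and that the closure of $U$ contains $x$ — this is where one must combine proper discontinuity (so that any fixed compact region, in particular $\overline U \cap$ a large ball, is eventually on the positive side of $\gamma_n$) with the fact that $U$, being a lift of $S - N(v)$, is an unbounded region whose closure in $\overline{\mathbb H}$ must therefore contain the nested-down point $x$.
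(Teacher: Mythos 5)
There is a genuine gap, and it occurs at the very first reduction. You pass from $d(u_0,\H^+(\gamma_n))\leq C$ along a subsequence to a single fixed vertex $w$ lying in $\bigcap_n \H^+(\gamma_n)$ by asserting that the ball of radius $C$ about $u_0$ in $\C(S,z)$ contains only finitely many vertices. Curve complexes are locally infinite: there are infinitely many isotopy classes of curves disjoint from (or at bounded distance from) any given one, so the ball $B(u_0,C)$ contains infinitely many vertices and the pigeonhole extraction of a constant subsequence $w_n=w$ fails. Worse, even the statement you reduce to is not sufficient: in a locally infinite complex, a nested family of subcomplexes can stay within bounded distance of $u_0$ while having empty intersection (the nearby vertices $w_n$ may be pairwise distinct), so ``$\bigcap_n \H^+(\gamma_n)$ contains no vertex'' does not imply $d(u_0,\H^+(\gamma_n))\to\infty$. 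Your closing step is essentially correct as far as it goes --- if $x$ fills $S$ then no vertex $w=\Phi(\{v\}\times U)$ can lie in every $\H^+(\gamma_n)$, since that would force $x\in\overline U$ and hence give a ray to $x$ staying in (the convex region bounded by the lifts of $v$ containing) $U$, whose projection misses $v$ --- but this only rules out a common vertex, not bounded distance.

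This is exactly why the paper's proof is more elaborate. It works in the $1$--skeleton, cuts down to $\Pi^{-1}(B(v_0,R))$ (Claim 1: any path leaving this set already has length at least $R$ because $\Pi$ is simplicial), and then proves a quantitative separation statement (Claim 2): one can choose indices $N_1<\cdots<N_k$ so that the ``walls'' $\gamma_{N_j,v}$ and $\gamma_{N_{j+1},v}$ are disjoint in $T_v$ simultaneously for all $v\in B(v_0,R)$. Proving Claim 2 is where the real work lies: one assumes it fails, extracts regions $U_j$ meeting both $\gamma_{N_k}$ and $\gamma_{N_k+j}$, takes Hausdorff limits of the curves $v_j$ and of connecting geodesics, and uses Proposition \ref{klarreich} (curves in a fixed ball of $\C(S)$ cannot limit onto a lamination containing an ending lamination) together with the structure of laminations to contradict the hypothesis that $x$ fills $S$. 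With Claim 2 in hand, Proposition \ref{separating half spaces} forces any edge path from $u_0$ into $\H^+(\gamma_{N_k})$ inside $\Pi^{-1}(B(v_0,R))$ to cross $k>R+1$ pairwise disjoint sets $\X(\gamma_{N_j})\cap\Pi^{-1}(B(v_0,R))$, hence to have length at least $R$. Some argument of this counting type, making the filling hypothesis quantitative across all curves in a ball of $\C(S)$, is needed; the local infiniteness of $\C(S,z)$ cannot be bypassed by a finiteness-of-balls appeal.
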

\begin{proof}
Recall that the curve complex and its one-skeleton are quasi-isometric \cite{masur-minsky}.
Thus, in what follows all distances will be computed in the $1$--skeleton.
We write $u_0 = \Phi(v_0,y)$ for some vertex $v_0 \in \C(S)$ and $y \in \mathbb H$.  By discarding a finite number of initial elements of the sequence $\{\gamma_n\}$ we may assume that $y \in H^-(\gamma_n)$ for all $n$, and so $u_0 \in \H^-(\gamma_n)$ for all $n$.

Now, fix any $R>0$.  Since
\[ \H^+(\gamma_1) \supset \H^+(\gamma_2) \supset \H^+(\gamma_3) \supset \ldots \]
we must show that there exists $N > 0$ so that for all $u \in \H^+(\gamma_N)$, $d(u_0,u) \geq R$.\\

\noindent {\bf Claim 1.}
It suffices to prove that there exists $N > 0$, so that for all $u \in (\H^+(\gamma_N) \cap \Pi^{-1}(B(v_0,R)))$, the distance inside $\Pi^{-1}(B(v_0,R))$ from $u_0$ to $u$ is at least $R$.\\
\begin{proof}
Observe that any edge path from a point $u \in \C(S,z)$ to $u_0$ which meets $\C(S,z) - \Pi^{-1}(B(v_0,R))$ projects to
a path which meets both $\C(S) - B(v_0,R)$ and $v_0$, and therefore has length at length at least $R$.  Since $\Pi$ is
simplicial, the length of the path in $\C(S,z)$ is also at least $R$.
\end{proof}

The intersection of $\H^+(\gamma_n)$ with each fiber $\Pi^{-1}(v) \cong T_v$ is a half-tree denoted by $H^+(\gamma_{n,v})$
and bounded by $\gamma_{n,v} = \X(\gamma_n) \cap \Pi^{-1}(v)$.  See the proof of Proposition \ref{separating half
spaces} and comments following it.\\

\noindent {\bf Claim 2.}
For any $k > 0$, there exists positive integers $N_1 < N_2 < N_3 < ...< N_k$ so that
\begin{equation} \label{well spaced eq}
\gamma_{N_j,v} \cap \gamma_{N_{j+1},v} = \emptyset
\end{equation}
for all $j = 1,...,k-1$ and all $v \in B(v_0,R)$.\\
\begin{proof}
The proof is by induction on $k$.  For $k = 1$, the condition is vacuously satisfied by setting $N_1 = 1$.  So, we assume it is true for $k \geq 1$, and prove it true for $k+1$. Thus, by hypothesis, we have found $N_1 < N_2 < ... < N_k$ so that \eqref{well spaced eq} is true, and we need to find $N_{k+1}$ so that
\begin{equation} \label{well spaced eq2}
\gamma_{N_k,v} \cap \gamma_{N_{k+1},v} = \emptyset
\end{equation}
for all $v \in B(v_0,R)$.

We suppose that no such $N_{k+1}$ exists and arrive at a contradiction.  Observe that the nesting
\[H^+(\gamma_{1,v}) \supset H^+(\gamma_{2,v}) \supset ... \]
means that if $\gamma_{n,v} \cap \gamma_{m,v} = \emptyset$ for some $m > n$, then $\gamma_{n,v} \cap \gamma_{m+j,v} = \emptyset$ for all $j \geq 0$.

Thus, since no such $N_{k+1}$ exists, it must be that for every $j > 0$, there exists a curve $v_j \in B(v_0,R)$ so that
\[ \gamma_{N_k,v_j} \cap \gamma_{N_k+j,v_j} \neq \emptyset.\]
Let $u_j \in \gamma_{N_k,v_j} \cap \gamma_{N_k+j,v_j}$ be a vertex in the intersection.  This vertex is the image of a component $U_j \subset \mathbb H - p^{-1}(v_j)$ which meets both $\gamma_{N_k}$ and $\gamma_{N_k+j}$; see Figure \ref{wellspacedfiglabel}.
\begin{figure}[htb]
\begin{center}
\ \psfig{file=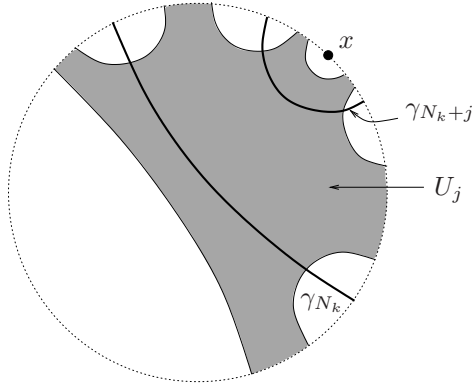,height=2truein}
\caption{The region $U_j$ and the geodesics $\gamma_{N_k}$ and $\gamma_{N_k+j}$ from the sequence nesting down on $x$.}
\label{wellspacedfiglabel}
\end{center}
 \setlength{\unitlength}{1in}
  \begin{picture}(0,0)(0,0)
    \put(2.85,1.15){$\gamma_{N_k}$}
    \put(3.4,2.15){$\gamma_{N_k+j}$}
    \put(3.55,1.73){$U_j$}
    \put(3.05,2.5){$x$}
  \end{picture}
\end{figure}

Let $g_j \subset U_j \subset \mathbb H$ be a geodesic segment connecting a point $y_j^- \in \gamma_{N_k}$ to $y_j^+ \in \gamma_{N_k+j}$.
Furthermore, we may pass to a subsequence so that the $y_j^-$
converge to some point $y$ (possibly in $\partial \mathbb H$) of
$\gamma_{N_k}$. It follows that the sequence of geodesics $g_j$
converge to a geodesic ray or line $r_\infty$ connecting $y$ and $x$.

By passing to a further subsequence, we can assume that $v_j$ limits in the Hausdorff topology to a geodesic lamination
$\mathcal L$, and that $p(r_\infty)$ does not transversely intersect $\mathcal L$.  Because the $v_j$ are all contained
in $B(v_0,R)$, $\mathcal L$ cannot contain an ending lamination as a sublamination by Proposition \ref{klarreich}. It
follows from \cite{CB} that $\mathcal L$ is obtained from a lamination supported on a proper subsurface $\Sigma$ by
adding a finite number of isolated leaves.  Any geodesic in $S$ which does not transversely intersect $\mathcal L$ can
only transversely intersect $\partial \Sigma$ twice (when it possibly exits/enters a crown; see \cite{CB}). Since
$p(r_\infty)$ meets $\partial \Sigma$ at most twice the point $x$ does not fill $S$, a contradiction.
\end{proof}

Now, pick an integer $k > R + 1$ and let $N_1 < N_2 < ... < N_k$ be as in Claim 2.  There can be no vertices in $\X(\gamma_{N_j}) \cap \X(\gamma_{N_{j+1}}) \cap \Pi^{-1}(B(v_0,R))$, and hence
\[ \X(\gamma_{N_j}) \cap \X(\gamma_{N_{j+1}}) \cap \Pi^{-1}(B(v_0,R)) = \emptyset. \]
Moreover, since
\[ \H^+(\gamma_{N_1}) \supset \H^+(\gamma_{N_2}) \supset ... \supset \H^+(\gamma_{N_k}) \]
it follows from Proposition \ref{separating half spaces} that
\begin{equation} \label{completely disjoint}
\X(\gamma_{N_j}) \cap \X(\gamma_{N_i}) \cap \Pi^{-1}(B(v_0,R)) = \emptyset
\end{equation}
for all $i \neq j$ between $1$ and $k$.

Let $u \in \H^+(\gamma_{N_k}) \cap \Pi^{-1}(B(v_0,R))$ be any point and $\{u_0,u_1,...,u_m=u\}$ be the vertices of an edge path from $u_0$ to $u$ within $\Pi^{-1}(B(v_0,R))$.  We have $u_0 \in \H^-(\gamma_{N_j})$ for all $j$ and $u \in \H^+(\gamma_{N_k}) \subset \H^+(\gamma_{N_j})$ for all $j$.  By Proposition \ref{separating half spaces}, the edge path must meet $\X(\gamma_{N_j})$ for each $j$.  That is, for each $j$, there is some $i = i(j)$ so that $u_{i(j)} \in \X(\gamma_{N_j})$.  By \eqref{completely disjoint}, there must therefore be at least $k > R+1$ vertices in the path, and hence the length of the path is at least $R$.

Therefore, setting $N = N_k$, we have for all $u \in \H^+(\gamma_N) \cap \Pi^{-1}(B(v_0,R))$, the distance inside $\Pi^{-1}(B(v_0,R))$ from $u_0$ to $u$ is at least $R$.  By Claim 1, this completes the proof of the proposition.
\end{proof}

We can now prove the first half of Theorem \ref{cannonthurston}.

\begin{theorem} \label{cannonthurstonhalf}
For any $v \in \C^0(S)$, the map
$$\Phi_v: \mathbb H \to \C(S,z)$$
has a continuous $\pi_1(S)$--equivariant extension to
$$\overline \Phi_v: \mathbb H \cup \ATF \to \overline \C(S,z).$$
\end{theorem}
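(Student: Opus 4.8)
The plan is to deduce the theorem from the abstract Cannon--Thurston criterion, Lemma \ref{ct-crit}, applied with $Y = \mathbb H$, $X = \C(S,z)$, $F = \Phi_v$, and $Z = \ATF$, using the half-spaces $\H^+(\gamma)$ and their nesting as the source of uniformly quasiconvex sets; equivariance and uniqueness are then formal. Fix once and for all a basepoint $u_0 \in \C(S,z)$.

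First I would fix $x \in \ATF$ and, as in the discussion preceding Proposition \ref{nesting distance}, choose a sequence $\{\gamma_n\}$ of pairwise distinct $\pi_1(S)$--translates of $\gamma$ nesting down to $x$; such a sequence exists because $p(\gamma)$ is filling, and $\{\overline{H^+(\gamma_n)}\}$ is a neighborhood basis for $x$ in $\overline{\mathbb H}$. I would then set $B_n = \overline{H^+(\gamma_n)} \cap (\mathbb H \cup \ATF)$, which is a neighborhood basis for $x$ in $\mathbb H \cup \ATF$, and $Q_n = \H^+(\gamma_n)$. Since $H^+(\gamma_n)$ is the closed half-space, so that $\gamma_n \subset H^+(\gamma_n)$, we have $B_n \cap \mathbb H = H^+(\gamma_n)$ and hence
\[ \Phi_v(B_n \cap \mathbb H) = \Phi(\{v\} \times H^+(\gamma_n)) \subset \Phi(\C(S) \times H^+(\gamma_n)) = \H^+(\gamma_n) = Q_n. \]
By Proposition \ref{weakconvex} each $\H^+(\gamma_n)$ is weakly convex, hence $K$--quasiconvex for a constant $K$ depending only on the hyperbolicity constant of $\C(S,z)$; so the family $\{Q_n\}$ is uniformly quasiconvex. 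By Proposition \ref{nesting distance}, $d(u_0, Q_n) = d(u_0, \H^+(\gamma_n)) \to \infty$ as $n \to \infty$. Lemma \ref{ct-crit} then produces a continuous extension $\overline\Phi_v : \mathbb H \cup \ATF \to \overline\C(S,z)$, with $\overline\Phi_v(x)$ the unique point of $\bigcap_n \overline{Q_n} = \bigcap_n \partial Q_n$.

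It remains to record uniqueness and equivariance. Uniqueness of the extension is automatic, since $\mathbb H$ is dense in $\mathbb H \cup \ATF$ and $\overline\C(S,z)$ is Hausdorff. For equivariance I would first note that $\ATF$ is $\pi_1(S)$--invariant: the deck group satisfies $p \circ g = p$, so if $r$ ends at $x$ then $g(r)$ ends at $gx$ and $p(g(r)) = p(r)$, whence the two conditions defining ``fills $S$'' transfer from $x$ to $gx$. Also $\pi_1(S) < \Mod(S,z)$ acts simplicially, hence isometrically, on $\C(S,z)$, and therefore by homeomorphisms on $\overline\C(S,z)$. Since $\Phi$, and hence its restriction $\Phi_v$, is $\pi_1(S)$--equivariant by Proposition \ref{phidescends}, for $g \in \pi_1(S)$ and $x \in \ATF$ one picks $y_n \to x$ in $\mathbb H$, so that $g y_n \to g x$, and computes $\overline\Phi_v(gx) = \lim_n \Phi_v(g y_n) = \lim_n g\,\Phi_v(y_n) = g\,\overline\Phi_v(x)$, using continuity of $\overline\Phi_v$ and of the action of $g$ on $\overline\C(S,z)$.

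I do not expect a genuine obstacle here: the substantive content has already been isolated in Propositions \ref{weakconvex} and \ref{nesting distance} and in the criterion Lemma \ref{ct-crit}. The one point that calls for a little care is the passage from ``each $\H^+(\gamma_n)$ is weakly convex'' to ``the family $\{\H^+(\gamma_n)\}$ is uniformly quasiconvex'', i.e. the observation that in a $\delta$--hyperbolic space a weakly convex set is quasiconvex with a constant depending only on $\delta$ and not on the set; this is precisely why Proposition \ref{weakconvex} was phrased in terms of weak convexity rather than quasiconvexity.
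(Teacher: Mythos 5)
Your proposal is correct and follows essentially the same route as the paper: the paper's proof also verifies the criterion of Lemma \ref{ct-crit} using the half-spaces $\H^+(\gamma_n)$ for a sequence nesting down on $x$, with Proposition \ref{weakconvex} supplying uniform quasiconvexity (weak convexity in a $\delta$--hyperbolic space) and Proposition \ref{nesting distance} supplying $d(u_0,\H^+(\gamma_n)) \to \infty$, and then deduces equivariance from equivariance of $\Phi_v$, continuity of $\overline\Phi_v$, and density of $\mathbb H$ in $\mathbb H \cup \ATF$. Your write-up merely spells out a few details (the neighborhood basis, invariance of $\ATF$) that the paper leaves implicit.
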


\begin{proof}  Observe that $\Phi_v$ is already defined and continuous.  All that remains is to extend it to $\overline
\Phi_v$ on $\ATF$ by checking the criterion of Lemma \ref{ct-crit}.  Equivariance will follow from equivariance of $\Phi_v$, continuity of $\overline \Phi_v$, and the fact that $\mathbb H$ is dense in $\mathbb H \cup \ATF$.

Fix a basepoint $u_0 \in \C(S,z)$. Given any $x \in \ATF$, let $\{\gamma_n\}$ be any sequence nesting down on $x$.
According to Proposition \ref{nesting distance}, we have
\[ d(u_0,\H^+(\gamma_n)) \to \infty. \]
Moreover, by Proposition \ref{weakconvex}, $\H^+(\gamma_n)$ is weakly convex and hence uniformly quasi-convex. Finally,
observe that $\Phi_v(H^+(\gamma_n)) = \Phi(\{v\} \times H^+(\gamma_n)) \subset \H^+(\gamma_n)$.  Since $x \in \ATF$ was
an arbitrary point, Lemma \ref{ct-crit} implies the existence of an $\ATF$--Cannon-Thurston map $\overline\Phi_v$.
\end{proof}

We note that, given $x \in \ATF$, the image $\overline\Phi_v(x)$ depends only on $x$, not on $v$, and is the
unique point of intersection of the sets
\[\bigcap_n \overline{\H^+(\gamma_n)}. \]
We can therefore unambiguously define $\partial \Phi: \ATF \to \partial \C(S,z)$ by $\partial \Phi(x) =
\overline\Phi_v(x)$ for any $x \in \ATF$, independent of the choice of $v \in \C^0(S)$.

\subsection{Separation.}

\begin{proposition} \label{disjointhoods}
Given $x,y \in \ATF$, let $\epsilon$ be the geodesic connecting them.  Then there are
$\pi_1(S)$--translates $\gamma_x$ and $\gamma_y$ of $\gamma$ defining half-space neighborhoods $\overline{H^+(\gamma_x)}$ and
$\overline{H^+(\gamma_y)}$ of $x$ and $y$, respectively, with
\[\partial \H^+(\gamma_x) \cap \partial \H^+(\gamma_y) = \emptyset\]
if and only if $p(\epsilon)$ is non-simple.
\end{proposition}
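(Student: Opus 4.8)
The plan is to prove the two directions of the equivalence separately, exploiting the dichotomy that $p(\epsilon)$ is simple if and only if $\epsilon$ is disjoint from all its $\pi_1(S)$--translates.

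\textbf{The ``only if'' direction (contrapositive): if $p(\epsilon)$ is simple, no such $\gamma_x,\gamma_y$ exist.} Suppose $p(\epsilon)$ is simple; then $x$ and $y$ fill a common essential subsurface $Y$ (the smallest subsurface filled by $\epsilon$), and in fact $\partial\Phi(x)=\partial\Phi(y)$ precisely because, by Lemma~\ref{injective comp reg} and the fact that $x,y$ are the endpoints of a single leaf of $p^{-1}$ of the corresponding lamination, $\overline\Phi_v$ sends them to the same point of $\partial\C(S,z)$. Concretely: for any $\pi_1(S)$--translate $\gamma_x$ with $x\in\overline{H^+(\gamma_x)}$, one shows $\partial\Phi(x)\in\bigcap_n\overline{\H^+(\gamma_{x,n})}\subseteq\overline{\H^+(\gamma_x)}$, and similarly $\partial\Phi(y)\in\overline{\H^+(\gamma_y)}$. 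Since $\partial\Phi(x)=\partial\Phi(y)$ lies in $\partial\H^+(\gamma_x)\cap\partial\H^+(\gamma_y)$ (a point of $\partial\C(S,z)$ in the closure of each half-space lies in the boundary of each, as the half-spaces are proper), this intersection is nonempty. Hence no disjoint pair exists. (Even without invoking that $\partial\Phi(x)=\partial\Phi(y)$, one can argue directly: a translate $\gamma_x$ cutting off $x$ from $y$ must cross $\epsilon$, but when $p(\epsilon)$ is simple every translate of $\gamma$ crossing $\epsilon$ crosses it transversally in $S$; one then traces through the fiber trees to find a common vertex of $\X(\gamma_x)$ and $\X(\gamma_y)$ in some $T_v$, forcing $\partial\H^+(\gamma_x)\cap\partial\H^+(\gamma_y)\neq\emptyset$.)

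\textbf{The ``if'' direction: if $p(\epsilon)$ is non-simple, such $\gamma_x,\gamma_y$ exist.} Since $p(\epsilon)$ is non-simple, $\epsilon$ has a transverse $\pi_1(S)$--translate $\epsilon'=\delta_0\epsilon$. Pick a point $q\in\epsilon\cap\epsilon'$ in $\mathbb H$; near $q$, the geodesic $\gamma$ (whose $\pi_1(S)$--orbit is dense in the space of geodesics crossing any fixed compact set, because $p(\gamma)$ is filling and hence the geodesic flow restricted to $\gamma$'s closure is minimal on the relevant part of the unit tangent bundle — more simply: the set of translates of $\gamma$ is dense in the space of all geodesics in $\mathbb H$) has a translate $\gamma_x$ separating a neighborhood of $x$ on $\epsilon$ from $q$, and likewise a translate $\gamma_y$ separating a neighborhood of $y$ from $q$, chosen so that $H^+(\gamma_x)$ and $H^+(\gamma_y)$ are disjoint half-spaces in $\mathbb H$ (the point $q$, or a small disk around it, lies in the complement of both). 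Now I must upgrade disjointness of $H^+(\gamma_x),H^+(\gamma_y)$ in $\mathbb H$ to disjointness of $\partial\H^+(\gamma_x),\partial\H^+(\gamma_y)$ in $\C(S,z)$. Using the description \eqref{glued Xgamma} of $\X(\gamma_x)$ as built from the axes $\gamma_{x,v}\subset T_v$, and similarly for $\gamma_y$, it suffices to show that for every vertex $v\in\C^0(S)$ the half-trees $H^+(\gamma_{x,v})$ and $H^+(\gamma_{y,v})$ in $T_v$ are disjoint: a vertex in both would be the image of a component $U$ of $\mathbb H-p^{-1}(v)$ meeting both $H^+(\gamma_x)$ and $H^+(\gamma_y)$, but since these half-spaces are separated in $\mathbb H$ by the strip around $q$, and since we may choose $\gamma_x,\gamma_y$ so that some curve $v_0$ (realized as a short geodesic near $q$, or rather so that $p^{-1}(v_0)$ has a component separating $H^+(\gamma_x)$ from $H^+(\gamma_y)$) witnesses this, no such $U$ exists for that $v_0$ — and then a subcomplex/spanning argument as in Proposition~\ref{separating half spaces} propagates disjointness of the boundaries.

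\textbf{Main obstacle.} The delicate point is the ``if'' direction: ensuring the chosen translates $\gamma_x,\gamma_y$ not only cut off $x$ and $y$ as \emph{points of} $\partial\mathbb H$ but produce half-spaces whose \emph{curve-complex} boundaries $\partial\H^+(\gamma_x),\partial\H^+(\gamma_y)$ are genuinely disjoint — this requires controlling, uniformly over \emph{all} fiber trees $T_v$, the half-trees $H^+(\gamma_{x,v})$ and $H^+(\gamma_{y,v})$, and the cleanest way is to interpose a single translate $\gamma_0$ of $\gamma$ (or a curve $v_0$) whose axis separates the two half-trees in every $T_v$ simultaneously, which is possible precisely because $p(\epsilon)$ being non-simple creates the ``transverse crossing room'' near $q$ to fit such a separator while the simple case does not. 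Verifying that this interposed separator works in every $T_v$ — i.e. that every component of $\mathbb H - p^{-1}(v)$ lies on one side — is the technical heart, and it uses that $p(\gamma)$, hence $p(\gamma_0)$, is filling so that $\gamma_0$ really does separate in every fiber tree (it is not elliptic in any $T_v$), together with the planarity argument (a region $U$ meeting both disjoint half-spaces would have to cross the separating geodesic $\gamma_0$, contradicting $U\cap p^{-1}(v)=\emptyset$ once $\gamma_0$ is arranged to lie in a single translate of $N(v)$'s complement appropriately — or more robustly, once $v_0\leq v$ as simplices so that $\gamma_{0,v}$ is an honest axis in $T_v$ disjoint from both $\gamma_{x,v}$ and $\gamma_{y,v}$).
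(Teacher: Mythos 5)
The essential difficulty is your ``if'' direction, and the strategy you sketch there does not work. You choose $\gamma_x,\gamma_y$ whose half-planes are disjoint in $\mathbb H$ and then try to ``upgrade'' this to $\partial \H^+(\gamma_x)\cap\partial \H^+(\gamma_y)=\emptyset$ by making the half-trees $H^+(\gamma_{x,v})$ and $H^+(\gamma_{y,v})$ disjoint in every fiber $T_v$. Two problems. First, even if you achieved disjointness of the subcomplexes $\H^+(\gamma_x)$ and $\H^+(\gamma_y)$ (which is all fiberwise disjointness of half-trees could give), that does not imply disjointness of their Gromov boundaries: disjoint quasiconvex subsets of a hyperbolic space can be asymptotic, and the points of $\partial \H^+$ are ending laminations of $(S,z)$ arising as limits, which a planarity/separator argument in $\mathbb H$ never controls. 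Second, the separator is not available: a single lift $\tilde v_0$ of a simple closed curve interposed between the half-planes only constrains the fiber over $v_0$ (and over simplices containing it); for an arbitrary vertex $v$, a complementary component of $p^{-1}(N(v))$ may cross $\tilde v_0$ and meet both half-planes, so the half-trees in $T_v$ can still share a vertex, and your fallback ``once $v_0\le v$ as simplices'' says nothing about the remaining fibers. Your auxiliary claim that the $\pi_1(S)$--translates of $\gamma$ are dense in the space of geodesics of $\mathbb H$ is also false: any locally uniform limit of translates of $\gamma$ projects into the closed set $p(\gamma)$ and is therefore again a translate, so this family is closed and discrete (density is fortunately not needed for the nesting constructions).

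The paper never attempts a direct verification of disjointness; it proves the contrapositive using two ingredients absent from your proposal: Proposition~\ref{sep half infinity}, which for translates with nested complements gives $\partial \H^+(\gamma_1)\cap\partial \H^+(\gamma_2)=\partial \X(\gamma_1)\cap\partial \X(\gamma_2)$, and Lemma~\ref{boundary Xgamma}, which computes $\partial \X(\gamma_i)=\hat\Phi(\partial \C(S)\times\gamma_i)\cup\{|\mu_\pm|\}$. A common boundary point then forces, via Lemma~\ref{injective comp reg}, a lamination $|\lambda|$ with a leaf or complementary polygon of $p^{-1}(|\lambda|)$ meeting both translates, i.e.\ a geodesic with simple projection crossing both; running this over sequences nesting down to $x$ and $y$ and taking a limit of these simple geodesics shows that if no disjoint pair exists then $p(\epsilon)$ is simple. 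Your ``only if'' direction is essentially the paper's, but as written it is circular: the identity $\partial\Phi(x)=\partial\Phi(y)$ for endpoints of a leaf is Corollary~\ref{points identified}, which the paper deduces from this very proposition (and Lemma~\ref{injective comp reg} by itself concerns points of $\mathbb H$, not of $\partial\mathbb H$). The fix is the paper's: show that $\hat\Phi(\{|\lambda|\}\times\epsilon)$ is a single point lying in $\partial \X(\gamma_x)\cap\partial \X(\gamma_y)\subset\partial \H^+(\gamma_x)\cap\partial \H^+(\gamma_y)$ via Lemma~\ref{boundary Xgamma}, where $|\lambda|\in\EL(S)$ is extracted from the closure of $p(\epsilon)$ using $x,y\in\ATF$ --- a step you also leave unjustified, and your assertion that $x$ and $y$ fill a common proper subsurface $Y$ contradicts $x,y\in\ATF$.
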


Before we can give the proof of Proposition \ref{disjointhoods}, we will need the analogue of Proposition
\ref{separating half spaces} for the boundaries at infinity.  Recall that $\gamma$ was chosen to be a  biinfinite
geodesic with stabilizer $\langle \delta \rangle$ and $p(\gamma)$ a filling closed geodesic.

\begin{proposition} \label{sep half infinity}We have
\[\partial \H^+(\gamma) \cup \partial \H^-(\gamma) = \partial \C(S,z)\]
and
\[\partial \H^+(\gamma) \cap \partial \H^-(\gamma) = \partial \X(\gamma).\]
\end{proposition}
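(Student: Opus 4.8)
The plan is to deduce both statements from their finite-level counterparts in Proposition \ref{separating half spaces} together with the fact, established in the proof of Theorem \ref{cannonthurstonhalf}, that for a sequence $\{\gamma_n\}$ nesting down to a point $x \in \ATF$ the half-spaces $\H^+(\gamma_n)$ shrink to $\partial\Phi(x)$, i.e. $\bigcap_n \overline{\H^+(\gamma_n)} = \{\partial\Phi(x)\}$. The first equality should be the easier of the two: since $\H^+(\gamma)\cup\H^-(\gamma) = \C(S,z)$ and each of $\H^\pm(\gamma)$ is weakly convex (hence quasiconvex) by Proposition \ref{weakconvex}, a boundary point $\xi \in \partial\C(S,z)$ is a limit of some sequence $\{u_n\}\subset\C(S,z)$, infinitely many of which lie in (say) $\H^+(\gamma)$; by quasiconvexity that subsequence converges to a point of $\partial\H^+(\gamma)$, so $\xi\in\partial\H^+(\gamma)\cup\partial\H^-(\gamma)$. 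Conversely $\partial\H^\pm(\gamma)\subset\partial\C(S,z)$ trivially.

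For the second equality, the inclusion $\partial\X(\gamma)\subseteq\partial\H^+(\gamma)\cap\partial\H^-(\gamma)$ follows immediately from $\X(\gamma)\subseteq\H^+(\gamma)\cap\H^-(\gamma)$ (Proposition \ref{separating half spaces}). The substantive direction is $\partial\H^+(\gamma)\cap\partial\H^-(\gamma)\subseteq\partial\X(\gamma)$. Here I would take $\xi\in\partial\H^+(\gamma)\cap\partial\H^-(\gamma)$ and choose sequences $\{u_n^+\}\subset\H^+(\gamma)$ and $\{u_n^-\}\subset\H^-(\gamma)$ both converging to $\xi$. Because $\H^\pm(\gamma)$ are weakly convex, I may connect $u_n^+$ to $u_n^-$ by a geodesic lying in the union, and in fact one can arrange a geodesic $[u_n^+,u_n^-]$ whose subsegments alternate between $\H^+(\gamma)$ and $\H^-(\gamma)$; every time the geodesic passes from one half-space to the other it must cross $\X(\gamma) = \H^+(\gamma)\cap\H^-(\gamma)$ (again Proposition \ref{separating half spaces}), so there is a point $w_n\in\X(\gamma)$ on $[u_n^+,u_n^-]$. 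Since $u_n^\pm\to\xi$, the geodesics $[u_n^+,u_n^-]$ escape to infinity toward $\xi$ (using $\delta$-hyperbolicity and that both endpoints converge to the same boundary point: the geodesics fellow-travel a ray to $\xi$), hence $w_n\to\xi$ as well, giving $\xi\in\partial\X(\gamma)$.

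The step I expect to be the main obstacle is making rigorous the claim that $w_n \to \xi$, i.e. that points on the geodesics $[u_n^+, u_n^-]$ with both endpoints converging to $\xi$ themselves converge to $\xi$. This is where one needs the geometry: $u_0$ and $\xi$ together with the fact that $d(u_0, [u_n^+,u_n^-])$ stays bounded would be fatal, so one must check it does not. I would argue this by invoking the nesting characterization of $\xi = \partial\Phi(x)$ for the appropriate $x\in\ATF$ with $\gamma$ one of a nesting sequence, or more directly by observing that if $d(u_0, w_n)$ were bounded along a subsequence then, by $\delta$-hyperbolicity, the geodesic $[u_n^+,u_n^-]$ would pass within bounded distance of $u_0$ while both its endpoints tend to $\xi$, forcing $\xi$ to be within bounded distance of $u_0$ — impossible for a boundary point. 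Thus $d(u_0,w_n)\to\infty$, and since all $w_n$ lie on geodesics converging to $\xi$, the sequence $\{w_n\}\subset\X(\gamma)$ converges to $\xi$, completing the proof. A secondary technical point to handle carefully is the ``alternating'' structure of the connecting geodesic: one should justify, as in the proof of Proposition \ref{nesting distance} (Claim 1 and the arguments around $\Pi^{-1}(B(v_0,R))$), that a geodesic joining a point of $\H^+(\gamma)$ to a point of $\H^-(\gamma)$ genuinely meets $\X(\gamma)$, which follows from weak convexity of both half-spaces together with $\H^+(\gamma)\cup\H^-(\gamma)=\C(S,z)$ and $\H^+(\gamma)\cap\H^-(\gamma)=\X(\gamma)$.
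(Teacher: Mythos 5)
Your proposal is correct and follows essentially the same route as the paper: the union statement is an immediate consequence of Proposition \ref{separating half spaces}, and for the intersection you take sequences in $\H^+(\gamma)$ and $\H^-(\gamma)$ converging to the same boundary point, note that connecting geodesics must meet $\X(\gamma)=\H^+(\gamma)\cap\H^-(\gamma)$, and conclude via hyperbolicity that these crossing points converge to the boundary point. The extra discussion of ``alternating'' subsegments is unnecessary, but the core argument matches the paper's proof.
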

\begin{proof}
This first statement in an immediate consequence of Proposition \ref{separating half spaces}.
The second also follows from this proposition, but requires some additional argument.
Since $\X(\gamma) = \H^+(\gamma) \cap \H^-(\gamma)$, it easily follows that
\[ \partial \X(\gamma) \subset \partial \H^+(\gamma) \cap \H^-(\gamma).\]

If $|\mu| \in \partial \H^+(\gamma) \cap  \partial \H^-(\gamma)$, then let $\{u_n^+\} \subset \H^+(\gamma)$ and
$\{u_n^-\} \in \H^-(\gamma)$ be sequences converging to $|\mu|$ in $\C(S,z)$. Let $g_n$ be geodesic segments from
$u_n^+$ to $u_n^-$.  By Proposition \ref{separating half spaces}, there is a vertex $u_n \in g_n \cap \X(\gamma)$.
Therefore $u_n$ also converges to $|\mu|$, and so $|\mu| \in
\partial \X(\gamma)$, proving
\[ \partial \H^+(\gamma) \cap \partial \H^-(\gamma) \subset \partial \X(\gamma).\]
\end{proof}

A theorem of Kra \cite{kra} implies that, since $p(\gamma)$ is filling on $S$, $\delta$ is pseudo-Anosov as an element
of $\Mod(S,z)$.  We let $|\mu_+|$ and $|\mu_-|$ be the attracting and repelling fixed points of $\delta$, respectively,
in $\partial \C(S,z)$.
\begin{lemma} \label{boundary Xgamma}
\[\partial \X(\gamma) = \hat\Phi(\partial \C(S) \times \gamma) \cup \{|\mu_\pm|\}\]
\end{lemma}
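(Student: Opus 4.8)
The plan is to establish the two inclusions separately, using the description of $\X(\gamma)$ from \eqref{glued Xgamma} as $\left(\bigsqcup_{v} v \times \gamma_v\right)/\!\sim$ together with the fact (Kra \cite{kra}) that $\delta$ acts as a pseudo-Anosov on $(S,z)$, hence as a loxodromic isometry of the hyperbolic space $\C(S,z)$ with endpoints $|\mu_\pm| \in \partial\C(S,z)$. For the inclusion $\supseteq$, first note that each axis $\gamma_v$ is a biinfinite geodesic on which $\delta$ acts by translation, and $\gamma_v \subset \X(\gamma)$; since $\X(\gamma)$ is weakly convex (Proposition \ref{weakconvex}) the $\delta$--orbit of any basepoint in $\X(\gamma)$ stays in $\X(\gamma)$, so the two limit points $|\mu_\pm|$ of that orbit lie in $\partial\X(\gamma)$. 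For the containment $\hat\Phi(\partial\C(S)\times\gamma)\subset\partial\X(\gamma)$: a point of $\partial\C(S)\times\gamma$ is a pair $(|\lambda|,w)$ with $|\lambda|\in\EL(S)$ and $w\in\gamma$; choose $v_n\in\C^0(S)$ with $v_n\to|\lambda|$ in $\overline\C(S)$, then $\Phi(v_n,w)\in\Phi(\C(S)\times\gamma)=\X(\gamma)$ and by continuity of $\hat\Phi$ (Proposition \ref{psiphi2}) these converge to $\hat\Phi(|\lambda|,w)$, which therefore lies in $\partial\X(\gamma)$ (and is genuinely a boundary point by Proposition \ref{contfailure}, since $\{v_n\}$ does accumulate on $\partial\C(S)$ — indeed it converges there).

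For the reverse inclusion $\subseteq$, take a sequence of vertices $u_n\in\X(\gamma)$ with $u_n\to|\mu|\in\partial\X(\gamma)$. Using \eqref{glued Xgamma}, write $u_n=\Phi(v_n,w_n)$ with $w_n\in\gamma$ and $v_n\in\C^0(S)$; equivalently $u_n$ is a vertex of the axis $\gamma_{v_n}\subset T_{v_n}$. Pass to a subsequence so that $\Pi(u_n)=v_n$ either (i) stays bounded in $\C(S)$, or (ii) leaves every bounded set. In case (ii), after a further subsequence $v_n\to|\lambda|$ for some $|\lambda|\in\partial\C(S)=\EL(S)$ (by compactness of $\overline\C(S)$ together with Klarreich, Proposition \ref{klarreich}), and the $w_n\in\gamma$ either converge in $\gamma$, in which case $|\mu|=\hat\Phi(|\lambda|,w)$ for the limit point $w\in\gamma$ by continuity of $\hat\Phi$; or $w_n$ exits $\gamma$ toward one of its two endpoints, which are the fixed points of $\delta$ in $\partial\mathbb H$, and then $u_n=\Phi(v_n,w_n)$ is pushed toward $|\mu_\pm|$ because $\Phi$ is $\pi_1(S)$--equivariant, $\delta$ translates along $\gamma$, and $\delta$ acts loxodromically on $\C(S,z)$ with endpoints $|\mu_\pm|$ — making $|\mu|=|\mu_\pm|$. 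In case (i), $v_n$ takes only finitely many values, so after passing to a subsequence $v_n\equiv v$ is constant; then $u_n\in\gamma_v$, a biinfinite geodesic in $T_v\cong\Pi^{-1}(v)$, and since $u_n$ converges to a point of $\partial\C(S,z)$ it must exit $\gamma_v$ toward one of its two ends. But $\gamma_v$ is the axis of $\delta$ in $T_v$, and again using equivariance of $\Phi$ and loxodromicity of $\delta$ on $\C(S,z)$, the two ends of $\gamma_v$ limit in $\overline\C(S,z)$ to $|\mu_+|$ and $|\mu_-|$; hence $|\mu|\in\{|\mu_\pm|\}$.

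The step I expect to be the main obstacle is the bookkeeping in case (i) (and the analogous subcase of (ii)): one must check that a ray running off to an end of the axis $\gamma_v\subset T_v$ really does converge in $\overline\C(S,z)$ to the pseudo-Anosov endpoint $|\mu_\pm|$, and not merely to \emph{some} boundary point. This is where one leans on the fact that $\gamma_v = \Phi(\{v\}\times\gamma)$ is $\delta$--invariant with $\delta$ translating along it, so that a subray of $\gamma_v$ contains arbitrarily large powers $\delta^k(u)$ of any fixed vertex $u\in\gamma_v$; by the standard North–South / convergence dynamics of the loxodromic isometry $\delta$ on the $\delta$--hyperbolic space $\C(S,z)$, we get $\delta^k(u)\to|\mu_+|$ as $k\to+\infty$ and $\delta^{-k}(u)\to|\mu_-|$, identifying the two ends of $\gamma_v$ with $|\mu_\pm|$. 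A minor point to dispatch along the way is that the limit in case (ii)(with $w_n\to w\in\gamma$) is indeed a boundary point and not an interior point: this again follows from Proposition \ref{klarreich} and Proposition \ref{contfailure}, since $v_n\to|\lambda|\in\EL(S)$. Assembling the cases gives $\partial\X(\gamma)\subseteq\hat\Phi(\partial\C(S)\times\gamma)\cup\{|\mu_\pm|\}$, completing the proof.
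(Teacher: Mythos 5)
Your inclusion $\supseteq$ is essentially the paper's (continuity of $\hat\Phi$ plus $\delta$--invariance of $\X(\gamma)$ --- note it is invariance, not weak convexity, that keeps the orbit inside $\X(\gamma)$), but the reverse inclusion has two genuine gaps. First, your case bookkeeping rests on two false finiteness claims. In case (i), ``$v_n$ stays bounded, so it takes only finitely many values'' fails: $\C(S)$ is locally infinite, so a bounded set of vertices is typically infinite and you cannot extract a constant subsequence and reduce to a single axis $\gamma_v$. In case (ii), ``$v_n\to|\lambda|\in\partial\C(S)$ after a subsequence by compactness of $\overline\C(S)$'' also fails: $\overline\C(S)$ is not compact (the complex is not proper), and a sequence leaving every bounded set need not subconverge to the Gromov boundary --- by Klarreich its $\PML$--accumulation points need not be filling. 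The tool that actually produces the needed subsequence is Proposition \ref{contfailure}: since $u_n=\Phi(v_n,w_n)\to|\mu|\in\partial\C(S,z)$, whenever $w_n$ converges to a point of $\mathbb H$ (e.g.\ stays in a compact arc of $\gamma$) the contrapositive of that proposition forces $\{v_n\}$ to accumulate on $\partial\C(S)$, and then continuity of $\hat\Phi$ gives $|\mu|=\hat\Phi(|\lambda|,w)$. This is why the paper's case division is keyed to the second coordinate $w_n$ (compact arc of $\gamma$ versus an ideal endpoint of $\gamma$), not to boundedness of $v_n$.

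Second, in the subcase where $w_n$ exits toward an endpoint $x$ of $\gamma$, bare North--South dynamics of $\delta$ on $\C(S,z)$ is not enough. Writing $w_n=\delta^{k_n}(w_n')$ with $w_n'$ in a compact fundamental domain, you get $u_n=\delta^{k_n}\bigl(\Phi(v_n,w_n')\bigr)$, but the points $\Phi(v_n,w_n')$ do not lie in a fixed bounded subset of $\C(S,z)$ when $v_n$ escapes (and even for bounded $v_n$ this needs an argument), and $\delta^{k_n}(z_n)\to|\mu_+|$ only holds with control on $z_n$; a priori such a limit could be some $\hat\Phi(|\lambda|,w)$ rather than $|\mu_\pm|$. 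The paper's argument here uses the half-space machinery instead: since $p(\gamma)$ fills, $x\in\ATF$, the translates $\delta^n(\gamma_1)$ nest down on $x$, and by Proposition \ref{nesting distance} the nested quasiconvex sets $\overline{\H^+(\delta^n(\gamma_1))}$ have $\bigcap_n\overline{\H^+(\delta^n(\gamma_1))}=\{|\mu_+|\}$; since $w_n$ eventually lies in $H^+(\delta^n(\gamma_1))$, the whole image point $u_n=\Phi(v_n,w_n)$ lies in $\H^+(\delta^n(\gamma_1))$ regardless of $v_n$, so $|\mu|=|\mu_+|$. Your orbit computation $\delta^{k}(b)\to|\mu_\pm|$ does correctly handle the literal constant-$v$ situation, but without the nesting argument (or an equivalent) the general case is not covered.
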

\begin{proof}
Continuity of $\hat \Phi$ implies $\hat\Phi(\partial \C(S) \times \gamma) \subset \partial \X(\gamma)$. Invariance of
$\gamma$ by $\delta$ implies invariance of $\X(\gamma)$ by $\delta$ so $\{ |\mu_\pm|\} \subset
\partial \X(\gamma)$, and hence
\[ \partial \X(\gamma) \supset \hat\Phi(\partial \C(S) \times \gamma) \cup \{|\mu_\pm|\}. \]

We are left to prove the reverse inclusion. Suppose $\{u_n\}$ is any sequence in $\X(\gamma)$ with $u_n \to |\mu| \in
\partial \X(\gamma)$. We wish to show that $|\mu| \in \hat\Phi(\partial \C(S) \times \gamma) \cup \{
|\mu_\pm|\}$. By definition of $\X(\gamma)$ there exists $\{(v_n,x_n)\} \subset \C(S) \times \gamma$ with
$\Phi(v_n,x_n) = u_n$ for
all $n$.   There are two cases to consider.\\

\noindent {\bf Case 1.} $\{x_n\} \subset K$, for some compact arc $K \subset \gamma$.\\

After passing to a subsequence if necessary $x_n \to x \in K$.  By Proposition \ref{contfailure}, we can assume that
$v_n$ accumulates on $\partial \C(S)$.  So, after passing to yet a further subsequence if necessary, we can assume that
$v_n \to |\lambda| \in \partial \C(S)$.  Then by continuity of $\hat \Phi$ (Proposition \ref{psiphi2}) we have
\[ |\mu| = \lim_{n \to \infty} \Phi(v_n,x_n) = \hat\Phi(|\lambda|,x) \in \hat\Phi(\partial \C(S) \times \gamma).\]

\noindent {\bf Case 2.} After passing to a subsequence $x_n \to x$, where $x$ is one of the endpoints of $\gamma$ in
$\partial \mathbb H$.\\

Note that $x \in \ATF$ since $p(\gamma)$ is filling. Indeed, $x$ is either the attracting or repelling fixed point of
$\delta$.  Without loss of generality, we assume it is the attracting fixed point.  Now suppose $\gamma_1$ is any
$\pi_1(S)$ translate which nontrivially intersects $\gamma$.  Thus $\{ \delta^n(\gamma_1)\}$ nests down on $x$, and
hence
\[ \bigcap_{n=1}^\infty \overline{\H^+(\delta^n(\gamma_1))} = \bigcap_{n=1}^\infty \delta^n(\overline{\H^+(\gamma_1)})\]
consists of the single point $|\mu_+|$, the attracting fixed point of the pseudo-Anosov $\delta$. After passing to a
further subsequence if necessary, we can assume $x_n \in H^+(\delta^n(\gamma_1))$.  Therefore, $\Phi(v_n,x_n) \in
\H^+(\delta^n(\gamma_1))$, and hence
\[ |\mu| = \lim_{n \to \infty} \Phi(v_n,x_n) = |\mu_+|,\]
completing the proof of Lemma \ref{boundary Xgamma}.
\end{proof}

\begin{proof}[Proof of Proposition \ref{disjointhoods}]  We fix $x,y \in \ATF$ and $\epsilon$ the geodesic between them.  We write $\gamma_x$ and $\gamma_y$ to denote $\pi_1(S)$--translates of $\gamma$ for which $\overline{H^+(\gamma_x)}$ and $\overline{H^+(\gamma_y)}$ define disjoint neighborhoods of $x$ and $y$, respectively.  We must show that $p(\epsilon)$ is simple if and only if $\partial \H^+(\gamma_x) \cap \partial \H^+(\gamma_y) \neq \emptyset$ for all such $\gamma_x$ and $\gamma_y$.

First, suppose $p(\epsilon)$ is simple.  The closure of $p(\epsilon)$ is a lamination $\mathcal L$ \cite{CB}.  Since
$x,y \in \ATF$, $\mathcal L$ must contain some $|\lambda| \in \EL(S)$ as the sublamination obtained by
discarding isolated leaves.  Therefore $\epsilon$ is either a leaf of $p^{-1}(|\lambda|)$ or a diagonal for some complementary
polygon of $p^{-1}(|\lambda|)$.

It follows from Lemma \ref{injective comp reg} that if $x' \in \gamma_x \cap \epsilon$ and $y' \in \gamma_y \cap
\epsilon$, then $\hat\Phi(|\lambda|,x') = \hat \Phi(|\lambda|,y')$.  Appealing to Lemma \ref{boundary Xgamma} we have
\begin{eqnarray*} \emptyset  & \neq &\hat\Phi(\{|\lambda|\} \times \gamma_x) \cap \hat\Phi(\{|\lambda|\}
\times \gamma_y)\\ &  \subset & \partial \X(\gamma_x) \cap \partial \X(\gamma_y)\\
& \subset & \partial \H^+(\gamma_x) \cap \partial \H^+(\gamma_y) \end{eqnarray*} as required. In fact, it is worth
noting that by Lemma \ref{injective comp reg}, $\hat\Phi(\{|\lambda|\} \times \epsilon)$ is a single point which lies
in $\partial \H^+(\gamma_x) \cap \partial \H^+(\gamma_y)$ for all allowed choice of $\gamma_x$ and $\gamma_y$, and is therefore equal to $\overline\Phi_v(x) =
\overline\Phi_v(y)$.

Before we prove the converse, suppose $\gamma_1$ and $\gamma_2$ are two translates of $\gamma$ for which $H^+(\gamma_1)
\subset H^-(\gamma_2)$ and $H^+(\gamma_2) \subset H^-(\gamma_1)$.  Then we have
\[ \partial \H^+(\gamma_1) \subset \partial \H^-(\gamma_2) \quad \mbox{ and } \quad \partial \H^+(\gamma_2) \subset \partial
\H^-(\gamma_1).\] Therefore, by Proposition \ref{sep half infinity}, it follows that
\[\partial \H^+(\gamma_1) \cap \partial \H^+(\gamma_2) = \partial \X(\gamma_1) \cap \partial
\X(\gamma_2).\]

Further suppose that $\gamma_1 \neq \gamma_2$, so that fixed points of $\delta_1$ and $\delta_2$  (elements generating
the stabilizers of $\gamma_1$ and $\gamma_2$, respectively) are disjoint in $\partial \C(S,z)$.  If
\[\partial \H^+(\gamma_1) \cap \partial \H^+(\gamma_2) \neq \emptyset\]
then by Proposition \ref{boundary Xgamma} there exists $x_1 \in \gamma_1$ and $x_2 \in \gamma_2$ and
$|\lambda_1|,|\lambda_2| \in \partial \C(S)$ for which $\hat\Phi(|\lambda_1|,x_1) = \hat\Phi(|\lambda_2|,x_2)$.
According to Lemma \ref{injective comp reg}, we have $|\lambda_1| = |\lambda_2|$, and $x_1$ and $x_2$ lie on the same
leaf, or in the closure of the same complementary region of $|\lambda_1|$.  In particular, there is a biinfinite
geodesic contained in a complementary region or leaf of $p^{-1}(|\lambda_1|)$ which meets both $\gamma_1$ and
$\gamma_2$.

We now proceed to the proof of the converse.  Let $\{ \gamma_{n,x} \}$ and $\{\gamma_{n,y}\}$ be sequences of
$\pi_1(S)$--translates of $\gamma$ which nest down on $x$ and $y$, respectively.  We suppose that
\[ \partial \H^+(\gamma_{n,x}) \cap \partial \H^-(\gamma_{n,y}) \neq \emptyset\]
for all $n \geq 0$, and prove that $p(\epsilon)$ is simple on $S$.  By the discussion in the preceding two paragraphs
there exists a sequence of laminations $\{|\lambda_n|\} \subset \partial \C(S)$ so that $\gamma_{x,n}$ and
$\gamma_{y,n}$ both meet a leaf or complementary polygon of $p^{-1}(|\lambda_n|)$. It follows that there is a sequence
of geodesics $\{ \epsilon_n \}$ in $\mathbb H$ for which $p(\epsilon_n)$ is simple on $S$, and $\epsilon_n \cap
\gamma_{x,n} \neq \emptyset \neq \epsilon_n \cap \gamma_{y,n}$. The limit $\epsilon$ of $\{ \epsilon_n\}$ has endpoints
$x$ and $y$.  Also $p(\epsilon)$ is simple as it is the limit of simple geodesics \cite{CB}.
\end{proof}

The following is now immediate from Proposition \ref{disjointhoods} and its proof.

\begin{corollary} \label{points identified}
Given distinct $x,y \in \ATF$ then $\partial \Phi(x) = \partial \Phi(y)$ if and only if $x$ and $y$ are ideal endpoints
of a leaf (or ideal vertices of a complementary polygon) of $p^{-1}(|\lambda|)$ for some $|\lambda| \in \partial
\C(S)$. \qed
\end{corollary}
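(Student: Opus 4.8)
The plan is to read off the equivalence from Proposition~\ref{disjointhoods} together with one preliminary observation: \emph{for every half-space neighborhood $\overline{H^+(\gamma_x)}$ of a point $x\in\ATF$ one has $\partial\Phi(x)\in\partial\H^+(\gamma_x)$.} Since $\overline{H^+(\gamma_x)}$ is a neighborhood of $x$ in $\overline{\mathbb H}$, the point $x$ lies in the open boundary arc of $H^+(\gamma_x)$; as a geodesic ray $r$ ending at $x$ meets the geodesic $\gamma_x$ at most once, $r$ is eventually contained in $H^+(\gamma_x)$, hence $\Phi_v(r)$ is eventually contained in $\Phi(\{v\}\times H^+(\gamma_x))\subset\H^+(\gamma_x)$. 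Since $\partial\Phi(x)=\overline\Phi_v(x)$ is the limit of $\Phi_v$ along $r$, it lies in $\overline{\H^+(\gamma_x)}\cap\partial\C(S,z)=\partial\H^+(\gamma_x)$; the analogous statement holds at $y$. Throughout, write $\epsilon$ for the geodesic joining $x$ to $y$.

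For the implication $\Leftarrow$, suppose $x$ and $y$ are ideal endpoints of a leaf, or ideal vertices of a complementary polygon, of $p^{-1}(|\lambda|)$ for some $|\lambda|\in\partial\C(S)\cong\EL(S)$. Then $\epsilon$ is a leaf of $p^{-1}(|\lambda|)$ or a diagonal of a complementary polygon, so $p(\epsilon)$ is simple. I would then invoke the ``in fact'' remark inside the proof of Proposition~\ref{disjointhoods}: for simple $p(\epsilon)$, the point $\hat\Phi(\{|\lambda|\}\times\epsilon)$ is a single point lying in $\partial\H^+(\gamma_x)\cap\partial\H^+(\gamma_y)$ for all allowed $\gamma_x,\gamma_y$, and it equals $\overline\Phi_v(x)=\overline\Phi_v(y)$. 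Hence $\partial\Phi(x)=\partial\Phi(y)$.

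For the implication $\Rightarrow$ I would argue contrapositively. Assume $x$ and $y$ are \emph{not} ideal endpoints of a leaf, nor ideal vertices of a complementary polygon, of $p^{-1}(|\lambda|)$ for any $|\lambda|\in\partial\C(S)\cong\EL(S)$. I claim $p(\epsilon)$ is non-simple: otherwise the closure of $p(\epsilon)$ is a geodesic lamination on $S$, and since $x,y\in\ATF$, discarding its isolated leaves produces some $|\lambda|\in\EL(S)$ of which $\epsilon$ is a leaf of $p^{-1}(|\lambda|)$ or a diagonal of a complementary polygon (this is the argument opening the proof of Proposition~\ref{disjointhoods}), contradicting our assumption. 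So $p(\epsilon)$ is non-simple, and Proposition~\ref{disjointhoods} provides half-space neighborhoods $\overline{H^+(\gamma_x)}$ and $\overline{H^+(\gamma_y)}$ of $x$ and $y$ with $\partial\H^+(\gamma_x)\cap\partial\H^+(\gamma_y)=\emptyset$. By the preliminary observation, $\partial\Phi(x)\in\partial\H^+(\gamma_x)$ and $\partial\Phi(y)\in\partial\H^+(\gamma_y)$, so $\partial\Phi(x)\neq\partial\Phi(y)$. The only step that is not pure bookkeeping is the preliminary observation that the half-space neighborhoods furnished by Proposition~\ref{disjointhoods} actually detect the Cannon--Thurston images $\partial\Phi(x),\partial\Phi(y)$; this is the point I would write out most carefully, using that $\overline\Phi_v(x)$ is independent of the nesting sequence.
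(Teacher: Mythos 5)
Your argument is correct and follows essentially the same route as the paper, which deduces the corollary directly from Proposition~\ref{disjointhoods} and its proof (your use of the ``in fact'' remark for the simple case, and of the opening paragraph of that proof to see that $p(\epsilon)$ is non-simple in the contrapositive, is exactly what the paper intends). Your preliminary observation that $\partial\Phi(x)\in\partial\H^+(\gamma_x)$ for any half-space neighborhood is a detail the paper leaves implicit, already contained in the characterization of $\overline\Phi_v(x)$ as the unique point of $\bigcap_n\overline{\H^+(\gamma_n)}$ following Theorem~\ref{cannonthurstonhalf}, and your ray argument for it is fine.
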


\subsection{Surjectivity.}

In this section, we prove that our map $\partial \Phi$ is surjective.

According to Birman--Series \cite{birmanseries}, the union of geodesics
\[ \overline{\bigcup_{v \in \C^0(S)} v} \]
is nowhere dense in $S$.  We fix an $\epsilon > 0$, and assume that our chosen constants $\{\epsilon(v)\}_{v \in
\C^0(S)}$ are sufficiently small so that
\[S - \bigcup_{v \in \C^0(S)} N(v)\]
is $\epsilon$--dense.  It follows that $\epsilon(v) \leq \epsilon$ for all $v \in \C^0(S)$.
\begin{lemma} \label{L:nearly weak convex} Suppose $(v_1,x_1),(v_2,x_2) \in \C^0(S) \times \mathbb H$ with $\Phi(v_i,x_i)
= u_i$ a vertex in $\C(S,z)$ for $i = 1,2$.
Then there is a path
\[ \G = ({\bf v} ,{\bf x}):[a,b] \to \C(S) \times \mathbb H \]
such that $\Phi \circ \G$ is a geodesic from $u_1$ to $u_2$ and ${\bf x}$ connects $x_1$ to
$x_2$ with image contained in the $2\epsilon$--neighborhood of a geodesic in $\mathbb H$.
\end{lemma}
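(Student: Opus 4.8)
The plan is to lift a geodesic edge path of $\C(S,z)$ through the combinatorial model of Theorem~\ref{glued cc}. Fix a geodesic $\sigma$ from $u_1$ to $u_2$ --- an edge path $u_1 = a_0, a_1, \dots, a_n = u_2$, working in the $1$--skeleton as elsewhere --- and set $w_j = \Pi(a_j) \in \C^0(S)$. Since each $a_j$ is a vertex, Theorem~\ref{T:maintree} together with Proposition~\ref{prop: factors through trees} --- which identify $\Phi(\{w\} \times \cdot) \colon \mathbb H \to T_w \cong \Pi^{-1}(w)$ with the map collapsing $p^{-1}(N(w))$ --- identify $a_j$ with a single closed complementary region $R_j$ of $\mathbb H - p^{-1}(N(w_j))$, namely the one with $\Phi(\{w_j\} \times R_j) = \{a_j\}$. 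As $u_i$ is a vertex, $\Pi(u_i) = v_i$, so $x_1 \in R_0$ and $x_2 \in R_n$.

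First I would lift a single edge $[a_{j-1}, a_j]$. If $w_{j-1} = w_j =: w$, then $a_{j-1}, a_j$ are adjacent in $T_w$, so $R_{j-1}, R_j$ are distinct and meet along exactly one strip $P_j$ of $p^{-1}(N(w))$; being an $\epsilon(w)$--neighborhood of a geodesic lift of $w$, this strip separates $\mathbb H$ into two half planes, one containing $R_{j-1}$ and the other $R_j$. Keeping ${\bf v} \equiv w$ and crossing $P_j$ transversally along a tie (of length $2\epsilon(w)$) from $\overline{R_{j-1}}$ to $\overline{R_j}$ makes $\Phi \circ \G$ traverse $[a_{j-1}, a_j]$ once --- this is just the definition of $\Phi$ on $N^\circ(w)$ read through the collapse map. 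If instead $w_{j-1} \neq w_j$, then $w_{j-1}, w_j$ are disjoint (they span an edge of $\C(S)$) and $R_{j-1} \cap R_j$ is nonempty (it contains the complementary region of $p^{-1}(N(w_{j-1}) \cup N(w_j))$ corresponding to the edge $[a_{j-1},a_j]$ in the model) and disjoint from $p^{-1}(N(w_{j-1}) \cup N(w_j))$; for any $q \in R_{j-1} \cap R_j$ one has $\Phi(w_{j-1}, q) = a_{j-1}$ and $\Phi(w_j, q) = a_j$, so holding ${\bf x} \equiv q$ while sliding ${\bf v}$ linearly from $w_{j-1}$ to $w_j$ makes $\Phi \circ \G$ traverse $[a_{j-1}, a_j]$ once.

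I would then assemble the path along the geodesic $g = [x_1, x_2] \subset \mathbb H$: ${\bf x}$ follows $g$ while $g$ lies in a region $R_j$ (with ${\bf v} \equiv w_j$ there), crosses each separating strip $P_j$ monotonically along ties (possible once one knows $g$ really does cross $P_j$), and performs each disjoint--curve transition holding ${\bf x}$ at a point of $g \cap R_{j-1} \cap R_j$. Reading off the edge--by--edge description, $\Phi \circ \G$ traverses $[a_0, a_1], \dots, [a_{n-1}, a_n]$ in order (with pauses at vertices), hence is $\sigma$ after reparametrization, and ${\bf x}$ runs from $x_1$ to $x_2$. For the metric bound, ${\bf x}$ agrees with $g$ outside the strips, and its modification inside a strip $P_j$ stays within $2\epsilon(w_j) \le 2\epsilon$ of $g$ because a tie has length $2\epsilon(w_j)$ and $P_j$ is crossed at a point of $g$; here $\epsilon(v) \le \epsilon$ for all $v$ is part of the Birman--Series density hypothesis \cite{birmanseries} recalled above. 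Thus ${\bf x} \subset N_{2\epsilon}(g)$.

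The main obstacle is justifying the routing in the previous paragraph: one must show that $g = [x_1,x_2]$ actually meets $R_1, \dots, R_{n-1}$, crosses each separating strip $P_j$ when $w_{j-1} = w_j$, and passes through $R_{j-1} \cap R_j$ when $w_{j-1} \neq w_j$, all in the combinatorial order prescribed by $\sigma$. A priori a geodesic edge path in $\C(S,z)$ need not be monotone relative to the decomposition of $\mathbb H$ by $p^{-1}(N(w_j))$ for a fixed $j$, so this takes an argument: starting from $x_1 \in R_0$, $x_2 \in R_n$ and using that $\sigma$ is a geodesic (so no edge $P_j$ is back--tracked), one shows the regions $R_0, \dots, R_n$ are chained along $g$ --- equivalently, that each strip $P_j$ separates $x_1$ from $x_2$, forcing $g$ to cross it, and similarly for the overlaps $R_{j-1} \cap R_j$; when $g$ only comes $\epsilon$--close to a region it must enter, a detour of length at most $2\epsilon$ keeps ${\bf x}$ in $N_{2\epsilon}(g)$. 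Once this chaining is in hand, the metric estimate is the elementary strip--width bound above and the identification $\Phi \circ \G = \sigma$ is bookkeeping from the single--edge analysis.
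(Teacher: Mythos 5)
Your single--edge analysis and the assembly bookkeeping are fine, but the step you yourself flag as ``the main obstacle'' is exactly where the argument breaks, and the justification you sketch for it does not work. For an \emph{arbitrary} geodesic edge path $\sigma$ from $u_1$ to $u_2$ in $\C(S,z)$ there is no reason that the regions $R_0,\dots,R_n$ are chained along $g=[x_1,x_2]$, nor that each separating strip $P_j$ separates $x_1$ from $x_2$: being a geodesic only prevents backtracking in $\C(S,z)$, which says nothing about where the regions $R_j$ sit in $\mathbb H$. Geodesics in the curve complex are highly non--unique; already when $d(u_1,u_2)=2$ a middle vertex $a_1$ need only be disjoint from $u_1$ and $u_2$ in $(S,z)$, which in the model of Theorem \ref{glued cc} only requires $R_1\cap R_0\neq\emptyset$ and $R_1\cap R_n\neq\emptyset$. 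Since $R_0$ and $R_n$ are unbounded regions (complements of infinitely many strips), $R_1$ can meet them far from $[x_1,x_2]$, and then any lift of that geodesic forces ${\bf x}$ to travel far outside $N_{2\epsilon}(g)$. So ``chaining along $g$'' is not a property of every geodesic, and your proposal gives no mechanism for producing a geodesic that has it; the ``detour of length at most $2\epsilon$'' remark presupposes $g$ comes $\epsilon$--close to the regions it must visit, which is the very point at issue.

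The paper's proof is organized precisely to sidestep this: instead of lifting a given geodesic, it \emph{chooses} one with the chaining property built in. One first moves $x_1,x_2$ by at most $\epsilon$ to points $x_1',x_2'$ on a geodesic line $\gamma'$ covering a filling closed geodesic (such endpoint pairs are dense), without changing $u_1,u_2$. Proposition \ref{weakconvex} applied to $\gamma'$ says $\X(\gamma')=\Phi(\C(S)\times\gamma')$ is weakly convex, so there is a geodesic from $u_1$ to $u_2$ lying entirely in $\X(\gamma')$; each of its vertices then pulls back, inside $\C(S)\times\gamma'$, to $\{v_i\}\times\alpha_i$ with $\alpha_i$ a connected arc of $\gamma'$. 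That is exactly the chaining you need, and it lets the second coordinate of the lift run inside $\gamma'$ itself, with one final correction of size at most $\epsilon$ to handle inessential intersections of $\gamma'$ with the strips $p^{-1}(N(v))$ (whence the $2\epsilon$). Without an input of this kind --- some statement forcing the chosen geodesic in $\C(S,z)$ to be realized along a fixed quasi--line in $\mathbb H$ near $[x_1,x_2]$ --- your construction cannot deliver the metric bound, so as written the proposal has a genuine gap.
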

\begin{proof}
For each $i = 1,2$ we can find $x_i'$ in the same component of $S - N^\circ(v_i)$ as $x_i$ within $\epsilon$ of $x_i$
such that $x_1'$ and $x_2'$ are contained in some geodesic $\gamma'$ which projects to a filling closed geodesic in $S$
(the pairs of endpoints of such geodesics is dense in $\partial \mathbb H \times \partial \mathbb H$). Then
$\Phi(v_i,x_i)= \Phi(v_i,x_i')$ for $i = 1,2$.  Moreover, the geodesic from $x_1'$ to $x_2'$ is within $\epsilon$ of
$x_1$ and $x_2$. Suppose we can find $\G' = ({\bf v}',{\bf x}')$ so that $\Phi \circ \G'$ is a geodesic from $u_1$ to
$u_2$ and ${\bf x}'$ connects $x_1'$ to $x_2'$ with image contained in the $\epsilon$--neighborhood of a geodesic
containing $x_1'$ and $x_2'$.  Then we can take $\G = ({\bf v},{\bf x})$ to be such that ${\bf v} = {\bf v}'$ and ${\bf
x}$ first runs from $x_1$ to $x_1'$, then traverses ${\bf x}'$, and finally runs from $x_2'$ to $x_2$ (all
appropriately reparameterized). This will then provide the desired path proving the lemma.

To construct $\G'$, we suppose for the moment that $\{\epsilon(v)\}_{v \in \C^0(S)}$ have been chosen so that any arc
of $\gamma' \cap p^{-1}(N(v))$ is essential.  With this assumption, Proposition \ref{weakconvex} applied to $\gamma'$
implies that $\X(\gamma')$ is weakly convex.  Now connect $u_1$ and $u_2$ by a geodesic edge path within $\X(\gamma')$
with vertex set $\{u_1=w_1,w_2,w_3,...,w_k = u_2\}$.

Let $v_i = \Pi(w_i)$.  We observe that for every $i = 1,...,k$,
\[ \Phi^{-1}(w_i) \cap (\C(S) \times \gamma') = \{v_i \} \times \alpha_i\]
where $\alpha_i$ is an arc of $\gamma' \cap (\mathbb H - p^{-1}(N(v_i)))$ and is in particular
connected.  It follows from the construction of $\Phi$ that the edges $[w_i,w_{i+1}]$, for $i = 1,...,k-1$ are
images of paths in $\C(S) \times \gamma'$ which we denote $a_i = (b_i,c_i)$.  Explicitly, if $v_i = v_{i+1}$, then
$b_i$ is constant and equal to $v_i=v_{i+1}$, and $c_i$ traverses an arc of $\gamma' \cap p^{-1}(N(v_i))$.  If
$v_i \neq v_{i+1}$, then $b_i$ traverses the edge $[v_i,v_{i+1}]$ and $c_i$ is constant.

We can now define $\G' = ({\bf v}',{\bf x}')$ as follows.
\begin{enumerate}
\item Begin by holding ${\bf v}'$ constant equal to $u_1 = w_1$ and let ${\bf x}'$ traverse from $x_1'$ to the initial point of $c_1$ inside $\alpha_1 \subset \gamma'$.
\item Next, traverse $a_1$.
\item After that, hold ${\bf v}'$ constant again and let ${\bf x}'$ traverse from the terminal point of $c_1$ to the initial point of $c_2$ inside $\alpha_2 \subset \gamma'$.
\item We can continue in this way, for $i = 2,...,k-2$ traversing $a_i$, then holding ${\bf v}'$ constant and letting ${\bf x}'$ go from the terminal point of $c_i$ to the initial point of $c_{i+1}$ inside $\alpha_{i+1} \subset \gamma'$.
\item We complete the path by traversing $a_{k-1}$, then holding ${\bf v}'$ constant and letting ${\bf x}'$ traverse the path from the terminal point of $c_{k-1}$ to
$x_2'$ inside $\alpha_k \subset \gamma'$.
\end{enumerate}
By construction, the projection of this path $\Phi \circ \G'$ onto the first coordinate is the geodesic
from $u_1$ to $u_2$ that we started with (although it stops and is constant at each of the vertices for some interval
in the domain of the parametrization). Moreover, ${\bf x}'$ is contained in $\gamma'$ and connects $x_1'$ to
$x_2'$, so therefore stays within a distance zero of the geodesic from $x_1'$ to $x_2'$, as required.

The proof so far was carried out under the assumption that for every $v \in \C^0(S)$, every arc of $\gamma' \cap
N(v)$ enters and exits the component of $N(v)$ which it meets in different boundary
components.  If this is not true, then first shrink all $\epsilon(v)$ to numbers $\epsilon'(v) < \epsilon(v)$ so that
it is true, construct the path as above, and call it $\G''=({\bf v}'',{\bf x}'')$.  Note
that the numbers $\{\epsilon'(v)\}_{v \in \C^0(S)}$ determine a new map $\Phi':\C(S) \times \mathbb H \to \C(S,z)$, and
$\Phi' \circ \G''$ is a geodesic.  With respect to the original map $\Phi$, $\widetilde \nu''$ is almost
good enough for our purposes. The only problem is that $\Phi \circ \G''$ may now no longer be a geodesic:
If there is some interval in the domain in which ${\bf v}''$ is constant equal to $v$ and ${\bf x}''$
enters and exits a component $p^{-1}(N(v))$ from the same side, then $\Phi \circ \G''$ will
divert from being a geodesic by running (less than half way) into an edge of $\Pi^{-1}(v)$ and running back out.  We
modify $\G''$ to the desired path $\G'$, by pushing ${\bf x}''$ outside of
$p^{-1}(N(v))$ whenever this happens, thus changing it by at most $\epsilon(v) \leq \epsilon$.  The
resulting path $\G'$ has ${\bf v}' = {\bf v}''$ and ${\bf x}'$ still connects
$x_1'$ to $x_2'$ and stays within $\epsilon$ of $\gamma'$, as required.
\end{proof}

Surjectivity of $\partial \Phi$ requires that every point of $\partial \C(S,z)$ is the limit of $\Phi_v(r)$ for some $v \in \C^0(S)$ and some ray
$r \subset \mathbb H$ ending at a point of $\ATF$.  The following much weaker conclusion is easier to arrive at, and
will be used in the proof of surjectivity.

\begin{lemma} \label{limiteverywhere}
For any $v \in \C^0(S)$, $\partial \C(S,z) \subset \overline{\Phi_v(\mathbb H)}$.
\end{lemma}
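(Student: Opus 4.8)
The plan is to exploit the $\pi_1(S)$--equivariance of $\Phi_v$.  Since $\mathbb H$ is $\pi_1(S)$--invariant, so is the image $\Phi_v(\mathbb H)$; in fact, by Proposition \ref{prop: factors through trees}, $\Phi_v(\mathbb H)$ is precisely the fiber $\Pi^{-1}(v)\cong T_v$, because the collapsing map $\mathbb H\to T_v$ appearing in that proof is onto.  As $\pi_1(S)<\Mod(S,z)$ acts by isometries on $\C(S,z)$, hence by homeomorphisms of $\overline\C(S,z)$, the set
\[ Z:=\overline{\Phi_v(\mathbb H)}\cap\partial\C(S,z) \]
is a closed, $\pi_1(S)$--invariant subset of $\partial\C(S,z)$.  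It therefore suffices to show that $Z$ is nonempty and that $\pi_1(S)$ acts minimally on $\partial\C(S,z)$.

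To see $Z\neq\emptyset$ I would use the fixed filling geodesic $\gamma$.  Since $\gamma\subset\mathbb H$, the axis $\gamma_v=\Phi_v(\gamma)$ of $\delta$ in $T_v\cong\Pi^{-1}(v)$ lies in $\Phi_v(\mathbb H)$.  By Kra's theorem \cite{kra} the element $\delta$ is pseudo-Anosov in $\Mod(S,z)$, so it acts loxodromically on $\C(S,z)$ with fixed points $|\mu_\pm|$.  As $\gamma_v$ is $\delta$--invariant, it contains a $\langle\delta\rangle$--orbit, which is a quasigeodesic in $\C(S,z)$ converging to $|\mu_\pm|$; hence $|\mu_\pm|\in\overline{\gamma_v}\subset Z$.

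For minimality I would argue as follows.  The group $\Mod(S,z)$ acts on $\C(S,z)$ with a coarsely dense orbit (it is transitive on nonseparating curves, and these are coarsely dense in $\C(S,z)$), so its limit set is all of $\partial\C(S,z)$, and hence $\Mod(S,z)$ acts minimally on $\partial\C(S,z)$.  Now $\pi_1(S)$ is normal in $\Mod(S,z)$ by the Birman exact sequence, and it is non-elementary on $\C(S,z)$: by Kra's theorem, point--pushing along two filling closed geodesics of $S$ that are not proper powers of a common closed geodesic yields two pseudo-Anosov elements of $\pi_1(S)$ with no common power, hence two independent loxodromics in $\C(S,z)$ (such pairs of geodesics exist in abundance).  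Consequently the limit set $\Lambda$ of $\pi_1(S)$ is a nonempty closed subset of $\partial\C(S,z)$ which, by normality of $\pi_1(S)$, is invariant under all of $\Mod(S,z)$; minimality of the $\Mod(S,z)$--action forces $\Lambda=\partial\C(S,z)$.  Since a non-elementary group acts minimally on its own limit set, $\pi_1(S)$ acts minimally on $\partial\C(S,z)$.  Together with $Z\neq\emptyset$ this yields $Z=\partial\C(S,z)$, which is the assertion of the lemma.

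The step I expect to be the main obstacle is the minimality claim: one must verify genuine non-elementarity of the $\pi_1(S)$--action on $\C(S,z)$ — this is exactly where Kra's theorem is indispensable — and invoke correctly the standard fact that a non-elementary normal subgroup of a group acting on a $\delta$--hyperbolic space has the same limit set as the ambient group.  By contrast, the nonemptiness of $Z$ and all of the equivariance bookkeeping are routine given the structural results already established.
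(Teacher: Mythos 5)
Your argument is correct in substance, but it takes a genuinely different route from the paper's. You stay entirely inside $\overline{\C}(S,z)$: the image $\Phi_v(\mathbb H)=\Pi^{-1}(v)$ is $\pi_1(S)$--invariant, its closure meets $\partial\C(S,z)$ (the fixed points $|\mu_\pm|$ of the point--push $\delta$, pseudo-Anosov by Kra and hence loxodromic on $\C(S,z)$, lie in $\overline{\gamma_v}$), and you then invoke limit-set dynamics: every nonempty closed invariant set for a non-elementary action contains the limit set, a non-elementary normal subgroup has the same limit set as the ambient group, and the $\Mod(S,z)$--limit set is all of $\partial\C(S,z)$. The paper never touches limit sets in $\partial\C(S,z)$ at all: it applies McCarthy--Papadopoulos in the \emph{compact} space $\PML(S,z)$ (the limit set of the infinite normal subgroup $\pi_1(S)<\Mod(S,z)$ is all of $\PML(S,z)$), realizes $\mathbb H$ as a $\pi_1(S)$--equivariant subset of $\T(S,z)$ to get, for each $\mu\in\PFL(S,z)$, points $x_n\in\mathbb H$ converging to $\mu$ in the Thurston compactification, and then transfers to $\partial\C(S,z)$ via Klarreich's continuity of the systole map, which on $\mathbb H$ is a bounded distance from $\Phi_v$ by equivariance. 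The reason for that detour is precisely the step you flag as the main obstacle: $\C(S,z)$ is not proper and $\partial\C(S,z)$ is not compact, so the ``standard'' facts you quote (minimality of a non-elementary group on its limit set, equality of limit sets for normal subgroups, and the passage from a coarsely dense $\Mod(S,z)$--orbit to minimality on the boundary) are standard only for proper spaces. They do remain true for general-type actions on arbitrary hyperbolic spaces --- pointwise north--south dynamics of loxodromics and density of loxodromic fixed points in the limit set need no properness --- but you would have to prove them or cite a source covering non-proper actions (for example Das--Simmons--Urba\'nski, or Maher--Tiozzo), so as written this is the one non-self-contained point; the nonemptiness of $Z$ and the equivariance bookkeeping are fine. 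What your approach buys is independence from Teichm\"uller theory (no Thurston compactification, no systole map); what the paper's approach buys is that all the dynamical input happens in a compact space where the cited results apply verbatim.
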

\begin{proof}
First, note that since $\pi_1(S) < \Mod(S,z)$ is a normal, infinite subgroup the limit set in $\PML(S,z)$ (in the sense
of \cite{mcpapa}) is all of $\PML(S,z)$.  In particular, the closure of any $\pi_1(S)$--equivariant embedding $\mathbb
H \subset \T(S,z)$ in the Thurston compactification of Teichm\"uller space meets the boundary $\PML(S,z)$ in all of $\PML(S,z)$.  In
particular, for any $\mu \in \PFL$, there is a sequence of points $x_n \in \mathbb H$ limiting to $\mu$.

The systol map $\sys:\T(S,z) \to \C(S,z)$ restricts to a $\pi_1(S)$--equivariant map from $\mathbb H$ to $\C(S,z)$,
which is therefore a bounded distance from $\Phi_v$.   Again appealing to Klarreich's work \cite{klarreich-el}, it
follows that $\sys$ extends continuously to $\PFL(S,z)$, and hence $\sys(x_n) \to |\mu| \in \EL(S,z) \cong
\partial \C(S,z)$. Therefore $\Phi_v(x_n) \to |\mu|$.  Since $\mu$ was arbitrary, every point of
$\partial \C(S,z)$ is a limit of a sequence in $\Phi_v(\mathbb H)$, and we are done.
\end{proof}

Given an arbitrary sequence $\{x_n\}$ in $\mathbb H$, we need to prove the following.
\begin{proposition} \label{notangents}
If $\displaystyle{\lim_{n \to \infty} x_n = x \in \partial \mathbb H - \ATF}$, then $\Phi_v(x_n)$ does not converge to
a point of $\partial \C(S,z)$.
\end{proposition}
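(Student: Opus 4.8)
The plan is to argue by contradiction, turning a hypothetical convergence $\Phi_v(x_n)\to|\mu|\in\partial\C(S,z)$ into a geodesic ray in $\C(S,z)$ whose $\mathbb H$--coordinate is pinned near a geodesic ray to $x$, and then to derive a contradiction from the fact that $x\notin\ATF$.

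First I would record the consequence of $x\notin\ATF$ that I will use. Fix a geodesic ray $r\subset\mathbb H$ ending at $x$. Since $x$ does not fill $S$, there is a vertex $w\in\C^0(S)$ with $p(r)\cap w$ finite, so $r$ eventually lies in a single component $U_\star$ of $\mathbb H-p^{-1}(N(w))$ (or, in the spiraling case, eventually inside a single strip of $p^{-1}(N(w))$; this variant is handled the same way). Exactly as in the Lemma preceding Proposition~\ref{nesting distance}, $\Phi(\{w\}\times r)$ then has bounded diameter in $\Pi^{-1}(w)\subset\C(S,z)$. Moreover a $2\epsilon$--perturbation in $\mathbb H$ crosses at most a bounded number of the width--$2\epsilon(w)\le 2\epsilon$ strips of $p^{-1}(N(w))$, so $\Phi(\{w\}\times y)$ stays in a bounded subset of $\C(S,z)$ for every $y$ within $2\epsilon$ of $r$.

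Now suppose $\Phi_v(x_n)\to|\mu|\in\partial\C(S,z)$; then $d(u_0,\Phi_v(x_n))\to\infty$ for a basepoint $u_0=\Phi(v_0,y_0)$ with $y_0\in r$. Apply Lemma~\ref{L:nearly weak convex} to $(v_0,y_0)$ and $(v,x_n)$ to obtain a path $\G_n=({\bf v}_n,{\bf x}_n)$ with $\Phi\circ\G_n$ a geodesic from $u_0$ to $\Phi_v(x_n)$ and ${\bf x}_n$ a path from $y_0$ to $x_n$ lying within $2\epsilon$ of the geodesic $[y_0,x_n]$. Since $x_n\to x$, the geodesics $[y_0,x_n]$ converge uniformly on compacta to $r$; passing to a subsequence and reparametrising by arclength from $u_0$, the geodesics $\Phi\circ\G_n$ converge to a geodesic ray $\rho\colon[0,\infty)\to\C(S,z)$ from $u_0$ to $|\mu|$, with a corresponding limit $\G_\infty=({\bf v}_\infty,{\bf x}_\infty)$ where ${\bf x}_\infty$ runs out to $x$ and stays within $2\epsilon$ of $r$. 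Hence ${\bf x}_\infty(t)$ is eventually within $2\epsilon$ of $U_\star$, so by the previous paragraph $\Phi(\{w\}\times{\bf x}_\infty(t))$ stays in a bounded subset of $\C(S,z)$. Because $\rho$ is an infinite geodesic ray and $\Pi$ is coarsely Lipschitz with $d_{\C(S,z)}(\Phi(u,y),\Phi(w,y))\le C\,d_{\C(S)}(u,w)$, the first coordinate ${\bf v}_\infty(t)$ cannot stay bounded in $\C(S)$ (else $\rho(t)$ would stay in a bounded set), so ${\bf v}_\infty(t)\to\partial\C(S)=\EL(S)$, say ${\bf v}_\infty(t)\to|\nu|$, a filling lamination on $S$.

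The endgame uses the continuity of $\hat\Phi$ (Proposition~\ref{psiphi2}) together with the observation that the $z$--positions relative to the filling lamination $|\nu|$ give only finitely many distinct laminations $\Psi(|\nu|,\cdot)\in\EL(S,z)$ — one for each complementary region of $|\nu|$ in $S$, and these are finitely many — while $\Psi(|\nu|,\cdot)$ is injective on that finite set of regions. Convergence of $\rho(t)\to|\mu|$ then forces ${\bf x}_\infty(t)$ (equivalently $p(r)$) to be eventually contained in lifts of a single complementary region of $|\nu|$, contradicting that $|\nu|$ fills $S$ — unless $x$ is an ideal endpoint of a leaf of $p^{-1}(|\nu|)$, in which case, $|\nu|$ being filling, every ray to $x$ meets every curve infinitely often, so $x\in\ATF$, again a contradiction. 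Either way we are done.

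The main obstacle is the limiting step: extracting the geodesic ray $\rho$ \emph{together with} $\G_\infty$ and checking that ${\bf x}_\infty$ genuinely limits to $x$ while remaining within $2\epsilon$ of $r$, and then pushing this through the boundary behaviour of $\hat\Phi$ carefully enough — using the topology of $\EL$ — to conclude that ``${\bf x}_\infty(t)$ wanders among the complementary regions of $|\nu|$'' really does prevent $\rho(t)$ from converging.
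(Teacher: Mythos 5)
Your route is genuinely different from the paper's (which fixes the proper subsurface $Y$ filled by $x$, disposes of the annular case separately in Lemma \ref{notangentsbaby}, and then contradicts convergence by showing that arcs of the subsurface projections $\pi_{X^\circ}(u_k)$ to a punctured copy of $Y$ get dragged by the point-pushing along $r$ and so have length tending to infinity). Unfortunately your version has gaps that I do not see how to close. First, the limiting step is not available: $\C(S,z)$ is not proper, so the geodesics from $u_0$ to $\Phi_v(x_n)$ need not subconverge to a geodesic ray $\rho$ ending at $|\mu|$, and there is certainly no compactness in $\C(S)\times\mathbb H$ forcing the paths $\G_n$ of Lemma \ref{L:nearly weak convex} to converge to a limit path $\G_\infty$ (the $\C(S)$--coordinate lives in a non--locally--compact complex and the paths carry no equicontinuity). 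The paper avoids exactly this by joining \emph{consecutive} points $\Phi(v,x_{2k})$ and $\Phi(v,x_{2k+1})$: since both endpoints tend to $|\mu|$, hyperbolicity makes \emph{every} point of those geodesic segments tend to $|\mu|$, so one only ever needs single points $\Phi(v_k,y_k)$ on them, never a limit ray. Second, even granting $\G_\infty$, "${\bf v}_\infty(t)$ is unbounded in $\C(S)$" does not yield ${\bf v}_\infty(t)\to|\nu|\in\EL(S)$; unbounded sequences in $\C(S)$ need not converge in $\overline\C(S)$ (compare Proposition \ref{klarreich}), and there is no continuous boundary extension of $\Pi$ available to transport $\rho(t)\to|\mu|$ down to $\C(S)$.

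The endgame has two further problems. The values of $\hat\Phi(|\nu|,\cdot)$ are indexed by the leaves and complementary regions of $p^{-1}(|\nu|)$ in $\mathbb H$ (Lemma \ref{injective comp reg}), of which there are infinitely many -- not by the finitely many complementary regions of $|\nu|$ in $S$ -- so the finiteness you invoke is false. More seriously, the assertion that $\rho(t)\to|\mu|$ "forces ${\bf x}_\infty(t)$ eventually into lifts of a single complementary region" would require controlling $\hat\Phi({\bf v}_\infty(t),{\bf x}_\infty(t))$ as ${\bf x}_\infty(t)\to x\in\partial\mathbb H$; but Proposition \ref{psiphi2} only gives continuity of $\hat\Phi$ on $\overline\C(S)\times\mathbb H$, and continuity in the $\mathbb H$--factor out to $\partial\mathbb H$ is precisely the Cannon--Thurston statement being established, so the argument is circular as written. (Also, landing in one complementary region would not contradict that $|\nu|$ fills $S$; it would show $x$ is an ideal vertex of $p^{-1}(|\nu|)$ and hence $x\in\ATF$ -- that part is fixable, but the forcing step itself is the missing idea.) Some replacement for the paper's subsurface-projection mechanism, which converts "$x$ fills only a proper subsurface $Y$" into a quantitative obstruction to convergence, is needed.
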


One case of this proposition requires a different proof, and we deal with this now.

\begin{lemma} \label{notangentsbaby}
If $\{x_n\}$ and $x$ are as in Proposition 3.12 and $x$ is the endpoint of a lift of a closed geodesic on $S$, then
$\Phi_v(x_n)$ does not converge to a point of $\partial \C(S,z)$
\end{lemma}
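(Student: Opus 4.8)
The idea is that, since $x \notin \ATF$, the closed geodesic $c$ with $p(\tilde c) = c$ --- where $\tilde c \subset \mathbb H$ is the lift of $c$ having $x$ as an endpoint --- cannot be filling: otherwise $x$ would be a fixed point of a pseudo-Anosov element of $\Mod(S,z)$ and so would lie in $\ATF$. Let $g \in \pi_1(S)$ generate the stabilizer of $\tilde c$, and, replacing $g$ by $g^{-1}$ if necessary, assume $x$ is the attracting fixed point of $g$. Because $c$ is not filling, Kra's theorem \cite{kra} shows that $g$, viewed in $\Mod(S,z)$, is not pseudo-Anosov; having infinite order it is therefore reducible, hence fixes a multicurve $w \in \C(S,z)$. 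Consequently $d(g^m u, w) = d(u, g^{-m} w) = d(u,w)$ for all $u$ and $m$, so every $\langle g\rangle$--orbit in $\C(S,z)$ is bounded, and $g$ has no fixed point in $\partial\C(S,z)$ (reducible mapping classes act elliptically on the curve complex).

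Next I would study $\Phi_v(x_n)$ modulo $\langle g\rangle$. Fix a $\pi_1(S)$--translate $\gamma_0$ of $\gamma$ crossing $\tilde c$; as in the proof of Lemma \ref{boundary Xgamma}, the translates $\gamma_k := g^k\gamma_0$ nest down on $x$, so $\{\overline{H^+(\gamma_k)}\}$ is a neighbourhood basis of $x$, with $H^+(\gamma_k)$ the side containing $x$. Since $x_n \to x$, for each $n$ there is a largest $k(n)$ with $x_n \in H^+(\gamma_{k(n)})$, and $k(n)\to\infty$; then $x_n \in H^+(\gamma_{k(n)}) \cap H^-(\gamma_{k(n)+1})$, so $y_n := g^{-k(n)}x_n$ lies in the fixed region $\hat D := H^+(\gamma_0)\cap H^-(\gamma_1)$, whose closure meets $\partial\mathbb H$ in two arcs avoiding both fixed points of $g$. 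By equivariance of $\Phi_v$ we have $\Phi_v(x_n) = g^{k(n)}\cdot\Phi_v(y_n)$, hence $d(\Phi_v(x_n),w) = d(\Phi_v(y_n),w)$ for all $n$. Pass to a subsequence so that $y_n \to y \in \overline{\hat D}$. If $y \in \mathbb H$, then $\Phi_v(y_n)\to\Phi_v(y)$, so $d(\Phi_v(x_n),w)$ is bounded along this subsequence and $\Phi_v(x_n)$ cannot converge into $\partial\C(S,z)$; so from now on assume $y \in \partial\mathbb H$, equivalently $d(x_n,\tilde c)\to\infty$.

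In this remaining case I would argue by contradiction: suppose $\Phi_v(x_n)\to|\nu|\in\partial\C(S,z)$, and pass to a further subsequence so that $\Phi_v(y_n)$ either stays bounded --- in which case so does $\Phi_v(x_n)$, since $d(\Phi_v(x_n),w) = d(\Phi_v(y_n),w)$, contradicting convergence to the boundary --- or converges to some $|\mu|\in\partial\C(S,z)$. When $y\in\ATF$ the latter holds automatically by continuity of $\overline\Phi_v$ on $\mathbb H\cup\ATF$ (Theorem \ref{cannonthurstonhalf}). When $y\notin\ATF$, one argues as in the first lemma of \S\ref{S:rays and existence of ct maps}: choosing a simple closed geodesic $v'$ disjoint from the proper essential subsurface that $y$ fills, one shows that the points $\Phi_{v'}(y_n)\in T_{v'}$ lie in a bounded subtree of $T_{v'}$, so $\{\Phi_{v'}(y_n)\}$, and hence $\{\Phi_v(y_n)\}$ (which is a bounded Hausdorff distance away), is bounded. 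This last point --- ruling out an infinite cascade of lifts of $v'$ accumulating near $y$ --- is the step I expect to be the main obstacle.

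Finally, granting $\Phi_v(y_n)\to|\mu|$: the sequence $\Phi_v(y_n)$ then fellow-travels a geodesic ray from $w$ to $|\mu|$ out to parameter $d_n := d(\Phi_v(y_n),w)\to\infty$; applying the isometry $g^{k(n)}$, which fixes $w$, shows $\Phi_v(x_n)$ fellow-travels the ray from $w$ to $g^{k(n)}|\mu|$ out to parameter $d_n$, while $\Phi_v(x_n)\to|\nu|$ forces it to fellow-travel the ray from $w$ to $|\nu|$ out to parameter $\approx d(\Phi_v(x_n),w)=d_n$. By $\delta$--hyperbolicity this yields $(g^{k(n)}|\mu|\mid|\nu|)_w\to\infty$, i.e. $g^{k(n)}|\mu|\to|\nu|$ in $\overline\C(S,z)$. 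But this is impossible: a limit of $g^{k(n)}|\mu|$ would be a $g$--fixed point of $\partial\C(S,z)$, of which there are none; equivalently, since $g$ is reducible every $\PML(S,z)$--accumulation point of $g^m\mu$ is supported on $w$ together with the attracting laminations of the pseudo-Anosov pieces of $g$, hence is non-filling, so by Klarreich's criterion (Proposition \ref{klarreich}) the ending laminations $g^m|\mu|$ --- and in particular any subsequence $g^{k(n)}|\mu|$ --- do not converge in $\overline\C(S,z)$. This contradiction proves the lemma.
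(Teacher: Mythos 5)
Your reduction modulo $\langle g\rangle$ is sound up to the point you yourself flag, but that flagged step is a genuine gap, and it is exactly where the difficulty of the lemma lives. In the sub-case $y\notin\ATF$ you want to conclude that $\{\Phi_{v'}(y_n)\}$ is bounded ``as in the first lemma of Section 3.2''; that lemma, however, only bounds the image of a geodesic \emph{ray} ending at a non-filling point, and the conclusion is false for a sequence that merely converges to $y$ in $\overline{\mathbb H}$. For example, let $y$ be an endpoint of a lift $\widetilde c'$ of a simple closed geodesic, interior to one of the two ideal arcs of $D=H^+(\gamma_0)\cap H^-(\gamma_1)$, let $h\in\pi_1(S)$ generate the stabilizer of $\widetilde c'$ with $y$ attracting, and take $z_m\to\xi\in\ATF$ inside $D$, so $d(u_0,\Phi_v(z_m))\to\infty$; setting $y_m=h^{p_m}z_m$ with $p_m$ large gives $y_m\to y$, $y_m\in D$ eventually, and $d(u_0,\Phi_v(y_m))=d(h^{-p_m}u_0,\Phi_v(z_m))\to\infty$ because $h$ is reducible in $\Mod(S,z)$ (Kra) and so has bounded orbits. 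Since $g^kD\subset H^+(\gamma_k)$, every such sequence does arise from an admissible $x_n\to x$. So your dichotomy ``$\Phi_v(y_n)$ bounded or convergent to a point of $\partial\C(S,z)$'' can fail on both horns ($\C(S,z)$ is not proper, so escaping to infinity does not yield a boundary limit along a subsequence), and repairing it amounts to proving Proposition \ref{notangents} for the point $y$ --- whose proof in the paper in turn quotes Lemma \ref{notangentsbaby} to dispose of the annular case, so your route threatens circularity.

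There are also two softer issues in the final step even when $\Phi_v(y_n)\to|\mu|$: the assertion that a limit of $g^{k(n)}|\mu|$ must be a $g$-fixed boundary point is unjustified for non-consecutive exponents $k(n)$, and your fallback --- that all $\PML(S,z)$-accumulation points of $g^m\mu$ for reducible $g$ are non-filling, combined with a version of Proposition \ref{klarreich} for sequences of ending laminations rather than curves --- is plausible but relies on facts established nowhere in the paper. (Minor: ``fellow-travels out to parameter $d_n$'' should be the Gromov product $(\Phi_v(y_n)\mid|\mu|)_w$, though the conclusion $g^{k(n)}|\mu|\to|\nu|$ survives.) For comparison, the paper's argument needs none of this machinery: with $\eta$ the hyperbolic element whose attracting fixed point is $x$ and $\gamma_0$ a translate of $\gamma$ separating the two fixed points, it passes to a subsequence with $x_n\in\eta^n(H^+(\gamma_0))$, so $\Phi_v(x_n)\in\eta^n(\H^+(\gamma_0))$ by equivariance; a boundary limit $|\mu|$ would then lie in $\bigcap_n\eta^n(\overline{\H^+(\gamma_0)})$ and be invariant under $\eta$, which is impossible since $\eta$ is reducible and fixes no point of $\partial\C(S,z)$. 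No fundamental-domain reduction, case analysis on $y$, or dynamics on $\PML(S,z)$ is required.
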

\begin{proof}  Under the hypothesis of the lemma, there is an element $\eta \in \pi_1(S)$ with $x$ as the attracting fixed
point.  Moreover, because $x \not \in \ATF$, the geodesic representative of this element of $\pi_1(S)$ does not fill
$S$.  Therefore, the associated mapping class is reducible (see \cite{kra}).

Let $\gamma_0$ be a $\pi_1(S)$--translate of $\gamma$ such that $\gamma_0$ separates $x$ from the repelling fixed point of $\eta$.
Then $\{\eta^n(\gamma_0)\}$ nest down on $x$.  It follows that after passing to a subsequence
(which we continue to denote $\{x_n\}$) we have
\[ x_n \in H^+(\eta^n(\gamma_0)) = \eta^n(H^+(\gamma_0)).\]
Appealing to the $\pi_1(S)$--equivariance of $\Phi$ we have
\[\Phi_v(x_n) = \Phi(v,x_n) \in \H^+(\eta^n(\gamma_0)) = \eta^n(\H^+(\gamma_0)).\]

Suppose now that $\Phi_v(x_n)$ converges to some element $|\mu| \in \partial \C(S,z)$.  It follows that
\[ |\mu| \in \bigcap_{n=1}^\infty \eta^n (\overline{\H^+(\gamma_0)}).\]
However, any such $|\mu|$ is invariant under $\eta$ and since $\eta$ is a reducible mapping class it fixes no point
of $\partial \C(S,z)$.  This contradiction implies $\Phi_v(x_n)$ does not converge to any $|\mu| \in
\partial \C(S,z)$, as required.
\end{proof}

\begin{proof}[Proof of Proposition \ref{notangents}]
Recall that $\Phi_v(x) = \Phi(v,x)$.
Suppose, contrary to the conclusion of the proposition, that
\[
\lim_{n \to \infty} \Phi(v,x_n) = |\mu| \in \EL(S,z) \cong \partial \C(S,z).
\]
We begin by finding another sequence which also converges to $|\mu|$ to which we can apply the techniques developed so
far. Since $x \not \in \ATF$ the surface $Y$ filled by $x$ is strictly contained in $S$.  By Lemma \ref{notangentsbaby}
we may assume that $Y$ is not an annulus.  Let $r \subset \mathbb H$ be a ray ending at $x$ so that $r$ is contained in
a component $\widetilde Y$ of $p^{-1}(Y)$ and so that $p(r)$ fills $Y$.

We pass to a subsequence (which we continue to denote $\{x_n\}$) with the property that for every $k > 0$, the
geodesic segment $\beta_k$ connecting $x_{2k}$ to $x_{2k+1}$ passes within some fixed distance, say distance $1$, of $r$
and so that furthermore
\[
\beta_k \cap \widetilde Y \neq \emptyset.
\]

Now fix any $k > 0$ and let $\Phi \circ \G^k:[a_k,b_k] \to \C(S,z)$ be a geodesic from $\Phi(v,x_{2k})$ to
$\Phi(v,x_{2k+1})$ where $\G^k = ({\bf v}^k,{\bf x}^k)$ is given by Lemma \ref{L:nearly weak convex}.  The path ${\bf
x}^k$ connects $x_{2k}$ to $x_{2k+1}$ and has image within $2 \epsilon$ of a geodesic in $\mathbb H$ which must also
pass within a uniformly bounded distance of $r$ (in fact, it passes within a distance $1+2\epsilon$).

Choosing a sufficiently thin subsequence $\{x_n\}$, we may assume that $\beta_k$ spends a very long time in $\widetilde
Y$.  Doing this ensures that the image of ${\bf x}^k$ nontrivially intersects $\widetilde Y$. Let $t_k \in [a_k,b_k]$
be any time where ${\bf x}^k$ meets $\widetilde Y$.  Then set
\[y_k = {\bf x}^k(t_k) \in \widetilde Y \quad \mbox{and} \quad v_k = {\bf v}^k(t_k) \in \C^0(S)\]
(recall that ${\bf x}^k$ is constant when ${\bf v}^k$ is not, so we can assume that $t_k$ is chosen so that $v_k$ is
indeed a vertex).

Now observe that since $\Phi \circ \G^k([a_k,b_k])$ is a geodesic from $\Phi(v,x_{2k})$ to $\Phi(v,x_{2k+1})$, the sequence
$\{\Phi \circ \G^k(t_k)\} = \{\Phi(v_k,y_k)\}$ also converges to $|\mu|$.  Let us write $u_k = \Phi(v_k,y_k)$.

Next, for each $k > 0$ let $f_k \in \Diff_0(S)$ be such that $\widetilde \ev(f_k) = y_k \in \widetilde Y$.  Since
$\widetilde Y$ is a single component of $p^{-1}(Y)$, we may assume that any two $f_j$ and $f_k$ differ by an isotopy
fixing the complement of the interior of $Y$.  That is, there is a path $f_t \in \Diff_0(S)$ for $t \in [1,\infty)$ such that $y_k = \widetilde\ev(f_k)$ for all positive integers $k$, and so that
\[f_1|_{S - Y} = f_t|_{S-Y}\]
for all $t \in [1,\infty)$.

Let $X = f_1^{-1}(Y)$ and consider the punctured surfaces
\[ Y^\circ = Y - \{f_1(z)\} \quad \mbox{ and } \quad X^\circ = X - \{z\} = f_1^{-1}(Y^\circ).\]
We will be interested in the set of subsurface projections
\[ \{\pi_{X^\circ}(u_k)\} \subset \C'(X^\circ) \]
where $\C'(X^\circ)$ is the arc complex of $X^\circ$; see \cite{masur-minsky2}.  We consider the incomplete metric on
$X^\circ$ for which $f_1: X^\circ \to Y^\circ$ is an isometry where $Y^\circ$ is given the induced path metric inside
of $S$.

\begin{claim} The length of some arc of $\pi_{X^\circ}(u_k)$ tends to infinity.
\end{claim}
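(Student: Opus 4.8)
The plan is to translate the Claim, via the isometry $f_1\colon X^\circ\to Y^\circ$, into a statement about the curve $v_k$ with its marked point dragged along a long, increasingly filling path in $Y$, and then to argue that the resulting arc must be metrically complicated: intuitively it has to wind around the marked point so as to track the filling ray $p(r)$, and its geodesic representative in the fixed incomplete metric cannot then stay bounded.

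First I would record the translation. Since $f_1$ is an isometry onto $Y^\circ$, it carries $\pi_{X^\circ}(u_k)$, preserving arc lengths, to $\pi_{Y^\circ}\!\big(f_1 f_k^{-1}(v_k)\big)$, where $f_1 f_k^{-1}(v_k)$ agrees with $f_1(u_k)$ up to the bigon modification in the construction of $\Phi$ (which is irrelevant here). Because $f_k$ and $f_1$ agree off $\INT(Y)$ and both lie in $\Diff_0(S)$, the homeomorphism $f_1 f_k^{-1}$ is, up to isotopy, the point-push carrying $f_1(z)$ to $f_k(z)=p(y_k)$ along the trace $\sigma_k$ of $t\mapsto f_t(z)$. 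By the arrangements already made — the $f_t$ differ by isotopies supported in $Y$, and $y_k$ stays within bounded distance of $r\subset\widetilde Y$ — the path $\sigma_k$ fellow-travels the initial segment of $p(r)$ of length $L_k\to\infty$; closing it up with a path of uniformly bounded length yields a based loop $\theta_k$ in $Y$ which, since $p(r)$ fills $Y$, fills $Y$ for all large $k$. By Kra's theorem \cite{kra} the point-push $\mathrm{Push}(\theta_k)$ is then pseudo-Anosov on $Y^\circ$.

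Next I would note that $\pi_{X^\circ}(u_k)\neq\emptyset$ for large $k$: under the standing (for-contradiction) hypothesis $u_k\to|\mu|\in\EL(S,z)$, the lamination $|\mu|$ fills $(S,z)$, so $u_k$ eventually crosses $\partial X^\circ$ essentially. Now suppose the Claim fails; then along a subsequence every arc of $\pi_{X^\circ}(u_k)$ has length at most some fixed $M$ in the incomplete metric. Bounded length forces bounded intersection with a fixed pants decomposition of $X^\circ$, hence membership in a finite set of isotopy classes, so after passing to a further subsequence $\pi_{X^\circ}(u_k)$ contains one fixed arc class $[\alpha_0]$ for every $k$. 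Transporting $[\alpha_0]$ by $f_1$ and then undoing the point-push, written as $f_1 f_k^{-1}=\mathrm{Push}(\theta_k)\circ(\text{fixed reference push})$, exhibits this fixed class, for every $k$, as the $\mathrm{Push}(\theta_k)$-image of a single fixed arc configuration of $Y^\circ$ — equivalently, $\mathrm{Push}(\theta_k)$ stabilizes a fixed arc class of $Y^\circ$ up to bounded error. For $k$ large this contradicts the fact that $\mathrm{Push}(\theta_k)$ is pseudo-Anosov, and so has no invariant (nor almost-invariant) arc class. That $[\alpha_0]$ filled in really corresponds to a fixed, bounded-type arc of $v_k$ inside the fixed subsurface $Y\subset S$ uses that $v_k$ converges in $\overline\C(S)$ — to the lamination obtained from $|\mu|$ by forgetting $z$, which fills $S$ — together with standard properties of subsurface projections (bounded geodesic image); with this the contradiction stands, proving the Claim.

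The step I expect to be the main obstacle is this last one: converting ``a fixed arc class persists in $\pi_{X^\circ}(u_k)$'' into an honest almost-periodicity statement for the pseudo-Anosov maps $\mathrm{Push}(\theta_k)$. The delicacy is bookkeeping — the point-pushes act between once-marked surfaces whose marked point moves with $k$, and the curves $v_k$ run off to infinity in $\C(S)$ — so one must fix a single reference copy of $Y^\circ$, write each $f_1 f_k^{-1}$ as a fixed reference push composed with $\mathrm{Push}(\theta_k)$, and check that the persistence of $[\alpha_0]$ genuinely pins down $\mathrm{Push}(\theta_k)$ rather than allowing it to move classes by a slowly growing amount. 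Once this is in place, Kra's theorem closes the argument at once.
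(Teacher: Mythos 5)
There is a genuine gap at the heart of your contradiction. If the Claim fails you do get a single arc class $[\alpha_0]$ lying in $\pi_{X^\circ}(u_k)$ for infinitely many $k$, but transporting by $f_1$ this only says that $h_k^{-1}=f_1f_k^{-1}$ (the inverse point-push) carries \emph{some} arc $\alpha_k$ of $v_k\cap Y$ to the fixed class, where $\alpha_k$ varies with $k$. To contradict anything about $\mathrm{Push}(\theta_k)$ you must pin down the upstairs side as well, and your proposed fix --- that $v_k=\Pi(u_k)$ converges in $\overline\C(S)$ to the lamination obtained from $|\mu|$ by forgetting $z$, which fills $S$ --- is unjustified and false in general: $\Pi$ admits no continuous extension to $\partial\C(S,z)$ (take $v_k\equiv v$ constant and $y_k$ tending to a point of $\ATF$; then $u_k=\Phi_v(y_k)$ converges to a point of $\partial\C(S,z)$ while $v_k$ goes nowhere), nothing in the construction of $v_k={\bf v}^k(t_k)$ forces convergence, and the forgetful image of an ending lamination on $(S,z)$ need not fill $S$. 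Moreover, even if both sides were fixed, ``a pseudo-Anosov has no (almost-)invariant arc class'' is a statement about a \emph{single} map, whereas your maps $\mathrm{Push}(\theta_k)$ change with $k$: for each individual $k$ there is no contradiction in a bounded-length class being sent to a bounded-length class. What you actually need is a quantitative statement that $\mathrm{Push}(\theta_k)$ displaces every bounded-length arc of $Y^\circ$ by an amount tending to infinity with $k$ --- and proving that is precisely the paper's argument: one drags the arc backwards along the lift $\widetilde h_t$ of the push path, and bounds $\ell_{Y^\circ}(h_k^{-1}(\alpha))$ below by the distance from $y$ to the two boundary components of $\widetilde Y$ containing the endpoints of the lifted arc $\widetilde\alpha^k$, which tends to infinity because $r$ fills $Y$ and so is not asymptotic to $\partial\widetilde Y$. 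Kra's theorem is not used in the paper's proof of the Claim and cannot substitute for this estimate.

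Two further points. First, ``$\theta_k$ fills $Y$ for all large $k$'' is asserted, not proved: long initial segments of a filling ray, closed up by a bounded connector, do not automatically give filling loops, and establishing this needs a limiting argument (Hausdorff limits of hypothetical disjoint curves, in the spirit of Claim 2 in the proof of Proposition \ref{nesting distance}). Second, the scenario corresponding to the paper's first case --- where the arcs of $\pi_Y(v_k)$ run through infinitely many isotopy classes --- is exactly where no fixed upstairs arc can be extracted; the paper dispatches it directly by observing that $f_1f_k^{-1}$ is the identity on $\partial Y$ and isotopic to the identity in $Y$ after forgetting $z$, so $\ell_{Y^\circ}(f_1f_k^{-1}(\alpha_k))\geq\ell_Y(\alpha_k)\to\infty$, with no point-push dynamics needed. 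Your point-push framing is a reasonable heuristic, but as written the argument is missing the mechanism that actually produces long arcs.
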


Here, length means infimum of lengths over the isotopy class of an arc. The claim implies that there are infinitely
many arcs in the set $\{\pi_{X^\circ}(u_k)\}$ which is impossible if $u_k \to |\mu|$. Thus, to complete the proof of
the proposition, it suffices to prove the claim.

\medskip

\noindent {\it Proof of Claim:} So, to prove that the length of some arc tends to infinity, first suppose that
$\{\pi_Y(v_k)\}$ contains an infinite set.  Then there are arcs $\alpha_k \subset \pi_Y(v_k)$ with $\ell_Y(\alpha_k)
\to \infty$.  Now $f_k^{-1}(\alpha_k)$ is an arc of $\pi_{X^\circ}(u_k)$ and $\ell_{X^\circ}(f_k^{-1}(\alpha_k)) =
\ell_{Y^\circ}(f_1 f_k^{-1}(\alpha_k))$. However, $f_1 f_k^{-1}$ is the identity outside the interior of $Y$, in
particular it is the identity on the boundary of $Y$ and isotopic (forgetting $z$) to the identity in $Y$.  So, we have
\[ \ell_{Y^\circ}(f_1 f_k^{-1}(\alpha_k)) \geq \ell_Y(\alpha_k) \to \infty\]
and hence there is an arc of $\pi_{X^\circ}(u_k)$ with length tending to infinity as required.

We may now suppose that there are only finitely many arcs in the set $\{\pi_Y(v_k)\}$.  By passing to a further
subsequence if necessary, we may assume that $\pi_Y(v_k)$ is constant and equal to a union of finitely many arcs in
$Y$. We fix attention on one arc, call it $\alpha$.  Again, we see that $f_k^{-1}(\alpha)$ is an arc of
$\pi_{X^\circ}(u_k)$ and $\ell_{X^\circ}(f_k^{-1}(\alpha)) = \ell_{Y^\circ}(f_1 f_k^{-1}(\alpha))$ with $f_1 f_t^{-1}$
equal to the identity outside the interior of $Y$ for all $t$.

Writing $h_t = f_t f_1^{-1}$, we are required to prove that $\ell_{Y^\circ}(h_k^{-1}(\alpha))$ tends to infinity as $k
\to \infty$.  Observe that $h_1$ is the identity on $S$ and $h_t$ is the identity outside the interior of $Y$ for all
$t \in [1,\infty)$.  We can lift $h_t$ to $\widetilde h_t$ so that $\widetilde h_1$ is the identity in $\mathbb H$. It
follows from the definition of $\widetilde{\ev}$ that $\widetilde h_k(\widetilde{\ev}(f_1)) = y_k$. Thus, $\widetilde
h_t$ is essentially pushing the point $y = \widetilde{\ev}(f_1) \in \widetilde Y$ along the ray $r$ (at least,
$\widetilde h_k(y) = y_k$ comes back to within a uniformly bounded distance to $r$ for every positive integer $k$,
though it is not hard to see that we can choose $f_t$ so that $\widetilde h_t$ always stays a bounded distance from
$r$).

Now $h_t^{-1}(\alpha)$ can be described as applying the isotopy $h_t$ \textit{backward} to $\alpha$.  Therefore, if we
let $\widetilde \alpha^k$ be the last arc of $p^{-1}(\alpha)$ intersected by the path $\widetilde h_t(y)$ for $t \in
[1,k]$, then we can drag $\widetilde \alpha^k$ backward using the isotopy $\widetilde h_t$ as $t$ runs from $k$ back to
$1$, and the result $\widetilde h_k^{-1}(\widetilde \alpha^k)$ projects down by $p$ to $h_k^{-1}(\alpha)$; see Figure
\ref{nolimitfigure}. Moreover, observe that $\ell_{Y^\circ}(h_k^{-1}(\alpha))$ is at least the sum of the distances
from $y$ to the two boundary components of $\widetilde Y$ containing the end points of $\widetilde \alpha^k$.

\begin{figure}[htb]
\begin{center}
\ \psfig{file=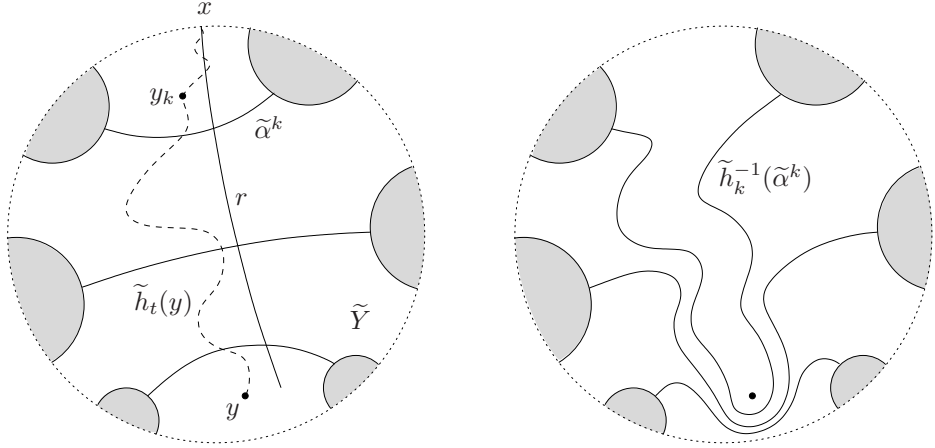,height=2.2truein} \caption{On the left: $r$ inside $\widetilde Y$ (the complement of the
shaded region), the path $\widetilde h_t(y)$ as it goes through $y_k= \widetilde h_k(y)$ and the arc $\widetilde
\alpha^k$. On the right: dragging $\widetilde \alpha^k$ back by $\widetilde h_k^{-1}$.} \label{nolimitfigure}
\end{center}
 \setlength{\unitlength}{1in}
  \begin{picture}(0,0)(0,0)
    \put(1.8,1.55){$\widetilde Y$}
    \put(1.3,2.55){$\widetilde \alpha^k$}
    \put(1.15,1.1){$y$}
    \put(.75,2.75){$y_k$}
    \put(.67,1.65){$\widetilde h_t(y)$}
    \put(3.73,2.3){$\widetilde h_k^{-1}(\widetilde \alpha^k)$}
    \put(1,3.2){$x$}
    \put(1.2,2.2){$r$}
  \end{picture}
\end{figure}

Finally, since $r$ fills $Y$, the distance from $y$ to the boundary components of $\widetilde Y$ containing the
endpoints of $\widetilde \alpha^k$ must be tending to infinity as $k \to \infty$ (otherwise, we would find that $r$ is
asymptotic to one of the boundary components of $\widetilde Y$ which contradicts all tails filling in $Y$). This
implies $\ell_{Y^\circ}(h_k^{-1}(\alpha))$ tends to infinity as $k \to \infty$.  This proves the claim, and so
completes the proof of the proposition.
\end{proof}

We can now prove one of the main technical pieces of Theorem \ref{cannonthurston}.

\pagebreak

\begin{theorem} \label{ctsurjective}
The map
\[ \partial \Phi:\ATF \to \partial \C(S,z)\]
is surjective.
\end{theorem}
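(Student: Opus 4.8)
The plan is to combine the three ingredients just assembled --- Lemma \ref{limiteverywhere}, Proposition \ref{notangents}, and the continuity of $\overline\Phi_v$ from Theorem \ref{cannonthurstonhalf} --- via a compactness argument in $\overline{\mathbb H} = \mathbb H \cup \partial\mathbb H$. Fix any $v \in \C^0(S)$ and any point $|\mu| \in \partial\C(S,z) \cong \EL(S,z)$. By Lemma \ref{limiteverywhere} there is a sequence $\{x_n\} \subset \mathbb H$ with $\Phi_v(x_n) \to |\mu|$. Since $\overline{\mathbb H}$ is compact, after passing to a subsequence we may assume $x_n \to x$ for some $x \in \overline{\mathbb H}$, while retaining $\Phi_v(x_n) \to |\mu|$.

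Now I would rule out the two bad locations for $x$. If $x \in \mathbb H$, then continuity of $\Phi_v$ forces $\Phi_v(x_n) \to \Phi_v(x) \in \C(S,z)$, contradicting $\Phi_v(x_n) \to |\mu| \in \partial\C(S,z)$. If $x \in \partial\mathbb H - \ATF$, then Proposition \ref{notangents} directly asserts that $\Phi_v(x_n)$ does not converge to a point of $\partial\C(S,z)$, again a contradiction. Hence the only remaining possibility is $x \in \ATF$.

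Finally, with $x \in \ATF$, Theorem \ref{cannonthurstonhalf} provides the continuous extension $\overline\Phi_v : \mathbb H \cup \ATF \to \overline\C(S,z)$, so $\Phi_v(x_n) = \overline\Phi_v(x_n) \to \overline\Phi_v(x) = \partial\Phi(x)$. Comparing with $\Phi_v(x_n) \to |\mu|$ and uniqueness of limits in $\overline\C(S,z)$ gives $\partial\Phi(x) = |\mu|$, so $|\mu|$ lies in the image of $\partial\Phi$. As $|\mu|$ was arbitrary, $\partial\Phi$ is surjective. The substantive content here is entirely carried by Proposition \ref{notangents} (whose proof via subsurface projections of the points $u_k = \Phi(v_k,y_k)$ is the real work); the present argument is just the bookkeeping that packages Lemmas \ref{limiteverywhere}--\ref{notangents} and Theorem \ref{cannonthurstonhalf} into the surjectivity statement, so I do not anticipate any genuine obstacle beyond making sure the subsequence extractions are carried out in the stated order.
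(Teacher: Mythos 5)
Your proposal is correct and is essentially the paper's own argument: pass to a subsequence of the points provided by Lemma \ref{limiteverywhere} converging in $\overline{\mathbb H}$, use Proposition \ref{notangents} (and continuity of $\Phi_v$ for interior limits) to force the limit into $\ATF$, and then conclude via the continuous extension of Theorem \ref{cannonthurstonhalf}. The only difference is that you spell out the exclusion of a limit in $\mathbb H$, which the paper leaves implicit.
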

\begin{proof} Let $|\mu| \in \partial \C(S,z)$ be an arbitrary point.
According to Lemma \ref{limiteverywhere} there exists a sequence $\{ x_n \} \subset \mathbb H$ with
$$\lim_{n \to \infty} \Phi_v(x_n) = |\mu|.$$
By passing to a subsequence, we may assume that $\{x_n\}$ converges to a point $x \in \partial \mathbb H$. It follows
from Proposition \ref{notangents} that $x \in \ATF$.  Then, by Theorem \ref{cannonthurstonhalf}
$$|\mu| = \lim_{n \to \infty} \Phi_v(x_n) = \overline \Phi_v(x) = \partial \Phi(x).$$
Since $|\mu| \in \partial \C(S,z)$ was an arbitrary point, it follows that $\partial \Phi(\ATF) = \partial \C(S,z)$, and $\partial \Phi$ is surjective.
\end{proof}

\subsection{Neighborhood bases.}

In this section we find neighborhood bases for points of $\partial \C(S,z)$.

\begin{remark}
For the purposes of the present work, it is more convenient to use the more flexible definition of neighborhood basis.
This is a collection of sets $\{V_j(x)\}_j$ associated to each point $x$ in the space with the property that a subset
$U$ is open if and only if for every $x \in U$, $V_j(x) \subset U$ for some $j$.  Equivalently, the interior of each
$V_j(x)$ is required to contain $x$, and any open set containing $x$ should contain some $V_j(x)$.
\end{remark}

To construct the neighborhood bases, we must distinguish between two types of points of $\ATF$.  We say a point $x \in
\ATF$ is \textit{simple} if there exists a ray $r$ in $\mathbb H$ ending at $x$ for which $p(r)$ is simple.  Otherwise
$x$ is not simple. Equivalently, a point $x \in \ATF$ is simple if and only if there is a lamination $|\lambda| \in
\EL(S)$ such that $x$ is the ideal endpoint of a leaf (or ideal vertex of a complementary polygon) of
$p^{-1}(|\lambda|)$.

\begin{lemma} \label{easyquotient topology}
If $x \in \ATF$ is not simple and $\{ \gamma_n\}$ are $\pi_1(S)$--translates of $\gamma$ which nest down to $x$, then
$\{ \partial \H^+(\gamma_n) \}$ is a neighborhood basis for $\partial \Phi(x)$.
\end{lemma}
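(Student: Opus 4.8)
The plan is to check the two defining properties of a (flexible) neighborhood basis for $\partial\Phi(x)$: (I) $\partial\Phi(x)$ lies in the interior of each $\partial\H^+(\gamma_n)$ relative to $\partial\C(S,z)$; and (II) every open set $U\ni\partial\Phi(x)$ contains some $\partial\H^+(\gamma_n)$. Throughout I use that $\partial\Phi(x)$ is the unique point of $\bigcap_n\overline{\H^+(\gamma_n)}$, so $\partial\Phi(x)\in\partial\H^+(\gamma_n)$ for every $n$. Since distinct $\pi_1(S)$--translates of $\gamma$ share no ideal endpoint (as $\pi_1(S)$ is a torsion-free Fuchsian group, two hyperbolic axes with a common endpoint coincide), $x$ can be an ideal endpoint of at most one $\gamma_n$; discarding that term — which affects neither the hypothesis nor the conclusion — I may assume $x$ is an endpoint of no $\gamma_n$.

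For (I): the set $\partial\H^-(\gamma_n)$ is the limit set of the weakly convex (hence quasiconvex) set $\H^-(\gamma_n)$ of Proposition \ref{weakconvex}, so it is closed in $\partial\C(S,z)$; by Proposition \ref{sep half infinity} we have $\partial\H^+(\gamma_n)\cup\partial\H^-(\gamma_n)=\partial\C(S,z)$ and $\partial\H^+(\gamma_n)\cap\partial\H^-(\gamma_n)=\partial\X(\gamma_n)$. Thus it suffices to show $\partial\Phi(x)\notin\partial\H^-(\gamma_n)$, for then $\partial\Phi(x)$ lies in the open set $\partial\C(S,z)\setminus\partial\H^-(\gamma_n)\subseteq\partial\H^+(\gamma_n)$. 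Suppose instead $\partial\Phi(x)\in\partial\H^-(\gamma_n)$. The nesting gives $H^-(\gamma_n)\subseteq H^-(\gamma_m)$ for $m\geq n$, hence $\partial\H^-(\gamma_n)\subseteq\partial\H^-(\gamma_m)$, so $\partial\Phi(x)\in\partial\X(\gamma_m)$ for all $m\geq n$. By Lemma \ref{boundary Xgamma}, for each such $m$ either $\partial\Phi(x)=\partial\Phi(e_m)$ for an ideal endpoint $e_m$ of $\gamma_m$ (the fixed points $|\mu_\pm|$ of $\delta_m$ being the $\partial\Phi$--images of the endpoints of $\gamma_m$), or $\partial\Phi(x)=\hat\Phi(|\lambda_m|,p_m)$ with $p_m\in\gamma_m$ and $|\lambda_m|\in\partial\C(S)\cong\EL(S)$. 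In the first case $x\neq e_m$, so Corollary \ref{points identified} forces $x$ to be simple, a contradiction; hence the second case holds for all large $m$. Applying Lemma \ref{injective comp reg} to consecutive pairs shows $|\lambda_m|$ is eventually a fixed $|\lambda|$, and that for large $m,m'$ the points $p_m,p_{m'}$ lie in the closure of a common leaf or complementary region of $p^{-1}(|\lambda|)$; since only finitely many leaves and regions contain a fixed $p_{m_0}$ in their closure, pigeonhole produces one such $L$ with $p_m\in\overline L\cap\gamma_m\subseteq\overline L\cap\overline{H^+(\gamma_m)}$ for infinitely many $m$. These are nested nonempty compact subsets of $\overline{\mathbb H}$, so $\overline L\cap\{x\}=\overline L\cap\bigcap_m\overline{H^+(\gamma_m)}\neq\emptyset$, i.e. $x\in\overline L$. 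Then $x$ is an ideal endpoint of a leaf, or an ideal vertex of a complementary polygon, of $p^{-1}(|\lambda|)$, so $x$ is simple — contradiction. This proves (I).

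For (II): suppose some open $U\ni\partial\Phi(x)$ contains no $\partial\H^+(\gamma_n)$, and choose $z_n\in\partial\H^+(\gamma_n)\setminus U$. Both $z_n$ and $\partial\Phi(x)$ are limits of sequences in $\H^+(\gamma_n)$, which is weakly convex by Proposition \ref{weakconvex}; hence any two of its points are joined by a geodesic inside it, and that geodesic stays at distance at least $D_n:=d(u_0,\H^+(\gamma_n))$ from $u_0$. Therefore the Gromov product based at $u_0$ of any two such points is at least $D_n-O(\delta)$, and passing to the limit $(z_n\mid\partial\Phi(x))_{u_0}\geq D_n-O(\delta)$. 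By Proposition \ref{nesting distance}, $D_n\to\infty$, so $z_n\to\partial\Phi(x)$ in $\overline\C(S,z)$, hence in $\partial\C(S,z)$. But $\partial\C(S,z)\setminus U$ is closed and contains every $z_n$, so it contains $\partial\Phi(x)\in U$ — a contradiction. This proves (II), and the lemma follows.

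The main obstacle is part (I): deducing that $x$ is simple from the fact that $\partial\Phi(x)$ lies in $\partial\X(\gamma_m)$ for all large $m$. This is precisely where the hypothesis that $x$ is not simple is used, and it requires combining the description of $\partial\X(\gamma_m)$ in Lemma \ref{boundary Xgamma}, the identification criterion of Corollary \ref{points identified}, the injectivity of $\hat\Phi$ on $\EL(S)\times\mathbb H$ in Lemma \ref{injective comp reg}, and a compactness/pigeonhole argument in $\overline{\mathbb H}$; part (II), by contrast, is a fairly soft consequence of weak convexity together with Proposition \ref{nesting distance}.
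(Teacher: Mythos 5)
Your proof is correct, and while it follows the same two-part skeleton as the paper (interior condition for each $\partial \H^+(\gamma_n)$, plus absorption of some $\partial \H^+(\gamma_n)$ into any given open set), the key step is handled by a genuinely different route. For the absorption property your Gromov-product estimate, using weak convexity of $\H^+(\gamma_n)$ and Proposition \ref{nesting distance}, is just a hands-on version of the paper's one-line observation that the visual diameter of $\overline{\H^+(\gamma_n)}$ tends to zero. The real divergence is in the interior condition: the paper does not show that $\partial\Phi(x)$ avoids $\partial\H^-(\gamma_n)$; instead it argues that for each $n$ there is some $m>n$ with $\partial\X(\gamma_n)\cap\partial\X(\gamma_m)=\emptyset$ (obtained by contradiction, citing the limiting simple-geodesic argument from the proof of Proposition \ref{disjointhoods} to conclude that otherwise $x$ would be simple), and then uses $\partial\Phi(x)\in\partial\H^+(\gamma_m)\subset\partial\H^+(\gamma_n)-\partial\X(\gamma_n)\subset\INT(\partial\H^+(\gamma_n))$. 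You prove the stronger statement that $\partial\Phi(x)\notin\partial\H^-(\gamma_n)$ for every single $n$, by tracking the point $\partial\Phi(x)$ itself through Lemma \ref{boundary Xgamma}, killing the fixed-point alternative with Corollary \ref{points identified}, and then combining Lemma \ref{injective comp reg} with a pigeonhole and compactness argument in $\overline{\mathbb H}$ to place $x$ in the closure of a single leaf or complementary polygon of one $p^{-1}(|\lambda|)$, contradicting non-simplicity. What this buys is a self-contained and slightly sharper argument ($\partial\Phi(x)$ lies in the open set $\partial\C(S,z)\setminus\partial\H^-(\gamma_n)$ for each $n$), in place of the paper's shorter proof that leans on the earlier proof of Proposition \ref{disjointhoods}. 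One small point to tidy up: with the paper's flexible definition, a neighborhood basis requires the interior condition for \emph{every} member of the family, so discarding the (at most one) index $n_0$ for which $x$ is an endpoint of $\gamma_{n_0}$ deserves a word of justification. It is indeed harmless, either because $\partial\H^+(\gamma_m)\subset\partial\H^+(\gamma_{n_0})$ for $m>n_0$ and interiors are monotone, or, more simply, because in your case analysis the alternative $x=e_m$ can occur for at most one $m$, so no discarding is needed at all.
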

\begin{proof}
Since the distance of $\H^+(\gamma_n)$ to any fixed basepoint in $\C(S,z)$ tends to infinity as $n \to \infty$, it
follows that the visual diameter of $\overline{\H^+(\gamma_n)}$ measured from any base point also tends to zero.  Thus,
for any neighborhood $U$ of $\partial \Phi(x)$ in $\partial \C(S,z)$, there exists $N > 0$ so that for all $n \geq N$,
$\partial \H^+(\gamma_n) \subset U$.

We must prove $\partial \Phi(x)$ is in $\INT(\partial \H^+(\gamma_n))$ for all $n$. We already know that
\[ \bigcap \partial \H^+(\gamma_n) = \{ \partial \Phi(x)\} \]
and in particular, $\partial \Phi(x) \in \partial \H^+(\gamma_n)$ for all $n$.
Therefore, it suffices to prove that for any $n$, there exists $m > n$ so that
\[ \partial \H^+(\gamma_m) \subset \INT(\partial \H^+(\gamma_n)).\]

It follows from Proposition \ref{sep half infinity} and the fact that $\partial \H^-(\gamma_n)$ is a closed subset of
$\partial \C(S,z)$ that
\[ \partial \H^+(\gamma_n) - \partial
\X(\gamma_n)  = \partial \C(S,z) - \partial \H^-(\gamma_n) \subset \INT(\partial \H^+(\gamma_n)).\] For any $m > n$, we
also know
\[ \partial \H^+(\gamma_m) \subset \partial \H^+(\gamma_n).\]
Thus, if we can find $m > n$ so that
\[ \partial \X(\gamma_n) \cap \partial \X(\gamma_m) = \emptyset,\]
then appealing to Proposition \ref{sep half infinity} again, it will follow that
\[ \partial \H^+(\gamma_m ) \subset \partial \H^+(\gamma_n) - \partial \X(\gamma_n) \subset \INT(\partial \H^+(\gamma_n)),\]
as required.

If for all $m > n$ we have $\partial \X(\gamma_m) \cap \partial \X(\gamma_n) \neq \emptyset$, then a similar proof to
that given for Proposition \ref{disjointhoods} shows that $x$ is a simple point which is a contradiction.
\end{proof}

The above lemma gives a neighborhood basis for $\partial \Phi(x)$ when $x \in \ATF$ not a simple point.  The next lemma
describes a neighborhood basis $\partial \Phi(x)$, where $x$ is a simple point.

Suppose $x_1,x_2$ are endpoints of a nonboundary leaf of $p^{-1}(|\lambda|)$ or $x_1,...,x_k$ are points of a
complementary polygon of some $p^{-1}(|\lambda|)$ for some $|\lambda| \in \EL(S)$.   We treat both cases simultaneously
referring to these points as $x_1,...,x_k$.  As already noted in the proof of Theorem \ref{cannonthurston}, $\partial
\Phi(x_1) = ... = \partial \Phi(x_k)$, and the $\partial \Phi$--image of any simple point has this form.

\begin{lemma} \label{quotient topology}
If $x_1,...,x_k$ are as above, and $\{\gamma_{1,n}\}$,...,$\{\gamma_{k,n}\}$ are sequences of $\pi_1(S)$--translates of
$\gamma$ with $\{ \gamma_{j,n} \}$ nesting down to $x_j$ for each $j = 1,...,k$, then
\[ \left\{ \partial \H^+(\gamma_{1,n}) \cup \cdots \cup \partial \H^+(\gamma_{k,n}) \right\}_{n=1}^\infty\]
is a neighborhood basis for $\partial \Phi(x_1) = ... = \partial \Phi(x_k)$.
\end{lemma}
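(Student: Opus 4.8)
The plan is to follow the structure of the proof of Lemma \ref{easyquotient topology}, but with a twist: when $x_1$ is simple the sets $\partial\X(\gamma_{j,n})$ cannot be separated near $\partial\Phi(x_1)$, so one cannot argue as there that a later neighborhood lies in the interior of an earlier one. Instead I would isolate $\partial\Phi(x_1)$ in the interior of the union directly, using surjectivity of $\partial\Phi$ and Proposition \ref{notangents}. Throughout, write $V_n=\partial\H^+(\gamma_{1,n})\cup\cdots\cup\partial\H^+(\gamma_{k,n})$. Since the half-spaces nest, the $V_n$ nest; and since $\{\gamma_{j,m}\}_m$ nests down to $x_j$, Corollary \ref{points identified} gives $\overline\Phi_v(x_j)=\partial\Phi(x_1)$ for every $j$, so $\partial\Phi(x_1)\in\partial\H^+(\gamma_{j,n})\subset V_n$ for all $j,n$. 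Two things then remain: (i) every neighborhood of $\partial\Phi(x_1)$ contains some $V_n$, and (ii) $\partial\Phi(x_1)\in\INT(V_n)$ for each $n$.

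Part (i) is the routine half. Fixing a basepoint and the visual metric on $\partial\C(S,z)$, Proposition \ref{weakconvex} makes each $\H^+(\gamma_{j,n})$ uniformly quasiconvex and Proposition \ref{nesting distance} shows its distance to the basepoint tends to infinity, so the visual diameter of $\overline{\H^+(\gamma_{j,n})}$, and in particular of $\partial\H^+(\gamma_{j,n})$, tends to $0$. As there are only $k$ of these sets and each contains $\partial\Phi(x_1)$, the union $V_n$ eventually lies inside any prescribed neighborhood.

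For part (ii) I would argue by contradiction. If $\partial\Phi(x_1)\notin\INT(V_n)$ for some $n$, pick $q_\ell\to\partial\Phi(x_1)$ in $\partial\C(S,z)$ with $q_\ell\notin V_n$, and use Theorem \ref{ctsurjective} to write $q_\ell=\partial\Phi(y_\ell)$ with $y_\ell\in\ATF$. The key point is that each $y_\ell$ lies outside the closure of $H^+(\gamma_{j,n})$ in $\overline{\mathbb H}$ for every $j$: otherwise, approximating $y_\ell$ by points of $H^+(\gamma_{j,n})$ and using $\Phi(\{v\}\times H^+(\gamma_{j,n}))\subset\H^+(\gamma_{j,n})$, one finds $q_\ell=\partial\Phi(y_\ell)\in\partial\H^+(\gamma_{j,n})\subset V_n$. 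Passing to a subsequence, $y_\ell\to y_\infty\in\partial\mathbb H$. Since $\{\overline{H^+(\gamma_{j,m})}\}_m$ is a neighborhood basis of $x_j$ in $\overline{\mathbb H}$, the point $x_j$ lies in the interior of the closed arc $\overline{H^+(\gamma_{j,n})}\cap\partial\mathbb H$, while every $y_\ell$ avoids that arc; hence $y_\infty\neq x_j$ for all $j$, i.e.\ $y_\infty\notin\{x_1,\dots,x_k\}$. Now I split into cases. If $y_\infty\notin\ATF$, choose $z_\ell\in\mathbb H$ with $z_\ell\to y_\infty$ and $\Phi_v(z_\ell)\to\partial\Phi(x_1)$ (possible because $\overline\Phi_v$ is continuous on $\mathbb H\cup\ATF$ by Theorem \ref{cannonthurstonhalf} and $\mathbb H$ is dense), which contradicts Proposition \ref{notangents}. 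If $y_\infty\in\ATF$, then continuity of $\overline\Phi_v$ gives $\partial\Phi(y_\infty)=\partial\Phi(x_1)$, so by Corollary \ref{points identified} $y_\infty$ and $x_1$ are ideal endpoints of a leaf, or ideal vertices of a complementary polygon, of $p^{-1}(|\lambda'|)$ for some $|\lambda'|\in\partial\C(S)$ — and this contradicts $y_\infty\notin\{x_1,\dots,x_k\}$ provided one knows that $\partial\Phi^{-1}(\partial\Phi(x_1))$ is exactly $\{x_1,\dots,x_k\}$.

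Establishing that last fact — that the ideal vertices of a complementary polygon, respectively the two endpoints of a nonboundary leaf, of an ending lamination form a full $\partial\Phi$-equivalence class, with no contribution from other ending laminations — is the step I expect to be the main obstacle; it is precisely where the structure theory of geodesic laminations is needed (distinct complementary polygons have disjoint ideal-vertex sets, and an ideal point cannot lie on leaves of two essentially different ending laminations; cf.\ \cite{CB}). Granting it, both cases above yield contradictions, so $\partial\Phi(x_1)\in\INT(V_n)$, and together with part (i) this shows $\{V_n\}$ is a neighborhood basis for $\partial\Phi(x_1)=\cdots=\partial\Phi(x_k)$.
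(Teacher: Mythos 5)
Your proposal is correct and follows essentially the same route as the paper's proof: shrinking visual diameters of the $\partial\H^+(\gamma_{j,n})$ give part (i), and for interiority you lift a sequence converging to $\partial\Phi(x_1)$ back to $\ATF$ via Theorem \ref{ctsurjective}, use a diagonal argument with Proposition \ref{notangents} to force any accumulation point of the lifted sequence into $\ATF$, and then force it to be one of $x_1,\dots,x_k$, exactly as in the paper. The ``main obstacle'' you flag --- that $\partial\Phi^{-1}(\partial\Phi(x_1))=\{x_1,\dots,x_k\}$ --- is precisely the structure-theoretic fact the paper also uses silently when it asserts that the geodesic from the extra limit point to each $x_j$ has non-simple projection before invoking Proposition \ref{disjointhoods}, so your appeal to Corollary \ref{points identified} together with the standard lamination facts from \cite{CB} is at the same level of rigor as the published argument.
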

\begin{proof}
Let $|\mu| = \partial \Phi(x_1) = ... = \partial \Phi(x_k)$. As in the proof of the previous lemma, the sets in the
proposed neighborhood basis have visual diameter tending to zero as $n \to \infty$.

Since $|\mu| \in \partial \H^+(\gamma_{j,n})$ for all $j = 1,...,k$, we clearly have
\[ |\mu| \in \partial \H^+(\gamma_{1,n}) \cup \cdots \cup \partial \H^+(\gamma_{k,n}). \]
Thus, we are required to show that $|\mu|$ is an interior point of this set.

This is equivalent to saying that for any sequence $\{|\mu_m|\} \subset \partial \C(S,z)$ converging to $|\mu|$,
and every positive integer $n$, there exists $M > 0$ so that for all $m \geq M$,
\begin{equation} \label{almost there}
|\mu_m| \in \partial \H^+(\gamma_{1,n}) \cup \cdots \cup \partial \H^+(\gamma_{k,n}).\end{equation}

So, let $\{|\mu_m|\} \subset \partial \C(S,z)$ be a sequence converging to $|\mu|$ and $n$ a positive integer.
Choose any sequence $\{y_m\} \subset \ATF$ so that $\partial \Phi(y_m) = |\mu_m|$ (such a sequence exists by
surjectivity of $\partial \Phi$). We wish to show that any accumulation point of $\{y_m\}$ is one of the points
$x_1,...,x_k$. For then, we can find an $M>0$ so that for all $m \geq M$
\[y_m \in \overline{H^+(\gamma_{1,n})} \cup ... \cup \overline{H^+(\gamma_{k,n})}\]
and hence \eqref{almost there} holds.

To this end, we pass to a subsequence so that $y_m \to x \in \partial \mathbb H$. Choosing sequences converging to
$y_m$ for all $m$ and applying a diagonal argument, we see that there is a sequence $\{q_m\} \subset \mathbb H$ with
$\displaystyle{\lim_{m \to \infty} q_m = x}$ and $\displaystyle{\lim_{m \to \infty} \Phi_v(q_m)} = |\mu|$. From
Proposition \ref{notangents} we deduce that $x \in \ATF$.

Now, if $x \in \{x_1,...,x_k\}$ then we are done.  Suppose not.  Then the geodesic $\epsilon_j$ from $x$ to $x_j$ has
$p(\epsilon_j)$ non-simple for all $j$.  Proposition \ref{disjointhoods} guarantees $\pi_1(S)$--translates
$\gamma_x,\gamma_{1,n},...,\gamma_{k,n}$ of $\gamma$ defining neighborhoods
$\overline{H^+(\gamma_x)},\overline{H^+(\gamma_{1,n})},...,\overline{H^+(\gamma_{k,n})}$ of $x,x_1,...,x_k$,
respectively for which
\[ \partial \H^+(\gamma_x) \cap \partial \H^+(\gamma_{j,n}) = \emptyset \]
for all $j = 1,...,k$.  Since $\partial \Phi$ is continuous, we have
\[ \lim_{m \to \infty} \partial \Phi(y_m) = \lim_{m \to \infty} |\mu_m| = |\mu| = \partial \Phi(x) \in \partial \H^+(\gamma_x).\]
This is impossible since $|\mu| \in \partial \H^+(\gamma_{j,n})$ for all $j = 1,...,k$.  Therefore, $x = x_j$ for some
$j$, and the proof is complete.
\end{proof}

We are now ready to prove
\smallskip

\noindent {\bf Theorem} \ref{cannonthurston} (Universal Cannon--Thurston map). \textit{For any $v \in \C^0(S)$, the map
$\Phi_v: \mathbb H \to \C(S,z)$ has a unique continuous $\pi_1(S)$--equivariant extension
$$\overline\Phi_v: \mathbb H \cup \ATF \to \overline \C(S,z).$$
The map $\partial \Phi = \overline \Phi_v|_{\ATF}$ does not depend on $v$ and is a quotient map onto $\partial
\C(S,z)$.  Given distinct points $x,y \in \ATF$, $\partial \Phi(x) = \partial \Phi(y)$ if and only if $x$ and $y$ are
ideal endpoints of a leaf (or ideal vertices of a complementary polygon) of the lift of an ending lamination on $S$.}
\smallskip

\begin{proof}
By Theorem \ref{cannonthurstonhalf}, Corollary \ref{points identified} and Theorem \ref{ctsurjective} all that remains
is to prove that $\partial \Phi$ is a quotient map.  To see this, we need only show that $E \subset
\partial \C(S,z)$ is closed if and only if $F = \partial \Phi^{-1}(E)$ is closed.  Since $\partial \Phi$
is continuous, it follows that if $E$ is closed, then $F$ is closed.

Now, suppose that $F$ is closed.  To show that $E$ is closed, we let $|\mu_n| \to |\mu|$ with $\{|\mu_n|\} \subset E$
and we must check that $|\mu| \in E$. By Lemmas \ref{easyquotient topology} and \ref{quotient topology}, after passing
to a subsequence if necessary, there is a sequence $\{\gamma_n\}$ nesting down on some point $x \in \partial
\Phi^{-1}(|\mu|)$ with $|\mu_n| \in
\partial \H^+(\gamma_n)$.  Let $x_n \in \partial \Phi^{-1}(|\mu_n|) \subset F$ be such that $x_n \in \partial H^+(\gamma_n)$. It
follows that $x_n \to x$, so since $F$ is closed, $x \in F$. Therefore, $\partial \Phi(x) = |\mu| \in E$, as required.
Thus, $E$ is closed, and $\partial \Phi$ is a quotient map.
\end{proof}

\subsection{$\Mod(S,z)$--equivariance.} \label{S:mod equivariance}

We now prove
\smallskip

\noindent \textbf{Theorem \ref{full equivariance}.}
\textit{The quotient map
\[ \partial \Phi: \ATF \to \partial \C(S,z) \]
constructed in Theorem \ref{cannonthurston} is equivariant with respect to the action of $\Mod(S,z)$.}

\smallskip
\begin{proof}  It suffices to prove
\[\partial \Phi(\phi(x)) = \phi(\partial \Phi(x)).\]
for every $\phi \in \Mod(S,z)$ and a dense set of points $x \in \ATF$.

Let $\gamma' \subset \mathbb H$ be a geodesic for which $p(\gamma')$ is a filling closed geodesic in $S$ and let
$\delta' \in \pi_1(S)$ be the generator of the infinite cyclic stabilizer of $\gamma'$.  Let $x \in \ATF$ denote the
attracting fixed point of $\delta'$.  As previously discussed, according to Kra \cite{kra}, $\delta'$ represents a
pseudo-Anosov mapping class in $\Mod(S,z)$, and the $\pi_1(S)$--equivariance of $\partial \Phi$ implies $\partial
\Phi(x)$ is the attracting fixed point for $\delta'$ in $\partial \C(S,z)$.

Now, given any $\phi \in \Mod(S,z)$, note that $\phi(x)$ is the attracting fixed point of $\phi \circ \delta' \circ
\phi^{-1}$ in $\ATF$, and $\phi(\partial \Phi(x))$ is the attracting fixed point for $\phi \circ \delta' \circ
\phi^{-1}$ in $\partial \C(S,z)$.  Appealing to the $\pi_1(S)$--equivariance again, we see that $\partial \Phi$ must
take $\phi(x)$ to $\phi(\partial \Phi(x))$.  That is
\[ \partial \Phi(\phi(x)) = \phi(\partial \Phi(x)). \]
Since the set of endpoints of such geodesics is dense in $\ATF$, this completes the proof.
\end{proof}




\section{Local path-connectivity.} \label{S:local pc}

The following, together with Lemma \ref{quotient topology} will prove Theorem \ref{localpathconn}.

\begin{lemma} \label{halfspaceconn}
$\partial \H^+(\gamma)$ is path-connected.
\end{lemma}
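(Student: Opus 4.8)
The plan is to prove that $\partial \H^+(\gamma)$ is path-connected by exhibiting paths between limit points using the structure of $\H^+(\gamma)$ established in Proposition \ref{weakconvex} and Proposition \ref{separating half spaces}, combined with the description of its boundary from Lemma \ref{boundary Xgamma}. The basic idea is that $\H^+(\gamma)$, being a weakly convex subcomplex, has a nicely structured Gromov boundary, and $\partial \X(\gamma)$ separates $\partial\H^+(\gamma)$ from $\partial\H^-(\gamma)$ (Proposition \ref{sep half infinity}); moreover $\partial\X(\gamma) = \hat\Phi(\partial\C(S)\times\gamma)\cup\{|\mu_\pm|\}$, so $\partial\X(\gamma)$ is connected (it is a continuous image of the connected set $\partial\C(S)\times\gamma$ together with two of its limit points $|\mu_\pm|$, using that $\partial\C(S)$ is connected by \cite{leinsch} and continuity of $\hat\Phi$).

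First I would reduce to connecting an arbitrary point $|\mu|\in\partial\H^+(\gamma)$ to a point of $\partial\X(\gamma)$ by a path inside $\partial\H^+(\gamma)$; since $\partial\X(\gamma)$ is path-connected (being the continuous image under $\hat\Phi$ of the path-connected space $(\partial\C(S)\cup\EL(S))\times\gamma$ — wait, more carefully, $\partial\C(S)$ path-connected is exactly what we want to prove too, so instead I would use that $\gamma$ is connected and $\hat\Phi(\{|\lambda|\}\times\gamma)$ gives arcs, and handle $\partial\C(S)$ via the $\delta$-action, which acts as a pseudo-Anosov with fixed points $|\mu_\pm|$). Concretely: given $|\mu|\in\partial\H^+(\gamma)$, take a sequence $u_n\in\H^+(\gamma)$ with $u_n\to|\mu|$, write $u_n = \Phi(v_n,x_n)$ with $x_n\in H^+(\gamma)$; using nested half-spaces $\{\delta^{-k}(\gamma)\}$ or translates nesting down to an endpoint of $\gamma$, push $x_n$ toward $\gamma$ via geodesics inside $H^+(\gamma)$, which by Lemma \ref{L:nearly weak convex}-type arguments traces out a path in $\C(S,z)$ whose endpoint limits land in $\partial\X(\gamma)$. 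The key mechanism: $\partial\H^+(\gamma)$ retracts (in a continuous-on-the-boundary sense) onto $\partial\X(\gamma)$ via the boundary extension of the simplicial projection $\rho:\C(S,z)\to\X(\gamma)$ from Proposition \ref{weakconvex}, and the retraction fibers are path-connected because each fiber $\rho^{-1}(\text{point})$ is itself a weakly convex subcomplex (a "sub-half-tree bundle") and hence has connected boundary by the same Cannon-Thurston machinery.

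The cleanest route I would actually pursue: show $\partial\H^+(\gamma)$ is the continuous image of a path-connected space. Use that $\H^+(\gamma)\cong \bigl(\bigsqcup_{v} v\times H^+(\gamma_v)\bigr)/\!\sim$ — a "bundle" of half-trees over $\C(S)$ — glued as in Theorem \ref{glued cc}. Then $\overline{\H^+(\gamma)}$ receives a surjection from a compactification of this bundle, and one shows $\partial\H^+(\gamma) = \hat\Phi\bigl((\partial\C(S)\cup\C(S))\times \overline{H^+(\gamma)}\bigr)\cap\partial\C(S,z)$ described via laminations: every point of $\partial\H^+(\gamma)$ is either $|\mu_\pm|$, or $\hat\Phi(|\lambda|, y)$ for some $|\lambda|\in\partial\C(S)$ and $y\in\overline{H^+(\gamma)}$, or a limit of $\Phi(v_n,x_n)$ with $v_n\to\infty$ in $\C(S)$ staying in $\partial\C(S,z)$. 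Since $\partial\C(S)$ itself is connected (by \cite{leinsch}, applicable here as $S$ has genus $\geq 2$ — caution: \cite{leinsch} requires genus $\geq 4$ for closed surfaces, so this is exactly the delicate point and one may need instead to bootstrap connectivity of $\partial\C(S,z)$ from Theorem \ref{ctsurjective} and the quotient description, i.e. prove $\partial\C(S,z)$ connected first as a consequence of $\ATF$-structure rather than assuming $\partial\C(S)$ connected), the image is a continuous image of a connected set plus two adherent points, hence connected; path-connectedness then follows by upgrading, using that $\gamma$ and $H^+(\gamma)$ are themselves contractible and the gluing maps $\phi^*$ are continuous.

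The main obstacle I expect is precisely establishing that the "horizontal" direction — moving among different vertices $v\in\C(S)$ while staying in $\partial\H^+(\gamma)$ — can be done along \emph{paths} rather than merely producing connectedness, and doing so without circularly invoking path-connectivity of $\partial\C(S)$ (which for closed genus-$2,3$ surfaces is not covered by \cite{leinsch}). I anticipate the resolution is to work entirely with laminations: a path in $\partial\H^+(\gamma)$ from $\hat\Phi(|\lambda_0|,y)$ to $|\mu_+|$ is built by first moving $y$ within $\overline{H^+(\gamma)}$ (a contractible set) to an endpoint of $\gamma$ — this is a path since $\hat\Phi$ is continuous in the $\mathbb H$-variable on $\overline\C(S)\times\mathbb H$ by Proposition \ref{psiphi2} — landing at $|\mu_+|$, so that \emph{every} point of $\partial\H^+(\gamma)$ of the form $\hat\Phi(|\lambda|,y)$ is joined by a path to $|\mu_+|$, and the remaining points (limits with $v_n\to\infty$) are handled by approximating with such points and invoking a local-connectivity-type argument via the visual-diameter-shrinking neighborhood basis $\{\partial\H^+(\gamma_n)\}$. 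Thus the hard part is showing there are no "exotic" points of $\partial\H^+(\gamma)$ inaccessible by paths from $|\mu_+|$, which should follow from Lemma \ref{boundary Xgamma} applied inside each sub-half-space together with an induction on complexity.
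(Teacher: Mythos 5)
The last paragraph of your proposal contains the germ of the right idea---fix a lamination and move only the $\mathbb H$--coordinate, using continuity of $\hat\Phi$ from Proposition \ref{psiphi2}---but the proposal has a genuine gap exactly where you flag ``the hard part'': connecting those points of $\partial \H^+(\gamma)$ that are not visibly of the form $\hat\Phi(|\lambda|,y)$. None of your suggested remedies closes it. Connectivity of $\partial\C(S)$ via \cite{leinsch} is, as you note, unavailable for closed surfaces of genus $2$ and $3$, so any route through it fails in exactly the cases at hand. The proposed boundary extension of the projection $\rho$ from Proposition \ref{weakconvex}, with ``path-connected fibers,'' is unsubstantiated: nothing in the paper (or your sketch) shows that $\rho$ extends continuously to $\partial\C(S,z)$, nor that its point-preimages are weakly convex subcomplexes with connected boundary. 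And handling the remaining ``exotic'' points by ``approximating with such points and invoking a local-connectivity-type argument'' is circular, since local path-connectivity of $\partial\C(S,z)$ is precisely what Lemma \ref{halfspaceconn} is meant to feed into Theorem \ref{localpathconn}; the ``induction on complexity'' is never formulated, and is not needed.

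The missing idea is to use surjectivity of the universal Cannon--Thurston map (Theorem \ref{ctsurjective}, as packaged in Theorem \ref{cannonthurston}): an arbitrary $|\mu| \in \partial\H^+(\gamma)$ equals $\partial\Phi(x)$ for some $x \in \ATF$. Choose a ray $r\co[0,1) \to H^+(\gamma)$ ending at $x$ and translates $\{\gamma_n\}$ nesting down on $x$ with $\gamma_1 = \gamma$. Fixing a single $|\lambda| \in \EL(S)$, the path $t \mapsto \hat\Phi(|\lambda|,r(t))$ lies in $\partial\H^+(\gamma)$, and for $t \geq t_n$ it lies in $\partial\H^+(\gamma_n)$; since these sets have visual diameter tending to zero and intersect in the single point $\partial\Phi(x) = |\mu|$, the path extends continuously to $[0,1]$ with endpoint $|\mu|$. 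Thus every point of $\partial\H^+(\gamma)$ is joined by a path to the path-connected set $\hat\Phi(\{|\lambda|\} \times H^+(\gamma))$ for one fixed $|\lambda|$, with no horizontal motion in $\partial\C(S)$, no classification of the points of $\partial\H^+(\gamma)$, and no retraction onto $\partial\X(\gamma)$. Note also that your claim that pushing $y$ to an endpoint of $\gamma$ ``lands at $|\mu_+|$'' by continuity of $\hat\Phi$ alone is not justified: Proposition \ref{psiphi2} gives continuity only on $\overline\C(S) \times \mathbb H$, and identifying the limit as the $\mathbb H$--coordinate exits to $\partial\mathbb H$ requires exactly the nesting argument above (compare Case 2 of Lemma \ref{boundary Xgamma}).
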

\begin{proof}
Fix any $|\lambda| \in \EL(S)$.  According to Proposition \ref{psiphi2}, $\hat \Phi$ is continuous, so we have a
path-connected subset
\[ \hat \Phi(\{ |\lambda|\} \times H^+(\gamma)) \subset \partial \H^+(\gamma). \]

Now let $|\mu| \in \partial \H^+(\gamma)$  be any point.  We will construct a path in $\partial \H^+(\gamma)$
connecting a point of $\hat \Phi(\{ |\lambda|\} \times H^+(\gamma))$ to $|\mu|$.  This will suffice to prove the lemma.

According to Theorem \ref{cannonthurston} there exists $x \in \ATF$ so that $\partial \Phi(x) = |\mu|$.  Let $r:[0,1)
\to H^+(\gamma)$ be a ray with
\[\lim_{t \to 1} r(t) = x. \]

Let $\{\gamma_n\}$ be a sequence of $\pi_1(S)$--translates of $\gamma$ which nest down on $x$.  We assume, as we may,
that $\gamma_1 = \gamma$.  Therefore, there is a sequence $t_1 < t_2 <...$ with $\lim_{n \to \infty}t_n = 1$ and
\[r([t_n,1)) \subset H^+(\gamma_n) \]
and hence again by Proposition \ref{psiphi2}
\[\hat \Phi(\{|\lambda|\} \times r([t_n,1)) \subset \hat \Phi(\{|\lambda|\} \times H^+(\gamma_n)) \subset \partial \H^+(\gamma_n). \]

Recall that, by definition, $\partial \Phi(x)$ is the unique point of intersection
\[\bigcap_{n=1}^\infty \partial \H^+(\gamma_n),\]
and hence
\[\lim_{t \to 1} \hat \Phi(|\lambda|,r(t)) = |\mu|. \]

Therefore, we can extend $R_{|\lambda|}(t) = \hat \Phi(|\lambda|,r(t))$ to a continuous map
\[R_{|\lambda|}:[0,1] \to \partial \H^+(\gamma)\]
with $R_{|\lambda|}(0) \in \hat \Phi(\{ |\lambda|\} \times H^+(\gamma))$ and $R_{|\lambda|}(1) = |\mu|$. This is the
required path completing the proof.
\end{proof}

We now prove
\smallskip

\noindent \textbf{Theorem \ref{localpathconn}.} \textit{The Gromov boundary $\partial \C(S,z)$ is path-connected and
locally path-connected.}

\smallskip

\begin{proof} From Lemma \ref{halfspaceconn}, we see that every set of the form $\partial \H^+(\gamma_0)$ is
path-connected for any $\pi_1(S)$--translate $\gamma_0$ of $\gamma$.  According to Lemmas \ref{easyquotient topology}
and \ref{quotient topology} there is a bases for the topology consisting of these sets (and finite unions of these sets
which all share a point), this proves local path-connectivity. Path-connectivity follows from Lemma \ref{halfspaceconn}
and Proposition \ref{sep half infinity}.
\end{proof}

\bibliography{uctmaps}

\begin{thebibliography}{CEG87}

\bibitem[Bir69]{birmansequence}
Joan~S. Birman.
\newblock Mapping class groups and their relationship to braid groups.
\newblock {\em Comm. Pure Appl. Math., 22}, pages 213--238, 1969.

\bibitem[Bir74]{birmanbook}
Joan~S. Birman.
\newblock Braids, links, and mapping class groups.
\newblock {\em Princeton University Press, Princeton, N.J., Annals of
  Mathematics Studies, No. 82.}, 1974.

\bibitem[Bon88]{bonahon-curr-teich}
Francis Bonahon.
\newblock The geometry of {T}eichm\"uller space via geodesic currents.
\newblock {\em Invent. Math.}, 92(1):139--162, 1988.

\bibitem[Bro00]{brock-length-cont}
J.~F. Brock.
\newblock Continuity of {T}hurston's length function.
\newblock {\em Geom. Funct. Anal.}, 10(4):741--797, 2000.

\bibitem[BS85]{birmanseries}
Joan~S. Birman and Caroline Series.
\newblock Geodesics with bounded intersection number on surfaces are sparsely
  distributed.
\newblock {\em Topology}, 24(2):217--225, 1985.

\bibitem[CB87]{CB}
A.~Casson and S.~Bleiler.
\newblock {Automorphisms of Surfaces after Nielsen and Thurston}.
\newblock {\em London Math. Soc. Student Texts}, 1987.

\bibitem[CEG87]{CEG}
R.~D. Canary, D.~B.~A. Epstein, and P.~Green.
\newblock Notes on {N}otes of {T}hurston.
\newblock {\em in Analytical and Geometric Aspects of Hyperbolic Spaces}, pages
  3--92, 1987.

\bibitem[CT07]{CT}
James~W. Cannon and William~P. Thurston.
\newblock Group invariant {P}eano curves.
\newblock {\em Geom. Topol.}, 11:1315--1355, 2007.

\bibitem[EE69]{earleeells}
Clifford~J. Earle and James Eells.
\newblock A fibre bundle description of {T}eichm\"uller theory.
\newblock {\em J. Differential Geometry}, 3:19--43, 1969.

\bibitem[FM02]{FMcc}
Benson Farb and Lee Mosher.
\newblock Convex cocompact subgroups of mapping class groups.
\newblock {\em Geom. Topol.}, 6:91--152 (electronic), 2002.

\bibitem[Gab09]{gabai}
David Gabai.
\newblock Almost filling laminations and the connectivity of ending lamination
  space.
\newblock {\em Geom. Topol.}, 13:1017--1041, 2009.

\bibitem[GL07]{guirardellevitt}
Vincent Guirardel and Gilbert Levitt.
\newblock Deformation spaces of trees.
\newblock {\em Groups Geom. Dyn.}, 1(2):135--181, 2007.

\bibitem[Har81]{harvey}
W.~J. Harvey.
\newblock Boundary structure of the modular group.
\newblock In {\em Riemann surfaces and related topics: Proceedings of the 1978
  Stony Brook Conference (State Univ. New York, Stony Brook, N.Y., 1978)},
  volume~97 of {\em Ann. of Math. Stud.}, pages 245--251, Princeton, N.J.,
  1981. Princeton Univ. Press.

\bibitem[Kla99]{klarreich-el}
E.~Klarreich.
\newblock The boundary at infinity of the curve complex and the relative
  teichmüller space.
\newblock {\em preprint, http://nasw.org/users/klarreich/research.htm}, 1999.

\bibitem[KLS06]{kls}
R.~Kent, C.~Leininger, and S.~Schleimer.
\newblock {Trees and Mapping Class Groups}.
\newblock {\em {arXiv:math/0611241}}, preprint 2006.

\bibitem[Kra81]{kra}
Irwin Kra.
\newblock On the {N}ielsen-{T}hurston-{B}ers type of some self-maps of
  {R}iemann surfaces.
\newblock {\em Acta Math.}, 146(3-4):231--270, 1981.

\bibitem[LS08]{leinsch}
C.J. Leininger and S.~Schleimer.
\newblock Connectivity of the space of ending laminations.
\newblock {\em {arXiv:0801.3058}}, preprint 2008.

\bibitem[Min92]{minsky-top}
Y.~N. Minsky.
\newblock Teichmuller {G}eodesics and {E}nds of 3-{M}anifolds.
\newblock {\em Topology}, pages 1--25, 1992.

\bibitem[Mit97]{mitra-endlam}
Mahan Mitra.
\newblock Ending {L}aminations for {H}yperbolic {G}roup {E}xtensions.
\newblock {\em GAFA vol.7 No. 2}, pages 379--402, 1997.

\bibitem[Mit98]{mitra-ct}
Mahan Mitra.
\newblock Cannon-{T}hurston {M}aps for {H}yperbolic {G}roup {E}xtensions.
\newblock {\em Topology 37}, pages 527--538, 1998.

\bibitem[Mj05]{mahan-amalgeo}
Mahan Mj.
\newblock {Cannon-Thurston Maps for Surface Groups I: Amalgamation Geometry and
  Split Geometry}.
\newblock {\em preprint, arXiv:math.GT/0512539}, 2005.

\bibitem[Mj06]{mahan-split}
Mahan Mj.
\newblock {Cannon-Thurston Maps for Surface Groups II: Split Geometry and the
  Minsky Model}.
\newblock {\em preprint, arXiv:math.GT/0607509}, 2006.

\bibitem[Mj07]{mahan-elct}
Mahan Mj.
\newblock {Ending Laminations and Cannon-Thurston Maps}.
\newblock {\em preprint, arXiv:math.GT/0701725}, 2007.

\bibitem[MM99]{masur-minsky}
H.~A. Masur and Y.~N. Minsky.
\newblock Geometry of the complex of curves {I}: {H}yperbolicity.
\newblock {\em Invent. Math.138}, pages 103--139, 1999.

\bibitem[MM00]{masur-minsky2}
H.~A. Masur and Y.~N. Minsky.
\newblock Geometry of the complex of curves {II}: {H}ierarchical structure.
\newblock {\em Geom. Funct. Anal. 10}, pages 902--974, 2000.

\bibitem[Mos97]{mosher-hbh}
Lee Mosher.
\newblock A hyperbolic-by-hyperbolic hyperbolic group.
\newblock {\em Proc. Amer. Math. Soc.}, 125(12):3447--3455, 1997.

\bibitem[MP89]{mcpapa}
J.~D. McCarthy and A.~Papadopoulos.
\newblock {Dynamics on Thurston's Sphere of Projective Measured Foliations}.
\newblock {\em Comment. Math. Helv. 64}, pages 133--166, 1989.

\bibitem[PH92]{harer-penner}
R.~Penner and J.~Harer.
\newblock Combinatorics of train tracks.
\newblock {\em Ann. Math. Studies 125, Princeton University Press}, 1992.

\bibitem[Thu80]{thurstonnotes}
W.~P. Thurston.
\newblock The {G}eometry and {T}opology of 3-{M}anifolds.
\newblock {\em Princeton University Notes}, 1980.

\end{thebibliography}
\bibliographystyle{alpha}

\bigskip

\noindent Department of Mathematics, University of Illinois, Urbana--Champaign, IL 61801, USA \newline \noindent
\texttt{clein@math.uiuc.edu}

\bigskip

\noindent School of Mathematical Sciences, RKM Vivekananda University, Belur Math, WB 711202, India \newline \noindent
\texttt{mahan@rkmvu.ac.in}

\bigskip

\noindent Department of Mathematics, University of Warwick, Coventry CV4 7AL UK
\newline \noindent \texttt{s.schleimer@warwick.ac.uk }

\end{document}